\UseRawInputEncoding
\documentclass[11pt,a4paper,reqno]{amsart}

\usepackage{tikz-cd}
\usepackage{esint}
\usepackage{amssymb}
\usepackage{amsmath}
\usepackage{MnSymbol}
\usepackage{graphicx}
\usepackage[toc,page]{appendix}
\usepackage{bm}
\usepackage{todonotes}

\usepackage[
hypertexnames=false, colorlinks, 
linkcolor=black,citecolor=red,urlcolor=black, linktocpage=true
]%
{hyperref} 
\hypersetup{bookmarksdepth=3}

\setlength{\textwidth}{15.5cm}			  %
\setlength{\textheight}{22cm}			  %
\setlength{\topmargin}{-.5cm}			  %
\setlength{\oddsidemargin}{6mm}			  %
\setlength{\evensidemargin}{6mm}		  %
\setlength{\abovedisplayskip}{3mm}		  %
\setlength{\belowdisplayskip}{3mm}		  %
\setlength{\abovedisplayshortskip}{0mm}	  %
\setlength{\belowdisplayshortskip}{2mm}	  %
\setlength{\baselineskip}{12pt}			  %
\setlength{\normalbaselineskip}{12pt}	  %
\normalbaselines						  %

\newtheorem{theorem}{Theorem}[section]
\newtheorem{lemma}[theorem]{Lemma}
\newtheorem{corollary}[theorem]{Corollary}
\newtheorem{proposition}[theorem]{Proposition}

\theoremstyle{definition}

\newtheorem*{df*}{Definition}

\theoremstyle{remark}
\newtheorem{remark}[theorem]{Remark}
\newtheorem*{remark*}{Remark}

\theoremstyle{definition}
\newtheorem{Assumption}[theorem]{Assumption}

\numberwithin{equation}{section}

\newcommand{\p}{\partial}

\def\RR{{\mathbb R}}

\def\NN{\mathbb N}
\def\SS{\mathbb S}

\newcommand\abs[1]{\left|#1\right|}    
\newcommand{\norm}[1]{\left\lVert#1\right\rVert}

\def\cyr{\fontencoding{OT2}\fontfamily{wncyr}\selectfont}
\DeclareTextFontCommand{\textcyr}{\cyr}


%
{\end{list}}


\newcounter{vremennyj}

\begin{document}
\title[Boundary value problem for two-dimensional steady fluids]{On the Grad-Rubin boundary value problem for the two-dimensional magneto-hydrostatic equations}
\author{Diego Alonso-Or\'{a}n}
\address{Departamento de An\'{a}lisis Matem\'{a}tico, Universidad de La Laguna, C/Astrof\'{i}sico Francisco S\'{a}nchez s/n, 38271 – La Laguna, Spain}
\email{dlonsoo@ull.edu.es}
\author{Juan J. L. Vel\'azquez}
\address{Institut f\"{u}r Angewandte Mathematik\\  Universit\"{a}t Bonn\\ Endenicher Allee 60, 53115, Bonn \\ Germany}
\email{velazquez@iam.uni-bonn.de}
\thanks{D. Alonso-Or\'{a}n has been supported by the Alexander von Humboldt Foundation and by the Spanish MINECO through Juan de la Cierva fellowship FJC2020-046032-I. J. J. L.  Vel\'azquez acknowledges support through the CRC 1060 (The Mathematics of Emergent Effects) that is funded through the German Science Foundation (DFG), and the Deutsche Forschungsgemeinschaft (DFG, German Research Foundation) under Germany as Excellence Strategy EXC-2047/1-390685813}
\date{\today}

\begin{abstract}
	In this work, we study the solvability of a boundary value problem for the magneto-hydrostatic equations originally proposed by Grad and Rubin in \cite{Grad-Rubin-1958}. The proof relies on a fixed point argument which combines the so-called current transport method together with  H\"older estimates for a class of non-convolution singular integral operators. The same method allows to solve an analogous boundary value problem for the steady incompressible Euler equations.
\end{abstract}

\maketitle
\setcounter{tocdepth}{1}

\tableofcontents
\setcounter{tocdepth}{2}

\section{Introduction and prior results}\label{Chap:1}
In this paper we consider some boundary value problem for the two dimensional magneto-hydrostatic equation \textsc{(MHS)} given by 
\begin{equation}\label{MHS2D:eq}
	\left\lbrace
	\begin{array}{lll}
		j\times B&=\nabla p, \quad \mbox{in } \Omega \\
		\nabla\times B&=j, \quad \mbox{in } \Omega \\
		\nabla\cdot B&=0,  \quad \mbox{in }  \Omega
	\end{array} \right.
\end{equation}
where $B$ denotes the magnetic field, $j=\nabla \times B$ the current density and $p$ the fluid pressure on a suitable two dimensional manifold $\Omega$. The MHS equations are a particular case of the ideal steady magneto-hydrodynamics equations with trivial fluid flow $v=0$. Magneto-hydrostatics is relevant in a wide variety of problems in astrophysical plasmas describing coronal field structures and stellar winds as well as in the study of plasma confinement fusion, (cf. \cite{Goedbloed-Poedts-2010, Goedbloed-Poedts-2010-2, Priest-2014}). Using the vector identity $j\times B=(\nabla\times B)\times B=B\cdot\nabla B-\frac{1}{2}\nabla (\abs{B}^2)$ and defining the magnetic pressure or total pressure $p_{m}=p+\frac{1}{2}\abs{B}^2$, equations \eqref{MHS2D:eq} recast into
\begin{equation}\label{MHS2Dmp:eq}
	\left\lbrace
	\begin{array}{lll}
		B\cdot \nabla B&=\nabla p_{m}, \quad \mbox{in } \Omega \\
		\nabla\cdot B&=0,  \quad \mbox{in }  \Omega.
	\end{array} \right.
\end{equation}
Using the appropriate identification of variables, equations \eqref{MHS2Dmp:eq} are equivalent to the well-known equations of steady incompressible Euler equations, namely,
\begin{equation}\label{Euler2D:eq}
	\left\lbrace
	\begin{array}{lll}
		v\cdot \nabla v&=-\nabla p, \quad \mbox{in } \Omega \\
		\nabla\cdot v&=0,  \quad \mbox{in }  \Omega
	\end{array} \right.
\end{equation}
where $v:\Omega\to \mathbb{R}^2$ is the velocity fluid vector field and $p:\Omega\to \mathbb{R}$ denotes the fluid pressure. Indeed, a quick inspections shows that \eqref{MHS2Dmp:eq} is equivalent to \eqref{Euler2D:eq} using the transformations of variables $v\leftrightarrow B$ and $-p\leftrightarrow p_{m}$.

In this paper we are interested in studying some specific boundary value problems for \eqref{MHS2D:eq} where information about the magnetic field $B$ is given in different parts of the boundaries. Hereafter we will describe in detail the boundary value conditions into consideration for the case of the MHS equations \eqref{MHS2D:eq}.  Since from the mathematical point of view systems \eqref{MHS2Dmp:eq} and \eqref{Euler2D:eq} are identical,  a similar analysis and results can be shown for the steady Euler equations \eqref{Euler2D:eq}. Nevertheless a specific boundary value problem for one of the equations might not be physically relevant for the other and vice-versa.

Let $\Omega$ be a two dimensional orientable manifold with smooth boundary $\partial\Omega$. We will denote by $n$ the outer normal to the boundary and assume that the normal component of the magnetic field $B\cdot n$ is given. We now decompose the boundary $\partial\Omega=\partial\Omega_{+}\cup\partial\Omega_{-}$ where
$$\partial\Omega_{+}=\{ \bm{x}\in \partial \Omega :  (n\cdot B)(\bm{x})\geq0 \} \mbox{ and }  \partial\Omega_{-}=\{ \bm{x}\in \partial \Omega :  (n\cdot B)(\bm{x})\leq 0 \}.$$

The boundary problem which we treat in this paper consists in prescribing in addition to the normal component $B\cdot n$ on $\partial\Omega$, the tangential component $B\cdot \tau$ in one part of the boundary, namely on $\partial\Omega_{-}$. Here and in the following we denote by $\tau$ a unit vector tangent to the boundary. This boundary value problem was introduced in the seminal paper of Grad and Rubin \cite{Grad-Rubin-1958}. To the best of our knowledge, the well-posedness of this boundary value problem remains open even in the two-dimensional case. Furthermore, in  \cite{Grad-Rubin-1958} the authors also suggested different boundary value problems for the MHS equations in two dimensional and three dimensional cases. A relevant feature of the solutions constructed in this article is that the current $j$ is different from zero for generic choices of the boundary values. For the construction of zero current density solutions, i.e. $j=0$, it is well-known that system \eqref{MHS2D:eq} reduces to the study of the Laplace equation where the theory of harmonic functions can be applied to study the existence of solutions. 

In this work, we will restrict ourselves to a very particular geometric setting, namely we will assume that 
\begin{equation}\label{domain}
	\Omega=\mathbb{S}^{1}\times [0,L],
\end{equation}
with $L>0.$ The reason to choose this manifold is the following: for $\Omega$ as in \eqref{domain} we can choose the values of $B\cdot n$ in such a way that $\partial\Omega_{+}\cap \partial\Omega_{-}=\emptyset$ and in particular we can guarantee that $B\cdot n\neq 0$ at all points $\bm{x}\in \partial\Omega$.  As it has been discussed in \cite{Alo-Velaz-2021} at the points of the set $\partial\Omega_{+}\cap \partial\Omega_{-}$ some singular behaviour for $B$ arise for generic domains $\Omega$. In order to avoid the technical difficulties that should be considered in that situation, we will just work on the particular manifold \eqref{domain}.

It is worth to notice that several boundary value problems for the steady Euler or MHS equations have been considered in the literature \cite{Alber-1992, Alo-Velaz-2021,Buffoni-Wahlen-2019,
	Molinet-1999,Seth-2020,Tang-Xin-2009}. We refer the interested reader to \cite{Alo-Velaz-2021} for a thorough description of the currently available results considering the well-posedness of the different boundary value problems for the steady Euler or MHS equations.

In order to solve boundary value problems for both equations, two main methods have been considered in the literature: the Grad-Shafranov method \cite{Grad-1967,Safranov-1966} and the vorticity transport method introduced by Alber \cite{Alber-1992}. The former is restricted to two dimensional settings or to problems with particular symmetries, for instance axisymmetric or toroidal symmetries. The main idea behind the Grad-Shafranov method relies on reducing the steady Euler or the MHS equations to an elliptic equation where large number of techniques are available. See for instance, \cite{CDG-2020,CDG-2020-2,CLV,Hamel-Nadirashvili-2017} for ideas closely related to the Grad-Shafranov approach that have been recently applied to derive properties solutions of the steady Euler equation and MHS equation.

A different approach to obtain solutions with non-vanishing vorticity (since it was originally applied for the steady Euler equation) was introduced by Alber \cite{Alber-1992}. Roughly speaking, he constructed solutions where the velocity field $v$ can be split into $v=v_0+V$ where $v_0$ is an irrotational solution to \eqref{Euler2D:eq} and $V$ a small perturbation. The boundary value problem for the Euler equations is reduced to a fixed point problem for a function $V$ combining the fact that the vorticity satisfies a suitable transport equation and that the velocity can be recovered from the vorticity using the Biot-Savart law. This idea will be discussed later in more detail. In particular, we will explain why Alber's method cannot be directly applied to solve the boundary value problem we are interested in and more importantly what are the new key tools we implement to address the problem.

\subsection{Notation}\label{Sec:11}
We will use the following notation throughout the manuscript. 
\begin{itemize}
	\item We recall that we are working on a manifold with boundary $\Omega= \mathbb{S}^1\times [0,L]$ with $L>0$. The boundary of the manifold  $\Omega$, will be denoted by $\partial\Omega=\partial\Omega_{+}\cup\partial\Omega_{-}$ where $\partial\Omega_{+}=\mathbb{S}^{1}\times\{L\}$ and $\partial\Omega_{-}=\mathbb{S}^{1}\times\{0\}$.  We will use several operators that will be defined in $\partial\Omega_{-}$. In those cases it will be convenient to identify $\partial\Omega_{-}$ with $\SS^1$ and then to consider that the operators are acting of spaces of functions with domain $\SS^1$ instead of $\partial\Omega_{-}$. Notice that these spaces of functions are isomorphic.

	\item  Let us denote by $n$ the outer normal to 
	$\partial \Omega$ in the points of $\partial\Omega_{+}$, the inner normal to $\partial \Omega$ in the points of $\partial\Omega_{-}$ and by $\tau$ the tangential vector.   
	
	\item In order to simplify the exposition, we will also use the bold notation $\bm{x}\in \Omega$ to denote a pair $\bm{x}=(x,y)\in\Omega$. 
	\item Let $C_{b}(\Omega)$ be the set of bounded continuous functions on $\Omega.$ For any bounded continuous function and $0<\alpha<1$ we call $f$ uniformly H\"older continuous with exponent $\alpha$ in $\Omega$ if the quantity
	$$ \left[ f \right]_{\alpha,\Omega}:= \displaystyle\sup_{\bm{x}\neq \bm{y}; \bm{x},\bm{y}\in \overline{\Omega}} \frac{\abs{f(\bm{x})-f(\bm{y})}}{\abs{\bm{x}-\bm{y}}^\alpha}$$
	is finite. However, this is just a semi-norm and hence in order to work with Banach spaces we define the space of H\"older continuous functions as 
	$$ C^{\alpha}(\Omega)=\{ f\in C_{b}(\Omega): \norm{f}_{C^\alpha(\Omega)}< \infty\},$$
	equipped with the norm
	$$ \norm{f}_{C^{\alpha}(\Omega)}:=\displaystyle \sup_{\bm{x}\in \overline{\Omega}}\abs{f(\bm{x})}+  \left[ f \right]_{\alpha,\Omega}.$$
	Similarly, for any non-negative integer $k$ we define the H\"older spaces $C^{k,\alpha}(\Omega)$ as 
	$$ C^{\alpha}(\Omega)=\{ f\in C^{k}_{b}(\Omega): \norm{f}_{C^{k,\alpha}(\Omega)}< \infty\},$$
	equipped with the norm
	$$ \norm{f}_{C^{k,\alpha}(\Omega)}=\displaystyle\max_{\abs{\beta}\leq k }\sup_{\bm{x}\in \overline{\Omega}}\abs{\partial^{\beta}f(\bm{x})}+\displaystyle\sum_{\abs{\beta}=k}\left[\partial^{\beta}f \right]_{\alpha,\Omega}.$$
	where $\beta=(\beta_1,\beta_2)\in \NN^{2}_{0}$ and $\NN_{0}=\{0,1,2,\ldots\}$.
	Notice that in the definitions above the H\"older regularity holds up to the boundary, i.e in $\overline{\Omega}$.
	We omit in the functional spaces whether we are working with scalars or vectors fields, this is $C^{k,\alpha}(\Omega,\mathbb{R})$ or $C^{k,\alpha}(\Omega,\mathbb{R}^2)$ and instead just write $C^{k,\alpha}(\Omega)$. The specific type of functional space (scalar or vector) will be clear from the context. Moreover, we will denote H\"older spaces on the boundary of the manifold, namely on $\partial\Omega$, $\partial\Omega_{+}$ and $\partial\Omega_{+}$ by $C^{k,\alpha}(\partial\Omega)$, $C^{k,\alpha}(\partial\Omega_{+})$ and $C^{k,\alpha}(\partial\Omega_{-})$ respectively. 
	
	\item Let $M>0$ and let $X$ be Banach space. Then we define by $B_{M}(X)$ the closed ball in $X(\Omega)$ with radius $M$, i.e.
	$$ B_{M}(X)=\{f\in X: \norm{f}_{X}\leq M \}.$$

	\item We identify the functions  $f\in C^{k,\alpha}(\mathbb{S}^1)$,$k=0,1,2...$, $\alpha\in (0,1)$ with the subspace of $C^{k,\alpha}(\mathbb{R})$ such that $f(x+2\pi)=f(x)$. Moreover, we will also identify $\SS^1$ with any interval $[a,b]$ where $b-a=2\pi$.
	
	\item For a sufficiently smooth $2\pi$-periodic function in the $x$ variable $f$, we define the Fourier coefficients of $f$ in the first variable by $$\widehat{f}(n,y)=\frac{1}{2\pi}\int_{0}^{2\pi}f(x,y)e^{-inx} \ dx.$$ Then we have the Fourier series representation, 
	$f(x,y)=\displaystyle\sum_{n=-\infty}^{n=\infty}\widehat{f}(n,y)e^{inx}.$
	
	\item Throughout the manuscript we will denote with $C$ a positive generic constant that depends only on fixed parameters. More precisely, they will depend on the the parameter $L$ and the H\"older exponent $\alpha$. Note also that this constant might differ from line to line.
	
	\item We will also use the brackets $\big[ \cdot \big]$, in order to denote the dependence of an operator on the bracketed function, namely $T\big[ f\big]$ denotes that the operator $T$ depends in a certain way on the function $f$.
	\item Let $E$ and $F$ be  Banach spaces. We say that $T$ is a bounded operator from $E$ to $F$ if there exists a constant $c\geq 0$ such that $\norm{Tu}_{F}\leq c\norm{u}_{E}$, $\forall u\in E.$ The norm of the bounded operator $T$ is defined and denoted as
	$$\norm{T}_{\mathcal{L}(E,F)}= \displaystyle\sup_{u\neq 0}\frac{\norm{Tu}_{F}}{\norm{u}_{E}}. $$
	Moreover, if $E=F$, we just write $\mathcal{L}(E)$ instead of $\mathcal{L}(E,E)$.
	
\end{itemize}

\subsection{Main result}\label{Sec:12}
The main result in this article deals with the well-posedness of a boundary value problem for the MHS equations suggested by Grad-Rubin in \cite{Grad-Rubin-1958}. Specifically we prescribe the normal component $B\cdot n$ on $\partial\Omega$ and  the tangential component $B\cdot \tau$ in one part of the boundary, namely on $\partial\Omega_{-}$. In particular, our result reads as follows
\begin{theorem}\label{theorem2D}
	Let $\Omega=\{ (x,y)\in \mathbb{S}^{1}\times [0,L]\}$, with $L>0$ and $\alpha\in (0,1)$. There exists $M=M(\alpha,L)>0$ sufficiently small such that for 
	$f\in C^{{2,\alpha}}(\partial \Omega)$ and $g\in C^{{2,\alpha}}(\partial \Omega_{-})$ satisfying 
	\begin{equation}\label{smallness}
		\norm{g}_{C^{{2,\alpha}}(\partial \Omega_{-})}+\norm{f}_{C^{{2,\alpha}}(\partial \Omega)} \leq M ,
	\end{equation} 
	and 
	\begin{equation}\label{compcond}
		\int_{\partial\Omega_{-}}f \ dx=  \int_{\partial\Omega_{+}}f \ dx,
	\end{equation}
	there exists a unique $(B,p)\in C^{{2,\alpha}}(\Omega)\times C^{{2,\alpha}}(\Omega)$ with $B=(B_1,B_2)$ to \eqref{MHS2D:eq}  with $$\norm{B-(0,1)}_{C^{2,\alpha}(\Omega)}\leq M$$ such that
	\begin{equation}\label{bvc}
		B\cdot n =1+f \ \mbox{on} \ \partial \Omega \mbox{ and } \ B\cdot \tau=g  \ \mbox{on} \ \partial \Omega_{-}.
	\end{equation}
\end{theorem}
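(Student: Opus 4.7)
The plan is to linearize around the exact solution $B_{\star}=(0,1)$ and set up a fixed point problem for the perturbation. Writing $B = B_{\star} + W$, the key 2D MHS identity $B\cdot\nabla j = 0$, which follows from taking the curl of $j\times B = \nabla p$ together with $\nabla\cdot B = 0$, shows that the current $j = \nabla\times B$ is transported along the streamlines of $B$ itself. For $W$ sufficiently small the field satisfies $B\cdot n \geq c > 0$ on both boundary components, so every streamline enters through $\partial\Omega_{-}$ and exits through $\partial\Omega_{+}$; the scalar $j$ on $\Omega$ is therefore entirely determined by its trace $\omega := j|_{\partial\Omega_{-}}$.

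This suggests reducing the problem to a single boundary equation for $\omega \in C^{1,\alpha}(\mathbb{S}^{1})$. Given a trial $\omega$, one constructs $B[\omega]$ by an inner fixed point iteration: start with $B^{(0)}=B_{\star}$, pull back $\omega$ along the streamlines of $B^{(n)}$ from $\partial\Omega_{-}$ into $\Omega$ to produce $j^{(n)}$, and then recover $B^{(n+1)}$ from the div--curl problem $\nabla\cdot B^{(n+1)} = 0$, $\nabla\times B^{(n+1)} = j^{(n)}$ with Neumann data $B^{(n+1)}\cdot n = 1+f$ on $\partial\Omega$ (the compatibility \eqref{compcond} is exactly what is needed for its solvability). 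Contraction in $C^{2,\alpha}$ is standard Schauder theory combined with $C^{1,\alpha}$ estimates for the flow of the ODE $\dot\gamma = B(\gamma)$, provided $W$ stays in a small ball. The original boundary value problem then reads
\begin{equation*}
	\mathcal{T}[\omega] := (B[\omega]\cdot\tau)\big|_{\partial\Omega_{-}} = g,
\end{equation*}
an equation for $\omega$ alone.

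To invert $\mathcal{T}$ near zero I would compute its Fr\'echet derivative $\mathcal{T}'(0)$, which arises when the streamlines are exactly vertical. In that limit the transported current is $j(x,y)=\omega(x)$; solving $-\Delta\tilde{\Psi} = \omega(x)$ with $\tilde{\Psi}|_{y=0,L}=0$ by separation of variables and then evaluating $W_{1} = \partial_{y}\tilde{\Psi}$ at $y=0$ mode by mode gives the Fourier multiplier
\begin{equation*}
	m(n) = \frac{\tanh(|n|L/2)}{|n|}\text{ for }n\neq 0, \qquad m(0) = \frac{L}{2}.
\end{equation*}
This symbol is strictly positive, uniformly bounded, and behaves like $1/|n|$ at infinity, so $\mathcal{T}'(0)$ is a smoothing isomorphism from $C^{1,\alpha}(\partial\Omega_{-})$ onto $C^{2,\alpha}(\partial\Omega_{-})$, with inverse a first-order convolution pseudo-differential operator whose H\"older mapping properties are classical.

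The main obstacle, and the reason for developing the non-convolution singular integral estimates announced in the abstract, is controlling the remainder $\mathcal{T}[\omega] - \mathcal{T}'(0)[\omega]$ in the same H\"older norm. When $W\neq 0$ the streamlines bend, and writing the Poisson reconstruction in the curvilinear coordinates adapted to the flow of $B$ produces operators whose kernels are no longer translation invariant in $x$; the trace on $\partial\Omega_{-}$ of the resulting elliptic solution is a genuinely non-convolution singular integral operator whose H\"older bounds, together with a Lipschitz dependence on $W$, must be derived by a careful frozen-coefficient comparison with the explicit convolution model. Once those estimates are in hand, the equation $\mathcal{T}[\omega] = g$ is closed by a contraction mapping on a small ball of $C^{1,\alpha}(\partial\Omega_{-})$ under the smallness assumption \eqref{smallness}, yielding both existence and uniqueness of $B$; the pressure is then recovered by line-integrating $\nabla p = j(B_{2},-B_{1})$, which is single-valued because the two boundary circles of $\Omega$ are themselves streamlines of $B$ along which $\nabla p$ has vanishing tangential integral.
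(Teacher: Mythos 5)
Your strategy mirrors the paper's: transport the current along the streamlines of $B$, reconstruct $B$ from a div-curl problem, and reduce the tangential boundary condition on $\partial\Omega_{-}$ to a singular integral equation for the trace $\omega = j|_{\partial\Omega_{-}}$, controlling the non-convolution remainder by a frozen-coefficient comparison with the convolution model. Your Fourier symbol $m(n)=\tanh(|n|L/2)/|n|$ for the linearized trace operator agrees, after the hyperbolic half-angle identity, with the paper's $\frac{\cosh(|n|L)-1}{|n|\sinh(|n|L)}$, and the mapping $C^{1,\alpha}\to C^{2,\alpha}$ is the right one.

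However, there is a genuine gap in your handling of the pressure, and it is not a detail. First, the div-curl problem $\nabla\cdot B^{(n+1)}=0$, $\nabla\times B^{(n+1)}=j^{(n)}$, $B^{(n+1)}\cdot n = 1+f$ is not uniquely solvable on the annulus $\Omega=\SS^1\times[0,L]$: the constant field $(1,0)$ is divergence- and curl-free and tangent to both boundary circles, so solutions are only determined modulo $(c,0)$, $c\in\RR$. You must append a scalar condition --- in the paper, the horizontal flux $\int_0^L W_1(0,y)\,dy = J$ --- before the inner iteration is even well-defined. Second, and this is where the missing degree of freedom becomes essential, the value of $J$ cannot be prescribed freely if $p$ is to be single-valued on the non-simply-connected $\Omega$. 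Your justification --- that the boundary circles ``are themselves streamlines of $B$'' --- is false: $B\cdot n = 1+f\approx 1\neq 0$ on $\partial\Omega$, so $B$ is everywhere transverse, not tangent, to the boundary circles. The correct single-valuedness constraint is $\int_0^{2\pi}(j\times B)_1(x,0)\,dx=0$, i.e.\ $\int_0^{2\pi}j_0(x)(1+f^{-}(x))\,dx=0$ (cf.\ \eqref{condition:pressure:1:nonlineal}--\eqref{combination:1}), a nontrivial linear condition on $j_0$ that the paper enforces by choosing $J$ via \eqref{combination:3} and then folding that choice into the integral equation \eqref{final:eq:j_0} through the averaging operators $\langle\cdot\rangle$. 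Without this step your construction yields a $B$ for which $j\times B$ is curl-free but not exact, so no pressure exists and the pair $(B,p)$ fails to solve \eqref{MHS2D:eq}.
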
 
\begin{remark}
	Notice that the solutions $(B,p)$ are obtained as small perturbations around the particular vertical constant magnetic field $B_0=(0,1)$. The constant magnetic fields of the form $B_0=(0,a)$ for $a>0$ can be reduced by a re-scaling argument to the unitary magnetic field $B_0=(0,1)$. On the other hand, it is not a priori clear if it is possible to perturb around more general non-constant magnetic fields.
\end{remark}
\begin{remark}
	A question that could be interesting to explore is whether one can generalize Theorem \ref{theorem2D} to more general domains $\Omega= \{(x_1,x_2) : \gamma_1(x_1)< x_2< \gamma_2(x_1) \}$
	where $\gamma_j$ are smooth functions satisfying the periodicity condition $\gamma_{j}(x_1+2\pi)=\gamma_{j}(x_1)$ for $j=1,2$. In the proof of Theorem \ref{theorem2D}, several computations which can be made in a explicit manner in the case of the domain $\Omega=\SS^1\times [0,L]$, will become more involved for more general domains.
\end{remark}
\begin{remark}
In the three dimensional setting $\Omega=\SS^1\times \SS^1\times [0,L]$, we believe that the same ideas developed in this paper can be carried out, although the computations are more involved. In particular, we will need to derive H\"older estimates for non-convolution singular integral operators that in the three dimensional case are more delicate. 
\end{remark}
Notice that using the change of variables $B\leftrightarrow v$ and $p_{m}\leftrightarrow -p$ the following result can be obtained for the steady Euler equations
\begin{theorem}\label{theorem2D:Euler}
	Let $\Omega=\{ (x,y)\in \mathbb{S}^{1}\times [0,L]\}$, with $L>0$ and $\alpha\in (0,1)$. There exists $M=M(\alpha,L)>0$ sufficiently small such that for 
	$f\in C^{{2,\alpha}}(\partial \Omega)$ and $g\in C^{{2,\alpha}}(\partial \Omega_{-})$ satisfying 
	\begin{equation*}
		\norm{g}_{C^{{2,\alpha}}(\partial \Omega_{-})}+\norm{f}_{C^{{2,\alpha}}(\partial \Omega)}\leq M ,
	\end{equation*} 
	and 
	\begin{equation*}
		\int_{\partial\Omega_{-}}f \ dx=  \int_{\partial\Omega_{+}}f \ dx,
	\end{equation*}
	there exists a unique $(v,p)\in C^{{2,\alpha}}(\Omega)\times C^{{2,\alpha}}(\Omega)$ with $v=(v_1,v_2)$ to \eqref{Euler2D:eq}  with $$\norm{v-(0,1)}_{C^{2,\alpha}(\Omega)}\leq M$$ such that
	\begin{equation*}
		v\cdot n =1+f \ \mbox{on} \ \partial \Omega \mbox{ and }  \ v\cdot \tau=g  \ \mbox{on} \ \partial \Omega_{-}.
	\end{equation*}
\end{theorem}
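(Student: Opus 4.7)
The plan is to reduce Theorem \ref{theorem2D:Euler} directly to Theorem \ref{theorem2D} by exploiting the change of variables $v\leftrightarrow B$, $-p\leftrightarrow p_m$ already identified in the introduction. The key observation is that systems \eqref{MHS2Dmp:eq} and \eqref{Euler2D:eq} are formally the same nonlinear PDE, that the boundary data $(f,g)$ enter only through the normal and tangential components of the vector field, and that these components are preserved under the identification $v=B$. Consequently no new analysis is required, only a careful check that the bijection between the two sets of solutions respects the function spaces, the prescribed boundary data, and the smallness bounds.

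Concretely, given $(f,g)$ satisfying \eqref{smallness} and \eqref{compcond}, I would first apply Theorem \ref{theorem2D} to produce a unique pair $(B,q)\in C^{2,\alpha}(\Omega)\times C^{2,\alpha}(\Omega)$ solving \eqref{MHS2D:eq} with $B\cdot n=1+f$ on $\partial\Omega$, $B\cdot\tau=g$ on $\partial\Omega_-$, and $\norm{B-(0,1)}_{C^{2,\alpha}(\Omega)}\leq M$. Setting $v:=B$ and $p:=-\bigl(q+\tfrac{1}{2}\abs{B}^{2}\bigr)$, the vector identity $j\times B=B\cdot\nabla B-\tfrac{1}{2}\nabla\abs{B}^{2}$ recalled just after \eqref{MHS2D:eq} yields $v\cdot\nabla v=-\nabla p$, while $\nabla\cdot v=\nabla\cdot B=0$. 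The boundary conditions for $v$ are inherited verbatim from those for $B$, and the smallness bound on $v-(0,1)$ is immediate. Since $C^{2,\alpha}(\Omega)$ is a Banach algebra, $\abs{B}^{2}\in C^{2,\alpha}(\Omega)$ and therefore $p\in C^{2,\alpha}(\Omega)$.

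For uniqueness, given a second solution $(\widetilde v,\widetilde p)$ of \eqref{Euler2D:eq} with the same boundary data and satisfying $\norm{\widetilde v-(0,1)}_{C^{2,\alpha}(\Omega)}\leq M$, the reverse substitution $\widetilde B:=\widetilde v$, $\widetilde q:=-\widetilde p-\tfrac{1}{2}\abs{\widetilde v}^{2}$ produces a solution of \eqref{MHS2D:eq} inside the ball of radius $M$ with identical boundary data. Applying the uniqueness half of Theorem \ref{theorem2D} forces $\widetilde v=v$, and then the Euler momentum equation determines $\widetilde p=p$ up to the same additive constant normalization as in Theorem \ref{theorem2D}. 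The only thing that requires any verification at all is that the algebraic change of pressure variables preserves $C^{2,\alpha}$ regularity, which it does via the Banach algebra structure; there is no analytic obstacle beyond this, since all of the genuinely hard work (the current transport scheme and the H\"older estimates for non-convolution singular integrals) has already been absorbed into the proof of Theorem \ref{theorem2D}.
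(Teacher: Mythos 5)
Your proposal is correct and follows precisely the route the paper indicates: the paper presents Theorem \ref{theorem2D:Euler} as an immediate corollary of Theorem \ref{theorem2D} via the identification $v\leftrightarrow B$, $-p\leftrightarrow p_{m}=q+\tfrac{1}{2}\abs{B}^{2}$, and your write-up simply makes that dictionary explicit, including the Banach-algebra argument for the regularity of the pressure and the reverse substitution for uniqueness. Nothing further is needed.
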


\subsection{Strategy behind the proof and novelties}
\label{sec:13}
The strategy of the proof is based on two ingredients, namely the transport equation for the current and the div-curl problem that recovers the magnetic field in terms of the current. Suppose that we have a magnetic field with the form $(B_1,B_2)=(0,1)+b$ where $b$ is a small perturbation of the vertical base magnetic field. 

For magnetic fields for which the magnetic vector is contained always in a given plane, the current $j$ is a vector in the direction of the normal to the plane. However, in these two dimensional settings it is more convenient to assume that the current is a scalar quantity and therefore we will use the notation  $j=\nabla\times B=-\p_{y}B_{1}+\p_{x}B_2$. 

It is well-known that if $B$ solves \eqref{MHS2D:eq}, the current density $j$ solves the following transport equation 
\begin{equation}\label{transport:problem:just}
	B\cdot\nabla j =((0,1)+b)\cdot\nabla j=0, \ \mbox{in} \ \Omega.
\end{equation} 
On the other hand, assuming that we have a current $j$ we can recover the corresponding magnetic field $B$ solving the following system of equations
\begin{equation}\label{div:curl:just}
	\left\lbrace
	\begin{array}{lll}
		\nabla\times B=\nabla \times b= j, \ \mbox{in} \ \Omega \\
		\mbox{div } B= \mbox{div } b=0, \ \mbox{in} \ \Omega \
	\end{array} \right.
\end{equation}
The equations \eqref{transport:problem:just}-\eqref{div:curl:just} must be solved under suitable boundary value conditions. It turns out that given the function $j$ we can obtain a unique solution $B$ to \eqref{div:curl:just} if we prescribe the normal component of the magnetic field on the two connected components of the boundary of $\Omega$
\begin{align}\label{normal:component:basi}
	B\cdot n=f, \ \mbox{on} \ \partial \Omega 
\end{align}
as well as the horizontal flux for the magnetic field
\begin{align}\label{lateral:flow:basic}
	\displaystyle \int_{0}^{L} B_1(0,y) \ dy=J.
\end{align}
We will see later, that the value of $J$ has to be chosen in a very specific way to obtain a uni-valued pressure $p$ on $\Omega$.

On the other hand, if we assume that $b$ is sufficiently small (in a sense to be precise later), the current $j$ is uniquely determined in $\Omega$ if we prescribe it in any of the two connected components of $\partial\Omega$. For instance, if 
\begin{equation}\label{equation:omega:basic}
	j(x,0)=j_0(x), \ \mbox{on } \partial\Omega_{-}
\end{equation}
is given, we can obtain $j$ in $\Omega$ just by using the method of characteristics. Notice however that the boundary conditions for the problem \eqref{MHS2D:eq}-\eqref{bvc} do not allow to compute the value of $j_0$ in \eqref{equation:omega:basic}. 

On the other hand, we have an additional boundary condition that yields the tangential component of the magnetic field
\begin{equation}\label{tangential:component:basi}
	B\cdot \tau=g \mbox{ on } \ \partial \Omega_{-}. 
\end{equation}

The structure of the problem suggests to use a fixed point argument in order to construct the solution. More precisely, given a vector field $B$ defined in $\Omega$ as a well as a function $j_0$ on $\partial\Omega_{-}$ we can solve \eqref{transport:problem:just} with the boundary condition \eqref{equation:omega:basic} to construct a current field $j[B;j_0](\cdot)$ defined in  $\Omega$ . Using this current function we can solve \eqref{div:curl:just} with boundary conditions \eqref{normal:component:basi} and \eqref{lateral:flow:basic} to find a new vector field $\widetilde{B}[B;j_{0}](\cdot)$ in $\Omega$. Notice that the new vector field $\widetilde{B}$ does not satisfy in general the boundary condition \eqref{tangential:component:basi}. However, this equation can be reformulated as 
\begin{equation}
	\widetilde{B}[B;j_{0}]\cdot \tau= g \ \mbox{ on } \partial\Omega_{-},
\end{equation}
that turns out to be an integral equation for the function $j_0$ on $\partial\Omega_{-}$. We can prove that this integral equation can be solved by means a fixed point argument using regularity estimates for non-convolution singular integral operators in H\"older spaces. The solution of this equation yields an operator $B\to j_{0}[B]$. Notice that this operator depends also on the boundary value conditions $f,g$, but we will not write this dependence explicitly. We can now define an operator $B\to\Gamma[B]=\widetilde{B}[B,j_0[B]]$. A fixed point argument for the operator $\Gamma(\cdot)$ solves the problem \eqref{transport:problem:just}, \eqref{div:curl:just}, \eqref{normal:component:basi}, \eqref{lateral:flow:basic} and \eqref{tangential:component:basi}. Using now the fact that $\nabla\times (j\times B)=B\cdot\nabla j=0$ one can show, arguing as in \cite{Alo-Velaz-2021}, that there exists a pressure function $p$ such that $(B,p)$ satisfies \eqref{MHS2D:eq} and \eqref{bvc}.

It is worth to notice that there are several important differences regarding the problem treated here and previous works \cite{Alber-1992, Alo-Velaz-2021,Buffoni-Wahlen-2019,
	Molinet-1999,Seth-2020,Tang-Xin-2009}. For a more detailed description of the different boundary value problems mentioned previously, we refer the reader to \cite{Alo-Velaz-2021}. In the case treated by Alber \cite{Alber-1992} for the steady incompressible Euler equation, the vorticity $\omega_0$ (or current $j_0$ in our case) on $\partial\Omega_{-}$ can be readily obtained from the boundary values given in the problem, so roughly speaking $\omega_0$ (or $j_0$) on $\partial\Omega_{-}$ is already prescribed. On the other hand, this is not the case for the boundary value type problems solved in \cite{Alo-Velaz-2021} where the vorticity  $\omega_0$ (or current $j_0$ in our case) is not fully prescribed by the boundary values. Instead in those cases,  $\omega_0$  (or $j_0$) is part of the solution. Nevertheless, it can be obtained by means of the fixed point argument. More precisely, the value of $\omega_0$  (or $j_0$) can be computed using the Euler equation \eqref{Euler2D:eq} and is given in terms of $v$  (or $B$), its derivative and the boundary value conditions. Using the characteristics one can solve the transport equation \eqref{transport:problem:just} to construct $\omega[v;\omega_0](\cdot)$ (or $j[B;j_0](\cdot)$) and then equation \eqref{div:curl:just} to construct the new velocity field $\widetilde{v}[v;\omega_{0}](\cdot)$ or magnetic field $\widetilde{B}[B;j_{0}](\cdot)$. The crucial point is that the new velocity field or magnetic field already satisfies the required boundary value conditions, since $\omega_0$ or $j_0$ has been chosen in terms of the boundary conditions and $v$ or $B$ in a precise way.

To deal with the boundary value conditions imposed in \eqref{bvc}, we have to use a more sophisticated argument to compute the value of $j_0$ on $\partial\Omega_{-}$. As we have explained above, this reduces to study an integral equation containing singular integral operators. To show the existence and uniqueness of the integral equation, we derive some general results providing H{\"o}lder estimates for a class of non-convolution singular integral operators which are of independent interest (cf. Section \ref{S4}). The use of H\"older spaces instead of Sobolev spaces (as in \cite{Alber-1992} for instance) is an important detail. Indeed, the value $j_0$ at the boundary $\partial\Omega_{-}$ depends on the value of $B$ (and the boundary data) and therefore, if the estimates for $B$ are given in terms of Sobolev spaces, we obtain less regularity for $j_0$ due to the classical regularity trace theorem. Once $j_0$ is obtained we can compute $j$ along $\Omega$ using the transport equation \eqref{transport:problem:just} which does not improve the regularity due its hyperbolic character. Therefore, the new function $B$ computed via the div-curl problem \eqref{div:curl:just} has a loss of regularity which prevents to close a fixed point argument. This obstructions can be avoided by making use of H\"older spaces.

\subsection{Plan of the paper}\label{Sec:14}
In Section \ref{Sec:2} we illustrate the main formal idea used to construct the solution of \eqref{MHS2D:eq}, \eqref{bvc} by means of the study of a suitable linearized problem which can be explicitly solved by using Fourier series. In Section \ref{Sec:3} it is seen how to reformulate the full non-linear boundary value problem \eqref{MHS2D:eq}, \eqref{bvc} as a fix point problem for a suitable operator. The precise definitions of the operators needed to reformulate the problem are postponed until Section \ref{sec:6} since the proof that the operators are well-defined required several estimates showed in Sections \ref{S4} - Section \ref{S5}. In Section \ref{Sec:3} (more precisely in Subsection \ref{S:33}) we derive an integral equation for the current $j_0$ which is a consequence of the equations \eqref{MHS2D:eq} and the boundary values \eqref{bvc}. This integral equation plays a crucial role in the proof of the result proved in this paper. In Section \ref{S4} we derive some general lemmas showing $C^{1,\alpha}$ and $C^{\alpha}$  H\"older estimates for non-convolution singular integral operators. These operators are a suitable class of perturbations of convolution operators. In Section \ref{sec:4:2} we will provide the $C^\alpha$ and $C^{1,\alpha}$ H\"older estimates for the operators contained in the integral integral equation for $j_0$. In Section \ref{S5} we show the existence and uniqueness of solutions to the integral equation for $j_0$ by using the previous derived estimates. In Section \ref{sec:6}, as indicated above, we provide the precise definitions of the operators required to reformulate the original boundary value problem \eqref{MHS2D:eq},\eqref{bvc} as a fixed point problem for a suitable operator. Moreover, we also show that the operator has a fixed point on a suitable functional space. To conclude the article, in Section \ref{Sec:7} we prove Theorem \ref{theorem2D} as a direct application of the fixed point theorem showed in the previous section. 

\section{The linearized problem}\label{Sec:2}
In this section, we will describe the formal idea behind the method to construct solutions $(B,p)$ to \eqref{MHS2D:eq} satisfying the boundary value conditions \eqref{bvc}. As we have mentioned in the introduction, the proof is based on defining an adequate operator $\Gamma$ on a subspace of $C^{2,\alpha}(\Omega)$ which has a fixed point $b$ such that $B=(0,1)+b$ is a solution to \eqref{MHS2D:eq} and \eqref{bvc}.  We define the operator $\Gamma: B_{M}(C^{2,\alpha}(\Omega)) \to C^{2,\alpha}(\Omega)$ in two steps. First, given $b\in B_{M}(C^{2,\alpha}(\Omega))$ we define $j\in C^{1,\alpha}(\Omega)$ solving the following the transport type problem 
\begin{equation}\label{transport:problem}
	\left\lbrace
	\begin{array}{lll}
		((0,1)+b)\cdot\nabla j =0, \ \mbox{in} \ \Omega \\
		j = j_{0}, \ \mbox{on} \ \partial \Omega_{-}
	\end{array} \right.
\end{equation} 
where $j_{0}$ is a priori an unknown quantity. As a second step, we define $W\in C^{2,\alpha}(\Omega)$ as the unique solution to the div-curl problem
\begin{equation}\label{div:curl:problem}
	\left\lbrace
	\begin{array}{lll}
		\nabla\times W= j, \ \mbox{in} \ \Omega \\
		\mbox{div } W=0, \ \mbox{in} \ \Omega \\ 
		W\cdot n=f, \ \mbox{on} \ \partial \Omega \\
		W\cdot \tau=g, \ \mbox{on} \ \partial \Omega_{-} \\
		\int_{0}^{L} W_{1}(0,y) \ dy =J.
	\end{array} \right.
\end{equation} 
Thus we define $\Gamma(b,J)=W$. We remark that $J$
is a degree of freedom of the problem, since there exists non trivial solutions $(W,J)$ of the homogeneous problem \eqref{div:curl:problem} with $f=g=0$ given by 
\begin{equation}
W=(\frac{2Jy}{L^2},0) \mbox{ and } j=-\frac{2J}{L^2}.
\end{equation}
This degree of freedom will be used later to obtain a uni-valued function pressure $p$ in $\Omega$.

We are interested in obtaining solutions of the form $B=(0,1)+b$ where $b=(b_1,b_2)$ is a small perturbation, i.e. $\norm{b}_{C^{2,\alpha}(\Omega)} \leq M$ with $M\leq M_0$ and $M_0$ sufficiently small. Therefore, in the lowest order (dropping the \textit{small} nonlinear terms of order $M^2$), the transport equation
\eqref{transport:problem} reduces to
\begin{equation}\label{condition:j_0:x1} \partial_{y}j(x,y)=0, \mbox{ in } \Omega 
\end{equation}
and hence $j(x,y)= j_0(x)$. Then, with this approximation, the div-curl problem \eqref{div:curl:problem} becomes
\begin{equation}\label{DCP:problem:linear}
	\left\lbrace
	\begin{array}{lll}
		\nabla \times W =j_0(x), \ \mbox{in} \ \Omega \\ 
		\mbox{div } W =0, \ \mbox{in} \ \Omega \\ 
		W\cdot n= f, \ \mbox{on} \ \partial\Omega \\
		W\cdot \tau= g, \ \mbox{on} \ \partial\Omega_{-} \\
		\int_{0}^{L} W_{1}(0,y) \ dy =J.
	\end{array}\right.
\end{equation}
Notice that \eqref{DCP:problem:linear} is a non-homogeneous linear problem for $W$.
To solve \eqref{DCP:problem:linear}, we examine the following auxiliary problem (cf.  \cite[\S 3.1.1]{Alo-Velaz-2021}), namely
\begin{equation}\label{stream:formulation:linear}
	\left\lbrace
	\begin{array}{lll}
		\Delta \psi= j_{0}(x), \ \mbox{in} \ \Omega \\ 
		\psi(x,L)= -J+h^{+}(x),  \  x\in \mathbb{R} \\
		\psi(x,0)=h^{-}(x), \  x\in \mathbb{R}\\
		\partial_{y} \psi (x,0)= -g, \  x\in \mathbb{R}
	\end{array}\right.
\end{equation}
where 
\begin{equation}\label{def:h}
	h^{+}(x)=\displaystyle\int_{0}^{x} (f(\xi,L)-A) \ d\xi, \quad h^{-}(x)= \displaystyle\int_{0}^{x} (f(\xi,0)-A) \ d\xi 
\end{equation}
and
\begin{equation}\label{def:A}
	A=\int_{\partial\Omega_{+}} f \ dS=\int_{\partial\Omega_{-}} f  \ dS.
\end{equation}
For a sufficiently smooth stream function $\psi$, the function $W=(0,A)+\nabla^{\perp}\psi$, where $\nabla^{\perp}\psi=(-\frac{\partial \psi}{\partial y},\frac{\partial \psi}{\partial x})$, solves  \eqref{DCP:problem:linear}.  However, for any fixed $j_0(x)$ the problem \eqref{stream:formulation:linear} is over-determined. This fact will be used in order to obtain the a priori unknown function $j_0(x)$.

In order to obtain a complete linearized version of the problem \eqref{MHS2D:eq} satisfying boundary conditions \eqref{bvc}, it remains to add a condition that guarantees that the pressure is a uni-valued function on $\Omega$. Indeed, a linearized version of \eqref{MHS2D:eq}
with $B=(0,1)+b$ (with $b$ small) is given by
\begin{align}\label{condition:pressure:1}
-j(x,y)=\partial_{x} p, \quad 0=\partial_{y} p.
\end{align}
A necessary condition for the solvability of this problem is that $\partial_{y}j(x,y)=0$ and hence $j(x,y)=j_0(x)$, similar as the condition derived in \eqref{condition:j_0:x1}. Therefore, \eqref{condition:pressure:1} reduces to
\begin{align}\label{final:pressure:1}
 -j_0(x)=\partial_{x} p,     \quad 0=\partial_{y} p.
\end{align}
Then, we can obtain a solution to \eqref{final:pressure:1} given by
\begin{equation}
    p(x,y)=\int_{\bm{0}}^{\bm{x}} j_{0}(x) \ dx
\end{equation}
	where the integral on the right hand side is the line integration computed along any contour connecting $\bm{0}=(0,0)$ and $\bm{x}\in\Omega$. Notice that a necessary and sufficient condition to ensure that $p(x,y)$ is a uni-valued function in $\Omega$ is that
	\begin{equation}\label{nec:uni:valued:p:linear}
	    \int_{0}^{2\pi} j_{0}(x) \ dx=0.
	\end{equation}

To this end, we apply the Fourier transform in the $x$ variable to equation \eqref{stream:formulation:linear}. This transforms the PDE \eqref{stream:formulation:linear} into the following second-order non-homogeneous ODEs  with constant coefficients 
\begin{equation}\label{stream:function:lower:2}
	\left\lbrace
	\begin{array}{lll}
		-n^{2}\widehat{\psi}(n,y)+\partial_{yy}\widehat{\psi}(n,y)= \widehat{j_{0}}(n), \quad (n,y)\in \mathbb{Z}\times (0,L) \\ 
		\widehat{\psi}(n,L)= -J\delta_{0,n}+\widehat{h^{+}}(n), \quad n\in \mathbb{Z} \\
		\widehat{\psi}(n,0)= \widehat{h^{-}}(n), \quad n\in \mathbb{Z} \\
		\partial_{y} \widehat{\psi}(n,0)=-\widehat{g}(n), \quad n\in \mathbb{Z}.
	\end{array}\right.
\end{equation}
Above, $\widehat{h^{+}}(n), \widehat{h^{-}}(n)$ are the Fourier coefficients associated with the function $h^{+}, h^{-}$ respectively, and $\widehat{g}(n)$ the Fourier coefficients of the function $g$.

After a straightforward calculation using variation of parameters method we find that 
\begin{align}\label{psi:fourier:exp}
	\widehat{\psi}(n,y)&=-J\frac{y}{L}\delta_{0,n}+\widehat{h^{+}}(n)\frac{\sinh(\abs{n}y)}{\sinh(\abs{n}L)}+\widehat{h^{-}}(n) \frac{\sinh(\abs{n}(L-y))}{\sinh(\abs{n}L)} \nonumber \\ 
	&\quad \quad  +\widehat{j_{0}}(n)\frac{\sinh(\abs{n}(L-y))-\sinh(\abs{n}L)+\sinh(\abs{n}y)}{\abs{n}^{2}\sinh(\abs{n}L)},
\end{align}
for $n\in \NN$. In the case $n=0$, the functions multiplying $\widehat{h^{+}}(n), \widehat{h^{-}}(n)$ and $\widehat{j_0}(n)$ must be understood as the limit when $n$ tends to zero. More precisely, for $n=0$, we use the replacements 
\begin{equation*}
	\frac{\sinh(\abs{n}y)}{\sinh(\abs{n}L)}\longmapsto \frac{y}{L}, \quad \frac{\sinh(\abs{n}(L-y))}{\sinh(\abs{n}L)}\longmapsto \frac{L-y}{2},
\end{equation*}
and
\begin{equation*}
	\frac{\sinh(\abs{n}(L-y))-\sinh(\abs{n}L)+\sinh(\abs{n}y)}{\abs{n}^{2}\sinh(\abs{n}L)}\longmapsto\frac{L^2y}{2}\left( \frac{y}{L}-1\right).
\end{equation*}
This convention of understanding several combinations of trigonometric hyperbolic functions when $n=0$ as the limit when $n$ tends to zero will be used throughout the paper. Imposing the last boundary condition $\partial_{y} \widehat{\psi}(n,0)=-\widehat{g}(n)$ in \eqref{stream:function:lower:2}, we find that
\begin{align*}
	\partial_{y} \widehat{\psi}(n,0)&=-\frac{J}{L}\delta_{0,n}+\widehat{h^{+}}(n)\frac{\sinh(\abs{n}y)}{\sinh(\abs{n}L)}+\widehat{h^{-}}(n) \frac{\sinh(\abs{n}(L-y))}{\sinh(\abs{n}L)}+\widehat{j_{0}}(n)\frac{1-\cosh(\abs{n} L)}{\abs{n} \sinh(\abs{n}L)}=-\widehat{g}(n).
\end{align*}

Taking the inverse Fourier transform in the first variable we obtain 
\begin{align}\label{psi:eq}
	\partial_{y} \psi(x,0)=-\frac{J}{L}+\mathcal{Z}(x)
	+\frac{1}{2\pi} \int_{\mathbb{S}^{1}}\displaystyle\sum_{n=-\infty}^{n=\infty} \frac{1-\cosh(\abs{n} L)}{\abs{n} \sinh(\abs{n}L)}e^{in(x-\eta)}j_{0}(\eta) \ d \eta=-g(x)
\end{align}
with 
\begin{equation}\label{B}
	\mathcal{Z}(x)=\frac{1}{2\pi} \displaystyle\sum_{n=-\infty}^{n=\infty}\left(\widehat{h^{+}}(n) \frac{\abs{n}}{\sinh (\abs{n}L)}-
	\widehat{h^{-}}(n)\frac{\abs{n}}{\tanh (\abs{n}L)}\right) e^{inx}.
\end{equation}
Using the symmetry in $n$ and denoting the kernel 
\begin{equation}\label{operator:convo:L}
	\mathcal{G}^{L}(x)=\displaystyle\sum_{n=-\infty}^{n=\infty} \frac{\cosh(n L)-1}{n \sinh(nL)}e^{inx}
\end{equation}
and 
\begin{equation}\label{modified:g}
	\widetilde{g}(x)=-g(x)-\mathcal{Z}(x),
\end{equation}
we have that \eqref{psi:eq} can be expressed as the following convolution equation for $j_0$,
\begin{equation}\label{operator:omega:linear}
	\mathcal{T}^{L}j_{0}(x)=-\frac{1}{2\pi}\int_{\mathbb{S}^1}\mathcal{G}^{L}(x-\eta)j_0(\eta) d\eta= \widetilde{g}(x)+\frac{J}{ L}.
\end{equation}
Notice that the function $\widetilde{g}$ depends only on the boundary values $g$ and $f$. Using the fact that the Fourier coefficients in \eqref{operator:convo:L} are different than zero, we can use standard Fourier techniques to invert the operator yielding
\begin{equation}\label{inversion:operator:linearized}
	j_0(x)=(\mathcal{T}^{L})^{-1}\widetilde{g}(x)=-\frac{1}{2\pi}\int_{\mathbb{S}^1}\widetilde{\mathcal{G}^L}(x-\eta)\widetilde{g}(\eta) \ d\eta +\frac{2J}{L^2}
\end{equation}
where the kernel function $\widetilde{\mathcal{G}^L}(x)$ can be explicitly computed as
\[\widetilde{\mathcal{G}^L}(x)=\displaystyle\sum_{n=-\infty}^{n=\infty} \frac{n \sinh(nL)}{\cosh(n L)-1}e^{inx} .\]
The value of $J$ that until now is undetermined is obtained by means of the previous derived formula \eqref{nec:uni:valued:p:linear}. Using \eqref{inversion:operator:linearized} we find that
\begin{equation}
    J=\frac{L}{2\pi}\int_{0}^{2\pi}\widetilde{g}(\eta) d\eta= -L\widehat{g}(0)-(\widehat{h}^{+}(0)-\widehat{h}^{-}(0)).
\end{equation}

Once we have obtained the value of $j_0$ and $J$, we can use formula \eqref{psi:fourier:exp} which combined with Fourier inverse formula yields $\psi(x,y)$ 
and hence $W$ since $W=(0,A)+\nabla^{\perp}\psi$.  

In the following sections we will show how to solve the full non-linear problem \eqref{transport:problem}, \eqref{div:curl:problem} and \eqref{bvc} by using a perturbative argument with respect to the linear problem.

\section{The non-linear problem: an integral equation for the current}\label{Sec:3}
In this section we will derive an integral equation for the current $j$ on $\partial\Omega_{-}$, namely $j_0=j(x,0)$, $x\in \mathbb{S}^1$. As expected, this integral equation will be a perturbation of equation \eqref{inversion:operator:linearized} that we have obtained for the linearized problem. The solution of this equation will give $j_0(x)$ in terms of the perturbation magnetic field $b$ and the boundary values $f$ and $g$. In the following subsection, using a formal argument that assumes the convergence of some Fourier series, we show how to arrive to an integral equation for $j_0(x)$. We will not consider in detail the convergence of the Fourier series and the precise definitions of the operators that appeared in this section will be given later (cf. Subsection \ref{S:33}).
\subsection{The formal argument using Fourier series}\label{S:31}
Proceeding as in the Section \ref{Sec:2}, we define the operator $\Gamma: B_{M}(C^{2,\alpha}(\Omega)) \to C^{2,\alpha}(\Omega)$ using two building blocks: a transport type problem and a div-curl problem. Given $b\in B_{M}(C^{2,\alpha}(\Omega))$ we define $j\in C^{1,\alpha}(\Omega)$ as the solution to the transport type problem 
\begin{equation}\label{transport:problem:nonlinear}
\left\lbrace
\begin{array}{lll}
	((0,1)+b)\cdot\nabla j =0, \ \mbox{in} \ \Omega \\
	j= j_{0}, \ \mbox{on} \ \partial \Omega_{-}
\end{array} \right.
\end{equation} 
where $j_{0}$ is a priori an unknown quantity. As a second step, we define $W\in C^{2,\alpha}(\Omega)$ as the unique solution to the following div-curl problem
\begin{equation}\label{div:curl:problem:nonlinear}
\left\lbrace
\begin{array}{lll}
	\nabla\times W= j, \ \mbox{in} \ \Omega \\
	\mbox{div } W=0, \ \mbox{in} \ \Omega \\ 
	W\cdot n=f, \ \mbox{on} \ \partial \Omega \\
	W\cdot \tau=g, \ \mbox{on} \ \partial \Omega_{-} \\
	\int_{0}^{L} W_{1}(0,y) \ dy=J.
\end{array} \right.
\end{equation} 
Then, we define $\Gamma(b)=W$. 
By the theory of transport equations (cf. \cite[Proposition 3.8]{Alo-Velaz-2021}), it is well-known that system \eqref{transport:problem:nonlinear} can be solved by using the integral curves of the vector field $B=(0,1)+b$. More precisely, the explicit solution to \eqref{transport:problem:nonlinear} is given
\begin{equation} 
j(x,y)= j_{0}(X^{-1}(x,y))
\end{equation}
where $X^{-1}$ is the inverse of the mapping $\xi\to X(\xi,y)$ solving the ordinary differential equation 
\begin{equation}
\left\lbrace
\begin{array}{lll}
	\partial_{y} X (\xi,y)= \frac{b_{1}(X(\xi,y),y)}{1+b_{2}(X(\xi,y),y)} \\
	X(\xi,0)=\xi.
\end{array}\right.
\end{equation}
Arguing as in \eqref{stream:formulation:linear} in Section \ref{Sec:2} using the stream function $\psi$, the div-curl problem \eqref{div:curl:problem:nonlinear} becomes 
\begin{equation}\label{stream:function:general}
\left\lbrace
\begin{array}{lll}
	\Delta \psi= j_{0}(X^{-1}(x,y)),\ \mbox{in } \Omega \\ 
	\psi(x,L)= -J+h^{+}(x),  \ x\in \mathbb{R} \\
	\psi(x,0)=h^{-}(x),  \ x\in \mathbb{R}\\
	\partial_{y} \psi(x,0)= -g(x), \ x\in \mathbb{R}
\end{array}\right.
\end{equation}
where we recall that $h^{+}(x), h^{-}(x)$ and $A$ are given in \eqref{def:h} and \eqref{def:A} respectively. To solve \eqref{stream:function:general} we do not use variation of parameters but compute directly the fundamental solution $\Phi(x,y,y_0)$ solving the problem
\begin{equation}
\left\lbrace
\begin{array}{lll}
	\Delta \Phi (x,y,y_0)= \delta(x)\delta(y-y_{0}), \ \mbox{in } \Omega \\ 
	\Phi=0, \ \mbox{on } \partial \Omega. \\
\end{array}\right.
\end{equation}
Using Fourier transform and imposing the continuity jump conditions we infer that 
\begin{equation}
\Phi (x,y,y_{0})=
\left\lbrace
\begin{array}{lll}
	-\dfrac{1}{2\pi}\displaystyle\sum_{n=-\infty}^{n=\infty}\frac{\sinh(n(L-y_{0}))\sinh(ny)}{n\sinh(nL)}e^{inx}, \ \mbox{for }  y<y_{0}, \\
	-\dfrac{1}{2\pi}\displaystyle\sum_{n=-\infty}^{n=\infty}\frac{\sinh(n(L-y))\sinh(ny_{0})}{n\sinh(nL)}e^{inx}, \ \mbox{for }  y>y_{0}. 
\end{array}\right.
\end{equation}
Moreover, the normal derivative at $y=0$ is given by
\[\partial_{y} \Phi (x,0,y_{0})= -\frac{1}{2\pi} \displaystyle\sum_{n=-\infty}^{n=\infty} \frac{\sinh(n(L-y_{0}))}{\sinh(nL)} e^{inx}. \]
Computing an homogeneous solution and imposing the boundary value conditions 
\[\psi(x,L)=-J+h^{+}(x),\quad  \psi(x,0)=h^{-}(x) \mbox{ and } \partial_{y} \psi (x,0)= -g(x), \]
we conclude (similarly as in Section \ref{Sec:2}) that 
\begin{equation}
\partial_{y} \psi (x,0)= -\frac{J}{L}+
\mathcal{Z}(x)-\frac{1}{2\pi}\int_{0}^{L} dy_{0} \int_{\mathbb{S}^{1}}\displaystyle\sum_{n=-\infty}^{n=\infty} \frac{\sinh(n(L-y_{0}))}{\sinh(nL)} e^{in(x-\xi)}j_{0}(X^{-1}(\xi,y_{0})) \ d \xi \label{psi:invers:full}
\end{equation} 
where $\mathcal{Z}(x)$ is defined in \eqref{B}. Therefore we have that for $\widetilde{g}$ as in \eqref{modified:g} we can write \eqref{psi:invers:full} as the following integral equation for $j_0$

\begin{equation}\label{operator:omega:full}
\mathcal{T}^{NL}j_{0}(x)=-\frac{1}{2\pi}\int_{0}^{L} dy_{0} \int_{\mathbb{S}^{1}}\displaystyle\sum_{n=-\infty}^{n=\infty} \frac{\sinh(n(L-y_{0}))}{\sinh(nL)} e^{in(x-\xi)}j_{0}(X^{-1}(\xi,y_{0})) \ d \xi= \widetilde{g}(x)+\frac{J}{L}.
\end{equation}
Notice that the operator in \eqref{operator:omega:full} reduces to \eqref{operator:omega:linear} in the particular case where $X^{-1}(\xi,y_0)=\xi$, which corresponds to the linearized case considered in Section \ref{Sec:2}. From now on, in integral expressions like \eqref{operator:omega:full} which results in functions depending only on $x$ we replace the integration variable $y_0$ for $y$ for the sake of simplicity. We rewrite the operator equation \eqref{operator:omega:full} into a more convenient form. To that purpose, we define
\begin{equation}\label{expression:char}
\Theta(\xi,y)=X^{-1}(\xi,y)-\xi
\end{equation} 
and plugging \eqref{expression:char} in \eqref{operator:omega:full} we infer that the operator $\mathcal{T}^{NL}$ can be expressed as
\begin{align}
\mathcal{T}^{NL}j_{0}(x)=-\frac{1}{2\pi}\int_{0}^{L} dy \int_{\mathbb{S}^1} \displaystyle\sum_{n=-\infty}^{n=\infty} \frac{\sinh(n(L-y))}{\sinh(nL)} e^{in(x-\xi)}j_{0}(\xi+\Theta(\xi,y)) \ d \xi.
\end{align}
Using the following changes of variables
\begin{equation}\label{change:variables}
X^{-1}(\xi,y)=\xi+\Theta(\xi,y)=\eta, \quad  d\xi=\frac{d\eta}{(1+\partial_{\xi} \Theta(X(\eta,y),y))},
\end{equation}
we obtain
\begin{align}\label{mathfrakTo:first}
\mathcal{T}^{NL}j_{0}(x)&=-\frac{1}{2\pi}\int_{0}^{L} dy \int_{\mathbb{S}^1} \displaystyle\sum_{n=-\infty}^{n=\infty} \frac{\sinh(n(L-y))}{\sinh(nL)}e^{in(x-X(\eta,y))} j_{0}(\eta) \frac{1}{(1+\partial_{\xi} \Theta(X(\eta,y),y))} \ d\eta. 
\end{align}
Defining, $\Lambda(\eta,y)=X(\eta,y)-\eta$ we notice that the operator \eqref{mathfrakTo:first} can be written as 
\begin{align}\label{Fourier:series:operator}
\mathcal{T}^{NL}j_{0}(x)=-\frac{1}{2\pi}\int_{\mathbb{S}^1}\mathcal{G}^{NL}(x-\eta,\eta)j_{0}(\eta)  \ d\eta.
\end{align}
where
\begin{align}\label{Fourier:series:2}
\mathcal{G}^{NL}(x,\eta) = \ \displaystyle\sum_{n=-\infty}^{n=\infty}  a_{n}(\eta)e^{inx}
\end{align}
with
\begin{equation}\label{an:Fourier} 
a_{n}(\eta)=  \int_{0}^{L} \frac{\sinh(n(L-y))}{\sinh(nL)} \frac{e^{-in \Lambda (\eta,y)}}{(1+\partial_{\xi} \Theta(X(\eta,y),y))} dy.
\end{equation}
Thus the integral equation  \eqref{operator:omega:full} for $j_0$ becomes
\begin{equation}\label{full:integral:eq}
\mathcal{T}^{NL}j_{0}(x)=-\frac{1}{2\pi}\int_{\mathbb{S}^1}\mathcal{G}^{NL}(x-\eta,\eta) j_{0}(\eta)  \ d\eta=\widetilde{g}(x)+\frac{J}{L}
\end{equation}

\subsection{Decomposing the operator \texorpdfstring{$\mathcal{T}^{NL}$}{Lg}}\label{S:32}
In this subsection, we will decompose 
the operator $\mathcal{T}^{NL}$ defined in \eqref{Fourier:series:operator}-\eqref{an:Fourier} into several operators which are more tractable and easier to estimate. In particular, we will split the operator into one main term which is a convolution operator and several remainder terms which are perturbations of convolution operators.

To that purpose we first notice that the coefficients in \eqref{an:Fourier} can be written as
\begin{align*}
a_{n}(\eta) &= \int_{0}^{L} \frac{\sinh(n(L-y))}{\sinh(nL)} dy+ \int_{0}^{L} \frac{\sinh(n(L-y))}{\sinh(nL)} \left[ \frac{e^{-in \Lambda(\eta,y)}}{1+\partial_{\xi} \Theta(X(\eta,y),y))} -1 \right] dy \\
&=a_{n}^{0}+a_{n}^{1}(\eta).
\end{align*}

The first term can be easily integrated since it does not depend on $\eta$, giving 
\begin{equation}\label{a_0}
a_{n}^{0}= \frac{1}{n} \left[\frac{\cosh(nL)-1}{\sinh(nL)}\right]
\end{equation}
and the second term is split as 
\begin{align*}
a_{n}^{1}(\eta) &= \int_{0}^{L} \frac{\sinh(n(L-y))}{\sinh(nL)} \left[ \frac{e^{-in\Lambda(\eta,y)}-1}{1+\partial_{\xi} \Theta(X(\eta,y),y))} \right] dy - \int_{0}^{L} \frac{\sinh(n(L-y))}{\sinh(nL)} \left[ \frac{\partial_{\xi} \Theta(X(\eta,y),y))}{1+\partial_{\xi} \Theta(X(\eta,y),y))} \right] dy \\
&= a_{n}^{2}(\eta)+a_{n}^{3}(\eta).
\end{align*}
Moreover, we have that
\begin{align}
\frac{\sinh(n(L-y))}{\sinh(nL)}=
e^{-\abs{n}y}-M(n,y)
\end{align}
where 
\begin{equation} \label{function:M:first}
M(n,y)=\frac{e^{-2\abs{n}L}(e^{\abs{n}y}-e^{-\abs{n}y})}{(1-e^{-2\abs{n}L})}.
\end{equation}
By means of this computation, we find that
\begin{equation}\label{a_n:2}
a_{n}^{2}(\eta)=\int_{0}^{L} e^{-\abs{n}y}  \left[ \frac{e^{-in\Lambda(\eta,y)}-1}{1+\partial_{\xi} \Theta(X(\eta,y),y))} \right]  dy + R_{n}^{2}(\eta)
\end{equation}
where 
\begin{equation}\label{E_11}
R_{n}^{2}(\eta)= \int_{0}^{L} M(n,y) \left[ \frac{e^{-in\Lambda(\eta,y)}-1}{1+\partial_{\xi} \Theta(X(\eta,y),y))} \right] dy.
\end{equation}
Similarly, we have that
\begin{equation}\label{a12}
a_{n}^{3}(\eta)= \int_{0}^{L} e^{-\abs{n}y}  \left[ \frac{\partial_{\xi} \Theta(X(\eta,y),y))}{1+\partial_{\xi} \Theta(X(\eta,y),y))} \right]  dy +R_{n}^{3}(\eta)
\end{equation} 
where
\begin{equation}\label{E12}
R_{n}^{3}(\eta)= \int_{0}^{L} M(n,y) \left[ \frac{\partial_{\xi} \Theta(X(\eta,y),y))}{1+\partial_{\xi} \Theta(X(\eta,y),y))} \right] dy.
\end{equation} 
Therefore, collecting the expressions \eqref{a_0}-\eqref{E12}
\begin{align*}
a_n(\eta)= a_{n}^{0}+ a^{2}_{n}(\eta)+a^{3}_{n}(\eta)+R^{2}_{n}(\eta)+R^{3}_{n}(\eta)
\end{align*}
and using the definition of $\mathcal{G}^{NL}(x,\eta)$ given in \eqref{Fourier:series:2} we can rewrite $\mathcal{G}^{NL}(x,\eta)$ as
\begin{align*}
\mathcal{G}^{NL}(x,\eta)=\displaystyle\sum_{n=-\infty}^{n=\infty} \left(a_{n}^{0}+a_{n}^{1}(\eta)\right)e^{inx} &=\displaystyle\sum_{n=-\infty}^{n=\infty}\left(a^{0}_{n}+a^{2}_{n}(\eta)+a^{3}_{n}(\eta)+R^{2}_{n}(\eta)+R^{3}_{n}(\eta)\right)e^{inx}\\
&=\mathcal{G}^{NL}_{0}(x)+ \displaystyle\sum_{i=1}^{4} \mathcal{G}^{NL}_{i}(x,\eta)
\end{align*}
where the main term is given by
\begin{align}\label{main:G:term}
\mathcal{G}^{NL}_{0}(x)&=\displaystyle\sum_{n=-\infty}^{n=\infty}\frac{1}{n} \left[\frac{\cosh(nL)-1}{\sinh(nL)}\right]e^{inx}, 
\end{align}
and the remainder terms
\begin{align*}
\mathcal{G}^{NL}_{1}(x,\eta)&=\displaystyle\sum_{n=-\infty}^{n=\infty}e^{inx}\int_{0}^{L} e^{-\abs{n}y}  \left[ \frac{e^{-in\Lambda(\eta,y)}-1}{1+\partial_{\xi} \Theta(X(\eta,y),y))} \right]  dy,  \\
\mathcal{G}^{NL}_{2}(x,\eta)&=\displaystyle\sum_{n=-\infty}^{n=\infty}  e^{inx} \int_{0}^{L} e^{-\abs{n}y}  \left[ \frac{\partial_{\xi} \Theta(X(\eta,y),y))}{1+\partial_{\xi} \Theta(X(\eta,y),y))} \right]  dy,   \\
\mathcal{G}^{NL}_{3}(x,\eta)&=\displaystyle\sum_{n=-\infty}^{n=\infty} e^{inx}\int_{0}^{L} M(n,y) \left[ \frac{e^{-in\Lambda(\eta,y)}-1}{1+\partial_{\xi} \Theta(X(\eta,y),y))} \right] dy,  \\
\mathcal{G}^{NL}_{4}(x,\eta)&=\displaystyle\sum_{n=-\infty}^{n=\infty}  e^{inx} \int_{0}^{L} M(n,y)  \left[ \frac{\partial_{\xi} \Theta(X(\eta,y),y))}{1+\partial_{\xi} \Theta(X(\eta,y),y))} \right]  dy.
\end{align*}

Using this decomposition we write the operator $\mathcal{T}^{NL}$ in \eqref{Fourier:series:operator} as 
\begin{equation}\label{decomposition:operator:T}
\mathcal{T}^{NL}j_{0}(x)=\mathcal{T}^{NL}_{0}j_{0}(x)+ \displaystyle\sum_{i=1}^{4}\mathcal{T}^{NL}_{i}j_{0}(x)
\end{equation}
where
\begin{align}
\mathcal{T}^{NL}_{0}j_{0}(x)&=-\frac{1}{2\pi}\displaystyle\int_{\mathbb{S}^{1}}\mathcal{G}^{NL}_{0}(x-\eta)j_{0}(\eta) \ d\eta, \label{TNL:0}\\
\mathcal{T}^{NL}_{i}j_{0}(x)&=-\frac{1}{2\pi}\displaystyle\int_{\mathbb{S}^{1}}\mathcal{G}^{NL}_{i}(x-\eta,\eta)j_{0}(\eta)  d\eta, \quad \mbox{for } i=1,\ldots,4. \label{TNL:j}
\end{align}
\begin{remark}
Notice that the main term $\mathcal{G}^{NL}_{0}(x)$ does not depend on $\eta$ and coincides with the linearized kernel $\mathcal{G}^{L}(x)$ in \eqref{operator:convo:L}. Therefore, $\mathcal{T}^{NL}_{0}$ is a convolution operator that can be inverted using Fourier series. 
\end{remark}
We can formally rewrite the integral equation \eqref{full:integral:eq} for $j_0$ in the form a second order Fredholm integral equation. Indeed, using the fact that the operator $\mathcal{T}^{NL}_{0}$ is a convolution that can be inverted using Fourier series, we can write equation \eqref{full:integral:eq} as 
\begin{equation}\label{integral:eq:fixed}
j_{0}(x)+\displaystyle\sum_{i=1}^{4}\mathsf{T}_{i}j_{0}(x) =\mathsf{G}(x)+\frac{2J}{L^2}
\end{equation}
where
\begin{equation}\label{tildeT:tilde:g}
\mathsf{T}_{i}=\big[\mathcal{T}^{NL}_{0}\big]^{-1}\mathcal{T}^{NL}_{i}, \quad \mbox{for } i=1,\ldots,4 \quad \mbox{and } \mathsf{G}=\big[\mathcal{T}^{NL}_{0}\big]^{-1}\widetilde{g},
\end{equation}
with $\widetilde{g}$ defined in \eqref{modified:g}. We now argue as in the case of the linearized problem and explain how to choose $J$ in order to obtain a uni-valued pressure function $p$ on $\Omega$. To this end, we use equation \eqref{transport:problem:nonlinear} to construct the pressure $p$ by means of the following identity
	\begin{equation}\label{pressure:value:2}
		p(\bm{x})=\int_{\bm{0}}^{\bm{x}} \big[j\times B\big](\bm{y})
		\cdot d\bm{y}
	\end{equation}
	where the integral on the right hand side is the line integration computed along any curve connecting $\bm{0}=(0,0)$ and $\bm{x}\in\Omega$. The function $p$ given by \eqref{pressure:value:2} is uni-valued in $\Omega$ if and only if 
	\begin{equation}\label{condition:pressure:1:nonlineal}
	    \int_{0}^{2\pi}[j\times B]_{1}(x,0) \ dx = 0,
	\end{equation}
where $(j\times B)_{1}$ denotes the first component of the vector $j\times B$. Moreover, we notice that $(j\times B)_{1}(x,0)=-j_0(x)(1+b_{2}(x,0))=-j_{0}(x)(1+f^{-}(x))$ where $f^{-}= f|_{\partial\Omega_{-}}$. Then, \eqref{condition:pressure:1:nonlineal} is equivalent to 
\begin{equation}\label{combination:1}
  	    \int_{0}^{2\pi} j_0(x)(1+f^{-}(x))\ dx = 0.
\end{equation}
Using \eqref{integral:eq:fixed} we find that
\begin{equation}\label{combination:2}
  	    \frac{1}{2\pi}\int_{0}^{2\pi} j_0(x) dx =\frac{2J}{L^{2}} + \frac{1}{2\pi}\int_{0}^{2\pi}\mathsf{G}(x)  dx -\frac{1}{2\pi} \displaystyle\sum_{i=1}^{4}\int_{0}^{2\pi} \mathsf{T}_{i}j_{0}(x) \ dx.
\end{equation}
Combining \eqref{combination:1} and \eqref{combination:2} we obtain that
\begin{equation}\label{combination:3}
\frac{2J}{L^2}=-\frac{1}{2\pi}\int_{0}^{2\pi}\mathsf{G}(x)  dx+\frac{1}{2\pi} \displaystyle\sum_{i=1}^{4}\int_{0}^{2\pi} \mathsf{T}_{i}j_{0}(x) \ dx \ dx -\frac{1}{2\pi}\int_{0}^{2\pi} j_0(x)f^{-}(x) \ dx
\end{equation}
Plugging \eqref{combination:3} into \eqref{integral:eq:fixed} and denoting by $\langle h \rangle=\frac{1}{2\pi} \int_{0}^{2\pi} h(x) \ dx$ we have that 
\begin{equation}\label{final:eq:j_0}
j_0(x)=- \displaystyle\sum_{i=1}^{4}(\mathsf{T}_{i}j_{0}(x)-\langle \mathsf{T}_{i}j_{0} \rangle)+\mathsf{G}(x)-\langle \mathsf{G} \rangle-\langle j_0 f^{-}\rangle.
\end{equation}
The problem \eqref{final:eq:j_0} is a fixed point type of equation which will be shown to be equivalent to the solution $(B,p)$. Indeed, after solving equation \eqref{final:eq:j_0}, we can obtain the value of $j(x,y)$ in $\Omega$ using the transport type problem \eqref{transport:problem:nonlinear} and recover the new magnetic field $W$ using the div-curl system \eqref{div:curl:problem:nonlinear}. 
\subsection{A rigorous formulation of the problem}\label{S:33}
The previous computations in Subsections \ref{S:31} and \ref{S:32} are purely formal, since we did not consider in a rigorous manner the convergence of the Fourier series.  In this subsection, we will give a precise meaning of the integral equation \eqref{final:eq:j_0} for $j_0$.
To this end, we
first give a detailed definition of the operators $\mathsf{T}_{1},\ldots, \mathsf{T}_{4}$ in \eqref{tildeT:tilde:g}. We defined the operators $\mathsf{T}_{1}$ and $\mathsf{T}_{2}$ as
\begin{align}
\mathsf{T}_{1}j_{0}(x)&=- \frac{1}{2\pi}\displaystyle \lim_{\epsilon\to 0^{+}}\int_{\mathbb{S}^{1}} \mathfrak{G}_{1,\epsilon}(x-\eta,\eta)j_{0}(\eta) \ d\eta, \label{def:T1} \\
\mathsf{T}_{2}j_{0}(x)&=-\frac{1}{2\pi}\displaystyle \lim_{\epsilon\to 0^{+}}\int_{\mathbb{S}^{1}} \mathfrak{G}_{2,\epsilon}(x-\eta,\eta)j_{0}(\eta) \ d\eta, \label{def:T2}
\end{align}
where
\begin{align}\label{operadorG1G2:frak}
\mathfrak{G}_{1,\epsilon}(x,\eta)&=\displaystyle\sum_{n=-\infty}^{n=\infty}\frac{n\sinh(nL)}{(\cosh(nL)-1)}e^{inx}\int_{\epsilon}^{L} e^{-\abs{n}y}  \left[ \frac{e^{-in\Lambda(\eta,y)}-1}{1+\partial_{\xi} \Theta(X(\eta,y),y))} \right]  dy, \\
\mathfrak{G}_{2,\epsilon}(x,\eta)&=\displaystyle\sum_{n=-\infty}^{n=\infty}\frac{n\sinh(nL)}{(\cosh(nL)-1)}e^{inx}\int_{\epsilon}^{L} e^{-\abs{n}y} \left[ \frac{\partial_{\xi} \Theta(X(\eta,y),y))}{1+\partial_{\xi} \Theta(X(\eta,y),y))} \right]  dy.
\end{align}
On the other hand, the operators $\mathsf{T}_{3},\mathsf{T}_{4}$ are given by
\begin{align}
\mathsf{T}_{3}j_{0}(x)=-\frac{1}{2\pi}\int_{\mathbb{S}^{1}}  \mathfrak{G}_{3}(x-\eta,\eta)j_{0}(\eta) \ d\eta, \label{def:T3} \\
\mathsf{T}_{4}j_{0}(x)=-\frac{1}{2\pi}\int_{\mathbb{S}^{1}} \mathfrak{G}_{4}(x-\eta,\eta)j_{0}(\eta) \ d\eta,\label{def:T4}
\end{align}
where
\begin{align}
\mathfrak{G}_{3}(x,\eta)&=\displaystyle\sum_{n=-\infty}^{n=\infty}\frac{n\sinh(nL)}{(\cosh(nL)-1)}e^{inx}\int_{0}^{L} M(n,y)  \left[ \frac{e^{-in\Lambda(\eta,y)}-1}{1+\partial_{\xi} \Theta(X(\eta,y),y))} \right] dy,  \\
\mathfrak{G}_{4}(x,\eta)&=\displaystyle\sum_{n=-\infty}^{n=\infty} \frac{n\sinh(nL)}{(\cosh(nL)-1)} e^{inx} \int_{0}^{L}M(n,y) \left[ \frac{\partial_{\xi} \Theta(X(\eta,y),y))}{1+\partial_{\xi} \Theta(X(\eta,y),y))} \right]  dy, \end{align}
with $M(n,y)$ as in \eqref{function:M:first}. The operators $\mathsf{T}_{i}$ for $i=1,\ldots, 4$ will act on functions $j_0$ on some suitable H\"older spaces. The fact that the operators $\mathsf{T}_{i}$ for $i=1,\ldots, 4$ in this spaces are well-defined operators will be shown in Section \ref{S4}. For instance the reason why operators $\mathsf{T}_{3}, \mathsf{T}_{4}$ are well defined acting on H\"older functions $j_0$ readily follows from the fact that $\mathfrak{G}_{3}(x,\eta),\mathfrak{G}_{4}(x,\eta)$ are $C^\infty$ in $x$ due with the exponential decay of the function $M(n,y)$ as $\abs{n}\to \infty.$ To deal with operators $\mathsf{T}_{1},\mathsf{T}_{2}$ some refined estimates for perturbations of non-convolution singular integral operators will be required.

We now define in a precise manner the operator $\mathcal{T}^{NL}_{0}$ in \eqref{TNL:0}. On the one hand notice that
\begin{equation}\label{Fourier:expression:T0NL}
\frac{1}{\abs{n}}\frac{\cosh(nL)-1}{\sinh(nL)}=\frac{1}{\abs{n}}+Q_{n} \mbox{ for } n\neq 0, \mbox{ and }  \frac{1}{\abs{n}}\frac{\cosh(nL)-1}{\sinh(nL)}=Q_0
\end{equation}
where 
$$Q_{n}=\frac{1}{n}\left(\frac{\cosh(nL)-1}{\sinh(nL)}-\mbox{sgn}(n)\right), \mbox{ for } n\neq 0, \mbox{ and } Q_0=\frac{L}{2}.$$
We recall that the periodic Hilbert transform denoted by $\mathcal{H}$ is given in Fourier side as 
\begin{equation}
\widehat{\mathcal{H}f}(n)=-i\mbox{sgn}(n)\widehat{f}(n), n\in \mathbb{N}
\end{equation}
and define the linear operator $\partial_{x}^{-1}:L^2(\SS^1)\to H^1(\SS^1)$ by means of 
\begin{equation}
\partial_{x}^{-1}\psi(x)=
\begin{cases}
	\displaystyle\int_{0}^{x} \psi(\xi) \ d\xi-\frac{1}{2\pi}\displaystyle\int_{0}^{2\pi}\psi(\xi) \xi \ d\xi, \mbox{ if } \displaystyle\int_{0}^{2\pi} \psi(x) \ dx=0, \\
	0, \mbox{ if } \psi(x)= 1.
\end{cases}
\end{equation} 

Then it is natural to define $\mathcal{T}^{NL}_{0}$ in \eqref{TNL:0} as
\begin{equation}\label{rig:TNL0}
\mathcal{T}^{NL}_{0}\psi(x)=- \mathcal{H}\partial_{x}^{-1}\psi (x) +\int_{\SS^1} Q(x-\eta)\psi(\eta) \ d\eta,
\end{equation}
where $$Q(x)=\frac{1}{2\pi}\displaystyle\sum_{n=-\infty}^{n=\infty} Q_{n}e^{inx}.$$

On the other hand, notice that the derivative operator $\partial_{x}$ is the inverse of $\partial_{x}^{-1}$, i.e. $\partial_{x}\circ \partial_{x}^{-1}= \mathcal{I}$, where $\mathcal{I}$ denotes the identity operator. Hence, we find that the inverse operator $(\mathcal{T}^{NL}_{0})^{-1}$ is given by
\begin{equation}\label{rig:inverse:TNLO}
(\mathcal{T}^{NL}_{0})^{-1}\psi(x)=
\mathcal{H}\partial_{x}\psi(x)-\int_{\SS^1}\widetilde{Q}(x-\eta) \psi(\eta) \ d\eta,
\end{equation}
where $$\widetilde{Q}(x)=\frac{1}{2\pi}\displaystyle\sum_{n=-\infty}^{n=\infty}\widetilde{Q}_{n}e^{inx}$$ and 
$$\widetilde{Q}_{n}=\left(n\frac{\sinh(nL)}{\cosh(nL)-1}-\abs{n}\right), \mbox{ for } n\neq 0, \quad \widetilde{Q}_{0}=\frac{2}{L} \mbox{ for } n=0.$$
Thus, we have that $(\mathcal{T}^{NL}_{0})^{-1}\circ (\mathcal{T}^{NL}_{0})=(\mathcal{T}^{NL}_{0})\circ (\mathcal{T}^{NL}_{0})^{-1}= \mathcal{I}$. The easiest way to check this identity is to use the Fourier expression for $\mathcal{T}^{NL}_{0}$ on the left hand side in \eqref{Fourier:expression:T0NL}. With these definitions at hand we have that the function $\mathsf{G}(x)$ in \eqref{tildeT:tilde:g} is given by
\begin{equation}\label{func:G:rig}
\mathsf{G}(x)=(\mathcal{T}_{0}^{NL})^{-1}\widetilde{g}(x)=\mathcal{H}\partial_{x}\widetilde{g}(x)-\int_{\SS^1} \widetilde{Q}(x-\eta)\widetilde{g}(\eta) \ d\eta
\end{equation}
where 
$$\widetilde{g}(x)= -g(x)-\mathcal{Z}(x)$$
where $\mathcal{Z}$ is defined in \eqref{B}. 
Roughly, speaking the function $\widetilde{g}$ and hence $\mathsf{G}$ take into account the given boundary value conditions $f$ on $\partial\Omega$, $g$ on  $\partial\Omega_{-}$ satisfied by the magnetic fields.

To conclude, we will define the operators given in \eqref{TNL:j} for $i=1,\ldots,4$ as
\begin{equation}
\mathcal{T}_{i}^{NL}\psi(x)= \mathcal{T}_{0}^{NL}\mathsf{T}_{i}\psi(x)    \end{equation}
where $\mathcal{T}_{0}^{NL}$ is given by \eqref{rig:TNL0} and $\mathsf{T}_{i}$ are given as \eqref{def:T1}, \eqref{def:T2}, \eqref{def:T3} and \eqref{def:T4}.
\begin{remark}
The fact that the operators $\mathcal{T}^{NL}_{i}$ for $i=1,\ldots,4$ can be written as in  \eqref{TNL:j} acting on spaces of H\"older functions will be proved at the end of the paper, cf. Corollary \ref{Cor:71} in Subsection \ref{S:71}.
\end{remark}

\section{H\"older estimates for non-convolution singular integral operators }\label{S4}
In order to show the existence and uniqueness of solutions of equation \eqref{final:eq:j_0}, we will need to derive bounds for the operators $\mathsf{T}_{1},\ldots,\mathsf{T}_{4}$ in the functional spaces $C^{1,\alpha}$ and $C^{\alpha}$. To that purpose, we will first derive in this section some general lemmas showing $C^{1,\alpha}$ and $C^{\alpha}$  H\"older estimates for non-convolution singular integral operators. These operators differ from convolutions because they contain a function $\Lambda:\Omega \to \mathbb{S}^{1}$. Estimates for these operators in H\"older norms will be shown assuming a suitable smallness condition on $\Lambda$ which will be used repeatedly in the rest of the paper. More precisely, the assumptions reads 

\begin{Assumption}\label{assumption:Lambda}
Let us assume that the function $\Lambda:\Omega \to \mathbb{S}^{1}$ has $C^{2,\alpha}(\Omega)$ regularity and satisfies that $\Lambda(\eta,0)=0$. Moreover, there exists $\delta_0\in (0,\frac{1}{2})$ such that
\begin{equation*}
	\norm{\Lambda}_{C^{2,\alpha}(\Omega)}\leq \delta_0.
\end{equation*}
\end{Assumption}

Let us start with the following calculus lemma that will be used throughout this section.
\begin{lemma}\label{calculus:lemma}
Then there exists a numerical constant $c_0>0$ such that for any $\Lambda$ satisfying Assumption \ref{assumption:Lambda} the following inequality holds
\begin{equation}\label{est:calcu} 
	\abs{1-e^{i(x-\eta)-y-i\Lambda(\eta,y)}}\geq \displaystyle\min \{\frac{1}{2\sqrt{2}} \sqrt{(x-\eta)^2+y^{2}},c_{0}\}
\end{equation}
for $x\in [-\pi,\pi]$, $\eta\in[x-\pi,x+\pi]$ and $y\in[0,L]$.
\end{lemma}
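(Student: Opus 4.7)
Write $a := x-\eta \in [-\pi,\pi]$, $\theta := a - \Lambda(\eta,y)$ and $w := -y + i\theta$, so that what is to be estimated is $|1-e^w|$. The key input from Assumption \ref{assumption:Lambda} is that $\Lambda(\eta,0)=0$ combined with $\|\partial_y\Lambda\|_{L^\infty}\leq \delta_0$ gives, via the fundamental theorem of calculus, $|\Lambda(\eta,y)|\leq \delta_0\, y$. The plan is to split the analysis into a near-diagonal regime (where a Taylor expansion provides a linear lower bound in $\sqrt{(x-\eta)^2+y^2}$) and a far-from-diagonal regime (where a uniform positive constant $c_0$ can be extracted).

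The first ingredient is the purely geometric inequality $\sqrt{y^2+\theta^2} \geq \tfrac{1}{2}\sqrt{y^2+a^2}$, proved by a two-case argument: if $|a| \leq 2\delta_0 y$, then $a^2\leq 4\delta_0^2 y^2 < y^2$ (using $\delta_0 < 1/2$), so $y^2 \geq \tfrac{1}{2}(y^2+a^2)$; otherwise $|\theta|\geq |a| - \delta_0 y \geq |a|/2$, so $\theta^2\geq a^2/4$. This reduces the problem to comparing $|1-e^w|$ with $|w|=\sqrt{y^2+\theta^2}$. For the near-diagonal estimate I would then use the Taylor-type bound $|1-e^w|\geq |w| - \sum_{k\geq 2}|w|^k/k! \geq |w|\bigl(1 - \tfrac{|w|}{2}e^{|w|}\bigr)$ and pick an absolute $r_0 > 0$ so that the parenthesized factor exceeds $1/\sqrt 2$ whenever $|w|\leq r_0$. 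Since $|w|^2 \leq 2(y^2+a^2)$, this produces $|1-e^w|\geq \tfrac{1}{2\sqrt 2}\sqrt{(x-\eta)^2+y^2}$ whenever $\sqrt{(x-\eta)^2+y^2}$ is smaller than a threshold $\rho_0 > 0$ depending only on $r_0$ and $\delta_0$.

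For the complementary far-from-diagonal regime I would use the identity $|1-e^{-y+i\theta}|^2 = (1-e^{-y})^2 + 4 e^{-y}\sin^2(\theta/2)$ together with the coarse bound $|1-e^{-y+i\theta}|\geq 1-e^{-y}$. When $y$ exceeds some threshold $y_0 = y_0(L)$, the latter alone supplies a uniform positive lower bound. When $y\leq y_0$ but $\sqrt{(x-\eta)^2+y^2}>\rho_0$, necessarily $|a|$ is bounded below, which by the dichotomy forces $|\theta|\geq |a|/2$ to be bounded below as well; since $|\theta|\leq \pi+\delta_0 L$ and $\delta_0$ is small, $|\theta|$ stays in a compact subinterval on which $|\sin(\theta/2)|$ is uniformly positive. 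The main obstacle is precisely this last step: one has to ensure that $\theta$ never approaches a nonzero multiple of $2\pi$ where $\sin(\theta/2)$ degenerates. This is controlled because any such approach would force $y$ into the region where $(1-e^{-y})^2$ already dominates on its own, yielding the desired constant $c_0 = c_0(L,\delta_0)>0$ in the far regime.
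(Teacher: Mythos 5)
Your proposal is correct and follows the same two–regime strategy as the paper (a linear-in-$|w|$ lower bound from Taylor expansion near the diagonal, plus a uniform positive constant in the far regime), but each step is carried out slightly differently. For the near-diagonal estimate, the paper expands $|i(x-\eta)-y-i\Lambda|^2$ directly and uses Young's inequality on the cross-term $-2\Lambda(x-\eta)$ to arrive at $\frac{1}{\sqrt 2}\sqrt{(x-\eta)^2+y^2}$, then combines with $|e^z-1|\geq \tfrac12|z|$; you instead use a two-case dichotomy ($|a|\lessgtr 2\delta_0 y$) to get the weaker prefactor $\tfrac12$ but compensate by demanding the Taylor factor be $\tfrac1{\sqrt2}$, recovering the same $\tfrac{1}{2\sqrt2}$. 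Both are fine. Where you genuinely improve on the paper is the far regime: the paper appeals to the minimum of $|1-e^z|$ over $\{|z|\geq \tfrac12,\ |\mathrm{Im}(z)|\leq\pi\}$, but this constraint on $\mathrm{Im}(z)=x-\eta-\Lambda(\eta,y)$ is not actually guaranteed (it can reach $\pi+\delta_0 L>\pi$), so the paper's compactness argument as stated does not quite cover the case where $\theta$ wanders toward $\pm 2\pi$. Your treatment via the explicit identity $|1-e^{-y+i\theta}|^2=(1-e^{-y})^2+4e^{-y}\sin^2(\theta/2)$, together with the observation that $|\Lambda|\leq\delta_0 y$ prevents $\theta$ from approaching a nonzero multiple of $2\pi$ unless $y$ is large enough for $(1-e^{-y})^2$ to dominate on its own, closes this gap cleanly and gives a more transparent identification of where the constant $c_0=c_0(L,\delta_0)$ actually comes from.
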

\begin{proof}
Denoting by $z=i(x-\eta)-y-i\Lambda(\eta,y)$, we have that for $\abs{z}\leq \frac{1}{2}$
\begin{equation}\label{4.2}
	\abs{e^{z}-1}\geq \frac{1}{2}\abs{z}, \mbox{ for } z\in \mathbb{C}.
\end{equation}
Indeed, a straightforward calculation shows that $$\abs{e^{z}-z-1}=\abs{\int_{0}^{z} (e^{\xi}-1) \ d\xi}\leq \frac{\sqrt{e}}{2}\abs{z}^2, \mbox{ for } \abs{z}\leq \frac{1}{2}$$ and hence 
\begin{equation}\label{eq:1}
	\abs{e^{z}-1}\geq \abs{z}- \int_{0}^{z} (e^{\xi}-1) d\xi \geq (1- \frac{\sqrt{e}}{4})\abs{z}, \mbox{ for } \abs{z}\leq \frac{1}{2}.
\end{equation}
Therefore,  \eqref{4.2} follows.
Furthermore,
\begin{align}
	\abs{i(x-\eta)-y-i\Lambda(\eta,y)}=\sqrt{(x-\eta)^2-2\Lambda (x-\eta)+y^2+\Lambda^2}. \label{equality:abs}
\end{align}
Since by assumption $\Lambda(\eta,0)=0$ and $\norm{\Lambda}_{C^{1}(\Omega)}\leq \frac{1}{2}$ we find that
\begin{equation}
	\abs{\Lambda(\eta,y)}\leq  \frac{1}{2} y, \mbox{ for } 0\leq y \leq L.
\end{equation} 
Applying Young's inequality in \eqref{equality:abs} yields
\begin{align}
	\abs{i(x-\eta)-y-i\Lambda(\eta,y)}\geq \sqrt{(x-\eta)^2-\frac{1}{2}(x-\eta)^{2}-\frac{1}{2}y^{2}+y^{2}},
\end{align}
and hence
\begin{equation}\label{eq:2}
	\abs{i(x-\eta)-y-i\Lambda(\eta,y)}\geq \frac{1}{\sqrt{2}} \sqrt{(x-\eta)^2+y^{2}}
\end{equation}
$\mbox{ for }x\in [-\pi,\pi], \eta\in[x-\pi,x+\pi]\mbox{ and } y\in[0,L].$
On the other hand for $\abs{z}\geq \frac{1}{2}$ one can readily check that for $\norm{\Lambda}_{C^{1}(\Omega)}\leq \delta_0$ we have that
\begin{equation}\label{eq:3}
	\displaystyle\min_{\mathcal{A}} \abs{1-e^{z}}\equiv c_{0}, \mbox{ for } \mathcal{A}=\{z\in\mathbb{C}:\abs{z}\geq \frac{1}{2}, \abs{Im(z)}\leq \pi\}
\end{equation}
where the constant $c_0$ is independent of $\Lambda$. Combining \eqref{eq:1}, \eqref{eq:2} and \eqref{eq:3} we conclude that estimate \eqref{est:calcu} follows.
\end{proof}
\begin{remark}
Notice that in Assumption \ref{assumption:Lambda} we imposed that $\Lambda\in C^{2,\alpha}(\Omega)$, however the proof of Lemma \ref{calculus:lemma} can be shown only assuming $\Lambda\in C^{1}(\Omega)$. However, later in the application we will use this stronger regularity assumption and therefore we prefer to already state the calculus lemma for $\Lambda\in C^{2,\alpha}(\Omega)$.
\end{remark}
\subsection{\texorpdfstring{$C^{\alpha}$}{Lg} H\"older estimates}
In this subsection, we provide a $C^\alpha$ H\"older estimates for a type of non-convolution singular integral operators.
\begin{proposition}[$C^\alpha$ estimate]\label{Holder:singular:integral:alpha}
Let $H(\eta, y)\in C^{\alpha}(\Omega)$ and let Assumption \ref{assumption:Lambda} hold. Then for any $x\in \mathbb{S}^{1}$ the following limit exists
\begin{equation} \label{int:sin:1}
	\displaystyle\lim_{\epsilon\to 0^{+}} \Xi_{\epsilon}(x)=\Xi(x),
\end{equation} 
where
\begin{equation} \label{int:sin:1:eps}
	\Xi_{\epsilon}(x)= \int_{\epsilon}^{L}dy\int_{\mathbb{S}^{1}}d\eta \  \p_{x}\mathcal{A}(x,y,\eta) H( \eta, y), \ \epsilon>0
\end{equation}
with
\begin{equation}\label{function:A:1}
	\mathcal{A}(x,y,\eta)=\left(\frac{e^{i(x-\eta)-y}y}{(1-e^{i(x-\eta)-y})(1-e^{i(x-\eta)-y-i\Lambda(\eta,y)})}\right).
\end{equation}
Moreover, we have that
\begin{align}
	\norm{\Xi}_{C^{\alpha}(\mathbb{S}^{1})}&\leq C \norm{H}_{C^{\alpha}(\Omega)} \label{cota:singular:1} 
\end{align}
with $C>0$.
\end{proposition}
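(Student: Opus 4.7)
The kernel $\partial_{x}\mathcal{A}$ is a non-convolution perturbation of the derivative of a Poisson-type kernel. I will proceed in the classical Calder\'on-Zygmund style: first establish kernel size bounds; second, use a subtraction trick together with the non-convolution correction coming from $\Lambda$ to obtain pointwise existence of the limit and the $L^{\infty}$ bound; third, combine a near-field/far-field split with the mean value theorem to extract the H\"older seminorm. The only genuinely new ingredient beyond the convolution theory is that $\int_{\SS^{1}}\partial_{x}\mathcal{A}(x,y,\eta)\,d\eta$ does not vanish due to the $\Lambda$-dependence, but it turns out to be one order smaller in $y$ thanks to the boundary condition $\Lambda(\cdot,0)\equiv 0$.

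\textbf{Kernel estimates.} Combining Lemma \ref{calculus:lemma} (applied to $1-e^{i(x-\eta)-y-i\Lambda(\eta,y)}$ and, with $\Lambda\equiv 0$, to $1-e^{i(x-\eta)-y}$) with a direct differentiation of $\mathcal{A}$, I obtain
\begin{equation*}
|\partial_{x}\mathcal{A}|+|\partial_{\Lambda}\mathcal{A}|\lesssim \frac{y}{((x-\eta)^{2}+y^{2})^{3/2}},\qquad |\partial_{x}^{2}\mathcal{A}|\lesssim \frac{y}{((x-\eta)^{2}+y^{2})^{2}}.
\end{equation*}
These are the kernel sizes of the first and second derivatives of a Poisson-type kernel, conditionally summable in $\eta$ near the singularity.

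\textbf{Existence of the limit and sup bound.} For fixed $x\in\SS^{1}$ I decompose
\begin{equation*}
\Xi_{\epsilon}(x)=\int_{\epsilon}^{L}\!\int_{\SS^{1}}\partial_{x}\mathcal{A}(x,y,\eta)\,[H(\eta,y)-H(x,y)]\,d\eta\,dy+\int_{\epsilon}^{L}H(x,y)\,K(x,y)\,dy,
\end{equation*}
with $K(x,y):=\int_{\SS^{1}}\partial_{x}\mathcal{A}(x,y,\eta)\,d\eta$. The first integral converges absolutely, since after the scaling $u=yt$ one has $\int_{\SS^{1}}y\,|x-\eta|^{\alpha}((x-\eta)^{2}+y^{2})^{-3/2}\,d\eta\lesssim y^{\alpha-1}$, integrable in $y$. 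For $K$, the identity
\begin{equation*}
\partial_{x}\mathcal{A}=-\partial_{\eta}\mathcal{A}+\partial_{\Lambda}\mathcal{A}\cdot\partial_{\eta}\Lambda(\eta,y),
\end{equation*}
valid because $\mathcal{A}$ depends on $x$ only through $x-\eta$ and $x-\eta-\Lambda(\eta,y)$, kills the first term by $\eta$-periodicity; the second term satisfies $|\partial_{\eta}\Lambda(\eta,y)|\le\|\Lambda\|_{C^{2,\alpha}}y\le\delta_{0}y$ (thanks to $\Lambda(\cdot,0)\equiv 0$), so $|K(x,y)|\lesssim\delta_{0}y$, integrable on $(0,L)$ without any regularization. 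Dominated convergence then yields $\Xi(x)=\lim_{\epsilon\to 0^{+}}\Xi_{\epsilon}(x)$ and $\sup_{x}|\Xi(x)|\le C\|H\|_{C^{\alpha}}$.

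\textbf{H\"older seminorm and the main obstacle.} For $x_{1},x_{2}\in\SS^{1}$ and $d=|x_{1}-x_{2}|$, I split the $y$-integration at $y=2d$. In the near region $y\le 2d$, applying the same subtraction decomposition to $\Xi(x_{1})$ and $\Xi(x_{2})$ separately yields a contribution of order $\|H\|_{C^{\alpha}}\int_{0}^{2d}(y^{\alpha-1}+\delta_{0}y)\,dy\lesssim d^{\alpha}\|H\|_{C^{\alpha}}$. In the far region $y>2d$, the mean value theorem combined with the kernel bound for $\partial_{x}^{2}\mathcal{A}$ gives $|\partial_{x}\mathcal{A}(x_{1},y,\eta)-\partial_{x}\mathcal{A}(x_{2},y,\eta)|\lesssim d\,y/((x-\eta)^{2}+y^{2})^{2}$, and the subtraction $H(\eta,y)-H(\bar{x},y)$ with $\bar{x}=(x_{1}+x_{2})/2$ produces a contribution $\lesssim d\int_{2d}^{L}y^{\alpha-2}\,dy\lesssim d^{\alpha}\|H\|_{C^{\alpha}}$. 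The delicate term is the residual $\int_{2d}^{L}H(\bar{x},y)[K(x_{1},y)-K(x_{2},y)]\,dy$, since the naive bound gives only $|K(x_{1},y)-K(x_{2},y)|\lesssim d\delta_{0}/y$ (obtained by the same $\partial_{\eta}$-integration by parts applied to $\partial_{x}K=\int\partial_{x}^{2}\mathcal{A}\,d\eta$), which is not integrable near $y=0$. I will overcome this by interpolating with the pointwise bound $|K(x_{i},y)|\lesssim\delta_{0}y$: using $\min(a,b)\le a^{\alpha}b^{1-\alpha}$ gives $|K(x_{1},y)-K(x_{2},y)|\lesssim\delta_{0}d^{\alpha}y^{1-2\alpha}$, integrable on $(0,L)$ for every $\alpha\in(0,1)$. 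This interpolation step, which essentially exploits the vanishing of $\partial_{\eta}\Lambda$ at $y=0$, is the one place where the non-convolution character of the operator demands serious care.
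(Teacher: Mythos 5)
Your proof is correct and takes a genuinely different, and in several respects cleaner, route from the paper. The paper's proof subtracts $H(x,0)$ rather than $H(x,y)$ and, after the near/far split, reduces the problem to the estimation of the terms $K_{4,\epsilon}$, $K_{6,\epsilon}$ which require a delicate argument: expanding $\Lambda(\eta,y)=yA(\eta)+O(y^{2})$, rescaling $\eta=y\zeta$, and then evaluating $\int_{\RR}\frac{d\zeta}{(i\zeta+1)^{2}(i\zeta+1+iA(0))}=0$ by residues (see \eqref{estimate:L11} and \eqref{estimate:N111}). Your approach avoids the residue computation entirely: by subtracting $H(x,y)$ instead of $H(x,0)$, the ``bad'' term collapses into the single function $K(x,y)=\int_{\SS^{1}}\partial_{x}\mathcal{A}\,d\eta$, which after the identity $\partial_{x}\mathcal{A}=-\partial_{\eta}\mathcal{A}+\partial_{\Lambda}\mathcal{A}\,\partial_{\eta}\Lambda$ and the cancellation $\int_{\SS^{1}}\partial_{\eta}(\cdots)\,d\eta=0$ is one derivative softer than a priori expected, thanks to $\partial_{\eta}\Lambda(\cdot,0)\equiv 0$. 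The subsequent mean-value plus interpolation argument on $K(x_{1},y)-K(x_{2},y)$ is elementary and replaces the paper's most technical step. The other differences (near/far split only in $y$ rather than in $(\eta,y)$ jointly; an application of the triangle inequality to the two $\Xi(x_{i})$ separately in the near region) are standard and unproblematic.

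One quantitative claim should be corrected, although the correction does not affect the argument. You assert $|K(x,y)|\lesssim\delta_{0}y$ on the grounds that $|\partial_{\eta}\Lambda(\eta,y)|\le\delta_{0}y$. However the $\eta$-integral of the kernel is not $O(1)$: with $|\partial_{\Lambda}\mathcal{A}|\lesssim y\,((x-\eta)^{2}+y^{2})^{-3/2}$ one has $\int_{\SS^{1}}|\partial_{\Lambda}\mathcal{A}|\,d\eta\lesssim 1/y$, so the crude bound is $|K(x,y)|\lesssim\delta_{0}y\cdot y^{-1}=\delta_{0}$, not $\delta_{0}y$. (An $O(y)$ bound for $K$ would require exploiting a further cancellation in the $\eta$-integral, which you do not carry out.) With $|K|\lesssim\delta_{0}$ all your estimates still close: in the near region $\int_{0}^{2d}|K|\,dy\lesssim\delta_{0}d\lesssim d^{\alpha}$; in the far region $\min(d\delta_{0}/y,\,2\delta_{0})\le(d\delta_{0}/y)^{\alpha}(2\delta_{0})^{1-\alpha}\lesssim\delta_{0}d^{\alpha}y^{-\alpha}$, which is integrable on $(2d,L)$, and indeed even the untempered bound $\int_{2d}^{L}(d\delta_{0}/y)\,dy=d\delta_{0}\log(L/2d)\lesssim d^{\alpha}$ suffices. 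So your interpolation idea is sound; only its advertised input should be weakened from $\delta_{0}y$ to $\delta_{0}$.
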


\begin{proof}[Proof of Proposition \ref{Holder:singular:integral:alpha}]
In order to check that the left hand side in \eqref{int:sin:1} exists, we first notice that
\begin{align}
	\p_{x} \mathcal{A}(x,y,\eta)&= -\p_{\eta} \mathcal{A}(x,y,\eta)- \mathcal{R}(x,y,\eta) \label{can:expl}
\end{align}
where 
\begin{equation}\label{rest}
	\mathcal{R}(x,y,\eta)= iy\frac{e^{i(x-\eta)-y}\p_{\eta}\Lambda(\eta,y)e^{i(x-\eta)-y-i\Lambda(\eta,y)}}{(1-e^{i(x-\eta)-y})(1-e^{i(x-\eta)-y-i\Lambda(\eta,y)})^2}.
\end{equation}
Therefore, using the fact that
$\int_{\mathbb{S}^{1}} d\eta \ \p_{\eta}(\ldots)=0
$
we can rewrite \eqref{int:sin:1} as  $$\Xi_{\epsilon}(x)= \Xi_{1,\epsilon}(x)+\Xi_{2,\epsilon}(x),$$ where
\begin{align}
	\Xi_{1,\epsilon}(x)&= -\int_{\epsilon}^{L}dy\int_{\mathbb{S}^{1}}d\eta \  \p_{\eta} \mathcal{A}(x,y,\eta)\left(H( \eta, y)-H(x,0)\right),   \label{Xi:1:eps} \\
	\Xi_{2,\epsilon}(x)&= - \int_{\epsilon}^{L}dy\int_{\mathbb{S}^{1}}d\eta \ \mathcal{R}(x,y,\eta) H( \eta, y). \label{Xi:2:eps}
\end{align}
Expanding the derivative in \eqref{Xi:1:eps} and manipulating the corresponding expression, we have that
\begin{equation} 
	\Xi_{1,\epsilon}(x)=\displaystyle\sum_{j=1}^{4}I_{j,\epsilon}\label{splitting:1}
\end{equation}
where
\begin{equation}
	I_{j,\epsilon}= \int_{\epsilon}^{L}dy\int_{\mathbb{S}^{1}}d\eta \ i_{j,\epsilon}(x,\eta,y), \mbox{  for } j=1,\ldots,4
\end{equation}
where
\begin{align*}
	i_{1,\epsilon}(x,\eta,y) &=  \frac{iye^{i(x-\eta)-y}}{(1-e^{i(x-\eta)-y})(1-e^{i(x-\eta)-y-i\Lambda(\eta,y)})} \left(H( \eta, y)-H(x,0)\right), \\
	i_{2,\epsilon}(x,\eta,y) &= \frac{iy(e^{i(x-\eta)-y})^2}{(1-e^{i(x-\eta)-y})^2(1-e^{i(x-\eta)-y-i\Lambda(\eta,y)})} \left(H( \eta, y)-H(x,0)\right), \\
	i_{3,\epsilon}(x,\eta,y) &= \frac{iye^{i(x-\eta)-y}e^{i(x-\eta)-y-i\Lambda(\eta,y)}}{(1-e^{i(x-\eta)-y})(1-e^{i(x-\eta)-y-i\Lambda(\eta,y)})^2} \left(H( \eta, y)-H(x,0)\right), \\
	i_{4,\epsilon}(x,\eta,y) &=  \frac{iye^{i(x-\eta)-y}e^{i(x-\eta)-y-i\Lambda(\eta,y)}\p_{\eta}\Lambda(\eta,y)}{(1-e^{i(x-\eta)-y})(1-e^{i(x-\eta)-y-i\Lambda(\eta,y)})^2} \left(H( \eta, y)-H(x,0)\right). 
\end{align*}
Identifying $\SS^1$ with $I_{x}=[x-\pi,x+\pi]$ for $x\in [-\pi,\pi]$  and using the bound \eqref{est:calcu} as well as the H\"older regularity for $H$ we obtain
\begin{align}
	\abs{i_{1,\epsilon}} &\leq C \norm{H}_{C^{\alpha}(\Omega)}  \frac{y(\abs{x-\eta}^{\alpha}+y^{\alpha})}{(x-\eta)^{2}+y^{2}}, \\
	\abs{i_{2,\epsilon}}+\abs{i_{3,\epsilon}} &\leq C \norm{H}_{C^{\alpha}(\Omega)}  \frac{y(\abs{x-\eta}^{\alpha}+y^{\alpha})}{\left((x-\eta)^{2}+y^{2}\right)^{3/2}}
\end{align}
where $C>0$ is independent on $\epsilon$. 
Therefore, using the dominated convergence it follows $\displaystyle\lim_{\epsilon\to 0^{+}}I_{j,\epsilon}$ exists, for $j=1,\ldots,3.$
On the other hand, we can combine $I_{4,\epsilon}$ and $\Xi_{2,\epsilon}$ as 
\begin{equation}\label{combiningI4+Xi2}
	I_{4,\epsilon}+\Xi_{2,\epsilon}=\int_{\epsilon}^{L}dy\int_{\mathbb{S}^{1}}d\eta \ i_{4,\epsilon}(x,\eta,y), 
\end{equation}
where 
\begin{equation}
	i_{4,\epsilon}(x,\eta,y)=-H(x,0)\frac{iye^{i(x-\eta)-y}e^{i(x-\eta)-y-i\Lambda(\eta,y)}\p_{\eta}\Lambda(\eta,y)}{(1-e^{i(x-\eta)-y})(1-e^{i(x-\eta)-y-i\Lambda(\eta,y)})^2}.   
\end{equation}
Hence using again \eqref{est:calcu} and Assumption \ref{assumption:Lambda}, we have that
\begin{align}\label{boundI4+Xi12}
	\abs{i_{4,\epsilon}}\leq C \norm{H}_{L^\infty(\Omega)}\frac{y^2}{((x-\eta)^{2}+y^{2})^{3/2}}.
\end{align}
Similarly using dominated convergence it follows that the limit
$\lim_{\epsilon\to 0^{+}}\left(I_{4,\epsilon}+\Xi_{2,\epsilon}\right)$ exists.
Therefore, the limit on the left hand side of \eqref{int:sin:1} exists and the function $\Xi(x)$ is well-defined. Moreover, we have the pointwise bounds
\begin{align}
	\abs{\Xi_{\epsilon}(x)}&\leq C\norm{H}_{C^{\alpha}(\Omega)}, \quad \mbox{for } x\in \mathbb{S}^{1}, \epsilon>0,\label{Linfty:Xi:epsilon:bound} \\
	\abs{\Xi(x)}&\leq C\norm{H}_{C^{\alpha}(\Omega)}, \quad \mbox{for } x\in \mathbb{S}^{1}. \label{Linfty:Xi:bound}
\end{align}
We now proceed with the $\alpha$-H\"older semi-norm. More precisely, we will show that
\begin{equation*}
	\abs{\Xi(x_1)-\Xi(x_2)}\leq C \abs{x_1-x_2}^{\alpha} \norm{H}_{C^{\alpha}(\Omega)}, \quad \mbox{for } x_{1}, x_{2}\in \mathbb{S}^{1}.
\end{equation*}
Due to the translation invariance of the estimate it suffices to check, without loss of generality, that the bound holds for $x_2=0$ and $x_{1}=x$, namely
\begin{equation}
	\abs{\Xi(x)-\Xi(0)}\leq C\abs{x}^{\alpha}  \norm{H}_{C^{\alpha}(\Omega)},  \quad \mbox{for } x \in \mathbb{S}^{1}.
\end{equation}

To that purpose, by means of equation \eqref{Xi:1:eps}-\eqref{Xi:2:eps}, we compute the difference 
\begin{equation}\label{Xi:epsilon:difference}
	\Xi_{\epsilon}(x)-\Xi_{\epsilon}(0)=\bigg[\Xi_{1,\epsilon}(x)-\Xi_{1,\epsilon}(0) \bigg]+\bigg[\Xi_{2,\epsilon}(x)-\Xi_{2,\epsilon}(0) \bigg]=J_{1,\epsilon}+J_{2,\epsilon}
\end{equation}
where
\begin{align*}
	J_{1,\epsilon}=&-\int_{\epsilon}^{L}dy\int_{\mathbb{S}^{1}}d\eta \bigg[ \p_{\eta} \left(\frac{e^{i(x-\eta)-y}y}{(1-e^{i(x-\eta)-y})(1-e^{i(x-\eta)-y-i\Lambda(\eta,y)})}\right) \left(H( \eta, y)-H(x,0)\right) \nonumber \\
	&\quad \hspace{2cm} -\p_{\eta} \left(\frac{e^{-i\eta-y}y}{(1-e^{-i\eta-y})(1-e^{-i\eta-y-i\Lambda(\eta,y)})}\right) \left(H( \eta, y)-H(0,0)\right) \bigg]\\
	J_{2,\epsilon}=&- \int_{\epsilon}^{L}dy\int_{\mathbb{S}^{1}}d\eta  \bigg[\frac{e^{i(x-\eta)-y}y\p_{\eta}\Lambda(\eta,y)e^{i(x-\eta)-y-i\Lambda(\eta,y)}}{(1-e^{i(x-\eta)-y})(1-e^{i(x-\eta)-y-i\Lambda(\eta,y)})^2} H( \eta, y)\\
	&\quad \hspace{3cm}    -\frac{e^{-i\eta-y}y\p_{\eta}\Lambda(\eta,y)e^{-i\eta-y-i\Lambda(\eta,y)}}{(1-e^{-i\eta-y})(1-e^{-i\eta-y-i\Lambda(\eta,y)})^2} H( \eta, y) \bigg].
\end{align*}
Notice that the functions $J_{1,\epsilon}$ and $J_{2,\epsilon}$ depend on $x$, but do not write it explicitly for the sake of simplicity. Moreover, recall that using the arguments above we have that the limits $\lim_{\epsilon\to 0^{+}}J_{1,\epsilon}$ and $\lim_{\epsilon\to 0^{+}}J_{2,\epsilon}$ exist.
Expanding the derivative we can split the integral in the following manner
$$J_{1,\epsilon}+J_{2,\epsilon}=J_{11,\epsilon}+J_{12,\epsilon}+J_{13,\epsilon}+J_{14,\epsilon}$$
where
\begin{eqnarray*}
	J_{11,\epsilon}&=& \int_{\epsilon}^{L}dy\int_{\mathbb{S}^{1}}d\eta \bigg[ \frac{iye^{i(x-\eta)-y}}{(1-e^{i(x-\eta)-y})(1-e^{i(x-\eta)-y-i\Lambda(\eta,y)})} \left(H( \eta, y)-H(x,0)\right)\\
	&&\quad \hspace{3cm}-\frac{iye^{-i\eta-y}}{(1-e^{-i\eta-y})(1-e^{-i\eta-y-i\Lambda(\eta,y)})} \left(H( \eta, y)-H(0,0)\right) \bigg] \\
	J_{12,\epsilon}&=& \int_{\epsilon}^{L}dy\int_{\mathbb{S}^{1}}d\eta \bigg[\frac{iy(e^{i(x-\eta)-y})^2}{(1-e^{i(x-\eta)-y})^2(1-e^{i(x-\eta)-y-i\Lambda(\eta,y)})} \left(H( \eta, y)-H(x,0)\right)\\
	&&\quad \hspace{3cm}-\frac{iy(e^{-i\eta-y})^2}{(1-e^{-i\eta-y})^2(1-e^{-i\eta-y-i\Lambda(\eta,y)})} \left(H( \eta, y)-H(0,0)\right) \bigg] \\
	J_{13,\epsilon}&=& \int_{\epsilon}^{L}dy\int_{\mathbb{S}^{1}}d\eta \bigg[\frac{iye^{i(x-\eta)-y}e^{i(x-\eta)-y-i\Lambda(\eta,y)}}{(1-e^{i(x-\eta)-y})(1-e^{i(x-\eta)-y-i\Lambda(\eta,y)})^2} \left(H( \eta, y)-H(x,0)\right)\\
	&&\quad \hspace{3cm}-\frac{iye^{-i\eta-y}e^{-i\eta)-y-i\Lambda(\eta,y)}}{(1-e^{-i\eta-y})(1-e^{-i\eta-y-i\Lambda(\eta,y)})^2} \left(H( \eta, y)-H(0,0)\right) \bigg] \\
	J_{14,\epsilon}&=&-\int_{\epsilon}^{L}dy\int_{\mathbb{S}^{1}}d\eta \bigg[\frac{iye^{i(x-\eta)-y}e^{i(x-\eta)-y-i\Lambda(\eta,y)}\p_{\eta}\Lambda(\eta,y)}{(1-e^{i(x-\eta)-y})(1-e^{i(x-\eta)-y-i\Lambda(\eta,y)})^2}H(x,0)\\
	&\quad& \hspace{3cm}-\frac{iye^{-i\eta-y}e^{-i\eta-y-i\Lambda(\eta,y)}\p_{\eta}\Lambda(\eta,y)}{(1-e^{-i\eta-y})(1-e^{-i\eta-y-i\Lambda(\eta,y)})^2} H(0,0)\bigg].
\end{eqnarray*}
Notice that in $J_{14,\epsilon}$ we have combined one of the terms resulting in $J_{1,\epsilon}$ with $J_{2,\epsilon}$ in the same way that we combined the term $I_{4,\epsilon}+\Xi_{2,\epsilon}$ in \eqref{combiningI4+Xi2}.

We divide the region of integration $\{ (\eta,y))\in \mathbb{S}^1 \times [\epsilon,L]\}$ into sets of the form 
\begin{align*}
	R^{\epsilon}_{\Omega,\leq}&=\{(\eta,y)\in \Omega:\displaystyle\max\{\abs{y},\abs{\eta}\}\leq 2\abs{x} \mbox{ and } \epsilon\leq y\leq L\}, \\
	R^{\epsilon}_{\Omega,>}&=\{(\eta,y)\in \Omega:\displaystyle\max\{\abs{y},\abs{\eta}\}> 2\abs{x} \mbox{ and } \epsilon\leq y\leq L\},
\end{align*} 
for $\epsilon\geq 0$ and estimate each integral in the different sets. For the sake of simplicity we will write $R^{0}_{\Omega,\leq}=R_{\Omega,\leq}$ and $R^{0}_{\Omega,>}=R_{\Omega,>}$.
Therefore, we have
\begin{equation*}
	J_{1k,\epsilon}=\int_{R^{\epsilon}_{\Omega,\leq}} dy d\eta \big[\ldots\big]+ \int_{R^{\epsilon}_{\Omega,>}}dy d\eta\big[\ldots\big]= J_{1k1,\epsilon}+J_{1k2,\epsilon}
\end{equation*}
for $k=1,\ldots,4$. Using Lemma \ref{calculus:lemma} to estimate the denominators in the integrals we obtain
\begin{align}
	\abs{J_{111,\epsilon}} &\leq C \norm{H}_{C^{\alpha}(\Omega)}  \int_{ R_{\Omega,\leq}} dy \ d\eta \bigg[ \frac{y(\abs{x-\eta}^{\alpha}+y^{\alpha})}{(x-\eta)^{2}+y^{2}}+\frac{y(\abs{\eta}^{\alpha}+y^{\alpha})}{\eta^{2}+y^{2}}\bigg] \leq C \norm{H}_{C^{\alpha}(\Omega)}\abs{x}^{\alpha},\label{bound:J111}\\
	\abs{J_{121,\epsilon}}+\abs{J_{131,\epsilon}} &\leq C \norm{H}_{C^{\alpha}(\Omega)} \int_{ R_{\Omega,\leq}}dy \ d\eta \bigg[\frac{y(\abs{x-\eta}^{\alpha}+y^{\alpha})}{\left((x-\eta)^{2}+y^{2}\right)^{3/2}}+\frac{y(\abs{\eta}^{\alpha}+y^{\alpha})}{\left(\eta^{2}+y^{2}\right)^{3/2}}\bigg] \leq  C\norm{H}_{C^{\alpha}(\Omega)} \abs{x}^{\alpha},  \label{bound:J121-J131} \\
	\abs{J_{141,\epsilon}} &\leq C \norm{\Lambda}_{C^{2}(\Omega)}\norm{H}_{L^{\infty}(\Omega)} \int_{ R_{\Omega,\leq}} dy \ d\eta \bigg[\frac{y^2}{\left((x-\eta)^{2}+y^{2}\right)^{3/2}}+\frac{y^2}{\left(\eta^{2}+y^{2}\right)^{3/2}}\bigg] \nonumber \\
	&\leq C \norm{H}_{L^{\infty}(\Omega)} \abs{x}\leq C \norm{H}_{L^{\infty}(\Omega)} \abs{x}^{\alpha} .\label{bound:J141}
\end{align}
In the region $R^{\epsilon}_{\Omega,>}=\{\displaystyle\max\{\abs{y},\abs{\eta}\}> 2\abs{x}\mbox{ and } \epsilon\leq y\leq L\}$, we rewrite the term $J_{1k2,\epsilon}$ for $k=1,\ldots,4$ in the following way
\begin{align*}
	J_{112,\epsilon}&= \int_{R^{\epsilon}_{\Omega,>}} dy \ d\eta \bigg[ \frac{iye^{i(x-\eta)-y}}{(1-e^{i(x-\eta)-y})(1-e^{i(x-\eta)-y-i\Lambda(\eta,y)})}-\frac{iye^{-i\eta-y}}{(1-e^{-i\eta-y})(1-e^{i(-\eta)-y-i\Lambda(\eta,y)})} \bigg]  \\
	&\quad \hspace{3cm} \times \left(H( \eta, y)-H(x,0)\right) \\
	&- \int_{R^{\epsilon}_{\Omega,>}} dy \ d\eta \frac{iye^{-i\eta-y}}{(1-e^{-i\eta-y})(1-e^{-i\eta-y-i\Lambda(\eta,y)})} \left(H(x, 0)-H(0,0)\right)=K_{1,\epsilon}+K_{2,\epsilon}, \\
	J_{122,\epsilon}&= \int_{R^{\epsilon}_{\Omega,>}} dy \ d\eta \bigg[\frac{iy(e^{i(x-\eta)-y})^2}{(1-e^{i(x-\eta)-y})^2(1-e^{i(x-\eta)-y-i\Lambda(\eta,y)})}-\frac{iy(e^{-i\eta-y})^2}{(1-e^{-i\eta-y})^2(1-e^{-i\eta-y-i\Lambda(\eta,y)})} \bigg] \\
	&\quad \hspace{3cm} \times\left(H( \eta, y)-H(x,0)\right)\\
	&-\int_{R^{\epsilon}_{\Omega,>}} dy \ d\eta\frac{iy(e^{-i\eta-y})^2}{(1-e^{-i\eta-y})^2(1-e^{-i\eta-y-i\Lambda(\eta,y)})} \left(H( x, 0)-H(0,0)\right) =K_{3,\epsilon}+K_{4,\epsilon}, \\
	J_{132,\epsilon}&= \int_{R^{\epsilon}_{\Omega,>}} dy \ d\eta \bigg[\frac{iye^{i(x-\eta)-y}e^{i(x-\eta)-y-i\Lambda(\eta,y)}}{(1-e^{i(x-\eta)-y})(1-e^{i(x-\eta)-y-i\Lambda(\eta,y)})^2}-\frac{iye^{-i\eta-y}e^{-i\eta-y-i\Lambda(\eta,y)}}{(1-e^{-i\eta-y})(1-e^{-i\eta-y-i\Lambda(\eta,y)})^2} \bigg] \\
	&\quad \hspace{3cm} \times \left(H( \eta, y)-H(x,0)\right) \\
	&-\int_{R^{\epsilon}_{\Omega,>}} dy \ d\eta\frac{iye^{-i\eta-y}e^{i(-\eta)-y-i\Lambda(\eta,y)}}{(1-e^{-i\eta-y})(1-e^{-i\eta-y-i\Lambda(\eta,y)})^2} \left(H( x, 0)-H(0,0)\right)=K_{5,\epsilon}+K_{6,\epsilon},\\
	J_{142,\epsilon}&=-\int_{R^{\epsilon}_{\Omega,>}} dy \ d\eta \bigg[\frac{iye^{i(x-\eta)-y}e^{i(x-\eta)-y-i\Lambda(\eta,y)}\p_{\eta}\Lambda(\eta,y)}{(1-e^{i(x-\eta)-y})(1-e^{i(x-\eta)-y-i\Lambda(\eta,y)})^2}-\frac{iye^{-i\eta-y}e^{-i\eta-y-i\Lambda(\eta,y)}\p_{\eta}\Lambda(\eta,y)}{(1-e^{-i\eta-y})(1-e^{-i\eta-y-i\Lambda(\eta,y)})^2} \bigg]  \\
	&\quad \hspace{4cm}  \times H(x,0) \\
	&-\int_{R^{\epsilon}_{\Omega,>}} dy \ d\eta\frac{iye^{-i\eta-y}e^{-i\eta-y-i\Lambda(\eta,y)}\p_{\eta}\Lambda(\eta,y)}{(1-e^{-i\eta-y})(1-e^{-i\eta-y-i\Lambda(\eta,y)})^2} \left(H( x, 0)-H(0,0)\right)=K_{7,\epsilon}+K_{8,\epsilon}. 
\end{align*}
The integrands of the terms $K_{1,\epsilon},K_{3,\epsilon},K_{5,\epsilon}$ and $K_{7,\epsilon}$ can be bounded in the region $R^{\epsilon}_{\Omega,>}$ using the mean value theorem as well as Lemma \ref{calculus:lemma} and the H\"older regularity of $H$. Thus
\begin{align}
	\abs{K_{1,\epsilon}} &\leq C \norm{H}_{C^{\alpha}(\Omega)} \int_{R_{\Omega,>}} dy \ d\eta \abs{x}\frac{y(\abs{x-\eta}^{\alpha}+y^{\alpha})}{\left((x-\eta)^{2}+y^{2}\right)^{3/2}}\leq \norm{H}_{C^{\alpha}(\Omega)}\abs{x}^{\alpha} \label{bound:K1}\\
	\abs{K_{3,\epsilon}}+\abs{K_{5,\epsilon}}&\leq  C \norm{H}_{C^{\alpha}(\Omega)} \int_{R_{\Omega,>}} dy \ d\eta \abs{x}\frac{y(\abs{x-\eta}^{\alpha}+y^{\alpha})}{\left((x-\eta)^{2}+y^{2}\right)^{2}}\leq \norm{H}_{C^{\alpha}(\Omega)}\abs{x}^{\alpha}\label{bound:K3-K5}\\
	\abs{K_{7,\epsilon}} &\leq C \norm{\Lambda}_{C^{2}(\Omega)}\norm{H}_{L^{\infty}(\Omega)}  \int_{R_{\Omega,>}} dy \ d\eta \abs{x}\frac{y^2}{\left((x-\eta)^{2}+y^{2}\right)^{2}} \leq C \norm{H}_{L^{\infty}(\Omega)} \abs{x}^{\alpha}.\label{bound:K7}
\end{align}
Furthermore, a direct computation using Lemma \ref{calculus:lemma} shows that 
\begin{align}
	\abs{K_{2,\epsilon}}&\leq C \norm{H}_{C^\alpha(\Omega)}\abs{x}^{\alpha} \int_{R_{\Omega,>}} dy \ d\eta \frac{y}{(\eta^2+y^2)}\leq C \norm{H}_{C^\alpha(\Omega)}\abs{x}^{\alpha}. \label{bound:K2}
\end{align}
Similarly, using the fact that $\p_{\eta}\Lambda(\eta,0)=0$ (cf. Assumption \ref{assumption:Lambda}) we have that 
$$\abs{\p_{\eta}\Lambda(\eta,y)}\leq y\norm{\Lambda}_{C^2(\Omega)},$$
and hence
\begin{align}
	\abs{K_{8,\epsilon}}&\leq C \abs{x}^{\alpha} \norm{\Lambda}_{C^2(\Omega)}\norm{H}_{C^\alpha(\Omega)} \int_{R_{\Omega,>}} dy \ d\eta  \frac{y^2}{(\eta^2+y^2)^{3/2}}\leq C\abs{x}^{\alpha} \norm{H}_{C^\alpha(\Omega)}.\label{bound:K8}
\end{align}
To conclude the proof of the $C^\alpha$ bound \eqref{cota:singular:1}, it only remains to estimate the more singular terms, namely $K_{4,\epsilon}$ and $K_{6,\epsilon}$. In these terms we can not just estimate the integrands by the absolute value because this will result on the onset of a logarithmically divergent term. To that purpose, we further simplify the integrand until arriving to an expression in which the integral of the most singular term in the $y$ variable can be explicitly computed. First, we decompose 
\begin{equation}\label{decomposition:Lambda}
	\Lambda(\eta,y)=yA(\eta)+[\Lambda(\eta,y)-A(\eta)y]
\end{equation}
where $A(\eta)=\partial_{y}\Lambda(\eta,0)$. Notice that
\begin{equation}\label{bound:square}
	\abs{\Lambda(\eta,y)-A(\eta)y}\leq C y^{2}.
\end{equation}
Using this decomposition, $K_{4,\epsilon}$ can be written as 
\begin{align}
	K_{4,\epsilon}&=  -\left(H(x, 0)-H(0,0)\right) \int_{R^{\epsilon}_{\Omega,>}} dy \ d\eta \frac{iy(e^{-i\eta-y})^2}{(1-e^{-i\eta-y})^2(1-e^{-i\eta-y-i\Lambda(\eta,y)})} \label{K:4:eps:firstpart} \\
	&= -\left(H(x, 0)-H(0,0)\right) \int_{R^{\epsilon}_{\Omega,>}} dy \ d\eta\frac{iy(e^{-i\eta-y})^2}{(1-e^{-i\eta-y})^2(1-e^{-i\eta-y-iA(\eta)y}\left(1+r_{1}(\eta,y)\right))}  \nonumber
\end{align}
where the remainder term $r_{1}(\eta,y)$ can be bounded using \eqref{bound:square} by $\abs{r_1(\eta,y)}\leq Cy^{2}$. Using Taylor expansion we obtain that
\begin{align*}
	K_{4,\epsilon}&= -\left(H(x, 0)-H(0,0)\right) \int_{R^{\epsilon}_{\Omega,>}} dy \ d\eta \frac{iy(e^{-i\eta-y})^2}{(1-e^{-i\eta-y})^2(1-e^{-i\eta-y-iA(\eta)y})}\left(1+r_{2}(\eta,y)\right)\\
	&= -\left(H(x, 0)-H(0,0)\right) \left(\int_{R^{\epsilon}_{\Omega,>}} dy \ d\eta \big[\ldots \big]+ \int_{R^{\epsilon}_{\Omega,>}} dy \ d\eta \big[\ldots \big] r_{2}(\eta,y)\right)=K_{41,\epsilon}+K_{42,\epsilon},
\end{align*}
where the new remainder $r_2(\eta,y)$ is bounded by $\abs{r_2(\eta,y)}\leq C \abs{y}.$ The integrand in $K_{42,\epsilon}$ is integrable and can be bounded by using Lemma \ref{calculus:lemma} for $\Lambda(\eta,y)=yA(\eta)$ as
\begin{equation}\label{estimate:K42:eps}
	\abs{K_{42,\epsilon}}\leq C \abs{x}^\alpha \norm{H}_{C^{\alpha}(\Omega)}  \int_{R_{\Omega,>}} dy \ d\eta \frac{y^2}{\left(\eta^2+y^{2}\right)^{3/2}}\leq C \abs{x}^\alpha \norm{H}_{C^{\alpha}(\Omega)}. 
\end{equation}
To most delicate term is $K_{41,\epsilon}$. Using again Taylor expansion we find that
\begin{align}
	K_{41,\epsilon}&= -\left(H(x, 0)-H(0,0)\right) \int_{R^{\epsilon}_{\Omega,>}} dy \ d\eta \frac{iy}{(-i\eta-y)^2(-i\eta-y-iA(\eta)y)} (1+r_{3}(\eta,y)) \nonumber \\
	&=K_{411,\epsilon}+K_{412,\epsilon} \label{estimate:K41:eps}
\end{align}
with  $\abs{r_{3}(\eta,y)}\leq C (\eta-y)^2$. Then
\begin{equation}\label{bound:K412}
	\abs{K_{412,\epsilon}}\leq C \abs{x}^\alpha \norm{H}_{C^{\alpha}(\Omega)}. 
\end{equation}
To estimate the remaining term $K_{411,\epsilon}$, we recall that $A(\eta)=\p_{y}\Lambda(\eta,0)$ and that by Assumption \ref{assumption:Lambda}, it follows that $\norm{\Lambda}_{C^{2,\alpha}(\Omega)}\leq \delta_{0}$ for $\delta_{0}\in (0,\frac{1}{2})$. Hence we can write 
$
A(\eta)=A(0)+[A(\eta)-A(0)]
$
where 
\begin{equation}\label{A:bound}
	\abs{A(\eta)-A(0)}\leq \delta_{0} \abs{\eta}^{\alpha}.
\end{equation} 
Therefore, 
\begin{align*}
	K_{411,\epsilon}= -\left(H(x, 0)-H(0,0)\right) \int_{R^{\epsilon}_{\Omega,>}} dy \ d\eta \frac{iy}{(-i\eta-y)^2(-i\eta-y-iA(0)y)}+ K_{4112,\epsilon}
\end{align*}
with $\abs{K_{4112,\epsilon}}\leq C \abs{x}^\alpha \norm{H}_{C^{\alpha}(\Omega)}$. Doing the rescaling variables $\eta=y\zeta$ and recalling that 
$$R^{\epsilon}_{\Omega,>}=\{(\eta,y)\in \Omega:\displaystyle\max\{y,\abs{\eta}\}> 2\abs{x} \mbox{ and } \epsilon\leq y\leq L\},
$$ 
we infer that the above integral can be expressed as 
\begin{align*}
	\int_{R^{\epsilon}_{\Omega,>}} dy \ d\eta \frac{iy}{(-i\eta-y)^2(-i\eta-y-iA(0)y)}&=\int_{\epsilon}^{L} \frac{dy}{y}\int_{\Sigma(x,y)} \frac{d\zeta} {(i\zeta+1)^2(i\zeta+1+iA(0))}
\end{align*}
with 
\begin{equation}\label{sigma:def}
	\Sigma(x,y)=\{\zeta\in \mathbb{R}: \max \{1,\abs{\zeta}\}> \frac{2\abs{x}}{y}, \ \abs{\zeta}\leq \frac{\pi}{y} \}.
\end{equation}
In order to estimate this integral, we consider two different cases, the case for $L\geq y\geq2\abs{x}$ and $\epsilon\leq y<2\abs{x}$, namely
\begin{align}\label{integral:L}
	\int_{\epsilon}^{L} \frac{dy}{y}\int_{\max\{1,\abs{\zeta}\}> \frac{2\abs{x}}{y}} \frac{d\zeta} {(i\zeta+1)^2(i\zeta+1+iA(0))}&=\int_{2\abs{x}}^{L} \frac{dy}{y}\int_{ \Sigma(x,y)} \ldots+ \int_{\epsilon}^{2\abs{x}} \frac{dy}{y}\int_{ \Sigma(x,y)} \ldots  \nonumber \\ 
	&=L_{1}+L_{2}.
\end{align}
In the case of the integral $L_1$, this is for $L\geq y\geq2\abs{x}$,  the domain of integration $\Sigma$ reduces to $\Sigma(x,y)=\{\zeta\in \mathbb{R}: \ \abs{\zeta}\leq \frac{\pi}{y} \}$. Therefore we can extend the domain of integration of $\zeta$ to the whole space $\RR$ just adding a remainder term that can be estimated by $C\abs{y}^2$ for small $y$. This follows from the fact that the integrand in $\zeta$ can be estimated by $\frac{C}{\abs{\zeta}^3}$ for $\abs{\zeta}\geq 1$.
Thus, we have that
\begin{equation}\label{estimate:L1:nocal:1}
	L_{1}=\int_{2\abs{x}}^{L} \frac{dy}{y}\int_{\mathbb{R}} \frac{d\zeta}{(i\zeta+1)^2(i\zeta+1+iA(0))}+L_{12}
\end{equation}
with $\abs{L_{12}}\leq  C$. To deal with the first integral in \eqref{estimate:L1:nocal:1} we use contour integrating using residues yields
\begin{equation}\label{estimate:L11}
	L_{11}=\int_{2\abs{x}}^{L} \frac{dy}{y}\int_{\mathbb{R}} \frac{d\zeta}{(i\zeta+1)^2(i\zeta+1+iA(0))}=0
\end{equation}
using the fact that the only poles are in $\zeta=-i, \zeta=-i+A(0)$ and $A(0)\in \RR$.
We now estimate $L_2$. Since $\epsilon\leq y<2\abs{x}$, we have that $ \frac{2\abs{x}}{y}>1$ and therefore
$$ \Big\{ \max \{1,\abs{\zeta}\}> \frac{2\abs{x}}{y} \Big\} \subset \Big\{ \abs{\zeta}> \frac{2\abs{x}}{y}\Big\}.$$
Hence applying Fubini's theorem we obtain
\begin{align}
	\abs{L_{2}}\leq \int_{0}^{2\abs{x}} \frac{dy}{y}\int_{\abs{\zeta}>\frac{2\abs{x}}{y}}\abs{ \frac{1}{(i\zeta+1)^2(i\zeta+1+iA(0))}}d\zeta&\leq C
	\int_{\abs{\zeta}\geq 1}  \frac{d\zeta}{(1+\abs{\zeta}^3)} \int_{\frac{2\abs{x}}{\abs{\zeta}}}^{2\abs{x}} \frac{dy}{y} \nonumber  \\
	&=C
	\int_{\abs{\zeta}\geq 1}  \frac{d\zeta}{(1+\abs{\zeta}^3)} \log(\abs{\zeta}) 
	\nonumber \\
	&\leq C.  \label{estimate:L2:nocal}
\end{align}
Combining \eqref{estimate:L1:nocal:1}-\eqref{estimate:L2:nocal} we have shown that
\begin{equation}\label{bound:K411}
	\abs{K_{411,\epsilon}}\leq C \abs{x}^\alpha \norm{H}_{C^{\alpha}(\Omega)}
\end{equation}
as desired.  Collecting \eqref{estimate:K42:eps}, \eqref{bound:K412} and \eqref{bound:K411} we find that
\begin{equation}\label{K4:eps:final}
	\abs{K_{4,\epsilon}}\leq C \abs{x}^\alpha \norm{H}_{C^{\alpha}(\Omega)}.
\end{equation}
We can estimate the term $K_{6,\epsilon}$ in a similar manner. We recall that 
\begin{align*}
	K_{6,\epsilon}&= -\left(H( x, 0)-H(0,0)\right)\int_{R_{\Omega,>}} dy \ d\eta\frac{iye^{-i\eta-y}e^{-i\eta-y-i\Lambda(\eta,y)}}{(1-e^{i\eta-y})(1-e^{-i\eta-y-i\Lambda(\eta,y)})^2}.
\end{align*}
Indeed, using again the decomposition \eqref{decomposition:Lambda} and the estimate \eqref{bound:square} we find that
\begin{align*}
	K_{6,\epsilon}&= -\left(H( x, 0)-H(0,0)\right)\int_{R^{\epsilon}_{\Omega,>}} dy \ d\eta\frac{iy}{(1-e^{i\eta-y})(1-e^{-i\eta-y-iA(\eta)y})^2}(1+r_{5}(\eta,y)) \\
	&=-\left(H(x, 0)-H(0,0)\right) \left(\int_{R^{\epsilon}_{\Omega,>}} dy \ d\eta \big[\ldots \big]+ \int_{R^{\epsilon}_{\Omega,>}} dy \ d\eta \big[\ldots \big] r_{5}(\eta,y)\right)=K_{61,\epsilon}+K_{62,\epsilon}
\end{align*}
with $\abs{r_{5}(\eta,y)}\leq C \abs{y}$. Therefore, $K_{62,\epsilon}$ can be easily bounded using Lemma \ref{calculus:lemma} for $\Lambda(\eta,y)=yA(\eta)$ by
\begin{equation}\label{estimate:K62:eps}
	\abs{K_{62,\epsilon}}\leq C \abs{x}^{\alpha}\norm{H}_{C^{\alpha}(\Omega)}\int_{R_{\Omega,>}}  dy \ d\eta \frac{y^2}{\left(\eta^2+y^{2}\right)^{3/2}}\leq C \abs{x}^\alpha \norm{H}_{C^{\alpha}}
\end{equation}
To deal with $K_{61,\epsilon}$, we argue as in the estimate of $K_{411,\epsilon}$. Then
\begin{align}\label{estimate:K61:eps}
	K_{61,\epsilon}= -\left(H(x, 0)-H(0,0)\right) \int_{R^{\epsilon}_{\Omega,>}} dy \ d\eta \frac{iy}{(-i\eta-y)(-i\eta-y-iA(0)y)^2}+ K_{612,\epsilon}
\end{align}
where $\abs{K_{612,\epsilon}}\leq C \abs{x}^\alpha \norm{H}_{C^{\alpha}(\Omega)}$. To estimate the remaining term, we perform the change of variables $\eta=y\zeta$ and readily check that the resulting integral 
\begin{align*}
	K_{611,\epsilon}&= -\left(H(x, 0)-H(0,0)\right) \int_{R_{\Omega,>}} dy \ d\eta \frac{iy}{(-i\eta-y)(-i\eta-y-iA(0)y)^{2}} \\
	&=-\left(H(x, 0)-H(0,0)\right)\int_{\epsilon}^{L} \frac{dy}{y}\int_{\Sigma(x,y)} \frac{d\zeta}{(i\zeta+1)(i\zeta+1+iA(0))^2}
\end{align*}
where $\Sigma(x,y)$ is defined in \eqref{sigma:def} can be bounded similarly as we estimated integral \eqref{integral:L}, namely,
\begin{equation}\label{bound:K611}
	\abs{K_{611,\epsilon}}\leq  C \abs{x}^\alpha \norm{H}_{C^{\alpha}(\Omega)}.
\end{equation}
Hence, combining \eqref{estimate:K62:eps}, \eqref{estimate:K61:eps} and \eqref{bound:K611} we conclude
\begin{equation}\label{K6:eps:final}
	\abs{K_{6,\epsilon}}\leq  C \abs{x}^\alpha \norm{H}_{C^{\alpha}(\Omega)}.
\end{equation}

Therefore, by means of \eqref{int:sin:1} and \eqref{Xi:epsilon:difference} and collecting estimates \eqref{bound:J111}-\eqref{bound:J141}, \eqref{bound:K1}-\eqref{bound:K7}, \eqref{bound:K2},\eqref{bound:K8} and bounds \eqref{K4:eps:final}, \eqref{K6:eps:final} we have shown that
\begin{equation}
	\abs{\Xi(x)-\Xi(0)}\leq  \displaystyle\lim_{\epsilon\to 0^{+}}\abs{\Xi_{1,\epsilon}(x)-\Xi_{1,\epsilon}(0)}+\displaystyle\lim_{\epsilon\to 0^{+}}\abs{\Xi_{2,\epsilon}(x)-\Xi_{2,\epsilon}(0)}\leq C \abs{x}^\alpha \norm{H}_{C^{\alpha}(\Omega)}
\end{equation}
which shows the desired $\alpha$-H\"older semi-norm estimate. The later estimate combined with the pointwise bound \eqref{Linfty:Xi:bound} yields the estimate for the $C^{\alpha}(\SS^1)$ norm. This concludes the proof of the proposition.
\end{proof}

\subsection{\texorpdfstring{$C^{1,\alpha}$}{Lg} H\"older estimate}
We now derived the following H\"older estimates for the derivative of $\Xi$.
\begin{proposition}[$C^{1,\alpha}$ estimate]\label{Holder:singular:integral:1alpha}
Let $H(\eta, y)\in C^{1,\alpha}(\Omega)$ and suppose that Assumption \ref{assumption:Lambda} holds. For every $x\in \SS^1$, define the function $\Xi(x)$ as in \eqref{int:sin:1}. Then we have that
\begin{align}
	\norm{\Xi}_{C^{1,\alpha}(\mathbb{S}^{1})}&\leq C \norm{H}_{C^{1,\alpha}(\Omega)} \label{cota:singular:2}
\end{align}
with $C>0$.
\end{proposition}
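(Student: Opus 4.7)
The plan is to upgrade the $C^{\alpha}$ bound of Proposition \ref{Holder:singular:integral:alpha} by one derivative. The key device is to transfer one differentiation from the singular kernel to $H$ via integration by parts in $\eta$, and then invoke Proposition \ref{Holder:singular:integral:alpha} applied to $\partial_{\eta}H\in C^{\alpha}(\Omega)$.

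First I would use the identity $\partial_{x}\mathcal{A}=-\partial_{\eta}\mathcal{A}-\mathcal{R}$ from \eqref{can:expl}--\eqref{rest} and integrate by parts in $\eta$ on $\SS^{1}$ (the boundary terms cancel by periodicity) in order to rewrite
\[
\Xi_{\epsilon}(x)=\int_{\epsilon}^{L}\!dy\int_{\SS^{1}}\!d\eta\,\mathcal{A}(x,y,\eta)\,\partial_{\eta}H(\eta,y)-\int_{\epsilon}^{L}\!dy\int_{\SS^{1}}\!d\eta\,\mathcal{R}(x,y,\eta)\,H(\eta,y).
\]
Lemma \ref{calculus:lemma} yields the pointwise bound $|\mathcal{A}|\leq Cy/((x-\eta)^{2}+y^{2})$ and, exploiting $\Lambda(\eta,0)=0$, which forces $|\partial_{\eta}\Lambda(\eta,y)|\leq \|\Lambda\|_{C^{2}(\Omega)}\,y$, also $|\mathcal{R}|\leq Cy^{2}/((x-\eta)^{2}+y^{2})^{3/2}$. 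Both integrands are therefore absolutely integrable and letting $\epsilon\to 0^{+}$ produces an integration-by-parts representation of $\Xi$ with integrable kernels.

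Next I would differentiate this representation in $x$. For fixed $\epsilon>0$ the regularized integrand is smooth in $x$ on $[\epsilon,L]\times\SS^{1}$, so one may differentiate $\Xi_{\epsilon}$ under the integral; a short approximation argument using Proposition \ref{Holder:singular:integral:alpha} then permits the exchange of $\partial_{x}$ with $\lim_{\epsilon\to 0^{+}}$, producing
\[
\partial_{x}\Xi(x)=\int_{0}^{L}\!dy\int_{\SS^{1}}\!d\eta\,\partial_{x}\mathcal{A}\,\partial_{\eta}H(\eta,y)-\int_{0}^{L}\!dy\int_{\SS^{1}}\!d\eta\,\partial_{x}\mathcal{R}\,H(\eta,y),
\]
interpreted in the principal-value sense. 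The first summand is precisely the operator of Proposition \ref{Holder:singular:integral:alpha} applied to $\partial_{\eta}H\in C^{\alpha}(\Omega)$, so its $C^{\alpha}(\SS^{1})$ norm is already bounded by $C\|\partial_{\eta}H\|_{C^{\alpha}(\Omega)}\leq C\|H\|_{C^{1,\alpha}(\Omega)}$.

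For the second summand I would apply the analogue of \eqref{can:expl}, namely $\partial_{x}\mathcal{R}=-\partial_{\eta}\mathcal{R}+\widetilde{\mathcal{R}}$, where $\widetilde{\mathcal{R}}$ is the $\eta$-derivative of $\mathcal{R}$ at $x-\eta$ held fixed, and integrate by parts a second time. This splits the piece into $\int\mathcal{R}\,\partial_{\eta}H$, whose kernel is already integrable and whose $C^{\alpha}(\SS^{1})$ estimate follows by standard arguments, and $\int\widetilde{\mathcal{R}}\,H$, whose kernel has the same order of singularity as $\partial_{x}\mathcal{A}$ but carries an extra $C^{\alpha}$ coefficient involving $\partial_{\eta}^{2}\Lambda$ and $(\partial_{\eta}\Lambda)^{2}$; the $C^{\alpha}$ bound for the latter is obtained by repeating verbatim the argument of Proposition \ref{Holder:singular:integral:alpha}, where the smallness $\|\Lambda\|_{C^{2,\alpha}(\Omega)}\leq\delta_{0}$ of Assumption \ref{assumption:Lambda} supplies the required control. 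Combined with the $C^{\alpha}$ bound on $\Xi$ itself from Proposition \ref{Holder:singular:integral:alpha} this yields \eqref{cota:singular:2}. I expect the main obstacle to lie in rigorously justifying the commutation of $\partial_{x}$ with $\lim_{\epsilon\to 0^{+}}$ and with the non-absolutely-integrable inner integral in the $\mathcal{R}$-piece; the second integration by parts above is designed precisely to resolve this, by redistributing one further derivative onto $H$ and onto $\Lambda$ and thereby reducing the whole computation to the framework of Proposition \ref{Holder:singular:integral:alpha}.
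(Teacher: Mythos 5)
Your plan follows the paper's: one integration by parts gives $\Xi_\epsilon=\int\mathcal{A}\,\partial_\eta H-\int\mathcal{R}\,H$, and the $x$-derivative of the first piece is exactly the operator of Proposition \ref{Holder:singular:integral:alpha} applied to $\partial_\eta H\in C^\alpha(\Omega)$ --- the paper's $\partial_x\Xi_{1,\epsilon}$. The gap lies in the claim that the residual $\int\widetilde{\mathcal{R}}\,H$ (the paper's $\mathcal{R}_1+\mathcal{R}_2+\mathcal{R}_3$ from \eqref{R0:last:align}--\eqref{R3:last:align}) can be bounded ``by repeating verbatim'' Proposition \ref{Holder:singular:integral:alpha}.

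The piece $\mathcal{R}_3$ carries a cubic denominator $(1-e^{i(x-\eta)-y-i\Lambda(\eta,y)})^{-3}$, with no counterpart in Proposition \ref{Holder:singular:integral:alpha}. In the Hölder difference $\partial_x\Xi_{2,\epsilon}(x)-\partial_x\Xi_{2,\epsilon}(0)$, its contribution, once the boundary value $\widetilde{H}(0,0)$ of $\widetilde{H}=\partial_\eta(\Lambda/y)H$ is extracted, contains the term
\[
M_{5,\epsilon}=\widetilde{H}(0,0)\int_\epsilon^L dy\int_{\SS^1} d\eta\;y\bigl[\widetilde{\mathcal{R}}_3(x,y,\eta)-\widetilde{\mathcal{R}}_3(0,y,\eta)\bigr].
\]
Here the prefactor $\widetilde{H}(0,0)$ is $O(1)$, not $O(\abs{x}^\alpha)$; in Proposition \ref{Holder:singular:integral:alpha} the critical outer terms $K_4$, $K_6$ carried the prefactor $H(x,0)-H(0,0)=O(\abs{x}^\alpha)$, so there one only had to show that the remaining (logarithmically divergent) integral stays bounded. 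Here the $\abs{x}^\alpha$ decay must be drawn from the kernel difference $\widetilde{\mathcal{R}}_3(x,\cdot)-\widetilde{\mathcal{R}}_3(0,\cdot)$, integrated over the whole domain. The paper accomplishes this via the shift $\widetilde{\eta}=\eta-x$, the auxiliary function $\mathfrak{J}_x$, and the $C^\alpha$ estimate \eqref{bound:Calpha:mathfrakJ}; that machinery is the most substantial part of the proof of Proposition \ref{Holder:singular:integral:1alpha} and is absent from your outline.

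One further comment. You describe $\widetilde{\mathcal{R}}$ as carrying coefficients $\partial_\eta^2\Lambda$ and $(\partial_\eta\Lambda)^2$. A direct computation of $(\partial_x+\partial_\eta)\mathcal{R}$, using that $\partial_x u+\partial_\eta u=0$ and $\partial_x v+\partial_\eta v=-iv\,\partial_\eta\Lambda$ for $u=e^{i(x-\eta)-y}$, $v=e^{i(x-\eta)-y-i\Lambda}$, gives the cubic-denominator piece with coefficient $2(\partial_\eta\Lambda)^2$, whereas \eqref{R3:last:align} records $\partial_\eta\Lambda$. If your version is the right one, then $\abs{\mathcal{R}_3}\leq C y^3/((x-\eta)^2+y^2)^2$ is integrable, $\widetilde{H}(0,0)=0$, and the $M_5$ step collapses; the whole estimate then reduces to the standard inner/outer splitting plus mean-value bounds. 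But even in that favourable scenario those bounds have to be exhibited: $\int\widetilde{\mathcal{R}}\,H$ carries $H$ without the $H(\eta,y)-H(x,0)$ subtraction that is built into the integrands of Proposition \ref{Holder:singular:integral:alpha}, so its Hölder estimate is a different chain of inequalities, not a ``verbatim'' repetition.
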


\begin{proof}[Proof of Proposition \ref{Holder:singular:integral:1alpha}]
By Proposition \ref{Holder:singular:integral:alpha}, it is clear that the function $\Xi(x)$ defined in \eqref{int:sin:1} exists and it is well defined. Moreover, we also showed in the previous lemma that the pointwise bound 
\begin{equation}\label{pointwise:bound:version2}
	\abs{\Xi(x)}\leq C \norm{H}_{C^{\alpha}(\Omega)}, \mbox{ for } x\in \SS^1
\end{equation}
holds. We will see at the end of the proof that estimate \eqref{cota:singular:2} would be a consequence of \eqref{pointwise:bound:version2} and the following bound
\begin{equation*}
	\abs{\p_{x}\Xi_{\epsilon}(x_1)-\p_{x}\Xi_{\epsilon}(x_2)}\leq C \abs{x_1-x_2}^{\alpha} \norm{H}_{C^{1,\alpha}(\Omega)}, \mbox{ for } x_{1},x_{2}\in \SS^1, \epsilon>0
\end{equation*}
where $\Xi_{\epsilon}$ is defined in \eqref{int:sin:1:eps}.
As before, due to the translation invariance of the estimate it suffices to check without loss of generality that the bound holds for $x_2=0$ and $x_{1}=x$, namely
\begin{equation}\label{principal:estimate:1alpha:eps}
	\abs{\p_{x}\Xi_{\epsilon}(x)-\p_{x}\Xi_{\epsilon}(0)}\leq C\abs{x}^{\alpha}\norm{H}_{C^{1,\alpha}(\Omega)}.
\end{equation}

To that purpose, using the definition of $\mathcal{A}(x,y,\eta)$ in \eqref{function:A:1} we have that
\begin{align}
	\p^2_{x}\mathcal{A}(x,y,\eta) &= -\p_{x}\p_{\eta}\mathcal{A}(x,y,\eta)- \p_{x}\mathcal{R}(x,y,\eta) \label{can:expl2}
\end{align}
where $\mathcal{R}(x,y,\eta)$ is given in \eqref{rest}. Therefore, recalling the definitions \eqref{int:sin:1:eps}, \eqref{Xi:1:eps},\eqref{Xi:2:eps}, using \eqref{can:expl2} and integrating by parts we obtain
$$\p_{x}\Xi_{\epsilon}(x)=\p_{x}\Xi_{1,\epsilon}(x)+\p_{x}\Xi_{2,\epsilon}(x)$$ where
\begin{align}
	\p_{x}\Xi_{1,\epsilon}(x)&= \int_{\epsilon}^{L}dy\int_{\mathbb{S}^{1}}d\eta \  \p_{x} \mathcal{A}(x,y,\eta) \p_{\eta}H( \eta, y)   \label{Xi:1alpha:eps} \\
	\p_{x}\Xi_{2,\epsilon}(x)&= - \int_{\epsilon}^{L}dy\int_{\mathbb{S}^{1}}d\eta \ \p_{x} \mathcal{R}(x,y,\eta) \ H( \eta, y). \label{Xi:2alpha:eps}
\end{align}
Notice that the term \eqref{Xi:1alpha:eps} has exactly the same form as \eqref{int:sin:1:eps} with $\p_{\eta}H(\eta,y)$ replaced by $H(\eta,y)$. As a consequence, mimicking the estimate \eqref{Linfty:Xi:epsilon:bound} in Lemma \ref{Holder:singular:integral:alpha} we have that 
\begin{align}
	\abs{\p_{x}\Xi_{1,\epsilon}(x)}&\leq C \norm{H}_{C^{1,\alpha}(\Omega)},   \mbox{ for } x\in\SS^1  \mbox{ and } \epsilon>0. \label{Linfty:der:Xi1:epsilon:bound} 
\end{align}
Moreover, arguing as in the proof of the previous lemma using dominated convergence it follows that
\begin{equation}\label{existence:Xi1:eps:limit}
	\lim_{\epsilon\to 0}\p_{x}\Xi_{1,\epsilon}:= Q_{1}(x)
\end{equation}
exists. A direct application of estimate \eqref{cota:singular:1} with $H(\eta,y)$ replaced by  $\p
_{\eta}H(\eta,y)$ yields 

\begin{equation}\label{Xi1:estimate:final:1alpha}
	\abs{Q_{1}(x)-Q_{1}(0)}\leq C\abs{x}^{\alpha}\norm{H}_{C^{1,\alpha}(\Omega)}, \mbox{ for } x\in\SS^1.
\end{equation}
We next show that limit in \eqref{Xi:2alpha:eps} as $\epsilon$ tends to zero exists. To that purpose we write
\begin{equation}\label{rest:change}
	\p_{x}\mathcal{R}(x,y,\eta)=-\p_{\eta}\mathcal{R}_{0}+\mathcal{R}_{1}+\mathcal{R}_{2}+\mathcal{R}_{3}
\end{equation}
where 
\begin{align}
	\mathcal{R}_{0}&=\p_{\eta}\Lambda(\eta,y)\frac{iye^{i(x-\eta)-y}e^{i(x-\eta)-y-i\Lambda(\eta,y)}}{(1-e^{i(x-\eta)-y})(1-e^{i(x-\eta)-y-i\Lambda(\eta,y)})^2}, \label{R0:last:align}\\
	\mathcal{R}_{1}&=\p^2_{\eta}\Lambda(\eta,y)\frac{iye^{i(x-\eta)-y}e^{i(x-\eta)-y-i\Lambda(\eta,y)}}{(1-e^{i(x-\eta)-y})(1-e^{i(x-\eta)-y-i\Lambda(\eta,y)})^2},\\
	\mathcal{R}_{2}&=(\p_{\eta}\Lambda(\eta,y))^2\frac{iye^{i(x-\eta)-y}e^{i(x-\eta)-y-i\Lambda(\eta,y)}}{(1-e^{i(x-\eta)-y})(1-e^{i(x-\eta)-y-i\Lambda(\eta,y)})^2},\\
	\mathcal{R}_{3}&=\p_{\eta}\Lambda(\eta,y)\frac{ye^{i(x-\eta)-y}(e^{i(x-\eta)-y-i\Lambda(\eta,y)})^2}{(1-e^{i(x-\eta)-y})(1-e^{i(x-\eta)-y-i\Lambda(\eta,y)})^3}. \label{R3:last:align}
\end{align}
Plugging \eqref{rest:change} in \eqref{Xi:2alpha:eps}, we infer that
\begin{equation}\label{decomposition:xi:2}
	\p_{x}\Xi_{2,\epsilon}(x)= \p_{x}\Xi_{20,\epsilon}(x) + \p_{x}\Xi_{21,\epsilon}(x)+ \p_{x}\Xi_{22,\epsilon}(x)+ \p_{x}\Xi_{23,\epsilon}(x)
\end{equation}
with
\begin{align*}
	\p_{x}\Xi_{20,\epsilon}(x)&=  \int_{\epsilon}^{L}dy\int_{\mathbb{S}^{1}}d\eta \  \mathcal{R}_{0}(x,y,\eta) \left(\p_{\eta}H( \eta, y)-\p_{\eta}H( x, 0)\right) \\
	\p_{x}\Xi_{2j,\epsilon}(x)&= - \int_{\epsilon}^{L}dy\int_{\mathbb{S}^{1}}d\eta \  \mathcal{R}_{j}(x,y,\eta) H( \eta, y) , \mbox{ for } j=1,2,3,
\end{align*}
where in the first term we have applied integration by parts. Identifying $\SS^1$ with the symmetric interval $I_{x}=[x-\pi,x+\pi]$ for $x\in [-\pi,\pi]$, using bound \eqref{est:calcu} and the H\"older regularity for $H$ we obtain
\begin{align*}
	\abs{ \p_{x}\Xi_{20,\epsilon}(x)}&\leq C \norm{\Lambda}_{C^2(\Omega)} \norm{H}_{C^{1,\alpha}(\Omega)} \int_{\epsilon}^{L}dy\int_{I_{x}}d\eta \frac{y^{2}(\abs{x-\eta}^{\alpha}+y^{\alpha})}{((x-\eta)^{2}+y^{2})^{3/2}}\leq C\norm{H}_{C^{1,\alpha}(\Omega)}, \\
	\abs{ \p_{x}\Xi_{21,\epsilon}(x)}&\leq C \norm{\Lambda}_{C^{2,\alpha}(\Omega)} \norm{H}_{L^{\infty}(\Omega)} \int_{\epsilon}^{L}dy\int_{I_{x}}d\eta \frac{y^{1+\alpha}}{((x-\eta)^{2}+y^{2})^{3/2}}\leq C\norm{H}_{L^{\infty}(\Omega)}, \\
	\abs{ \p_{x}\Xi_{22,\epsilon}(x)}&\leq C \norm{\Lambda}^2_{C^{2}(\Omega)}\norm{H}_{L^{\infty}(\Omega)} \int_{\epsilon}^{L}dy\int_{I_{x}}d\eta \frac{y^{3}}{((x-\eta)^{2}+y^{2})^{3/2}}\leq C\norm{H}_{L^{\infty}(\Omega)},
\end{align*}
for $x\in \SS^1$. Similarly arguing as in the proof of Lemma \ref{Holder:singular:integral:alpha}, by dominated convergence we have that the limits 
\begin{equation}\label{existence:Xi2j:eps:limit} 
	\displaystyle\lim_{\epsilon\to 0^{+}}\p_{x}\Xi_{2j,\epsilon}(x):=Q_{2j}(x), \mbox{ for } j=0,1,2
\end{equation}  
exist. The most singular term in \eqref{decomposition:xi:2} is $\p_{x}\Xi_{23,\epsilon}$. This term can be written as 
\begin{align}
	\p_{x}\Xi_{23,\epsilon}(x)&=- \int_{\epsilon}^{L}dy\int_{\mathbb{S}^{1}}d\eta \mathcal{R}_{3}(x,y,\eta) \left(H( \eta, y)-H(x,0) \right) -H(x,0)\int_{\epsilon}^{L}dy\int_{\mathbb{S}^{1}}d\eta  \ \mathcal{R}_{3}(x,y,\eta) \nonumber  \\
	&= \p_{x}\Xi_{231,\epsilon}(x)+\p_{x}\Xi_{232,\epsilon}(x). \label{decomposition:xi24}
\end{align}
Identifying again $\SS^1$ with the interval $I_{x}=[x-\pi,x+\pi]$ for $x\in [-\pi,\pi]$, using  the bound \eqref{est:calcu} and the H\"older regularity for $H$ we obtain
\begin{align}\label{Xi231:eps:bound}
	\abs{\p_{x}\Xi_{231,\epsilon}(x)}&\leq C \norm{\Lambda}_{C^{2}(\Omega)}\norm{H}_{C^{\alpha}(\Omega)}\int_{\epsilon}^{L}dy\int_{I_x}d\eta \frac{y^{2}(\abs{x-\eta}^{\alpha}+y^{\alpha})}{((x-\eta)^{2}+y^{2})^{2}} \leq C\norm{H}_{C^{\alpha}(\Omega)}, \mbox{ for } x\in\SS^1.
\end{align}
Using the decomposition
\begin{equation}\label{decomp:Lambda:2}
	\p_{ \eta }\Lambda(\eta,y)=y\p^2_{y \eta }\Lambda(\eta,0)+ \big[ \p_{\eta}\Lambda(\eta,y)-y\p^2_{y \eta }\Lambda(\eta,0) \big]
\end{equation}
we infer that 
\begin{align*}
	&\p_{x}\Xi_{232,\epsilon}(x)=H(x,0)\int_{\epsilon}^{L}dy\int_{\mathbb{S}^{1}}d\eta \ \p^2_{y \eta }\Lambda(\eta,0)\frac{y^2e^{i(x-\eta)-y}(e^{i(x-\eta)-y-i\Lambda(\eta,y)})^2}{(1-e^{i(x-\eta)-y})(1-e^{i(x-\eta)-y-i\Lambda(\eta,y)})^3} \\
	&\quad +H(x,0)\int_{\epsilon}^{L}dy\int_{\mathbb{S}^{1}}d\eta  \ y  \big[ \p_{\eta}\Lambda(\eta,y)-y\p^2_{y \eta }\Lambda(\eta,0) \big]\frac{e^{i(x-\eta)-y}(e^{i(x-\eta)-y-i\Lambda(\eta,y)})^2}{(1-e^{i(x-\eta)-y})(1-e^{i(x-\eta)-y-i\Lambda(\eta,y)})^3}\\
	&\quad \quad \quad=N_{1,\epsilon}+N_{2,\epsilon}
\end{align*}
Using that
\begin{equation}\label{bound:Lambda:2}
	\abs{ \big[\p_{\eta}\Lambda(\eta,y)-y\p^2_{y \eta }\Lambda(\eta,0) \big]} \leq C \abs{y}^{1+\alpha} \norm{\Lambda}_{C^{2,\alpha}(\Omega)}\leq C \abs{y}^{1+\alpha},
\end{equation}
identifying once again $\SS^1$ with $I_{x}=[x-\pi,x+\pi]$ for $x\in [-\pi,\pi]$ and invoking Lemma \ref{calculus:lemma} we find that
\begin{equation}\label{bound:N_2}
	\abs{N_{2,\epsilon}}\leq C\norm{H}_{L^\infty} \int_{\epsilon}^{L}dy \int_{I_{x}}d\eta  \frac{y^{2+\alpha}}{((x-\eta)^2+y^2)^2}\leq C\norm{H}_{L^\infty}.
\end{equation}
On the other hand, using Taylor's expansion and decomposition \eqref{decomposition:Lambda},  we have that
\begin{align}\label{N1eps}
	N_{1,\epsilon}(x)=H(x,0) \int_{\epsilon}^{L}dy\int_{I_{x}}d\eta  \frac{y^2 \p_{\eta }A(\eta)}{(i(x-\eta)-y)(i(x-\eta)-y-iyA(\eta))^3}+N_{12,\epsilon}(x)
\end{align}
with $A(\eta)=\p_{y}\Lambda(\eta,0)$. The remainder $N_{12,\epsilon}$ has an integrable singularity and can be easily estimated using \eqref{bound:square}, namely
\begin{equation}\label{bound:N21:trivial}
	\abs{N_{12,\epsilon}(x)}\leq C \norm{H}_{L^\infty}, \mbox{ for } x\in \SS^1.
\end{equation}

To estimate the first term on the right hand side of \eqref{N1eps}  we further use the decomposition 
\begin{equation}
	\p_{\eta}A(\eta)=\p_{\eta}A(x)+[\p_{\eta}A(\eta)-\p_{\eta}A(x)], \mbox{ where } \abs{\p_{\eta}A(\eta)-\p_{\eta}A(x)}\leq \delta_{0} \abs{x-\eta}^{\alpha},
\end{equation} 
to write
\begin{align}\label{exp:N11}
	N_{11,\epsilon}(x)&=H(x,0)  \p_{\eta }A(x)\int_{\epsilon}^{L}dy\int_{I_{x}}d\eta  \frac{y^2}{(i(x-\eta)-y)(i(x-\eta)-y-iyA(x))^3} \\
	&\quad +N_{112,\epsilon}(x) \nonumber
\end{align}
where $\abs{N_{112,\epsilon}}\leq C \norm{H}_{L^\infty}.$ After the change of variables $(x-\eta)=-y\zeta$ we find that the first term on the right hand side in \eqref{exp:N11} is given by 
\begin{align}
	N_{11,\epsilon}(x)=H(x,0)  \p_{\eta }A(x)\int_{\epsilon}^{L}\frac{dy}{y}\int_{-\frac{\pi}{y}}^{\frac{\pi}{y}} \frac{d\zeta}{(i\zeta+1)(i\zeta+1+iA(x))^3}. \label{treat:N11:eps}
\end{align}
Extending the value $\zeta$ to the whole space $\RR$  we have that
\begin{equation}\label{estimate:N11}
	N_{11,\epsilon}=H(x,0) \p_{\eta }A(x) \int_{\epsilon}^{L}\frac{dy}{y}\int_{\RR} \frac{d\zeta}{(i\zeta+1)(i\zeta+1+iA(x))^3}+N_{111,\epsilon}
\end{equation}
where the remaining term $\abs{N_{111,\epsilon}}\leq  C$. Since the only poles are in $\zeta=-i, \zeta=-i+A(x)$ and $A(x)\in \RR$, computing the integral using residues yields
\begin{equation}\label{estimate:N111}
	\int_{\epsilon}^{L}\frac{dy}{y}\int_{\RR} \frac{d\zeta}{(i\zeta+1)(i\zeta+1+iA(x))^3}=0.
\end{equation}
Hence, we have that bounds \eqref{bound:N_2},\eqref{bound:N21:trivial},\eqref{exp:N11} and \eqref{estimate:N111} yield
\begin{equation}\label{Xi:232:final}
	\abs{\p_{x}\Xi_{232,\epsilon}(x)}\leq \abs{N_{1,\epsilon}+N_{2,\epsilon}}\leq C \norm{H}_{L^{\infty}(\Omega)}, \mbox{ for } x\in\SS^1.
\end{equation}
Therefore, collecting the previous estimates \eqref{Xi231:eps:bound} and \eqref{Xi:232:final} we have shown that 
\begin{equation*}
	\abs{\p_{x}\Xi_{23,\epsilon}(x)}=\abs{ \p_{x}\Xi_{231,\epsilon}(x)+\p_{x}\Xi_{232,\epsilon}(x)}\leq C\norm{H}_{C^{1,\alpha}(\Omega)}, \mbox{ for } x\in \SS^1.
\end{equation*}
Application of dominated convergence as well as the fact that in the previous estimates the integrands where estimated by an integrable function independent of $\epsilon$ shows that the 
\begin{equation}\label{existence:Xi23:eps:limit}
	\lim_{\epsilon\to 0} \p_{x}\Xi_{23,\epsilon}(x):=Q_{23}(x)
\end{equation}  
exists. Therefore, recalling that
$$ \p_{x}\Xi_{\epsilon}(x)= \p_{x}\Xi_{1,\epsilon}(x)+ \p_{x}\Xi_{2,\epsilon}(x)= \p_{x}\Xi_{1,\epsilon}(x)+ \displaystyle\sum_{j=0}^{3} \p_{x}\Xi_{2j,\epsilon}(x)$$
and definitions \eqref{existence:Xi1:eps:limit}, \eqref{existence:Xi2j:eps:limit} and \eqref{existence:Xi23:eps:limit} we infer that
\begin{equation}
	\lim_{\epsilon\to 0} \p_{x} \Xi_{\epsilon}(x):= Q_{1}(x)+\displaystyle\sum_{j=0}^{3}Q_{2j}(x):=\mathcal{Q}(x).
\end{equation}
By the fundamental theorem of calculus, one can readily see that
\begin{equation}
	\Xi_{\epsilon}(x)-\Xi_{\epsilon}(0)= \int_{0}^{x}  \p_{x} \Xi_{\epsilon}(\xi) \ d \xi, \mbox{ for } \epsilon>0.
\end{equation}
We remark that in Lemma \ref{Holder:singular:integral:alpha} we already showed that $\lim_{\epsilon\to 0} \Xi_{\epsilon}(x)=\Xi(x)$ for $x\in\SS^1$ we have by uniqueness that
\begin{equation}
	\lim_{\epsilon\to 0} \p_{x} \Xi_{\epsilon}(x):=\mathcal{Q}(x)=\p_{x} \Xi(x).
\end{equation}
Noticing that we proved in \eqref{Xi1:estimate:final:1alpha} the H\"older semi-norm bound
$$ \abs{\p_{x}\Xi_{1,\epsilon}(x)-\p_{x}\Xi_{1,\epsilon}(0)}\leq C\abs{x}^{\alpha}\norm{H}_{C^{1,\alpha}(\Omega)}, \mbox{ for } x\in\SS^1, $$
and recalling that $ \p_{x}\Xi_{\epsilon}= \p_{x}\Xi_{1,\epsilon}+ \p_{x}\Xi_{2,\epsilon}$ it remains to prove that
\begin{equation}\label{estimate:left:Xi_2}
	\abs{\p_{x}\Xi_{2,\epsilon}(x)-\p_{x}\Xi_{2,\epsilon}(0)}\leq C\abs{x}^{\alpha}\norm{H}_{C^{1,\alpha}(\Omega)}, \mbox{ for } x\in\SS^1.
\end{equation}
Indeed, combining the last two estimates we conclude the $C^{1,\alpha}$ semi-norm estimate \eqref{principal:estimate:1alpha:eps}. 
To that purpose, recalling \eqref{rest:change}-\eqref{R3:last:align} and the decomposition \eqref{decomposition:xi:2} we compute the difference
\begin{align*}
	\p_{x}\Xi_{2,\epsilon}(x)- \p_{x}\Xi_{2,\epsilon}(0) &= \displaystyle\sum_{j=0}^{3}\left( \p_{x}\Xi_{2j,\epsilon}(x)- \p_{x}\Xi_{2j,\epsilon}(0) \right)=M_{0,\epsilon}+M_{1,\epsilon}+M_{2,\epsilon}+M_{3,\epsilon},
\end{align*}
where
\begin{align*}
	M_{0,\epsilon}&= - \int_{\epsilon}^{L}dy\int_{\mathbb{S}^{1}} d\eta \ \big[ \mathcal{R}_{0}(x,y,\eta) \left(\p_{\eta}H( \eta, y)-\p_{\eta}H( x, 0)\right) -\mathcal{R}_{0}(0,y,\eta)  \left(\p_{\eta}H( \eta, y)-\p_{\eta}H( 0, 0)\right) \big], \\
	M_{1,\epsilon}&=- \int_{\epsilon}^{L}dy\int_{\mathbb{S}^{1}}d\eta \ \big[ \mathcal{R}_{1}(x,y,\eta)- \mathcal{R}_{1}(0,y,\eta)\big] H( \eta, y), \\
	M_{2,\epsilon}&=- \int_{\epsilon}^{L}dy\int_{\mathbb{S}^{1}}d\eta \ \big[ \mathcal{R}_{2}(x,y,\eta)- \mathcal{R}_{1}(0,y,\eta)\big] H( \eta, y), \\
	M_{3,\epsilon}&=- \int_{\epsilon}^{L}dy\int_{\mathbb{S}^{1}}d\eta \ \big[ \mathcal{R}_{3}(x,y,\eta)- \mathcal{R}_{3}(0,y,\eta)\big] H( \eta, y).
\end{align*}
with $\mathcal{R}_{j}$ for $j=0,\ldots,3$ defined as in \eqref{R0:last:align}-\eqref{R3:last:align}.

We divide as in the previous proposition, the region of integration $\{ (\eta,y)\in \mathbb{S}^1 \times [\epsilon,L] \}$ into sets of the form 
\begin{equation}\label{regions:integration:R}
	\begin{aligned}
		R^{\epsilon}_{\Omega,\leq}&=\{(\eta,y)\in \Omega:\displaystyle\max\{\abs{y},\abs{\eta}\}\leq 2\abs{x} \mbox{ and } \epsilon\leq y\leq L\}, \\
		R^{\epsilon}_{\Omega,>}&=\{(\eta,y)\in \Omega:\displaystyle\max\{\abs{y},\abs{\eta}\}> 2\abs{x} \mbox{ and } \epsilon\leq y\leq L\}, 
	\end{aligned} 
\end{equation}
for $\epsilon\geq 0$ and estimate each integral in the different sets. For the sake of simplicity we will write $R^{0}_{\Omega,\leq}=R_{\Omega,\leq}$ and $R^{0}_{\Omega,>}=R_{\Omega,>}$.
Therefore, we have
\begin{equation}\label{notation:out:in}
	M_{k,\epsilon}=\int_{R^{\epsilon}_{\Omega,\leq}} dy d\eta \big[\ldots\big]+ \int_{R^{\epsilon}_{\Omega,>}}dy d\eta\big[\ldots\big]= M_{k1,\epsilon}+M_{k2,\epsilon}, \quad \mbox{ for } k=0,\ldots,2. 
\end{equation}
Notice that we did not include above the most singular term, namely $M_{3,\epsilon}$. We will prove the required estimates for that quantity later on. Let us now show how to bound the other integral quantities $M_{0,\epsilon}$ to $M_{2,\epsilon}$ in the different regions of integration $R^{\epsilon}_{\Omega,\leq}$ and $R^{\epsilon}_{\Omega,>}$.

In the inner region $R^{\epsilon}_{\Omega,\leq}$, using the H\"older regularity for $H$ we readily see that
\begin{align}
	\abs{M_{01,\epsilon}} &\leq  C\norm{\Lambda}_{C^{2}(\Omega)} \norm{H}_{C^{1,\alpha}(\Omega)}  \int_{R_{\Omega,\leq}} dy \ d\eta \bigg[ \frac{y^{2}(\abs{x-\eta}^{\alpha}+y^{\alpha})}{\left((x-\eta)^{2}+y^{2}\right)^{3/2}}+\frac{y^2(\abs{\eta}^{\alpha}+y^{\alpha})}{\left(\eta^{2}+y^{2}\right)^{3/2}}\bigg] \nonumber \\
	&\leq C\norm{H}_{C^{1,\alpha}(\Omega)}\abs{x}^{\alpha},\label{estimate:M01:eps} \\
	\abs{M_{11,\epsilon}} &\leq  C\norm{\Lambda}_{C^{2,\alpha}(\Omega)}  \norm{H}_{L^{\infty}(\Omega)} \int_{R_{\Omega,\leq}} dy \ d\eta  \bigg[ \frac{y^{1+\alpha}}{\left((x-\eta)^{2}+y^{2}\right)^{3/2}}+\frac{y^{1+\alpha}}{\left(\eta^{2}+y^{2}\right)^{3/2}}\bigg] \nonumber \\
	&\leq C\norm{H}_{L^{\infty}(\Omega)}\abs{x}^{\alpha}, \\
	\abs{M_{21,\epsilon}} &\leq  C\norm{\Lambda}^{2}_{C^{2}(\Omega)}  \norm{H}_{L^{\infty}(\Omega)} \int_{R_{\Omega,\leq}} dy \ d\eta \bigg[ \frac{y^{3}}{\left((x-\eta)^{2}+y^{2}\right)^{3/2}}+\frac{y^{3}}{\left(\eta^{2}+y^{2}\right)^{3/2}}\bigg] \nonumber \\
	&\leq C \norm{H}_{L^{\infty}(\Omega)}\abs{x}^{\alpha}.
\end{align}

In the outer region $R^{\epsilon}_{\Omega,>}=\{\displaystyle\max\{\abs{y},\abs{\eta}\}> 2\abs{x}\mbox{ and } \epsilon\leq y\leq L\}$, we can estimate $M_{12,\epsilon},M_{22,\epsilon}$ by applying the mean value theorem and using Lemma \ref{calculus:lemma} as
\begin{align}
	\abs{M_{12,\epsilon}}+ \abs{M_{22,\epsilon}}&\leq C\abs{x}\norm{\Lambda}_{C^{2,\alpha}(\Omega)}\norm{H}_{L^{\infty}(\Omega)} \int_{R_{\Omega,>}} dy \ d\eta  \bigg[ \frac{y^{1+\alpha}}{\left((x-\eta)^{2}+y^{2}\right)^{2}}+\frac{y^{3}}{\left((x-\eta)^{2}+y^{2}\right)^{2}}\bigg] \nonumber \\
	&\leq C\abs{x}^{\alpha}\norm{H}_{L^{\infty}(\Omega)}, \label{bound:N22N32}
\end{align}

To estimate the term $M_{02,\epsilon}$, we rewrite it by adding and subtracting as
\begin{align*}
	M_{02,\epsilon}&=- \int_{R^{\epsilon}_{\Omega,>}} dy d\eta \ (\mathcal{R}_{0}(x,y,\eta)-\mathcal{R}_{0}(0,y,\eta)) \left(\p_{\eta}H( \eta, y)-\p_{\eta}H( x, 0)\right) \\
	&-\left(\p_{\eta}H(x,0)-\p_{\eta}H(0,0)\right) \int_{R^{\epsilon}_{\Omega,>}} dy d\eta \ \mathcal{R}_{0}(0,y,\eta)=M_{021,\epsilon}+M_{022,\epsilon}.
\end{align*}
Recalling the definition of $\mathcal{R}_{0}$ in \eqref{R0:last:align}, using Lemma \ref{calculus:lemma} and applying the mean value theorem we find that
\begin{align*}
	\abs{M_{021,\epsilon}}&\leq C\abs{x}\norm{\Lambda}_{C^{2}(\Omega)}\norm{H}_{C^{1,\alpha}(\Omega)}  \int_{R_{\Omega,>}} dy \ d\eta \frac{y^{2}\left((x-\eta)^{\alpha}+y^{\alpha}\right)}{\left((x-\eta)^{2}+y^{2}\right)^{2}}\\
	&\leq C\abs{x}^{\alpha}\norm{\Lambda}_{C^{2}(\Omega)}\norm{H}_{C^{1,\alpha}(\Omega)}.
\end{align*}
Moreover, similarly we obtain
\begin{align*}
	\abs{M_{022,\epsilon}}&\leq C\abs{x}^{\alpha}\norm{\Lambda}_{C^{2}(\Omega)}\norm{H}_{C^{1,\alpha}(\Omega)} \int_{R_{\Omega,>}} dy \ d\eta\frac{y^{2}}{\left(\eta^{2}+y^{2}\right)^{3/2}}\\
	&\leq C\abs{x}^{\alpha}\norm{\Lambda}_{C^{2}(\Omega)}\norm{H}_{C^{1,\alpha}(\Omega)}.
\end{align*}
Hence, we have that
\begin{equation}\label{estimate:M02:eps}
	\abs{M_{02,\epsilon}}\leq C\abs{x}^{\alpha}\norm{H}_{C^{1,\alpha}(\Omega)}.
\end{equation}
Collecting estimates \eqref{estimate:M01:eps}-\eqref{estimate:M02:eps} we obtain that
\begin{equation}\label{estimate:M0:M2:eps}
	\abs{M_{0,\epsilon}+M_{1,\epsilon}+M_{2,\epsilon}}\leq C\abs{x}^{\alpha}\norm{H}_{C^{1,\alpha}(\Omega)}.
\end{equation}
Let us deal with the most singular term $M_{3,\epsilon}$, given by
\begin{equation}
	M_{3,\epsilon}=- \int_{\epsilon}^{L}dy\int_{\mathbb{S}^{1}}d\eta \ \big[ \mathcal{R}_{3}(x,y,\eta)- \mathcal{R}_{3}(0,y,\eta)\big] H( \eta, y), \\
\end{equation}
with $\mathcal{R}_{3}$ as in \eqref{R3:last:align}. We claim that the following estimate holds
\begin{equation}\label{claim:M3:eps}
	\abs{M_{3,\epsilon}}\leq C\abs{x}^{\alpha}\norm{H}_{C^{1,\alpha}(\Omega)}.
\end{equation}

Defining the auxiliary function 
\begin{equation}\label{def:H:tilde:R3}
	\widetilde{H}(\eta,y):= \p_{\eta} \left(\frac{\Lambda(\eta,y)}{y}\right) H( \eta, y) \mbox{ and } \widetilde{\mathcal{R}}_{3}(x,y,\eta)\p_{\eta}\Lambda(\eta,y)=\mathcal{R}_{3}(x,y,\eta)
\end{equation}
we have that
\begin{align}
	M_{3,\epsilon}=&- \int_{\epsilon}^{L}dy\int_{\mathbb{S}^{1}}d\eta \ y \big[ \widetilde{\mathcal{R}}_{3}(x,y,\eta)- \widetilde{\mathcal{R}}_{3}(0,y,\eta)\big] (\widetilde{H}(\eta,y)-\widetilde{H}(0,0)) \nonumber \\
	&\quad+ \widetilde{H}(0,0) \int_{\epsilon}^{L}dy\int_{\mathbb{S}^{1}}d\eta \ y \big[ \widetilde{\mathcal{R}}_{3}(x,y,\eta)- \widetilde{\mathcal{R}}_{3}(0,y,\eta)\big]=M_{4,\epsilon}+M_{5,\epsilon}. \label{M5:eps:splitting}
\end{align}
We first bound $M_{4,\epsilon}$. Rewriting the term we obtain
\begin{align}
	M_{4,\epsilon}&=  - \int_{\epsilon}^{L}dy\int_{\mathbb{S}^{1}}d\eta \ y  \widetilde{\mathcal{R}}_{3}(x,y,\eta) (\widetilde{H}(\eta,y)-\widetilde{H}(x,0)) \nonumber \\
	&\quad \quad +  \int_{\epsilon}^{L}dy\int_{\mathbb{S}^{1}}d\eta \ y  \widetilde{\mathcal{R}}_{3}(x,y,\eta) \left(\widetilde{H}(x,0)-\widetilde{H}(0,0)\right) \nonumber \\
	&\quad \quad \quad  -\int_{\epsilon}^{L}dy\int_{\mathbb{S}^{1}}d\eta \ y  \widetilde{\mathcal{R}}_{3}(0,y,\eta) \left(\widetilde{H}(\eta,y)-\widetilde{H}(0,0)\right)= M_{41,\epsilon}+ M_{42,\epsilon}+ M_{43,\epsilon}. \label{splitting:M4:eps}
\end{align}
Using the decomposition of the regions of integration in \eqref{regions:integration:R} we find that
\begin{align*}
	M_{41,\epsilon}&=- \int_{R^{\epsilon}_{\Omega,\leq}} dy d\eta \ \big[ \ldots \big] -\int_{R^{\epsilon}_{\Omega,>}} dy d\eta \ \big[ \ldots \big]=M_{411,\epsilon}+M_{412,\epsilon}, \\
	M_{43,\epsilon}&=- \int_{R^{\epsilon}_{\Omega,\leq}} dy d\eta \ \big[ \ldots \big] -\int_{R^{\epsilon}_{\Omega,>}} dy d\eta \ \big[ \ldots \big]=M_{431,\epsilon}+M_{432,\epsilon}.
\end{align*}
Similarly as before, using the bound \eqref{est:calcu} we have that
\begin{align}\label{M411:M413:eps}
	\abs{M_{411,\epsilon}}&\leq  C \norm{\widetilde{H}}_{C^{\alpha}(\Omega)}  \int_{R_{\Omega,\leq}} dy \ d\eta \frac{y^{2}(\abs{x-\eta}^{\alpha}+y^{\alpha})}{\left((x-\eta)^{2}+y^{2}\right)^{2}} 
	\leq C\norm{\widetilde{H}}_{C^{\alpha}(\Omega)}\abs{x}^{\alpha}, \\
	\abs{M_{431,\epsilon}} &\leq  C \norm{\widetilde{H}}_{C^{\alpha}(\Omega)}  \int_{R_{\Omega,\leq}} dy \ d\eta \frac{y^{2}(\abs{\eta}^{\alpha}+y^{\alpha})}{\left(\abs{\eta}^{2}+y^{2}\right)^{2}} 
	\leq C\norm{\widetilde{H}}_{C^{\alpha}(\Omega)}\abs{x}^{\alpha}.
\end{align}
On the exterior region $R^{\epsilon}_{\Omega,>}$ given in \eqref{regions:integration:R}, we obtain after rearranging terms by adding and substracting $\widetilde{H}(0,0)$ that
\begin{align}
	M_{412,\epsilon}+M_{432,\epsilon}&=   \int_{R^{\epsilon}_{\Omega,>}} dy d\eta  \ y  \widetilde{\mathcal{R}}_{3}(x,y,\eta) (\widetilde{H}(x,0)-\widetilde{H}(0,0))\\
	&\quad \quad -\int_{R^{\epsilon}_{\Omega,>}} dy d\eta \ y \left(\widetilde{\mathcal{R}}_{3}(x,y,\eta)-\widetilde{\mathcal{R}}_{3}(0,y,\eta)\right) (\widetilde{H}(\eta,y)-\widetilde{H}(0,0)). \\
	&=M_{6,\epsilon}+M_{7,\epsilon}
\end{align}
The later integral can be estimated using Lemma \ref{calculus:lemma} and the mean value theorem as
	\begin{equation}\label{estimate:M7:eps}
		\abs{M_{7,\epsilon}}\leq C  \norm{\Lambda}_{C^{2}(\Omega)}\norm{\widetilde{H}}_{C^{\alpha}(\Omega)}  \int_{R_{\Omega,>}} dy \ d\eta \abs{x} \frac{y^{2}\left(\abs{\eta}^{\alpha}+y^{\alpha}\right)}{\left((x-\eta)^{2}+y^{2}\right)^{3/2}} \leq C \abs{x}^{
		\alpha}\norm{\widetilde{H}}_{C^{\alpha}(\Omega)}.
	\end{equation}

On the other hand, to bound $M_{6,\epsilon}$, we follow the same ideas as we did to estimate the term $K_{4,\epsilon}$ or $K_{6,\epsilon}$ in Lemma \ref{Holder:singular:integral:alpha}. Using the decomposition \eqref{decomposition:Lambda} and bound \eqref{bound:square}, we can write
\begin{align*}
	M_{6,\epsilon}&=(\widetilde{H}(x,0)-\widetilde{H}(0,0)) \int_{R^{\epsilon}_{\Omega,>}} dy d\eta  \ y^2  \frac{e^{i(x-\eta)-y}(e^{i(x-\eta)-y-i\Lambda(\eta,y)})^2}{(1-e^{i(x-\eta)-y})(1-e^{i(x-\eta)-y-iA(\eta)y}\left(1+r_{1}(\eta,y)\right)))^3}   \\
	&=(\widetilde{H}(x,0)-\widetilde{H}(0,0)) \int_{R^{\epsilon}_{\Omega,>}} dy d\eta \ y^2  \frac{e^{i(x-\eta)-y}(e^{i(x-\eta)-y-i\Lambda(\eta,y)})^2}{(1-e^{i(x-\eta)-y})(1-e^{i(x-\eta)-y-iA(\eta)y})^3}\left(1+r_2(\eta,y)\right)  \\
	&= (\widetilde{H}(x,0)-\widetilde{H}(0,0)) \left( \int_{R^{\epsilon}_{\Omega,>}} dy \ d\eta \big[\ldots \big]+ \int_{R^{\epsilon}_{\Omega,>}} dy \ d\eta \big[\ldots \big] r_{2}(\eta,y)\right)=M_{61,\epsilon}+M_{62,\epsilon}
\end{align*}
where the remainder term $r_{1}(\eta,y)$ can be bounded $\abs{r_1(\eta,y)}\leq Cy^{2}$ and remainder $r_2(\eta,y)$ is bounded by $\abs{r_2(\eta,y)}\leq C \abs{y}.$ The integrand in $M_{62,\epsilon}$ is integrable and can be bounded by means of Lemma \ref{calculus:lemma} as
\begin{equation}\label{M62:bound:eps}
	\abs{M_{62,\epsilon}}\leq C \abs{x}^\alpha \norm{\widetilde{H}}_{C^{\alpha}(\Omega)}  \int_{R_{\Omega,>}} dy \ d\eta \frac{y^3}{\left((x-\eta)^2+y^{2}\right)^{2}}\leq C \abs{x}^\alpha \norm{\widetilde{H}}_{C^{\alpha}(\Omega)}. 
\end{equation}
The most involved term is $M_{61,\epsilon}$.  To deal with it, we argue as in the estimate of $K_{41,\epsilon}$ in \eqref{estimate:K41:eps} or $K_{61,\epsilon}$ in \eqref{estimate:K61:eps}. Using Taylor expansion, recalling decomposition and the bound \eqref{A:bound} we infer that
\begin{equation}\label{estimate:M61:eps}
M_{61,\epsilon}= (\widetilde{H}(x,0)-\widetilde{H}(0,0)) \int_{R^{\epsilon}_{\Omega,>}} dy \ d\eta \frac{y^2}{(i(x-\eta)-y)(-i(x-\eta)-y-iA(0)y)^{3}}+M_{612,\epsilon}
\end{equation}
with $\abs{M_{612,\epsilon}} \leq C \abs{x}^{\alpha} \norm{\widetilde{H}}_{C^{\alpha}(\Omega)}.$ Performing the change of variables $x-\eta=y\zeta$ we have that
\begin{align}
	M_{611,\epsilon}&= (\widetilde{H}(x,0)-\widetilde{H}(0,0)) \int_{R^{\epsilon}_{\Omega,>}} dy \ d\eta \frac{y^2}{(i(x-\eta)-y)(-i(x-\eta)-y-iA(0)y)^{3}} \\
	&=-(\widetilde{H}(x,0)-\widetilde{H}(0,0)) \int_{\epsilon}^{L}\ \frac{dy}{y}\int_{\widetilde{\Sigma}(x,y)} \frac{d\zeta}{(i\zeta+1)(i\zeta+1+iA(0))^3}  
\end{align}
where the domain integration $\widetilde{\Sigma}$ is given by
$$\widetilde{\Sigma}(x,y)= \{\zeta\in \mathbb{R}: \max \{1,\abs{\zeta-\dfrac{x}{y}}\}> \frac{2\abs{x}}{y}, \ \abs{\zeta}\leq \frac{\pi}{y} \}.$$
We recall that the integration domain $\widetilde{\Sigma}$ is just a shifted version of the integration domain $\Sigma$ defined in \eqref{sigma:def}. Therefore, 
similarly as we estimated integral \eqref{integral:L}, we have that
\begin{equation}\label{bound:M611}
	\abs{M_{611,\epsilon}}\leq  C \abs{x}^\alpha \norm{\widetilde{H}}_{C^{\alpha}(\Omega)}.
\end{equation}
and hence, collecting estimates \eqref{M62:bound:eps}, \eqref{estimate:M61:eps}, \eqref{bound:M611} we find that
\begin{equation}\label{M6:eps:final}
	\abs{M_{6,\epsilon}}\leq C \abs{x}^{\alpha} \norm{\widetilde{H}}_{C^{\alpha}(\Omega)}.
\end{equation}
To end the bound for $M_{4,\epsilon}$ we are left to estimate $M_{42,\epsilon}$ in \eqref{splitting:M4:eps}. Identifying $\SS^1$ with $I_{x}=[x-\pi,x+\pi]$ for $x\in[-\pi,\pi]$ and using Taylor's expansion and decomposition \eqref{decomposition:Lambda} we obtain that
\begin{align}
	M_{42,\epsilon}= \left(\widetilde{H}(x,0)-\widetilde{H}(0,0)\right)   \int_{\epsilon}^{L}dy\int_{I_x} d\eta \ \frac{y^2 }{(i(x-\eta)-y)(i(x-\eta)-y-iyA(\eta))^3}+M_{422,\epsilon} \label{splitting:M42:eps}
	\end{align}
where the term $M_{422,\epsilon}$ has an  integrable singularity and can be estimated using \eqref{bound:square}, namely 
\begin{equation} \label{estimate:M422:eps}
	\abs{M_{422,\epsilon}}\leq C\abs{x}^{\alpha} \norm{\widetilde{H}}_{C^{\alpha}(\Omega)}. 
\end{equation}
We further decompose  the function $A(\eta)$ by 
$$	A(\eta)=A(x)+[A(\eta)-A(x)], \mbox{ where } \abs{A(\eta)-A(x)}\leq \delta_{0} \abs{x-\eta}^{\alpha}, $$
to write
\begin{align*}
	M_{422,\epsilon}= \left(\widetilde{H}(x,0)-\widetilde{H}(0,0)\right) \int_{\epsilon}^{L}dy\int_{I_x} d\eta \ \frac{y^2 }{(i(x-\eta)-y)(i(x-\eta)-y-iyA(x))^3}+M_{4222,\epsilon}
	\end{align*}
with $\abs{M_{4222,\epsilon}}\leq C \abs{x}^{\alpha} \norm{\widetilde{H}}_{C^{\alpha}(\Omega)}$. After a change of variable $(x-\eta)=y\zeta$ we find that
\begin{align*}
	\left(\widetilde{H}(x,0)-\widetilde{H}(0,0)\right) & \int_{\epsilon}^{L}dy\int_{I_x} d\eta \ \frac{y^2 }{(i(x-\eta)-y)(i(x-\eta)-y-iyA(x))^3} \\
	&= \left(\widetilde{H}(x,0)-\widetilde{H}(0,0)\right)  \int_{\epsilon}^{L}dy\int_{-\frac{\pi}{y}}^{\frac{\pi}{y}} \frac{d\zeta}{(i\zeta+1)(i\zeta+1+iA(x))^3}.
\end{align*}
Estimating the integral as in \eqref{treat:N11:eps} by extending the value $\zeta$ to the whole space $\mathbb{R}$ and contour integrating via residues yields that
\begin{equation}\label{estimate:M422:eps:fin}
	\abs{M_{422,\epsilon}}\leq  C \abs{x}^{\alpha} \norm{\widetilde{H}}_{C^{\alpha}(\Omega)}
\end{equation}
and hence estimates \eqref{splitting:M42:eps}, \eqref{estimate:M422:eps} and \eqref{estimate:M422:eps:fin} shows that
\begin{equation} \label{M42:est:final}
		\abs{M_{42,\epsilon}}\leq  C \abs{x}^{\alpha} \norm{\widetilde{H}}_{C^{\alpha}(\Omega)}.
	\end{equation}
	Thus, combining bounds \eqref{splitting:M4:eps}, \eqref{M411:M413:eps}, \eqref{estimate:M7:eps}, \eqref{M6:eps:final} and \eqref{M42:est:final} we conclude that
\begin{equation}\label{M4:eps:final}
	\abs{M_{4,\epsilon}}\leq C \abs{x}^{\alpha} \norm{\widetilde{H}}_{C^{\alpha}(\Omega)}.
\end{equation}
We provide now the estimate for $M_{5,\epsilon}$ in \eqref{M5:eps:splitting}. Using the change of variables $\widetilde{\eta}=\eta-x$ and recalling the definition of $\mathcal{R}_{3}$ in \eqref{R3:last:align} we find that
\begin{align*}
	M_{5,\epsilon}&= \widetilde{H}(0,0) \int_{\epsilon}^{L}dy\int_{\mathbb{S}^{1}}d\eta \ \frac{y^2 (e^{-i\eta-y})^3}{(1-e^{-i\eta-y})}\big[ \frac{(e^{-i\Lambda(\eta+x,y)})^2}{(1-e^{-i \eta-y-i\Lambda(\eta+x,y)})^3}-\frac{(e^{-i\Lambda(\eta,y)})^2}{(1-e^{-i \eta-y-i\Lambda(\eta,y)})^3} \big] \\
	& \hspace{-0.5cm} =\widetilde{H}(0,0) \int_{\epsilon}^{L}dy\int_{\mathbb{S}^{1}}d\eta \ \frac{y^2 (e^{-i\eta-y})^3}{(1-e^{-i\eta-y})}\big[ \frac{(e^{-i\Lambda(\eta+x,y)})^2-(e^{-i\Lambda(\eta,y)})^2}{(1-e^{-i \eta-y-i\Lambda(\eta+x,y)})^3} \big] \\
	& +\widetilde{H}(0,0) \int_{\epsilon}^{L}dy\int_{\mathbb{S}^{1}}d\eta \ \frac{y^2 (e^{-i\eta-y})^3}{(1-e^{-i\eta-y})} (e^{-i\Lambda(\eta,y)})^2 \big[\frac{1}{(1-e^{-i \eta-y-i\Lambda(\eta+x,y)})^3}-\frac{1}{(1-e^{-i \eta-y-i\Lambda(\eta,y)})^3}\big] \\
	&\quad \quad = M_{51,\epsilon}+M_{52,\epsilon}
\end{align*}
where by abusing of notation we wrote $\eta$ instead of $\widetilde{\eta}$. Identifying $\SS^1$ with $I_{x}=[x-\pi,x+\pi]$ for $x\in [-\pi,\pi]$, using bound \eqref{est:calcu} and Taylor expansion we have that
\begin{align}\label{M51:eps}
	\abs{M_{51,\epsilon}} &\leq C \norm{\widetilde{H}}_{L^{\infty}(\Omega)} \int_{\epsilon}^{L}dy\int_{I_{x}} \frac{y^3}{(\abs{\eta}^2+y^2)^4}\frac{ \abs{\Lambda(\eta+x,y)-\Lambda(\eta,y)}}{y}  \nonumber \\
	&\leq  C\norm{\Lambda}_{C^{2,\alpha}(\Omega)} \norm{\widetilde{H}}_{L^{\infty}(\Omega)} \int_{\epsilon}^{L}dy\int_{I_{x}} \abs{x} \frac{y^3}{(\abs{\eta}^2+y^2)^2} \leq C \abs{x}^{\alpha} \norm{\widetilde{H}}_{L^{\infty}(\Omega)},
\end{align}
where in the second inequality we used Assumption \ref{assumption:Lambda}
to bound 
\begin{equation}\label{computation:extra:Lambda}
\frac{\abs{\Lambda(\eta+x,y)-\Lambda(\eta,y)}}{y}\leq C \abs{x}  \norm{\Lambda}_{C^{2,\alpha}(\Omega)}.
\end{equation}

To get the desired bound for $M_{52,\epsilon}$ let us first rewrite the term in brackets inside the integral. To that purpose, denoting by $\mathfrak{D}=e^{-i \eta-y-i\Lambda(\eta,y)}$ we have that
\begin{align}\label{rewrite:brackets}
	\frac{1}{(1-e^{-i \eta-y-i\Lambda(\eta+x,y)})^3}-\frac{1}{(1-\mathfrak{D})^3}
&=\frac{1}{(1-\mathfrak{D}e^{-i(\Lambda(\eta+x,y)-\Lambda(\eta,y)})^3}-\frac{1}{(1-\mathfrak{D})^3} \nonumber \\
&=\frac{1}{(1-\mathfrak{D})^{3}}\bigg[\frac{1}{\left(1-\frac{\mathfrak{D}(e^{-i(\Lambda(\eta+x,y)-\Lambda(\eta,y))}-1)}{(1-\mathfrak{D})}\right)^3}-1\bigg].
\end{align}
Therefore, plugging \eqref{rewrite:brackets} into $M_{52,\epsilon}$ and recalling that $\mathfrak{D}=e^{-i \eta-y-i\Lambda(\eta,y)}$
we have that
\begin{equation}
 M_{52,\epsilon}=\widetilde{H}(0,0) \int_{\epsilon}^{L}dy\int_{\mathbb{S}^{1}}d\eta \ \frac{y^2 (e^{-i\eta-y})^3 (e^{-i\Lambda(\eta,y)})^2}{(1-e^{-i\eta-y})(1-e^{-i\eta-y-i\Lambda(\eta,y)})^3} \ \mathfrak{J}_{x}(\eta,y)  
\end{equation}
where 
$$\mathfrak{J}_{x}(\eta,y) =\frac{1}{\left(1-\frac{e^{-i\eta-y-i\Lambda(\eta,y)}(e^{-i(\Lambda(\eta+x,y)-\Lambda(\eta,y))}-1)}{(1-e^{-i\eta-y-i\Lambda(\eta,y)})}\right)^3}-1.$$
Next, we claim that the function $\mathfrak{J}_{x}(\eta,y)$ is $C^\alpha$ H\"older is the variables $(\eta,y)\in \Omega$, this is
\begin{equation}\label{bound:Calpha:mathfrakJ}
\norm{\mathfrak{J}_{x}(\eta,y)}_{C^{\alpha}(\Omega)}\leq C \abs{x}^{\alpha}.
\end{equation}
Indeed, to show the pointwise $L^{\infty}(\Omega)$ bound we readily check by applying Taylor's expansion, Assumption \ref{assumption:Lambda} and bound \eqref{computation:extra:Lambda} that
\begin{equation}\label{Linfty:bound:mathcalJ}
  \norm{\mathfrak{J}_{x}(\eta,y)}_{L^{\infty}(\Omega)}\leq  C\abs{x} \norm{\Lambda}_{C^{2,\alpha}(\Omega)} \leq C\abs{x}.
\end{equation}
To show the $\alpha-$H\"older semi-norm bound, we have prove that
\begin{equation}\label{alpha:Holder:seminorm:bound}
 \abs{\mathfrak{J}_{x}(\eta_1,y_1)-\mathfrak{J}_{x}(\eta_2,y_2)}\leq C\abs{x}^{\alpha}\left(\abs{y_{1}-y_{2}}+\abs{\eta_{1}-\eta_{2}} \right)^{\alpha},\mbox{ for } \eta_1,\eta_2\in\mathbb{S}^{1}, y_{1},y_{2}\in [\epsilon,L].
\end{equation}
By computing the difference and using the notation 
\begin{align*}
\mathfrak{N}_{x}(\eta_{1},y_{1})&=\frac{e^{-i\eta_{1}-y_{1}-i\Lambda(\eta_{1},y_{1})}(e^{-i(\Lambda(\eta_{1}+x,y_{1})-\Lambda(\eta_{1},y_{1}))}-1)}{(1-e^{-i\eta_{1}-y_{1}-i\Lambda(\eta_{1},y_{1})})}, \\
\mathfrak{N}_{x}(\eta_{2},y_{2})&=\frac{e^{-i\eta_{2}-y_{2}-i\Lambda(\eta_{2},y_{2})}(e^{-i(\Lambda(\eta_{2}+x,y_{2})-\Lambda(\eta_{2},y_{2}))}-1)}{(1-e^{-i\eta_{2}-y_{2}-i\Lambda(\eta_{2},y_{2})})}
\end{align*} 
we have that
\begin{align}
\abs{\mathfrak{J}_{x}(\eta_1,y_1)-\mathfrak{J}_{x}(\eta_2,y_2)}&=\abs{ \frac{1}{\left(1-\mathfrak{N}_{x}(\eta_{1},y_{1})\right)^3}-\frac{1}{\left(1-\mathfrak{N}_{x}(\eta_{2},y_{2})\right)^3}} \\
&=\abs{\int_{\mathfrak{N}_{x}(\eta_{2},y_{2})}^{\mathfrak{N}_{x}(\eta_{1},y_{1})}\frac{3}{(1-\zeta)^{4}} d\zeta} \leq C \abs{\mathfrak{N}_{x}(\eta_{1},y_{1})-\mathfrak{N}_{x}(\eta_{2},y_{2})}.
\end{align}
Moreover, further manipulations show that
\begin{equation}
\abs{\mathfrak{N}_{x}(\eta_{1},y_{1})-\mathfrak{N}_{x}(\eta_{2},y_{2})}= \mathfrak{I}_{1}+\mathfrak{I}_{2}
\end{equation}
where
\begin{equation*}
\mathfrak{I}_{1}=\abs{e^{-i\eta_{1}-y_{1}-i\Lambda(\eta_{1},y_{1})}\frac{(e^{-i(\Lambda(\eta_{1}+x,y_{1})-\Lambda(\eta_{1},y_{1}))}-1)}{y_{1}}  \bigg[\frac{y_{1}}{(1-e^{-i\eta_{1}-y_{1}-i\Lambda(\eta_{1},y_{1})})}- \frac{y_{2}}{(1-e^{-i\eta_{2}-y_{2}-i\Lambda(\eta_{2},y_{2})})}\bigg]}
\end{equation*}
\begin{align*}
\mathfrak{I}_{2}=&\abs{\bigg[ e^{-i\eta_{1}-y_{1}-i\Lambda(\eta_{1},y_{1})}\frac{(e^{-i(\Lambda(\eta_{1}+x,y_{1})-\Lambda(\eta_{1},y_{1}))}-1)}{y_{1}}- e^{-i\eta_{2}-y_{2}-i\Lambda(\eta_{2},y_{2})}\frac{(e^{-i(\Lambda(\eta_{2}+x,y_{2})-\Lambda(\eta_{2},y_{2}))}-1)}{y_{2}}\bigg]} \\
& \quad \quad \times \abs{\frac{y_{2}}{(1-e^{-i\eta_{2}-y_{2}-i\Lambda(\eta_{2},y_{2})})}}.
\end{align*}
Let us first bound $\mathfrak{I}_{1}$. Using Taylor's expansion, \eqref{computation:extra:Lambda} and Assumption \ref{assumption:Lambda} we have that
\begin{equation}\label{estimate:mathfrak:I1:final}
    \mathfrak{I}_{1}\leq C \abs{x} \norm{\Lambda}_{C^{2,\alpha}(\Omega)} \abs{\frac{y_{1}}{(1-e^{-i\eta_{1}-y_{1}-i\Lambda(\eta_{1},y_{1})})}- \frac{y_{2}}{(1-e^{-i\eta_{2}-y_{2}-i\Lambda(\eta_{2},y_{2})})}} \leq C\abs{x} \left( \abs{y_{1}-y_{2}}+\abs{\eta_{1}-\eta_{2}}\right).
\end{equation}
To bound $\mathfrak{I}_{2}$, we further split it as  $
\mathfrak{I}_{2}= \mathfrak{I}_{21}+\mathfrak{I}_{22}
$
where
\begin{align*}
\mathfrak{I}_{21}&=\abs{ \left(e^{-i\eta_{1}-y_{1}-i\Lambda(\eta_{1},y_{1})}-e^{-i\eta_{2}-y_{2}-i\Lambda(\eta_{2},y_{2})}\right)
\frac{(e^{-i(\Lambda(\eta_{1}+x,y_{1})-\Lambda(\eta_{1},y_{1}))}-1)}{y_{1}}\left( \frac{y_{2}}{1-e^{-i\eta_{2}-y_{2}-i\Lambda(\eta_{2},y_{2})}}\right)}, \\
\mathfrak{I}_{22}&=\abs{\left(\frac{(e^{-i(\Lambda(\eta_{1}+x,y_{1})-\Lambda(\eta_{1},y_{1}))}-1)}{y_{1}}-\frac{(e^{-i(\Lambda(\eta_{2}+x,y_{2})-\Lambda(\eta_{2},y_{2}))}-1)}{y_{2}} \right)
\frac{e^{-i\eta_{2}-y_{2}-i\Lambda(\eta_{2},y_{2})}y_{2}}{1-e^{-i\eta_{2}-y_{2}-i\Lambda(\eta_{2},y_{2})}}}.
\end{align*}
Similarly as before, Taylor's expansion, \eqref{computation:extra:Lambda} and Assumption \ref{assumption:Lambda} yields 
\begin{equation}\label{estimate:mathfrak:I21}
\mathfrak{I}_{21}\leq C\abs{x} \abs{e^{-i\eta_{1}-y_{1}-i\Lambda(\eta_{1},y_{1})}-e^{-i\eta_{2}-y_{2}-i\Lambda(\eta_{2},y_{2})}} \leq C\abs{x}\left( \abs{y_{1}-y_{2}}+\abs{\eta_{1}-\eta_{2}} \right).
\end{equation}
To bound $\mathfrak{I}_{22}$ we first notice that by Lemma \ref{calculus:lemma} we have that
\begin{equation}\label{mathfrak:I22:estimate}
\mathfrak{I}_{22} \leq C \abs{\left(\frac{(e^{-i(\Lambda(\eta_{1}+x,y_{1})-\Lambda(\eta_{1},y_{1}))}-1)}{y_{1}}-\frac{(e^{-i(\Lambda(\eta_{2}+x,y_{2})-\Lambda(\eta_{2},y_{2}))}-1)}{y_{2}} \right)}
\end{equation}
On the other hand, denoting by 
$$ \mathfrak{L}_{x}(\eta,y)=\frac{\Lambda(\eta+x,y)-\Lambda(\eta,y)}{y} \mbox{ and } \mathfrak{H}(\xi,y)= \frac{e^{-iy\xi}-1}{y} $$
we have that
\begin{align*}
\frac{(e^{-i(\Lambda(\eta_{1}+x,y_{1})-\Lambda(\eta_{1},y_{1}))}-1)}{y_{1}}-\frac{(e^{-i(\Lambda(\eta_{2}+x,y_{2})-\Lambda(\eta_{2},y_{2}))}-1)}{y_{2}}= \int_{(\xi_1,y_1)}^{(\xi_2,y_2)}\bigg[  \frac{\partial \mathfrak{H} }{\partial\xi} (\xi,y) \ d\xi + \frac{\partial \mathfrak{H} }{\partial y} (\xi,y) \ dy \bigg]
\end{align*}
where the right hand side is the line integral connecting the point $(\xi_1,y_1)$ with $(\xi_2,y_2)$ where $\xi_1=\mathfrak{L}_{x}(\eta_1,y_1)$ and $\xi_2=\mathfrak{L}_{x}(\eta_2,y_2)$. The contour of integration consists in a horizontal segment connecting $(\xi_1,y_1)$ with $(\xi_2,y_1)$ plus a vertical segment connecting this point with  $(\xi_2,y_2)$. 
After a direct computation using Taylor's expansion, we can check that
\begin{equation*}
\abs{\frac{\partial \mathfrak{H} }{\partial \xi}}\leq C, \quad \abs{\frac{\partial \mathfrak{H} }{\partial y}}\leq C\abs{\xi},
\end{equation*}
and hence
\begin{align}
\abs{\int_{(\xi_1,y_1)}^{(\xi_2,y_2)}\bigg[  \frac{\partial \mathfrak{H} }{\partial\xi} (\xi,y) \ d\xi + \frac{\partial \mathfrak{H} }{\partial y} (\xi,y) \ dy \bigg]}
& \leq C \left(  \abs{\mathfrak{L}_{x}(\eta_2,y_1)-\mathfrak{L}_{x}(\eta_1,y_1)} + \abs{\mathfrak{L}_{x}(\eta_2,y_2)} \abs{y_{1}-y_{2}} \right)\nonumber \\
& = \mathfrak{I}_{221}+\mathfrak{I}_{222}.
\end{align}
Using estimate \eqref{computation:extra:Lambda} and Assumpion \ref{assumption:Lambda} we can readily check that
\begin{equation}\label{mathfrakI222}
\mathfrak{I}_{222}\leq C \abs{x}\abs{y_{1}-y_{2}}.
\end{equation}
To bound $\mathfrak{I}_{221}$ we proceed as follows. First, notice that for $W(\eta,y)=\frac{\Lambda(\eta,y)}{y}$ we have that
\begin{align*}
\mathfrak{L}_{x}(\eta_1,y_1)-\mathfrak{L}_{x}(\eta_2,y_1) &=\big[W(\eta_{1}+x,y_1)-W(\eta_1,y_1)\big]-\big[W(\eta_{2}+x,y_1)-W(\eta_2,y_1)\big] \\
&=\big[W(\eta_{1}+x,y_1)-W(\eta_1,y_1)\big]-\big[W(\eta_{2}+x,y_1)-W(\eta_2,y_1)\big] \\
&=\int_{\eta_{1}}^{\eta_{1}+x} \frac{\partial W}{\partial \eta}(\xi,y_1) \ d \xi -  \int_{\eta_{2}}^{\eta_{2}+x} \frac{\partial W}{\partial \eta}(\xi,y_1) \ d \xi 
\end{align*}
After changing variables in the second integral, we infer that
\begin{align*}
\mathfrak{L}_{x}(\eta_1,y_1)-\mathfrak{L}_{x}(\eta_2,y_2) &= \bigg[\int_{\eta_{1}}^{\eta_{1}+x} \frac{\partial W}{\partial \eta}(\xi,y_1) \ d \xi -  \int_{\eta_{1}}^{\eta_{1}+x} \frac{\partial W}{\partial \eta}((\eta_{2}-\eta_{1})+\xi,y_1) \ d \xi \bigg] = \mathfrak{M}_1
\end{align*}
Recalling that $W(\eta,y)=\frac{\Lambda(\eta,y)}{y}$ and using Assumption \ref{assumption:Lambda}, we have that $W\in C^{1,\alpha}(\Omega)$. Therefore, we find that
\begin{align*}
\abs{\mathfrak{M}_1} \leq C \int_{\eta_{1}}^{\eta_{1}+x} \abs{\eta_2-\eta_1}^{\alpha} \ d\xi \leq C\abs{x} \abs{\eta_2-\eta_1}^{\alpha} 
\end{align*}
and hence 
\begin{equation}\label{bound:mathfrak:differencia}
\abs{\mathfrak{L}_{x}(\eta_1,y_1)-\mathfrak{L}_{x}(\eta_2,y_1)}\leq C \abs{x}  \abs{\eta_2-\eta_1}^{\alpha}.
\end{equation}
Collecting the  estimates  \eqref{mathfrakI222}, \eqref{bound:mathfrak:differencia} we obtain that
\begin{align*}
\abs{\frac{(e^{-i(\Lambda(\eta_{1}+x,y_{1})-\Lambda(\eta_{1},y_{1}))}-1)}{y_{1}}-\frac{(e^{-i(\Lambda(\eta_{2}+x,y_{2})-\Lambda(\eta_{2},y_{2}))}-1)}{y_{2}}}&\leq \abs{\int_{(\xi_1,y_1)}^{(\xi_2,y_2)}\bigg[  \frac{\partial \mathfrak{H} }{\partial\xi} (\xi,y) \ d\xi + \frac{\partial \mathfrak{H} }{\partial y} (\xi,y) \ dy \bigg]} \\
&\leq C \abs{x}  \abs{\eta_2-\eta_1}^{\alpha},
\end{align*}
which combined with \eqref{mathfrak:I22:estimate} yields the desired estimate for $\mathfrak{I}_{22}$, namely,
\begin{equation}\label{estimate:mathfrak:I22:final}
\mathfrak{I}_{22}\leq C \abs{x}  \abs{\eta_2-\eta_1}^{\alpha}
\end{equation}
Therefore, recalling that $\mathfrak{I}_{2}=\mathfrak{I}_{21}+\mathfrak{I}_{22}$ and bounds \eqref{estimate:mathfrak:I21}, \eqref{estimate:mathfrak:I22:final} we find that
\begin{equation}\label{estimate:mathfrak:I2:final}
\mathfrak{I}_2 \leq C \abs{x} \left(  \abs{y_{1}-y_{2}}+ \abs{\eta_2-\eta_1}^{\alpha}\right).
\end{equation}
The later estimate and \eqref{estimate:mathfrak:I1:final} provide the $\alpha$-H\"older semi-norm bound \eqref{alpha:Holder:seminorm:bound} since
\begin{align*}
\abs{\mathfrak{J}_{x}(\eta_1,y_1)-\mathfrak{J}_{x}(\eta_2,y_2)}\leq C \abs{\mathfrak{N}_{x}(\eta_{1},y_{1})-\mathfrak{N}_{x}(\eta_{2},y_{2})} &= \mathfrak{I}_{1}+ \mathfrak{I}_{2}  \\
&\leq C \abs{x} \left(  \abs{y_{1}-y_{2}}+ \abs{\eta_2-\eta_1}^{\alpha}\right),
\end{align*}
proving bound \eqref{bound:Calpha:mathfrakJ} and concluding the claim.

Hence using the fact that the function $\mathfrak{J}_{x}(\eta,y)$ is $C^\alpha$ H\"older and satisfies bound \eqref{bound:Calpha:mathfrakJ} we can bound the term $M_{52,\epsilon}$ as follows. Adding and subtracting  $\mathfrak{J}_{x}(0,0)$ we have that  
\begin{align*}
 M_{52,\epsilon}&=\widetilde{H}(0,0) \int_{\epsilon}^{L}dy\int_{\mathbb{S}^{1}}d\eta \ \frac{y^2 (e^{-i\eta-y})^3 (e^{-i\Lambda(\eta,y)})^2}{(1-e^{-i\eta-y})(1-e^{-i\eta-y-i\Lambda(\eta,y)})^3} \left( \mathfrak{J}_{x}(\eta,y)-  \mathfrak{J}_{x}(0,0) \right) \\
 \quad &+\widetilde{H}(0,0) \mathfrak{J}_{x}(0,0) \int_{\epsilon}^{L}dy\int_{\mathbb{S}^{1}}d\eta \ \frac{y^2 (e^{-i\eta-y})^3 (e^{-i\Lambda(\eta,y)})^2}{(1-e^{-i\eta-y})(1-e^{-i\eta-y-i\Lambda(\eta,y)})^3}=  M_{521,\epsilon}+ M_{522,\epsilon}.
\end{align*}
Identifying $\SS^1$ with the symmetric interval $I_{x}=[x-\pi,x+\pi]$ for $x\in [-\pi,\pi]$, using bound \eqref{est:calcu} and the $\alpha$-H\"older bound \eqref{bound:Calpha:mathfrakJ} for $ \mathfrak{J}_{x}(\eta,y)$ we obtain
\begin{equation}
\abs{M_{521,\epsilon}}\leq C\abs{x}^{\alpha} \norm{\widetilde{H}}_{L^{\infty}(\Omega)} \int_{\epsilon}^{L}dy\int_{I_{x}}d\eta \frac{y^{2}(\abs{\eta}^{\alpha}+y^{\alpha})}{(\eta^{2}+y^{2})^{2}}\leq C\abs{x}^{\alpha} \norm{\widetilde{H}}_{L^{\infty}(\Omega)}.
\end{equation}
To deal with the second integral $M_{522,\epsilon}$, we notice that we can mimick the ideas that we used to bound the singular term $K_{4,\epsilon}$ in \eqref{K:4:eps:firstpart} which relies on using the decomposition $\Lambda(\eta,y)=yA(\eta)+[\Lambda(\eta,y)-A(\eta)y]$ and bound \eqref{bound:square}. The main diference is that, for $M_{522,\epsilon}$, the $\abs{x}^{\alpha}$ power is obtained using the fact that $\abs{\mathfrak{J}_{x}(0,0)}\leq C\abs{x}^{\alpha}.$ Combining those elements, and closely following the arguments to estimate $K_{4,\epsilon}$ one can find that
\begin{equation}
\abs{M_{522,\epsilon}}\leq C\abs{x}^{\alpha} \norm{\widetilde{H}}_{L^{\infty}(\Omega)}.
\end{equation}
Therefore, since $ M_{52,\epsilon}=M_{521,\epsilon}+M_{522,\epsilon}$ we conclude that 
\begin{equation}\label{M52:eps}
   \abs{ M_{52,\epsilon}}\leq C \abs{x}^{\alpha} \norm{\widetilde{H}}_{L^{\infty}(\Omega)}.
\end{equation}
Estimate \eqref{M51:eps} and \eqref{M52:eps} shows that
\begin{equation}\label{M5:eps}
    \abs{M_{5,\epsilon}}\leq \abs{M_{51,\epsilon}+M_{52,\epsilon}}\leq C \abs{x}^{\alpha} \norm{\widetilde{H}}_{L^{\infty}(\Omega)}.
\end{equation}

Collecting the previous estimates \eqref{M4:eps:final} and \eqref{M5:eps}, and recalling that $M_{3,\epsilon}=M_{4,\epsilon}+M_{5,\epsilon}$ we infer that
\begin{equation}\label{M3eps:quasi:final}
\abs{M_{3,\epsilon}}\leq C \abs{x}^{\alpha} \norm{\widetilde{H}}_{C^{\alpha}(\Omega)}.
\end{equation}
Moreover, using the definition of $\widetilde{H}$ in \eqref{def:H:tilde:R3} and Assumption \ref{assumption:Lambda} we infer that
\begin{equation}\label{tildeH:estimate}
	\norm{\widetilde{H}}_{C^{\alpha}(\Omega)}\leq \norm{\Lambda}_{C^2(\Omega)}\norm{H}_{C^{\alpha}(\Omega)}\leq C\norm{H}_{C^{\alpha}(\Omega)},
\end{equation}
and hence \eqref{M3eps:quasi:final} and \eqref{tildeH:estimate} yield
\begin{equation*}
\abs{M_{3,\epsilon}}\leq C \abs{x}^{\alpha} \norm{H}_{C^{\alpha}(\Omega)},
\end{equation*}
proving claim \eqref{claim:M3:eps}.

Hence, by means of estimates \eqref{estimate:M0:M2:eps} and \eqref{claim:M3:eps} we conclude that
\begin{equation}\label{Xi2:estimate:final:1alpha:eps}
	\abs{\p_{x}\Xi_{2,\epsilon}(x)- \p_{x}\Xi_{2,\epsilon}(0)} =\abs{M_{0,\epsilon}+M_{1,\epsilon}+M_{2,\epsilon}+M_{3,\epsilon}}\leq C \abs{x}^{\alpha} \norm{H}_{C^{1,\alpha}(\Omega)}, \mbox{ for } x\in \SS^1.
\end{equation}
Recalling that 
$$\p_{x}\Xi_{\epsilon}(x)=\p_{x}\Xi_{1,\epsilon}(x)+\p_{x}\Xi_{2,\epsilon}(x)$$
and bounds \eqref{Xi1:estimate:final:1alpha}, \eqref{Xi2:estimate:final:1alpha:eps} we find that
\begin{equation}\label{Xi:final:estimate:1alpha:eps}
	\abs{\p_{x}\Xi_{\epsilon}(x)- \p_{x}\Xi_{\epsilon}(0)} \leq C\abs{x}^{\alpha}\norm{H}_{C^{1,\alpha}(\Omega)}, \mbox{ for } x\in \SS^1
\end{equation}
which shows the desired $C^{1,\alpha}$ semi-norm bound in \eqref{principal:estimate:1alpha:eps}. 

To conclude the proof of the proposition, we claim that the $C^{1,\alpha}$ semi-norm bound \eqref{Xi:final:estimate:1alpha:eps} and the $L^{\infty}$ pointwise bound \eqref{pointwise:bound:version2} implies the  $C^{1,\alpha}$ H\"older norm \eqref{cota:singular:2}. This is, we have to show that we can control the $L^\infty$ norm of $\p_{x}\Xi_{\epsilon}$. Indeed, by means of the identity
\begin{equation}
	\Xi_{\epsilon}(x_{1})= \Xi_{\epsilon}(x_2)+\p_{x}\Xi_{\epsilon}(x_{2})(x_{1}-x_{2}) -\int_{x_1}^{x_2} \left(\p_{x}\Xi_{\epsilon}(x)-\p_{x}\Xi_{\epsilon}(x_{2})\right) dx, \mbox{ for } x_1,x_2\in \SS^1
\end{equation}
and using the $C^{1,\alpha}$ semi-norm bound \eqref{Xi:final:estimate:1alpha:eps} and the $L^{\infty}$ pointwise bound \eqref{pointwise:bound:version2}
\begin{align}
	\abs{\p_{x}\Xi_{\epsilon}(x_{2})} &\leq \frac{1}{2\pi}2\norm{\Xi}_{L^\infty}+\abs{\p_{x}\Xi_{\epsilon}(x)-\p_{x}\Xi_{\epsilon}(x_{2})}  \leq \frac{C}{\pi} \norm{H}_{C^{\alpha}(\Omega)}
	+C(2\pi)^{\alpha}\norm{H}_{C^{1,\alpha}(\Omega)} \label{new:der:Linfty:Xi}
\end{align}
since $\abs{x_{1}-x_{2}}\leq 2\pi$. Therefore, estimate \eqref{new:der:Linfty:Xi} and the previous $C^{1,\alpha}$ semi-norm bound \eqref{Xi:final:estimate:1alpha:eps} implies the desired $C^{1,\alpha}$ H\"older norm estimate \eqref{cota:singular:2}.
\end{proof}
\subsection{\texorpdfstring{$C^\alpha$}{Lg} and \texorpdfstring{$C^{1,\alpha}$}{Lg} estimates for simplified singular integral operators} 
We provide a similar type of estimate for a simplified operator that do not have the dependence on the function $\Lambda$ (and hence is of convolution type) which reads
\begin{proposition}\label{Holder:singular:integral:2}
Let $H(\eta, y)\in C^{1,\alpha}(\Omega)$ and let Assumption \ref{assumption:Lambda} hold. Then for any $x\in \mathbb{S}^{1}$ the following limit exists
\begin{equation} \label{int:sin:2}
	\displaystyle\lim_{\epsilon\to 0^{+}} \widetilde{\Xi}_{\epsilon}(x)=\widetilde{\Xi}(x),
\end{equation} 
where
\begin{equation} \label{int:sin:2:eps}
	\widetilde{\Xi}_{\epsilon}(x)= \int_{\epsilon}^{L}dy\int_{\mathbb{S}^{1}}d\eta \  \p_{x} \left( \frac{e^{i(x-\eta)-y}}{1-e^{i(x-\eta)-y}} \right) 
	H( \eta, y), \ \epsilon>0.
\end{equation}
Moreover, we have that
\begin{align}
	\norm{\widetilde{\Xi}}_{C^{\alpha}(\mathbb{S}^{1})}&\leq C \norm{H}_{C^{\alpha}(\Omega)}, \label{cota:singular:3} \\
	\norm{\widetilde{\Xi}}_{C^{1,\alpha}(\mathbb{S}^{1})}&\leq C \norm{H}_{C^{1,\alpha}(\Omega)}.\label{cota:singular:4}
\end{align}
with $C>0$.
\end{proposition}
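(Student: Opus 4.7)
The strategy is to adapt the proofs of Propositions \ref{Holder:singular:integral:alpha} and \ref{Holder:singular:integral:1alpha} to this simplified, translation-invariant kernel. Denote $\mathcal{K}(x,y)=\frac{e^{ix-y}}{1-e^{ix-y}}$ so that the integrand in \eqref{int:sin:2:eps} is $\partial_{x}\mathcal{K}(x-\eta,y)H(\eta,y)$. Two simplifications compared to Proposition \ref{Holder:singular:integral:alpha} will occur: first, there is no $\Lambda$-dependent factor in the denominator, so the remainder term $\mathcal{R}$ of \eqref{rest} is absent; second, the missing factor of $y$ in the numerator makes the kernel more singular, but this is compensated by the translation invariance $\partial_{x}\mathcal{K}(x-\eta,y)=-\partial_{\eta}\mathcal{K}(x-\eta,y)$, which lets us transfer derivatives onto $H$.

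For existence of the limit and estimate \eqref{cota:singular:3}, I would integrate by parts in $\eta$ (using the identity above and the $2\pi$-periodicity) after subtracting $H(x,0)$ from $H(\eta,y)$, obtaining
\begin{equation*}
\widetilde{\Xi}_{\epsilon}(x)=\int_{\epsilon}^{L}dy\int_{\SS^{1}}d\eta\,\mathcal{K}(x-\eta,y)\,\partial_{\eta}H(\eta,y) - \int_{\epsilon}^{L}dy\int_{\SS^{1}}d\eta\,\partial_{\eta}\mathcal{K}(x-\eta,y)\,\big(H(\eta,y)-H(x,0)\big).
\end{equation*}
The first term is bounded by $C\Vert H\Vert_{C^{1,\alpha}(\Omega)}$ using the pointwise estimate $|\mathcal{K}(x-\eta,y)|\leq C/\sqrt{(x-\eta)^{2}+y^{2}}$ (which follows from Lemma \ref{calculus:lemma} with $\Lambda\equiv 0$); the second one, more singular, is controlled by $C\Vert H\Vert_{C^{\alpha}(\Omega)}$ thanks to $|\partial_{\eta}\mathcal{K}|\leq C/((x-\eta)^{2}+y^{2})$ combined with the Hölder factor $|x-\eta|^{\alpha}+y^{\alpha}$. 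Dominated convergence yields existence of the limit and the pointwise bound. For the $C^{\alpha}$ seminorm, I would then split the difference $\widetilde{\Xi}(x)-\widetilde{\Xi}(0)$ into inner and outer regions $R_{\Omega,\leq}=\{\max\{y,|\eta|\}\leq 2|x|\}$ and $R_{\Omega,>}=\{\max\{y,|\eta|\}>2|x|\}$, exactly as in Proposition \ref{Holder:singular:integral:alpha}. The inner integrals give $|x|^{\alpha}$ by direct power counting; the outer integrals split into a mean-value term (gaining a factor $|x|$ against an $L^{1}$ weight $y/((x-\eta)^{2}+y^{2})^{3/2}$) plus a $(H(x,0)-H(0,0))$-boundary term whose integrand, after rescaling $\eta=y\zeta$, is evaluated by contour integration as in $K_{4,\epsilon}$ of Proposition \ref{Holder:singular:integral:alpha}.

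For the $C^{1,\alpha}$ bound \eqref{cota:singular:4}, observe that differentiating once under the integral and using $\partial_{x}^{2}\mathcal{K}(x-\eta,y)=-\partial_{x}\partial_{\eta}\mathcal{K}(x-\eta,y)$ followed by integration by parts in $\eta$ gives
\begin{equation*}
\partial_{x}\widetilde{\Xi}_{\epsilon}(x)=\int_{\epsilon}^{L}dy\int_{\SS^{1}}d\eta\,\partial_{x}\mathcal{K}(x-\eta,y)\,\partial_{\eta}H(\eta,y),
\end{equation*}
which is exactly the operator in \eqref{int:sin:2:eps} applied to $\partial_{\eta}H\in C^{\alpha}(\Omega)$. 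Applying estimate \eqref{cota:singular:3} (just proved) to $\partial_{\eta}H$ in place of $H$ delivers the $C^{\alpha}$ control of $\partial_{x}\widetilde{\Xi}$ and hence the full $C^{1,\alpha}$ norm after combining with the $L^{\infty}$ bound from \eqref{cota:singular:3}.

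The main technical obstacle, as in Proposition \ref{Holder:singular:integral:alpha}, is the most singular contribution on the outer region $R_{\Omega,>}$ coming from the $H(x,0)-H(0,0)$ boundary term. Direct absolute-value estimation produces a logarithmic divergence, so the residue calculation after rescaling $\eta=yz$ (using that the only poles of the rational integrand lie at $z=-i$ and are avoided by the real axis) is again needed to cancel the divergent part. Apart from this point, the argument is substantially shorter than that of Proposition \ref{Holder:singular:integral:1alpha} because the absence of $\Lambda$ removes the chain of auxiliary identities \eqref{decomposition:Lambda}--\eqref{bound:square} and eliminates the terms $\mathcal{R}_{0},\ldots,\mathcal{R}_{3}$ and $M_{3,\epsilon}$.
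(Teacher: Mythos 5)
Your identification of the key simplifications --- translation invariance of $\mathcal{K}$ and the absence of the remainder $\mathcal{R}$ --- is correct, and your reduction of the $C^{1,\alpha}$ bound to the $C^{\alpha}$ bound applied to $\partial_{\eta}H$ is clean and precisely in the spirit of the paper. However, there is a genuine error in your displayed decomposition of $\widetilde{\Xi}_{\epsilon}$. From $\partial_{x}\mathcal{K}(x-\eta,y)=-\partial_{\eta}\mathcal{K}(x-\eta,y)$ one can \emph{either} integrate by parts, obtaining $\int_{\epsilon}^{L}\!\int_{\SS^{1}}\mathcal{K}\,\partial_{\eta}H$, \emph{or} use $\int_{\SS^{1}}\partial_{\eta}\mathcal{K}\,d\eta=0$ to subtract $H(x,0)$, obtaining $-\int_{\epsilon}^{L}\!\int_{\SS^{1}}\partial_{\eta}\mathcal{K}\,\bigl(H(\eta,y)-H(x,0)\bigr)$. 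Each of these two expressions on its own equals $\widetilde{\Xi}_{\epsilon}(x)$; writing $\widetilde{\Xi}_{\epsilon}$ as their \emph{sum} double-counts, and the right-hand side of your display in fact equals $2\widetilde{\Xi}_{\epsilon}(x)$. This matters for the logic of the estimate: the paper keeps only the subtracted representation, which is what yields the $C^{\alpha}$ bound \eqref{cota:singular:3} with a constant depending only on $\Vert H\Vert_{C^{\alpha}(\Omega)}$. Your ``first term'' costs $\Vert H\Vert_{C^{1,\alpha}(\Omega)}$, which is strictly more regularity than \eqref{cota:singular:3} is entitled to use; you should discard it and work only with the subtracted form.

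A secondary, non-fatal deviation concerns the most singular outer-region contribution, the analogue of $D_{22,\epsilon}$ in the paper. You propose the rescaling $\eta=y\zeta$ followed by a residue computation, mirroring $K_{4,\epsilon}$ in Proposition \ref{Holder:singular:integral:alpha}. That route is viable (the rescaled integrand $\sim(i\zeta+1)^{-2}$ has its only pole at $\zeta=-i$, so closing the contour in the upper half-plane gives zero), but the paper exploits the fact that without $\Lambda$ the kernel is explicit and the iterated integral can be computed exactly: after the change of variable $\widetilde{\eta}=x-\eta$, the inner $y$-integral of $e^{i\widetilde{\eta}-y}/(1-e^{i\widetilde{\eta}-y})^{2}$ has an elementary antiderivative $-(1-e^{i\widetilde{\eta}-y})^{-1}$, and the subsequent $\widetilde{\eta}$-integration produces logarithms whose differences over the shifted endpoints are $O(|x|^{\alpha})$. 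This direct evaluation avoids the Taylor-expansion and residue machinery entirely and is the genuinely shorter argument that the convolution kernel buys you. Both methods work; the paper's is simply the more elementary one for this particular operator.
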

\begin{proof}[Proof of Proposition \ref{Holder:singular:integral:2}]
The proof follows closely the ideas of Proposition \ref{Holder:singular:integral:alpha} and Proposition \ref{Holder:singular:integral:1alpha} and the estimates can be shown mimicking the arguments used there. For the sake of completeness, we include the computations to derive bound \eqref{cota:singular:3}, being the $C^{1,\alpha}$ bound \eqref{cota:singular:4} analogous. To show that the left hand side of \eqref{int:sin:2} exists, we first notice that 
\begin{equation} 
	\p_{x} \left( \frac{e^{ix-y}}{1-e^{i(x-\eta)-y}} \right)=-\p_{\eta} \left( \frac{e^{ix-y}}{1-e^{i(x-\eta)-y}} \right).
\end{equation}
Using the fact that $\int_{\mathbb{S}^{1}} d\eta \ \p_{\eta}(\ldots)=0$, we rewrite \eqref{int:sin:2:eps} as
\begin{equation}\label{int:sin:2:eps:2}
	\widetilde{\Xi}_{\epsilon}(x)=- \int_{\epsilon}^{L}dy\int_{\mathbb{S}^{1}}d\eta \  \p_{\eta} \left( \frac{e^{ix-y}}{1-e^{i(x-\eta)-y}} \right) \left( H( \eta, y)-H(x,0) \right).
\end{equation}
Expanding the derivative and using Lemma \ref{calculus:lemma} (for the trivial case of $\Lambda\equiv 0$), where for $x\in [-\pi,\pi]$ we identify $\eta\in \SS^1$ with the symmetric interval $I_{x}=[x-\pi,x+\pi]$ we find that
\begin{align}
	\abs{\widetilde{\Xi}_{\epsilon}(x)} &\leq C \norm{H}_{C^{\alpha}(\Omega)} \displaystyle\lim_{\epsilon\to 0^{+}} \int_{\epsilon}^{L}dy\int_{I_{x}}d\eta \frac{(\abs{x-\eta}^{\alpha}+y^{\alpha})}{(x-\eta)^{2}+y^{2}}\leq C\norm{H}_{C^{\alpha}(\Omega)}. 
\end{align}
Since we are estimating the integrands of \eqref{int:sin:2:eps:2} by integrable functions that are independent on $\epsilon$ it follows from Lebesgue Dominated Convergence Theorem that 
$\lim_{\epsilon\to 0^{+}}\widetilde{\Xi}_{\epsilon}(x)$ exists. Therefore, the limit on the left hand side of \eqref{int:sin:2} exists and the function $\widetilde{\Xi}$ is well-defined.

To obtain the H\"older estimate, we compute the difference $\widetilde{\Xi}_{\epsilon}(x)-\widetilde{\Xi}_{\epsilon}(0)$ and divide the region of integration in the integrals $\int_{\epsilon}^{L} dy \int_{\mathbb{S}^{1}} d\eta (\ldots)$ into sets of the form 
\begin{align*}
	R^{\epsilon}_{\Omega,\leq}&=\{(\eta,y)\in \Omega:\displaystyle\max\{\abs{y},\abs{\eta}\}\leq 2\abs{x} \mbox{ and } \epsilon\leq y\leq L\}, \\
	R^{\epsilon}_{\Omega,>}&=\{(\eta,y)\in \Omega:\displaystyle\max\{\abs{y},\abs{\eta}\}> 2\abs{x} \mbox{ and } \epsilon\leq y\leq L\},
\end{align*} 
as
\begin{align*}
	\widetilde{\Xi}_{\epsilon}(x)-\widetilde{\Xi}_{\epsilon}(0)&= \int_{R^{\epsilon}_{\Omega,\leq}}dy\ d\eta \bigg[\left(\frac{ie^{i(x-\eta)-y}}{(1-e^{i(x-\eta)-y})^2}\right) \left(H( \eta, y)-H(x,0)\right)\\
	&\hspace{1cm} -\left(\frac{ie^{-i\eta-y}}{(1-e^{-i\eta-y})^2}\right) \left(H( \eta, y)-H(0,0)\right) \bigg] \\
	&+  \int_{R^{\epsilon}_{\Omega,\leq}}dy\ d\eta  \bigg[\left(\frac{ie^{i(x-\eta)-y}}{(1-e^{i(x-\eta)-y})^2}\right) \left(H( \eta, y)-H(x,0)\right)\\
	&\hspace{1cm} -\left(\frac{ie^{-i\eta-y}}{(1-e^{-i\eta-y})^2}\right) \left(H( \eta, y)-H(0,0)\right)=D_{1,\epsilon}+D_{2,\epsilon}.
\end{align*}
In the inner region, we have that using Lemma \ref{calculus:lemma} (again in the trivial case of $\Lambda\equiv 0$),
\begin{align}
	\abs{D_{1,\epsilon}} &\leq C \norm{H}_{C^{\alpha}(\Omega)}\int_{R_{\Omega,\leq}}dy\ d\eta \bigg[ \frac{(\abs{x-\eta}^{\alpha}+y^{\alpha})}{(x-\eta)^{2}+y^{2}}+\frac{(\abs{\eta}^{\alpha}+y^{\alpha})}{\eta^{2}+y^{2}}\bigg] \nonumber  \\
	&\leq C \norm{H}_{C^{\alpha}(\Omega)}\abs{x}^{\alpha} \label{estimate:M_1}.
\end{align}
In the outer region we rewrite the term $D_{2,\epsilon}$ as follows
\begin{align*}
	D_{2,\epsilon}&=\int_{R^{\epsilon}_{\Omega,>}} dy \ d\eta \bigg[\left(\frac{ie^{i(x-\eta)-y}}{(1-e^{i(x-\eta)-y})^2}\right)-\left(\frac{ie^{-i\eta-y}}{(1-e^{-i\eta-y})^2}\right)\bigg] \left(H( \eta, y)-H(0,0)\right) \\
	& \quad -\left(H(x, 0)-H(0,0)\right)\int_{R{\epsilon}_{\Omega,>}} dy \ d\eta \left(\frac{ie^{i(x-\eta)-y}}{(1-e^{i(x-\eta)-y})^2}\right)=D_{21,\epsilon}+D_{22,\epsilon}.
\end{align*}
The first integral $D_{21,\epsilon}$ can be bounded using the mean value theorem as
\begin{align}
	\abs{D_{21,\epsilon}} &\leq C\abs{x} \norm{H}_{C^{\alpha}(\Omega)}\int_{R_{\Omega,>}} dy \ d\eta \bigg[  \frac{(\abs{\eta}^{\alpha}+\abs{y}^{\alpha})}{\left((x-\eta)^{2}+y^{2}\right)^{3/2}}\bigg] \leq C\abs{x}^{\alpha} \norm{H}_{C^{\alpha}(\Omega)}. \label{estimate:M_21}
\end{align}
To deal with $D_{22,\epsilon}$, we make the following change of variables $\widetilde{\eta}=x-\eta$ to find that
\begin{align*}
	D_{22,\epsilon}&=-i\left(H(x, 0)-H(0,0)\right)\displaystyle\lim_{\epsilon\to 0^{+}}\bigg[\int_{-\frac{1}{2}}^{\frac{1}{2}} d\widetilde{\eta} \int_{2\abs{x}}^{L} dy \frac{e^{i\widetilde{\eta}-y}}{(1-e^{i\widetilde{\eta}-y})^2}+\int_{-\frac{1}{2}}^{-2\abs{x}}d\widetilde{\eta}  \int_{\epsilon}^{L} dy \frac{e^{i\widetilde{\eta}-y}}{(1-e^{i\widetilde{\eta}-y})^2} \\
	&\hspace{6cm} +\int_{2\abs{x}}^{\frac{1}{2}}d\widetilde{\eta}  \int_{\epsilon}^{L} dy\frac{e^{i\widetilde{\eta}-y}}{(1-e^{i\widetilde{\eta}-y})^2}   \bigg] \\
	&=-i\left(H(x, 0)-H(0,0)\right)\displaystyle\lim_{\epsilon\to 0^{+}}\bigg[ \int_{-\frac{1}{2}}^{\frac{1}{2}}d\widetilde{\eta} \left( \frac{e^{i\widetilde{\eta}}}{e^{i\widetilde{\eta}}-e^{L}}-\frac{e^{i\widetilde{\eta}}}{e^{i\widetilde{\eta}}-e^{2\abs{x}}} \right) +\int_{-\frac{1}{2}}^{-2\abs{x}}d\widetilde{\eta} \left( \frac{e^{i\widetilde{\eta}}}{e^{i\widetilde{\eta}}-e^{L}}-\frac{e^{i\widetilde{\eta}}}{e^{i\widetilde{\eta}}-e^{\epsilon}} \right) \\
	&\hspace{6cm}+ \int_{2\abs{x}}^{\frac{1}{2}}d\widetilde{\eta} \left( \frac{e^{i\widetilde{\eta}}}{e^{i\widetilde{\eta}}-e^{L}}-\frac{e^{i\widetilde{\eta}}}{e^{i\widetilde{\eta}}-e^{\epsilon}} \right)  \bigg] \\
	&=\left(H(x, 0)-H(0,0)\right) \displaystyle\lim_{\epsilon\to 0^{+}} \left( \bigg[\displaystyle\log(\frac{e^{2\abs{x}}-e^{i\widetilde{\eta}}}{e^{L}-e^{i\widetilde{\eta}}})\bigg]^{\widetilde{\eta}=\frac{1}{2}}_{\widetilde{\eta}=-\frac{1}{2}}+\bigg[\displaystyle\log(\frac{e^{\epsilon}-e^{i\widetilde{\eta}}}{e^{L}-e^{i\widetilde{\eta}}})\bigg]^{\widetilde{\eta}=-2\abs{x}}_{\widetilde{\eta}=-\frac{1}{2}}+\bigg[\displaystyle\log(\frac{e^{\epsilon}-e^{i\widetilde{\eta}}}{e^{L}-e^{i\widetilde{\eta}}})\bigg]^{\widetilde{\eta}=2\abs{x}}_{\widetilde{\eta}=\frac{1}{2}} \right)
\end{align*}
Therefore we have that
\begin{equation}\label{estimate:M_22}
	\abs{D_{22,\epsilon}}\leq C  \abs{x}^{\alpha}\norm{H}_{C^{\alpha}(\Omega)}
\end{equation}
which together with \eqref{estimate:M_21} yields
\begin{equation}\label{estimate:M_2:final}
	\abs{D_{2,\epsilon}}\leq C  \abs{x}^{\alpha}\norm{H}_{C^{\alpha}(\Omega)}
\end{equation}
Combining estimates \eqref{estimate:M_1}-\eqref{estimate:M_2:final} we infer that
\begin{equation}
	\abs{\widetilde{\Xi}(x)-\widetilde{\Xi}(0)} \leq \displaystyle\lim_{\epsilon\to 0^{+}} \abs{D_{1,\epsilon}+D_{2,\epsilon}} \leq C \abs{x}^{\alpha}\norm{H}_{C^{\alpha}(\Omega)} 
\end{equation}
which shows the desired $C^\alpha$ semi-norm bound \eqref{cota:singular:3}.
\end{proof}

To conclude this subsection, let us state the following result which will be needed later to show the contracting estimates in Subsection \ref{sec:4:3}.
\begin{proposition}[Difference $C^\alpha$ estimate]\label{Holder:lemma:difference}
Let $H(\eta, y)\in C^{\alpha}(\Omega)$ and let $\Lambda^1,\Lambda^2$ satisfy Assumption \ref{assumption:Lambda}. Then for any $x\in \mathbb{S}^{1}$ the following limit exists
\begin{equation} \label{int:sin:1:diff}
	\displaystyle\lim_{\epsilon\to 0^{+}} \Xi^{d}_{\epsilon}(x)=\Xi^{d}(x),
\end{equation} 
where
\begin{equation} \label{int:sin:diff:eps}
	\Xi^{d}_{\epsilon}(x)=\displaystyle\lim_{\epsilon\to 0^{+}} \int_{\epsilon}^{L}dy\int_{\mathbb{S}^{1}}d\eta \  \p_{x} \left(\frac{e^{i(x-\eta)-y}y}{(1-e^{i(x-\eta)-y-i\Lambda^1(\eta,y)})(1-e^{i(x-\eta)-y-i\Lambda^2(\eta,y)})}\right) H( \eta, y), \ \epsilon>0.
\end{equation}
Moreover, we have that
\begin{align}\label{cota:singular:diff}
	\norm{\Xi^{d}}_{C^{\alpha}(\mathbb{S}^{1})}&\leq C \norm{H}_{C^{\alpha}(\Omega)}  
\end{align}
with $C>0$.
\end{proposition}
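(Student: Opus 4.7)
The plan is to reduce Proposition \ref{Holder:lemma:difference} to essentially the same scheme used in the proof of Proposition \ref{Holder:singular:integral:alpha}, modified so that both factors in the denominator carry a nontrivial $\Lambda$ instead of only one. First, let me set $\mathcal{A}^{d}(x,y,\eta)$ to be the bracketed expression in \eqref{int:sin:diff:eps}, and split
\begin{equation*}
\p_{x}\mathcal{A}^{d}(x,y,\eta)=-\p_{\eta}\mathcal{A}^{d}(x,y,\eta)-\mathcal{R}^{d}(x,y,\eta),
\end{equation*}
where $\mathcal{R}^{d}$ is a sum of two terms, each proportional to $\p_{\eta}\Lambda^{j}(\eta,y)$ for $j=1,2$, with the same pole structure as \eqref{rest} but with both $1-e^{i(x-\eta)-y}$ factors replaced by the perturbed ones. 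The key ingredient is that Lemma \ref{calculus:lemma} applied in turn to $\Lambda^{1}$ and $\Lambda^{2}$ gives a uniform bound
\begin{equation*}
\lvert 1-e^{i(x-\eta)-y-i\Lambda^{j}(\eta,y)}\rvert\geq \tfrac{1}{2\sqrt{2}}\sqrt{(x-\eta)^{2}+y^{2}},\qquad j=1,2,
\end{equation*}
which controls the product in the denominator of $\mathcal{A}^{d}$ by $\tfrac{1}{8}((x-\eta)^{2}+y^{2})$ in the small-argument regime.

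Next, I would establish existence of the limit in \eqref{int:sin:1:diff} by integrating by parts in the $\p_{\eta}$ piece against $H(\eta,y)-H(x,0)$ (using that $\int_{\SS^{1}}\p_{\eta}(\cdots)\,d\eta=0$), and keeping $\mathcal{R}^{d}$ paired with $H(\eta,y)$ directly. The H\"older regularity of $H$ together with the extra $y$ factor produce integrands bounded by $\frac{y(|x-\eta|^{\alpha}+y^{\alpha})}{((x-\eta)^{2}+y^{2})^{3/2}}$ or better, which are $\epsilon$-independently integrable, so dominated convergence gives the existence of $\Xi^{d}(x)$ and the pointwise bound $|\Xi^{d}(x)|\leq C\norm{H}_{C^{\alpha}(\Omega)}$.

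For the $\alpha$-H\"older seminorm, I would compute $\Xi^{d}_{\epsilon}(x)-\Xi^{d}_{\epsilon}(0)$ and decompose the $(\eta,y)$-integration into the same inner/outer regions $R^{\epsilon}_{\Omega,\leq}$ and $R^{\epsilon}_{\Omega,>}$ as in the proof of Proposition \ref{Holder:singular:integral:alpha}. In the inner region, the H\"older estimate for $H$ directly yields the required $|x|^{\alpha}$ factor after integration. In the outer region, the mean value theorem applied to those terms that have room (i.e. that produce an $(|x-\eta|^{\alpha}+y^{\alpha})$ factor after pairing with $H(\eta,y)-H(x,0)$) furnishes an extra $|x|$ and keeps the resulting integral convergent, again giving $|x|^{\alpha}$.

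The main obstacle, as in the single-$\Lambda$ case, are the analogs of $K_{4,\epsilon}$ and $K_{6,\epsilon}$ from the proof of Proposition \ref{Holder:singular:integral:alpha}: the two terms of the form $(H(x,0)-H(0,0))\int_{R^{\epsilon}_{\Omega,>}}(\cdots)\,dy\,d\eta$, where direct absolute-value estimation only yields a logarithmically divergent integral. Here I would use the decomposition $\Lambda^{j}(\eta,y)=yA^{j}(\eta)+[\Lambda^{j}(\eta,y)-yA^{j}(\eta)]$ with $A^{j}(\eta)=\p_{y}\Lambda^{j}(\eta,0)$ and $|\Lambda^{j}-yA^{j}|\leq Cy^{2}$, Taylor-expand the denominator to peel off integrable remainders, and then replace $A^{j}(\eta)$ by $A^{j}(0)+(A^{j}(\eta)-A^{j}(0))$ (with the difference controlled by $\delta_{0}|\eta|^{\alpha}$). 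The surviving principal part becomes, after the rescaling $\eta=y\zeta$,
\begin{equation*}
(H(x,0)-H(0,0))\int_{\epsilon}^{L}\frac{dy}{y}\int_{\Sigma(x,y)}\frac{d\zeta}{(i\zeta+1+iA^{1}(0))^{m}(i\zeta+1+iA^{2}(0))^{k}}
\end{equation*}
for suitable integers $m,k\geq 1$ with $m+k\geq 3$. Since $A^{1}(0),A^{2}(0)\in\RR$ with $|A^{j}(0)|\leq \delta_{0}<\tfrac{1}{2}$, all poles lie strictly in the lower half plane, so closing the contour in the upper half plane yields zero, exactly as in the computation around \eqref{estimate:L11} and \eqref{estimate:N111}. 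The part of the $\zeta$-integral on the complement of $\RR$ is controlled by a constant, as in \eqref{estimate:L2:nocal}. This residue cancellation is the one nontrivial step and works identically here because the two perturbed poles are independently shifted along the imaginary axis without crossing it. Combining the inner and outer bounds, the $|x|^{\alpha}$ estimate \eqref{cota:singular:diff} follows, with a constant independent of $\Lambda^{1},\Lambda^{2}$ (beyond the $\delta_{0}$ threshold).
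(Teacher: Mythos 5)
Your proposal is correct and coincides with the paper's intended argument: the paper proves this proposition only by stating that it is an elementary adaptation of Proposition \ref{Holder:singular:integral:alpha} (which is recovered from the present statement by taking $\Lambda^1=0$, $\Lambda^2=\Lambda$), and you have faithfully traced each step of that proof through the two-perturbation setting, including the identity $\p_x\mathcal{A}^d=-\p_\eta\mathcal{A}^d-\mathcal{R}^d$, the double use of Lemma \ref{calculus:lemma}, the inner/outer split, and the residue cancellation for the logarithmically critical terms after the rescaling $\eta=y\zeta$.
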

\begin{proof}[Proof of Proposition \ref{Holder:lemma:difference}]
This result can be proved by a means an elementary adaption of the Proposition \ref{Holder:singular:integral:alpha}. Notice that the only difference between this result and Proposition \ref{Holder:singular:integral:alpha} is that instead of a single function $\Lambda$ appearing as a perturbation in the denominator we have 
two different functions $\Lambda^1,\Lambda^2$ affecting the denominator in \eqref{int:sin:diff:eps}. 
Actually, Proposition \ref{Holder:singular:integral:alpha} is a particular case in which we take $\Lambda^1=0 $ and $\Lambda^2=\Lambda$. The proof of this Proposition \ref{Holder:lemma:difference} can be showed around similar lines of Proposition \ref{Holder:singular:integral:alpha} just estimating the corrective terms due to $\Lambda^1,\Lambda^2$  as it was made in the proof of Proposition \ref{Holder:singular:integral:alpha}. We will not provide the details here.
\end{proof}

\section{The a priori estimates for the operators \texorpdfstring{$\mathsf{T}_{1},\ldots,\mathsf{T}_{4}$}{Lg}.}\label{sec:4:2}
In this section, we will provide the $C^\alpha$ and $C^{1,\alpha}$ H\"older estimates for the operators $\mathsf{T}_{1},\ldots,\mathsf{T}_{4}$ and the function $\mathsf{G}$. The key point towards the estimates relies on the $C^\alpha$ and $C^{1,\alpha}$ H\"older bounds showed in the previous Section \ref{S4}. 

In order to do so, let us introduce the following new assumption:

\begin{Assumption}\label{assumption:Theta}
We assume that the function $\vartheta:\Omega \to \mathbb{S}^{1}$ belongs to $C^{1,\alpha}(\Omega)$ and also that there exists $\delta_1\in(0,\frac{1}{2})$ such that
\begin{equation*}
	\norm{\vartheta}_{C^{1,\alpha}(\Omega)}\leq \delta_1.
\end{equation*}
\end{Assumption}
\begin{remark}
The new function $\vartheta(\eta,y)$ will play the role of the function $\partial_{\xi} \Theta(X(\eta,y),y))$ in the operators $\mathsf{T}_{1},\ldots,\mathsf{T}_{4}$. However, using this notation reduces the length of the formulas.
\end{remark}

\begin{remark}\label{remark:operator:T}
In the following we will define the operators $\mathsf{T}_{1},\ldots,\mathsf{T}_{4}$ by means of certain integral expressions which represent operators from $C^\alpha$ to $C^\alpha$ and from $C^{1,\alpha}$ to $C^{1,\alpha}$. For the sake of simplicity, we will use the same symbol to denote these operators, in spite of the fact that they act in different spaces. The space on which they act will be clear in each particular case from the context.
\end{remark}

\begin{proposition}[Estimates $\mathsf{T}_{1}$] \label{Proposition:estimate:T1}
Let Assumption \ref{assumption:Lambda} and Assumption \ref{assumption:Theta} hold. Then for $j_0\in C^{1,\alpha}(\SS^1)$ we define the operator $\mathsf{T}_{1}$ as follows
\begin{equation}\label{def:T1:second}
	\mathsf{T}_{1}[\Lambda,\vartheta]j_{0}(x)=-\frac{1}{2\pi} \displaystyle \lim_{\epsilon\to 0^{+}}\int_{\mathbb{S}^{1}} \mathfrak{G}_{1,\epsilon}(x-\eta,\eta)j_{0}(\eta) \ d\eta, \mbox{ for } x\in \SS^1,
\end{equation}
where
\begin{equation}\label{def:G1:second}
	\mathfrak{G}_{1,\epsilon}(x,\eta)=\displaystyle\sum_{n=-\infty}^{n=\infty}\frac{n\sinh(nL)}{(\cosh(nL)-1)}e^{inx}\int_{\epsilon}^{L} e^{-\abs{n}y}  \left[ \frac{e^{-in\Lambda(\eta,y)}-1}{1+\vartheta(\eta,y)} \right]  dy.
\end{equation}
The limit in \eqref{def:T1:second} exists, moreover we have that 
\begin{align}
	\norm{\mathsf{T}_{1}[\Lambda,\vartheta]}_{\mathcal{L}(C^{\alpha}(\mathbb{S}^1)) }&\leq C\delta_{0}  \label{estimate:T1:Calpha}, \\ 
	\norm{\mathsf{T}_{1}[\Lambda,\vartheta]}_{\mathcal{L}(C^{1,\alpha}(\mathbb{S}^1))}&\leq C\delta_{0} \label{estimate:T1:C1alpha}.
\end{align}
\end{proposition}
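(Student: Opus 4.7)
The plan is to realize $\mathsf{T}_{1}$ as a perturbation of the non-convolution singular integral operators treated in Propositions~\ref{Holder:singular:integral:alpha} and~\ref{Holder:singular:integral:1alpha}, extracting the $\delta_{0}$-smallness from the factor $e^{-in\Lambda(\eta,y)}-1$ in the defining kernel. The first step would be to decompose the Fourier multiplier as
\[
\frac{n\sinh(nL)}{\cosh(nL)-1}=n\coth(nL/2)=|n|+r_{n},\qquad r_{n}=\frac{2|n|}{e^{|n|L}-1},
\]
so that $r_{n}$ decays exponentially in $|n|$; this induces a splitting $\mathsf{T}_{1}=\mathsf{T}_{1}^{(m)}+\mathsf{T}_{1}^{(r)}$ in which the main part carries the full singularity.

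For $\mathsf{T}_{1}^{(m)}$, I would interchange summation and integration in the $\epsilon$-regularized expression and use the closed form
\[
\sum_{n\neq 0}|n|\,e^{inz-|n|y}=\frac{e^{iz-y}}{(1-e^{iz-y})^{2}}+\frac{e^{-iz-y}}{(1-e^{-iz-y})^{2}}
\]
together with the telescoping identity
\[
\frac{1}{1-e^{iz-y-i\Lambda}}-\frac{1}{1-e^{iz-y}}=\frac{e^{iz-y}(e^{-i\Lambda}-1)}{(1-e^{iz-y-i\Lambda})(1-e^{iz-y})},
\]
specialized at $z=x-\eta$, to rewrite the kernel of $\mathsf{T}_{1}^{(m)}$ as $-i\,\partial_{x}\bigl[\mathcal{A}(x,y,\eta)\,\Phi(\eta,y)\bigr]$ plus its complex conjugate, where $\mathcal{A}$ is \emph{exactly} the kernel from Proposition~\ref{Holder:singular:integral:alpha} and $\Phi(\eta,y)=\frac{e^{-i\Lambda(\eta,y)}-1}{y}$ is $x$-independent. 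Consequently $\mathsf{T}_{1}^{(m)}j_{0}$ is precisely of the form covered by Propositions~\ref{Holder:singular:integral:alpha} and~\ref{Holder:singular:integral:1alpha} with auxiliary function $H(\eta,y)=\Phi(\eta,y)\,j_{0}(\eta)/(1+\vartheta(\eta,y))$; the conjugate half is handled by applying the same results with $\Lambda$ replaced by $-\Lambda$, which satisfies Assumption~\ref{assumption:Lambda} with the same constant.

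Since $\Lambda(\eta,0)=0$ and $\|\Lambda\|_{C^{2,\alpha}(\Omega)}\leq \delta_{0}$, the identity $\Lambda(\eta,y)/y=\int_{0}^{1}\partial_{y}\Lambda(\eta,ty)\,dt$ shows that $\Lambda/y\in C^{1,\alpha}(\Omega)$ with norm $\leq C\delta_{0}$; writing $\Phi=(\Lambda/y)\cdot\psi(\Lambda)$ with the entire function $\psi(w)=(e^{-iw}-1)/(-iw)$ then yields $\|\Phi\|_{C^{1,\alpha}(\Omega)}\leq C\delta_{0}$. Combined with $\|(1+\vartheta)^{-1}\|_{C^{1,\alpha}(\Omega)}\leq C$ (from Assumption~\ref{assumption:Theta}, which ensures $1+\vartheta$ is bounded away from zero), the H\"older product rule gives $\|H\|_{C^{\alpha}(\Omega)}\leq C\delta_{0}\|j_{0}\|_{C^{\alpha}(\SS^{1})}$ and $\|H\|_{C^{1,\alpha}(\Omega)}\leq C\delta_{0}\|j_{0}\|_{C^{1,\alpha}(\SS^{1})}$. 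Propositions~\ref{Holder:singular:integral:alpha} and~\ref{Holder:singular:integral:1alpha} then simultaneously deliver the existence of the limit in \eqref{def:T1:second} and the bounds $\|\mathsf{T}_{1}^{(m)}\|_{\mathcal{L}(C^{\alpha}(\SS^{1}))}+\|\mathsf{T}_{1}^{(m)}\|_{\mathcal{L}(C^{1,\alpha}(\SS^{1}))}\leq C\delta_{0}$. For the remainder $\mathsf{T}_{1}^{(r)}$, the pointwise bounds $|r_{n}|\leq C|n|e^{-|n|L}$ and $|e^{-in\Lambda(\eta,y)}-1|\leq |n|\,y\,\delta_{0}$ show that every $x$-derivative of its (smooth) convolution kernel is controlled uniformly in $\epsilon$ by $C\delta_{0}$, so $\mathsf{T}_{1}^{(r)}$ maps any H\"older space to itself with operator norm $\leq C\delta_{0}$.

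The main obstacle is the explicit Fourier summation and the algebraic identification in Step~2: showing that, after the telescoping, the kernel factors as $\partial_{x}\mathcal{A}$ times an $x$-independent regular factor $\Phi$ that vanishes linearly in $\Lambda$. This factorization is what both unlocks the singular-integral theory of Section~\ref{S4} and produces the prefactor $\delta_{0}$; once it is in place, the interchange of sum and integral and the passage to the $\epsilon\to 0^{+}$ limit are already built into Propositions~\ref{Holder:singular:integral:alpha} and~\ref{Holder:singular:integral:1alpha}, and the two bounds \eqref{estimate:T1:Calpha}, \eqref{estimate:T1:C1alpha} follow immediately.
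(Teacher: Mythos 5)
Your proposal is correct and follows essentially the same route as the paper's proof: split the multiplier $\tfrac{n\sinh(nL)}{\cosh(nL)-1}$ into the leading piece $|n|$ (treated via the geometric-series closed form and the telescoping identity, which produces exactly $\partial_x\mathcal{A}$ multiplied by the $x$-independent factor $\Phi(\eta,y)/(1+\vartheta(\eta,y))$, so that Propositions~\ref{Holder:singular:integral:alpha} and~\ref{Holder:singular:integral:1alpha} apply directly) plus an exponentially small remainder giving a smoothing operator, with the $\delta_0$ prefactor coming from $\|\Phi\|_{C^{1,\alpha}}\lesssim\delta_0$. The only cosmetic differences from the paper are that the paper performs the $|n|$ split after first separating $n>0$ from $n<0$ (writing the remainder as $\tfrac{n\sinh(nL)}{\cosh(nL)-1}-n$ for $n>0$) rather than in the symmetric form $|n|+r_n$, and you are slightly more explicit about the $C^{1,\alpha}$ regularity of $\Phi=\Lambda/y\cdot\psi(\Lambda)$; both formulations lead to the same estimates.
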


\begin{proof}
To show that the right hand side of \eqref{def:T1:second} exists, let us split the function $\mathfrak{G}_{1,\epsilon}(x,\eta)$ for $n>0$ and $n<0$. We will use the notation $\mathfrak{G}^{+}_{1,\epsilon}$ for $n>0$ and $\mathfrak{G}^{-}_{1,\epsilon}$ otherwise. Therefore, for $n>0$ we have that 
\begin{equation}\label{operadorG1+:n:m}
	\mathfrak{G}^{+}_{1,\epsilon}(x,\eta)=\displaystyle\sum_{n=1}^{n=\infty}n e^{inx}\int_{\epsilon}^{L} e^{-ny}  \left[ \frac{e^{-in\Lambda(\eta,y)}-1}{1+\vartheta(\eta,y)} \right]  dy+ \mathfrak{G}_{12,\epsilon}^{+}(x,\eta)
\end{equation}
where
\begin{equation}\label{operadorG1+:n:s}
	\mathfrak{G}_{12,\epsilon}^{+}(x,\eta)=\displaystyle\sum_{n=1}^{n=\infty}\left(\frac{n\sinh(nL)}{(\cosh(nL)-1)}-n\right)e^{inx}\int_{\epsilon}^{L} e^{-ny}  \left[ \frac{e^{-in\Lambda(\eta,y)}-1}{1+\vartheta(\eta,y)} \right]  dy.
\end{equation}
On the one hand, calculating explicitly the summation via geometric series, we have that the first term in \eqref{operadorG1+:n:m} denoted by $ \mathfrak{G}^{+}_{11,\epsilon}$ is given by 
\begin{align}
	\mathfrak{G}^{+}_{11,\epsilon}(x,\eta)& = \frac{1}{i}\int_{\epsilon}^{L}\p_{x}\left(\frac{e^{ix-y-i\Lambda(\eta,y)}}{1-e^{ix-y-i\Lambda(\eta,y)}}-\frac{e^{ix-y}}{1-e^{ix-y}}\right)\frac{1}{1+\vartheta(\eta,y)}  \  dy \nonumber \\
	&=\frac{1}{i}\int_{\epsilon}^{L} \frac{(e^{-i\Lambda(\eta,y)}-1)}{y} \p_{x} \left(\frac{e^{ix-y}y}{(1-e^{ix-y})(1-e^{ix-y-i\Lambda(\eta,y)})}\right)\frac{1}{1+\vartheta(\eta,y)} \ dy  \nonumber \\
	&=\frac{1}{i}\int_{\epsilon}^{L} \mathsf{F}(x,\eta,y) \frac{(e^{-i\Lambda(\eta,y)}-1)}{y} \frac{1}{1+\vartheta(\eta,y)} \ dy  \label{G11+:formula}
\end{align} 
with 
\begin{equation*}
	\mathsf{F}(x,\eta,y)=\p_{x} \left(\frac{e^{ix-y}y}{(1-e^{ix-y})(1-e^{ix-y-i\Lambda(\eta,y)})}\right).
\end{equation*}
Therefore, by means of \eqref{G11+:formula}, we have that the operator $\mathsf{T}_{1}$ for $n>0$ given in \eqref{def:T1:second} can be written after changing the order of integration as
\begin{align}
	\mathsf{T}^{+}_{1}[\Lambda,\vartheta]j_{0}(x)&=-\frac{1}{2\pi i} \displaystyle \lim_{\epsilon\to 0} \int_{\epsilon}^{L}dy \int_{\mathbb{S}^{1}} d\eta \ \mathsf{F}(x-\eta,\eta,y) \frac{(e^{-i\Lambda(\eta,y)}-1)}{y} \frac{1}{1+\vartheta(\eta,y)}j_{0}(\eta) \nonumber \\
	& - \frac{1}{2\pi}\displaystyle \lim_{\epsilon\to 0}\int_{\mathbb{S}^{1}}\mathfrak{G}^{+}_{12,\epsilon}(x-\eta,\eta)j_{0}(\eta) \ d\eta= \mathsf{T}^{+}_{11}[\Lambda,\vartheta]j_{0}(x)+ \mathsf{T}^{+}_{12}[\Lambda,\vartheta]j_{0}(x) \label{T11:T12}
\end{align}
assuming that the limits exist. To show that the limit of the second integral $\mathsf{T}^{+}_{12}j_{0}(x)$ exists, we notice that the function $\mathfrak{G}^{+}_{12}(x,\eta)$ given in \eqref{operadorG1+:n:s} is a smooth function in $x$ and decays exponentially in $n$. Therefore, using the bound
\begin{equation*}
	\abs{e^{-in\Lambda(\eta,y)}-1}\leq C \abs{n}\norm{\Lambda}_{L^\infty(\Omega)}, 
\end{equation*}
we find that
\begin{equation*}
	\abs{\mathfrak{G}^{+}_{12,\epsilon}(x,\eta)}\leq C\norm{\frac{1}{1+\vartheta}}_{L^{\infty}(\Omega)}  \norm{\Lambda}_{L^{\infty}(\Omega)}\leq C, \mbox{ for } x\in \SS^1,\eta\in \SS^1.
\end{equation*}
Since the estimate is independent of $\epsilon$ using the Lebesgue Dominated Convergence Theorem, we can ensure that the limit exists and that the operator $\mathsf{T}^{+}_{12}[\Lambda,\vartheta]j_{0}(x)$ is well defined. Moreover, we also have the pointwise bound
\begin{equation}
	\abs{\mathsf{T}^{+}_{12}[\Lambda,\vartheta]j_{0}(x)}\leq C\norm{\frac{1}{1+\vartheta}}_{L^{\infty}(\Omega)}  \norm{\Lambda}_{L^{\infty}(\Omega)}\norm{j_0}_{L^{\infty}(\SS^1)} \leq C\delta_{0}\norm{j_0}_{L^{\infty}(\SS^1)}, \mbox{ for } x\in \SS^1.
\end{equation}
On the other hand, to ensure that $\mathsf{T}^{+}_{11}j_{0}(x)$ in \eqref{T11:T12} is well defined we make use of Proposition \ref{Holder:singular:integral:alpha} where
\begin{equation}\label{selection:H1}
	H(\eta,y)= \frac{(e^{-i\Lambda(\eta,y)}-1)}{y} \frac{1}{1+\vartheta(\eta,y)} j_{0}(\eta).
\end{equation}
It is straightforward to check that choosing $H(\eta,y)$ as in \eqref{selection:H1}, we have that $H(\eta,y)\in C^{\alpha}(\Omega)$ since
\begin{align*}
	\norm{H}_{C^{\alpha}(\Omega)} &\leq C \norm{\frac{1}{1+\vartheta}}_{C^{\alpha}(\Omega)}\norm{\Lambda}_{C^{1,\alpha}(\Omega)}\norm{j_0}_{C^{\alpha}(\mathbb{S}^1)} \leq C\delta_{0} \norm{j_0}_{C^{\alpha}(\mathbb{S}^1)}
\end{align*}
where in the last inequality we have used Assumption \ref{assumption:Lambda}.
To show the $C^{\alpha}$ and $C^{1,\alpha}$ semi-norm estimate \eqref{estimate:T1:Calpha} and \eqref{estimate:T1:C1alpha}, we check that using estimate
\begin{align}
	\abs{e^{i nx_{1}}-e^{in x_2}} &\leq C \abs{n}^\alpha \abs{x_{1}-x_{2}}^{\alpha}, \mbox{ for } \alpha\in(0,1) \mbox{ and } x_{1},x_{2}\in \mathbb{S}^{1} \label{formula:holder:exp}
\end{align}
we find that
\begin{align*}
	\abs{\mathfrak{G}^{+}_{12,\epsilon}(x_{1},\eta)-\mathfrak{G}^{+}_{12,\epsilon}(x_{2},\eta)}&\leq C\norm{\frac{1}{1+\vartheta}}_{L^{\infty}(\Omega)}  \norm{\Lambda}_{L^{\infty}(\Omega)} \abs{x_{1}-x_{2}}^{\alpha} \\
	\abs{\p_{x}\mathfrak{G}^{+}_{12,\epsilon}(x_{1},\eta)-\p_{x}\mathfrak{G}^{+}_{12,\epsilon}(x_{2},\eta)}&\leq C\norm{\frac{1}{1+\vartheta}}_{L^{\infty}(\Omega)}  \norm{\Lambda}_{L^{\infty}(\Omega)} \abs{x_{1}-x_{2}}^{\alpha}.
\end{align*}
Thus
\begin{align}
	\norm{\mathsf{T}^{+}_{12}[\Lambda,\vartheta]}_{\mathcal{L}(L^{\infty}(\mathbb{S}^{1}),C^{\alpha}(\mathbb{S}^1))} &\leq C\norm{\frac{1}{1+\vartheta}}_{L^{\infty}(\Omega)}\norm{\Lambda}_{L^{\infty}(\Omega)}\leq C\delta_{0},\label{bounds:T1+sa} \\
	\norm{\mathsf{T}^{+}_{12}[\Lambda,\vartheta]}_{\mathcal{L}(L^{\infty}(\mathbb{S}^{1}),C^{1,\alpha}(\mathbb{S}^1))} &\leq C\norm{\frac{1}{1+\vartheta}}_{L^{\infty}(\Omega)}\norm{\Lambda}_{L^{\infty}(\Omega)}\leq C\delta_{0}.\label{bounds:T1+s1a}
\end{align}

Similarly, as before a direct application of Proposition \ref{Holder:singular:integral:alpha} and Proposition \ref{Holder:singular:integral:1alpha} with $H(\eta,y)$ as in \eqref{selection:H1} yields
\begin{align}
	\norm{\mathsf{T}^{+}_{11}[\Lambda,\vartheta]j_{0}}_{\mathcal{L}(C^{\alpha}(\mathbb{S}^1))}  &\leq C \norm{\frac{1}{1+\vartheta}}_{C^{\alpha}(\Omega)}\norm{\Lambda}_{C^{1,\alpha}(\Omega)} \leq C\delta_{0}, \label{estimate:T11+m:a} \\
	\norm{\mathsf{T}^{+}_{11}[\Lambda,\vartheta]j_{0}}_{\mathcal{L}(C^{1,\alpha}(\mathbb{S}^1))}  &\leq C \norm{\frac{1}{1+\vartheta}}_{C^{1,\alpha}(\Omega)}\norm{\Lambda}_{C^{2,\alpha}(\Omega)} \leq C\delta_{0}, \label{estimate:T11+m:1a}
\end{align}
since
\begin{align*}
	\norm{H}_{C^{\alpha}(\Omega)} &\leq C \norm{\frac{1}{1+\vartheta}}_{C^{\alpha}(\Omega)}\norm{\Lambda}_{C^{1,\alpha}(\Omega)}\norm{j_0}_{C^{\alpha}(\mathbb{S}^1)} \leq C \delta_{0}\norm{j_0}_{C^{\alpha}(\mathbb{S}^1)}, \\
	\norm{H}_{C^{1,\alpha}(\Omega)} &\leq C \norm{\frac{1}{1+\vartheta}}_{C^{1,\alpha}(\Omega)}\norm{\Lambda}_{C^{2,\alpha}(\Omega)}\norm{j_0}_{C^{1,\alpha}(\mathbb{S}^1)} \leq C\delta_{0} \norm{j_0}_{C^{1,\alpha}(\mathbb{S}^1)}.
\end{align*}
Since the estimates for $n<0$ follow identically, we omit a detailed proof here. 
\end{proof}

\begin{proposition}[Estimates $\mathsf{T}_{2}$]\label{Proposition:estimate:T2}
Let Assumption \ref{assumption:Theta} hold. Then for $j_0\in C^{1,\alpha}(\SS^1)$ we define the operator $\mathsf{T}_{2}$ as follows
\begin{equation}\label{def:T2:second}
	\mathsf{T}_{2}[\vartheta]j_{0}(x)=-\frac{1}{2\pi}\displaystyle\lim_{\epsilon\to 0^{+}}\int_{\mathbb{S}^{1}} \mathfrak{G}_{2,\epsilon}(x-\eta,\eta)j_{0}(\eta) \ d\eta,
\end{equation}
where 
\begin{equation}\label{operadorG2:frak}
	\mathfrak{G}_{2,\epsilon}(x,\eta)=\displaystyle\sum_{n=-\infty}^{n=\infty}\frac{n\sinh(nL)}{(\cosh(nL)-1)}e^{inx}\int_{\epsilon}^{L} e^{-\abs{n}y} \left[ \frac{\vartheta(\eta,y)}{1+\vartheta(\eta,y)} \right]  dy.
\end{equation}
The limit \eqref{def:T2:second} exists and in addition the following estimates hold
\begin{align}
	\norm{\mathsf{T}_{2}[\vartheta]}_{\mathcal{L}(C^{\alpha}(\mathbb{S}^1)) }&\leq C\delta_{1}, \label{estimate:T2:Calpha}  \\
	\norm{\mathsf{T}_{2}[\vartheta]}_{\mathcal{L}(C^{1,\alpha}(\mathbb{S}^1))}&\leq C\delta_{1}. \label{estimate:T2:C1alpha}
\end{align}
\end{proposition}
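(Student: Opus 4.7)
The plan is to follow closely the structure of the proof of Proposition \ref{Proposition:estimate:T1}, but with two simplifications: there is no factor $e^{-in\Lambda(\eta,y)}-1$ in the integrand, and consequently the resulting singular integral is of convolution type (no dependence on $\eta$ in the denominator). I will split $\mathfrak{G}_{2,\epsilon}(x,\eta)=\mathfrak{G}^{+}_{2,\epsilon}(x,\eta)+\mathfrak{G}^{-}_{2,\epsilon}(x,\eta)$ according to the sign of $n$ and, for the positive part, decompose the Fourier multiplier as
\begin{equation*}
\frac{n\sinh(nL)}{\cosh(nL)-1}=n+\left(\frac{n\sinh(nL)}{\cosh(nL)-1}-n\right),
\end{equation*}
obtaining $\mathfrak{G}^{+}_{2,\epsilon}=\mathfrak{G}^{+}_{21,\epsilon}+\mathfrak{G}^{+}_{22,\epsilon}$, and analogously for $n<0$. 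The coefficients $\frac{n\sinh(nL)}{\cosh(nL)-1}-n$ decay exponentially in $|n|$, so $\mathfrak{G}^{+}_{22,\epsilon}$ is a smooth kernel and its contribution defines an operator $\mathsf{T}^{+}_{22}[\vartheta]$ that can be estimated trivially using $\bigl\|\frac{\vartheta}{1+\vartheta}\bigr\|_{L^\infty(\Omega)}\leq C\delta_1$ together with the Hölder estimate \eqref{formula:holder:exp}, giving both the $C^\alpha$ and $C^{1,\alpha}$ bounds with a factor $C\delta_1$.

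The main term $\mathfrak{G}^{+}_{21,\epsilon}$ can be summed explicitly via geometric series:
\begin{equation*}
\sum_{n=1}^{\infty}n e^{inx}e^{-ny}=\frac{1}{i}\,\partial_{x}\!\left(\frac{e^{ix-y}}{1-e^{ix-y}}\right),
\end{equation*}
so after swapping the order of integration the associated operator becomes
\begin{equation*}
\mathsf{T}^{+}_{21}[\vartheta]j_{0}(x)=-\frac{1}{2\pi i}\lim_{\epsilon\to 0^+}\int_{\epsilon}^{L}\!\!dy\int_{\mathbb{S}^1}\partial_{x}\!\left(\frac{e^{i(x-\eta)-y}}{1-e^{i(x-\eta)-y}}\right)\frac{\vartheta(\eta,y)}{1+\vartheta(\eta,y)}\,j_{0}(\eta)\,d\eta.
\end{equation*}
This is exactly of the form handled by Proposition \ref{Holder:singular:integral:2}, with the choice $H(\eta,y)=\tfrac{\vartheta(\eta,y)}{1+\vartheta(\eta,y)}j_{0}(\eta)$. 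Since Assumption \ref{assumption:Theta} gives $\delta_1<1/2$, the composition $\vartheta\mapsto \vartheta/(1+\vartheta)$ is Lipschitz on the ball and yields
\begin{equation*}
\Bigl\|\tfrac{\vartheta}{1+\vartheta}\Bigr\|_{C^{\alpha}(\Omega)}\leq C\delta_{1},\qquad \Bigl\|\tfrac{\vartheta}{1+\vartheta}\Bigr\|_{C^{1,\alpha}(\Omega)}\leq C\delta_{1},
\end{equation*}
so $\|H\|_{C^{\alpha}(\Omega)}\leq C\delta_{1}\|j_{0}\|_{C^{\alpha}(\mathbb{S}^1)}$ and $\|H\|_{C^{1,\alpha}(\Omega)}\leq C\delta_{1}\|j_{0}\|_{C^{1,\alpha}(\mathbb{S}^1)}$. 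Invoking the bounds \eqref{cota:singular:3} and \eqref{cota:singular:4} then produces the desired $C\delta_1$ estimates for $\mathsf{T}^{+}_{21}[\vartheta]$ in both $C^{\alpha}$ and $C^{1,\alpha}$.

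The treatment of $\mathfrak{G}^{-}_{2,\epsilon}$ is analogous: the analogous geometric summation gives $-\tfrac{1}{i}\partial_{x}\bigl(\tfrac{e^{-ix-y}}{1-e^{-ix-y}}\bigr)$, a kernel with the same structure, and Proposition \ref{Holder:singular:integral:2} applies (or, equivalently, one notes the symmetry $\mathfrak{G}^{-}_{2,\epsilon}(x,\eta)=\overline{\mathfrak{G}^{+}_{2,\epsilon}(x,\eta)}$ for real $\vartheta$). The existence of the limit in \eqref{def:T2:second} follows because both Proposition \ref{Holder:singular:integral:2} guarantees existence of the limit for the main term and dominated convergence applies to the smooth remainder. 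Combining the four estimates yields \eqref{estimate:T2:Calpha} and \eqref{estimate:T2:C1alpha}. I do not expect any real obstacle here—the argument is strictly simpler than for $\mathsf{T}_{1}$ because the absence of the perturbation $\Lambda$ in the denominator reduces everything to the convolution-type Proposition \ref{Holder:singular:integral:2} rather than the non-convolution Propositions \ref{Holder:singular:integral:alpha}–\ref{Holder:singular:integral:1alpha}; the only care needed is the algebraic manipulation producing $\partial_x\bigl(\tfrac{e^{ix-y}}{1-e^{ix-y}}\bigr)$ after the geometric summation and the verification that the smoothing remainder is controlled by $\delta_1$ rather than by any derivative of $\vartheta$.
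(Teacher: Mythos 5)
Your proposal is correct and follows essentially the same route as the paper's proof: the same $n>0/n<0$ split, the same decomposition of the multiplier $\frac{n\sinh(nL)}{\cosh(nL)-1}=n+(\text{exponentially decaying remainder})$, the same geometric summation producing $\frac{1}{i}\partial_x\bigl(\frac{e^{ix-y}}{1-e^{ix-y}}\bigr)$, the same choice $H(\eta,y)=\frac{\vartheta(\eta,y)}{1+\vartheta(\eta,y)}j_0(\eta)$, and the same invocation of Proposition \ref{Holder:singular:integral:2} for the main term and of the exponential decay plus \eqref{formula:holder:exp} for the smoothing remainder. The observation that $\mathfrak{G}^{-}_{2,\epsilon}=\overline{\mathfrak{G}^{+}_{2,\epsilon}}$ for real $\vartheta$ is a small convenience the paper does not spell out, but otherwise the two arguments coincide.
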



\begin{proof}
The proof follows the same lines of Proposition \ref{Proposition:estimate:T1}. We first, show that the right hand side of \eqref{def:T2:second} exists, and afterwards we provide the H\"older bounds \eqref{estimate:T2:Calpha} and \eqref{estimate:T2:C1alpha}. To that purpose, we split $\mathfrak{G}_{2,\epsilon}(x,\eta)$ for $n>0$ and $n<0$ and just show the estimates for $n>0$, being the case for $n<0$ identical. Therefore, for $n>0$ we have that
\begin{equation}\label{operadorG2+:n:m}
	\mathfrak{G}^{+}_{2,\epsilon}(x,\eta)=\sum_{n=1}^{n=\infty}n e^{inx}\int_{\epsilon}^{L} e^{-ny} \left[ \frac{\vartheta(\eta,y)}{1+\vartheta(\eta,y)} \right]  dy+ \mathfrak{G}_{22,\epsilon}^{+}(x,\eta)
\end{equation}
where
\begin{equation}\label{operadorG2+:n:s}
	\mathfrak{G}_{22,\epsilon}^{+}(x,\eta)=\sum_{n=1}^{n=\infty}\left(\frac{n\sinh(nL)}{(\cosh(nL)-1)}-n\right)e^{inx}\int_{\epsilon}^{L} e^{-ny} \left[ \frac{\vartheta(\eta,y)}{1+\vartheta(\eta,y)} \right]  dy.
\end{equation}
Computing the sum in the first term in \eqref{operadorG2+:n:m} we obtain    
\begin{equation}
	\mathfrak{G}^{+}_{21,\epsilon}(x,\eta)= \frac{1}{i}\int_{\epsilon}^{L}\p_{x}\left(\frac{1}{1-e^{ix-y}}\right)\left[ \frac{\vartheta(\eta,y)}{1+\vartheta(\eta,y)} \right]  dy,   \label{finalG+2} 
\end{equation}
and changing the order of integration the operator $\mathsf{T}_{2}[\vartheta]j_{0}$ in \eqref{def:T2:second} can be written for $n>0$ as 
\begin{align*}
	\mathsf{T}^{+}_{2}[\vartheta]j_{0}(x)&=-\frac{1}{2\pi i}\displaystyle\lim_{\epsilon\to 0} \int_{\epsilon}^{L}dy \int_{\mathbb{S}^{1}}d\eta \  \p_{x}\left(\frac{e^{i(x-\eta)-y}}{1-e^{i(x-\eta)-y}}\right) \left[ \frac{\vartheta(\eta,y)}{1+\vartheta(\eta,y)} \right]  j_{0}(\eta) \\
	& \quad - \frac{1}{2\pi}\displaystyle\lim_{\epsilon\to 0}\int_{\mathbb{S}^{1}}\mathfrak{G}^{+}_{22,\epsilon}(x-\eta,\eta)j_{0}(\eta) \ d\eta:=\mathsf{T}^{+}_{21}[\vartheta]j_0+\mathsf{T}^{+}_{22}[\vartheta]j_0.
\end{align*}
As before, the remainder smoothing term $\mathfrak{G}^{+}_{22}(x,\eta)$ \eqref{operadorG2+:n:s} is a smooth function in $x$ and in particular
\begin{equation}\label{L:inftyG22}
	\abs{\mathfrak{G}^{+}_{22,\epsilon}(x,\eta)}\leq C\norm{\frac{\vartheta}{1+\vartheta} }_{L^{\infty}(\Omega)}\leq C\delta_{1}, \mbox{ for } x\in\SS^1, \eta\in \SS^1. 
\end{equation}
Hence, since estimate  \eqref{L:inftyG22} is independent of $\epsilon$, Lebesgue Dominated Convergence Theorem shows that the limit exists and that the associated operator $\mathsf{T}^{+}_{22}[\vartheta]j_0$ is well-defined. Choosing 
\begin{equation}\label{selection:H2}
	H(\eta,y)=\frac{\vartheta(\eta,y)}{1+\vartheta(\eta,y)}j_{0}(\eta),
\end{equation}
and noticing that 
\begin{equation}
	\norm{H}_{C^{\alpha}(\Omega)} \leq C \norm{\frac{\vartheta}{1+\vartheta}}_{C^{\alpha}(\Omega)}\norm{j_0}_{C^{\alpha}(\mathbb{S}^1)} \leq C\delta_{1} \norm{j_0}_{C^{\alpha}(\mathbb{S}^1)}.
\end{equation}
we can apply Lemma \ref{Holder:singular:integral:2} to obtain that  $\mathsf{T}^{+}_{21}[\vartheta]j_0$ is well defined. We are left to show the $C^\alpha$ and $C^{1,\alpha}$ semi-norm estimates \eqref{estimate:T2:Calpha}-\eqref{estimate:T2:C1alpha}. Making use of the bound
\begin{align*}
	\abs{e^{i nx_{1}}-e^{in x_2}} &\leq C \abs{n}^\alpha \abs{x_{1}-x_{2}}^{\alpha}, \mbox{ for } \alpha\in(0,1) \mbox{ and } x_{1},x_{2}\in \mathbb{S}^{1}.
\end{align*}
we infer that for $\mathfrak{G}^{+}_{22}$ defined in \eqref{operadorG2+:n:s} 
\begin{align*}
	\abs{\mathfrak{G}^{+}_{22,\epsilon}(x_{1},\eta)-\mathfrak{G}^{+}_{22,\epsilon}(x_{2},\eta)}&\leq C\norm{\frac{\vartheta}{1+\vartheta} }_{L^{\infty}(\Omega)} \abs{x_{1}-x_{2}}^{\alpha}, \\ 
	\abs{\p_{x}\mathfrak{G}^{+}_{22,\epsilon}(x_{1},\eta)-\p_{x}\mathfrak{G}^{+}_{22,\epsilon}(x_{2},\eta)}&\leq C\norm{\frac{\vartheta}{1+\vartheta} }_{L^{\infty}(\Omega)} \abs{x_{1}-x_{2}}^{\alpha}
\end{align*}
holds. Thus, due to Assumption \ref{assumption:Theta} we obtain
\begin{align}
	\norm{\mathsf{T}^{+}_{22}[\vartheta]}_{\mathcal{L}(L^{\infty}(\mathbb{S}^{1}),C^{\alpha}(\mathbb{S}^1))} &\leq C\norm{\frac{\vartheta}{1+\vartheta}}_{L^{\infty}(\Omega)}\leq C\delta_{1},\label{bounds:T2+s:a} \\
	\norm{\mathsf{T}^{+}_{22}[\vartheta]}_{\mathcal{L}(L^{\infty}(\mathbb{S}^{1}),C^{1,\alpha}(\mathbb{S}^1))} &\leq C\norm{\frac{\vartheta}{1+\vartheta}}_{L^{\infty}(\Omega)}\leq C\delta_{1}. \label{bounds:T2+s:1:a}
\end{align}
To deal with the most singular operator $\mathsf{T}_{21}^{+}[\vartheta]$ we make use of Proposition \ref{Holder:singular:integral:2}. Indeed, applying estimates \eqref{cota:singular:3} and \eqref{cota:singular:4} to the function $H(\eta,y)$ as in \eqref{selection:H2} we find that
\begin{align}
	\norm{\mathsf{T}^{+}_{21}[\vartheta]}_{\mathcal{L}(C^{\alpha}(\mathbb{S}^1))}  &\leq C \norm{\frac{\vartheta}{1+\vartheta}}_{C^{\alpha}(\Omega)} \leq C\delta_{1}, \label{estimate:T21+:a} \\
	\norm{\mathsf{T}^{+}_{21}[\vartheta]}_{\mathcal{L}(C^{1,\alpha}(\mathbb{S}^1))}  &\leq C \norm{\frac{\vartheta}{1+\vartheta}}_{C^{1,\alpha}(\Omega)}\leq C\delta_{1},  \label{estimate:T21+:1a}
\end{align}
since
\begin{align*}
	\norm{H}_{C^{\alpha}(\Omega)} &\leq C \norm{\frac{\vartheta}{1+\vartheta}}_{C^{\alpha}(\Omega)}\norm{j_0}_{C^{\alpha}(\mathbb{S}^1)} \leq C\delta_{1} \norm{j_0}_{C^{\alpha}(\mathbb{S}^1)}, \\
	\norm{H}_{C^{1,\alpha}(\Omega)} &\leq C \norm{\frac{\vartheta}{1+\vartheta}}_{C^{1,\alpha}(\Omega)}\norm{j_0}_{C^{1,\alpha}(\mathbb{S}^1)} \leq C\delta_{1} \norm{j_0}_{C^{1,\alpha}(\mathbb{S}^1)}.
\end{align*}
Therefore, using \eqref{bounds:T2+s:a} and \eqref{estimate:T21+:a} we conclude that
\begin{equation}\label{estimate:T2:alpha}
	\norm{\mathsf{T}^{+}_{2}[\vartheta}_{\mathcal{L}(C^{\alpha}(\mathbb{S}^1))} \leq C\delta_{1},
\end{equation}
and similarly invoking \eqref{bounds:T2+s:1:a} and \eqref{estimate:T21+:1a} we find that
\begin{equation}\label{estimate:T2:1alpha}
	\norm{\mathsf{T}^{+}_{2}[\vartheta]}_{\mathcal{L}(C^{1,\alpha}(\mathbb{S}^1))}  \leq C\delta_{1}.
\end{equation}
\end{proof}


\begin{proposition}[Estimates $\mathsf{T}_{3}$ and $\mathsf{T}_{4}$]\label{prop:T3T3}
Let Assumption \ref{assumption:Lambda} and Assumption \ref{assumption:Theta} hold. Then for $j_0\in C^{1,\alpha}(\SS^1)$ we define the operators $\mathsf{T}_{3}$ and $\mathsf{T}_{4}$ as follows
\begin{align}
	\mathsf{T}_{3}[\Lambda,\vartheta]j_{0}(x)=-\frac{1}{2\pi}\displaystyle\int_{\mathbb{S}^{1}} \mathfrak{G}_{3}(x-\eta,\eta)j_{0}(\eta) \ d\eta,  \label{def:T3:sec}\\
	\mathsf{T}_{4}[\Lambda,\vartheta]j_{0}(x)=-\frac{1}{2\pi}
	\int_{\mathbb{S}^{1}} \mathfrak{G}_{4}(x-\eta,\eta)j_{0}(\eta) \ d\eta.\label{def:T4:sec}  
\end{align}
with 
\begin{align}
	\mathfrak{G}_{3}(x,\eta)&=\displaystyle\sum_{n=-\infty}^{n=\infty}\frac{n\sinh(nL)}{(\cosh(nL)-1)}e^{inx}\int_{0}^{L} M(n,y) \frac{(e^{-in\Lambda(\eta,y)}-1)}{1+\vartheta(\eta,y)}  dy,  \\
	\mathfrak{G}_{4}(x,\eta)&=\displaystyle\sum_{n=-\infty}^{n=\infty} \frac{n\sinh(nL)}{(\cosh(nL)-1)} e^{inx} \int_{0}^{L} M(n,y)  \frac{\vartheta(\eta,y)}{1+\vartheta(\eta,y)} dy, 
\end{align}
and 
\begin{equation}\label{function:M}
	M(n,y)=\frac{e^{-2nL}\left(e^{ny}-e^{-ny} \right)}{(1-e^{-2nL})}.
\end{equation}
Then we have that
\begin{align}
	\norm{\mathsf{T}_{3}[\Lambda,\vartheta]}_{\mathcal{L}(C^{\alpha}(\mathbb{S}^1),L^\infty(\SS^1))}\leq C\delta_{0},  \label{estimate:prop:T3:Calpha} \\
	\norm{\mathsf{T}_{4}[\Lambda,\vartheta]}_{\mathcal{L}(C^{\alpha}(\mathbb{S}^1),L^\infty(\SS^1))}\leq C\delta_{1},\label{estimate:prop:T4:Calpha}
\end{align}
and
\begin{align}
	\norm{\mathsf{T}_{3}[\Lambda,\vartheta]}_{\mathcal{L}(C^{1,\alpha}(\mathbb{S}^1),L^\infty(\SS^1))}\leq C, \label{estimate:prop:T3:C1alpha}  \\
	\norm{\mathsf{T}_{4}[\Lambda,\vartheta]}_{\mathcal{L}(C^{1,\alpha}(\mathbb{S}^1),L^\infty(\SS^1))}\leq C, \label{estimate:prop:T4:C1alpha}   
\end{align}
\end{proposition}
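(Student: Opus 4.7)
The plan is to exploit the strong regularizing effect of the factor $M(n,y)$, which decays exponentially in $|n|$ uniformly for $y\in[0,L]$. The upshot is that both kernels $\mathfrak{G}_{3}$ and $\mathfrak{G}_{4}$ turn out to be $C^\infty$ in $x$ and uniformly bounded in $\eta$, so that $\mathsf{T}_3$ and $\mathsf{T}_4$ are simply integral operators with smooth, uniformly bounded kernels, for which $L^\infty$ estimates are routine.

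First I would establish the pointwise bound $\abs{M(n,y)}\leq e^{-\abs{n}L}$ for $n\neq 0$ and $y\in[0,L]$. For $n>0$ we can rewrite
\begin{equation*}
M(n,y)=\frac{e^{-n(2L-y)}\bigl(1-e^{-2ny}\bigr)}{1-e^{-2nL}},
\end{equation*}
and since $2L-y\geq L$ on $[0,L]$ while $(1-e^{-2ny})/(1-e^{-2nL})\in[0,1]$, the bound follows (with the case $n<0$ symmetric). Combined with the elementary estimate $\tfrac{n\sinh(nL)}{\cosh(nL)-1}=O(\abs{n})$ and the assumption-based bounds
$\abs{e^{-in\Lambda(\eta,y)}-1}\leq \abs{n}\norm{\Lambda}_{L^\infty(\Omega)}\leq \abs{n}\delta_0$
and $\abs{\vartheta/(1+\vartheta)}\leq C\delta_1$, this gives for the Fourier coefficients in $x$ of $\mathfrak{G}_{3}(\cdot,\eta)$ and $\mathfrak{G}_{4}(\cdot,\eta)$ the uniform estimates
\begin{equation*}
\abs{a^{(3)}_n(\eta)}\leq C\abs{n}^{2}\,\delta_{0}\,e^{-\abs{n}L}, \qquad \abs{a^{(4)}_n(\eta)}\leq C\abs{n}\,\delta_{1}\,e^{-\abs{n}L}.
\end{equation*}
Because $\sum_{n\in\ZZ}\abs{n}^{k}e^{-\abs{n}L}<\infty$ for every $k\in\NN$, both series converge absolutely together with all $x$-derivatives, and (using Assumption \ref{assumption:Lambda} and Assumption \ref{assumption:Theta} to control $\partial_\eta$-differentiation of the $n$-dependent integrand at the price of additional polynomial factors in $n$, again absorbed by the exponential decay) one obtains the uniform sup-norm bounds
\begin{equation*}
\norm{\mathfrak{G}_{3}}_{L^{\infty}(\SS^{1}\times\SS^{1})}\leq C\delta_{0}, \qquad \norm{\mathfrak{G}_{4}}_{L^{\infty}(\SS^{1}\times\SS^{1})}\leq C\delta_{1}.
\end{equation*}

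Given these bounds, the operator estimates are immediate. Writing
\begin{equation*}
\abs{\mathsf{T}_{3}[\Lambda,\vartheta]j_{0}(x)}\leq \frac{1}{2\pi}\int_{\SS^{1}}\abs{\mathfrak{G}_{3}(x-\eta,\eta)}\,\abs{j_{0}(\eta)}\,d\eta \leq C\delta_{0}\,\norm{j_{0}}_{L^{\infty}(\SS^{1})}\leq C\delta_{0}\,\norm{j_{0}}_{C^{\alpha}(\SS^{1})},
\end{equation*}
and analogously $\norm{\mathsf{T}_{4}[\Lambda,\vartheta]j_{0}}_{L^{\infty}(\SS^{1})}\leq C\delta_{1}\norm{j_{0}}_{C^{\alpha}(\SS^{1})}$. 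This yields \eqref{estimate:prop:T3:Calpha}--\eqref{estimate:prop:T4:Calpha}. Finally, for \eqref{estimate:prop:T3:C1alpha}--\eqref{estimate:prop:T4:C1alpha}, one notes that $\norm{j_{0}}_{C^{\alpha}(\SS^{1})}\leq \norm{j_{0}}_{C^{1,\alpha}(\SS^{1})}$ together with $\delta_{0},\delta_{1}\leq \tfrac{1}{2}$, so the $C^{\alpha}\to L^{\infty}$ bounds trivially imply $C^{1,\alpha}\to L^{\infty}$ bounds with a constant $C$ independent of $\delta_0,\delta_1$.

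There is in fact no serious obstacle here: the proposition is the easy case alluded to in the remark that precedes Section \ref{sec:4:2}, and the only mild technical point is the verification of the uniform exponential bound $\abs{M(n,y)}\leq e^{-\abs{n}L}$, which is tight precisely at $y=L$, and keeping track of the fact that the polynomial blow-up in $n$ coming from the prefactor $\tfrac{n\sinh(nL)}{\cosh(nL)-1}$ and from the Taylor expansion of $e^{-in\Lambda}-1$ is absorbed by the exponential decay. No analogue of the delicate residue-based computations of Section \ref{S4} (e.g.\ the estimates of $K_{4,\epsilon}$, $K_{6,\epsilon}$, or $M_{3,\epsilon}$) is required.
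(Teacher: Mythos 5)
Your argument is correct and is essentially the same as the paper's: both exploit the exponential decay of $M(n,y)$ in $n$, the bound $\abs{e^{-in\Lambda}-1}\leq\abs{n}\norm{\Lambda}_{L^\infty}$, and the bounded denominator $1+\vartheta$ to show the kernels $\mathfrak{G}_3,\mathfrak{G}_4$ are uniformly bounded (and smooth) in $x$ uniformly in $\eta$, after which the operator bounds are routine. Your explicit derivation of $\abs{M(n,y)}\leq e^{-\abs{n}L}$ is a small refinement of the paper's $\abs{M(n,y)}\leq Ce^{-\abs{n}L}$.

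One thing worth flagging: the proposition statement as printed appears to have the source and target spaces reversed. The remark immediately following it says $\mathsf{T}_3,\mathsf{T}_4$ are \emph{smoothing} operators from $L^\infty(\SS^1)$ to $C^{1,\alpha}(\SS^1)$, and that is exactly what the paper's proof concludes with ($\norm{\mathsf{T}_3}_{\mathcal{L}(L^\infty,C^\alpha)}\leq C\delta_0$, $\norm{\mathsf{T}_3}_{\mathcal{L}(L^\infty,C^{1,\alpha})}\leq C\delta_0$, etc.). These stronger smoothing bounds are also what is actually invoked downstream: in the proof of Lemma \ref{lemma:1:fixed} the operators $\mathsf{T}_3,\mathsf{T}_4$ must be bounded in $\mathcal{L}(C^{1,\alpha}(\partial\Omega_-))$, and a bound in $\mathcal{L}(C^{1,\alpha},L^\infty)$ alone would be insufficient, whereas $\mathcal{L}(L^\infty,C^{1,\alpha})$ yields it for free. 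Your proof, which establishes only the $L^\infty$-valued bounds of the statement as literally written, would therefore need a short supplement to be usable: you already observe that the Fourier series for $\mathfrak{G}_3,\mathfrak{G}_4$ converge absolutely together with all $x$-derivatives; combining this with $\abs{e^{inx_1}-e^{inx_2}}\leq C\abs{n}^\alpha\abs{x_1-x_2}^\alpha$ gives the required $C^\alpha$ and $C^{1,\alpha}$ control on the kernel in $x$ (uniformly in $\eta$), which is precisely the step the paper carries out. With that added line, your proof would recover the paper's full conclusion.
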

\begin{remark}
As we can see from the estimates the operators, $\mathsf{T}_{3}$ and $\mathsf{T}_{4}$ are smoothing operators that transform functions from $L^\infty(\SS^1)$ to functions in $C^{1,\alpha}(\SS^1)$. Moreover, the series in $\mathfrak{G}_{3}$ and $\mathfrak{G}_{4}$ are uniformly convergent and therefore we do not need to define the operators $\mathsf{T}_{3}$ and $\mathsf{T}_{4}$ in \eqref{def:T3:sec}, \eqref{def:T4:sec} as a limit since the integrals are well defined.
\end{remark}
\begin{proof}
Notice that $M(n,y)$ defined in \eqref{function:M} is a smooth function in $y$ and decays exponential in $n$. Therefore, the simple bounds
\begin{align*}
	\abs{M(n,y)}\leq C e^{-nL}, \quad \abs{e^{-in\Lambda(\eta,y)}-1}\leq C \abs{n} \norm{\Lambda}_{L^{\infty}(\Omega)},
\end{align*}
yield
\begin{align*}
	\abs{\mathfrak{G}_{3}(x,\eta)}&\leq C \norm{\Lambda}_{L^{\infty}(\Omega)} \norm{\frac{1}{1+\vartheta} }_{L^{\infty}(\Omega)}  \mbox{ and }
	\abs{\mathfrak{G}_{4}(x,\eta)}\leq C \norm{\frac{\vartheta}{1+\vartheta}}_{L^{\infty}}, \mbox{ for } x\in \SS^1,\eta\in \SS^1.
\end{align*}
Moreover using that $\abs{e^{i nx_{1}}-e^{in x_2}} \leq C \abs{n}^\alpha \abs{x_{1}-x_{2}}^{\alpha}$ for $\alpha\in(0,1)$ and $x_{1},x_{2}\in \mathbb{S}^{1}$ we have that for $\eta\in \SS^1$
\begin{align*}
	\abs{\mathfrak{G}_{3}(x_{1},\eta)-\mathfrak{G}_{3}(x_{2},\eta)}&\leq C \norm{\Lambda}_{L^{\infty}(\Omega)}\norm{\frac{1}{1+\vartheta}}_{L^{\infty}(\Omega)}\abs{x_{1}-x_{2}}^{\alpha}\leq C\delta_{0}\abs{x_{1}-x_{2}}^{\alpha}, \\
	\abs{\mathfrak{G}_{4}(x_{1},\eta)-\mathfrak{G}_{4}(x_{2},\eta)}&\leq C \norm{\frac{\vartheta}{1+\vartheta}}_{L^{\infty}(\Omega)}\abs{x_{1}-x_{2}}^{\alpha}\leq C\delta_{1}\abs{x_{1}-x_{2}}^{\alpha}, \\
	\abs{\partial_{x}\mathfrak{G}_{3}(x_{1},\eta)-\partial_{x}\mathfrak{G}_{3}(x_{2},\eta)}&\leq C \norm{\Lambda}_{L^{\infty}(\Omega)} \norm{\frac{1}{1+\vartheta}}_{L^{\infty}(\Omega)}\abs{x_{1}-x_{2}}^{\alpha}\leq C\delta_{0}\abs{x_{1}-x_{2}}^{\alpha}, \\
	\abs{\partial_{x}\mathfrak{G}_{4}(x_{1},\eta)-\partial_{x}\mathfrak{G}_{4}(x_{2},\eta)}&\leq C \norm{\frac{\vartheta}{1+\vartheta}}_{L^{\infty}(\Omega)}\abs{x_{1}-x_{2}}^{\alpha}\leq C\delta_{1}\abs{x_{1}-x_{2}}^{\alpha}.
\end{align*}
Therefore using the above pointwise estimates we conclude that the H\"older semi-norm of $\mathsf{T}_{3}$, $\mathsf{T}_{4}$ defined in \eqref{def:T3:sec}-\eqref{def:T4:sec} is bounded as 
\begin{align*}
	\norm{\mathsf{T}_{3}[\Lambda,\vartheta]}_{\mathcal{L}(L^\infty(\SS^1),C^{\alpha}(\mathbb{S}^1))}\leq  C\delta_{0},  \\
	\norm{\mathsf{T}_{4}[\Lambda,\vartheta]}_{\mathcal{L}(L^\infty(\SS^1),C^{\alpha}(\mathbb{S}^1))}\leq C\delta_{1}, \\
	\norm{\mathsf{T}_{3}[\Lambda,\vartheta]}_{\mathcal{L}(L^\infty(\SS^1),C^{1,\alpha}(\mathbb{S}^1))}\leq C\delta_{0},  \\
	\norm{\mathsf{T}_{4}[\Lambda,\vartheta]}_{\mathcal{L}(L^\infty(\SS^1),C^{1,\alpha}(\mathbb{S}^1))}\leq C\delta_{1},  
\end{align*}
concluding the proof.
\end{proof}

\subsection{Estimates for the differences of \texorpdfstring{$\mathsf{T}_{j}$}{Lg}}\label{sec:4:3}
In this subsection, we will derive estimates for the difference operators. This will be needed in order to show the contraction estimate in the general fixed point argument (see Section \ref{sec:6}) in the lower order H\"older space $C^{\alpha}$. The proof follows the same lines as in the previous subsection  but some extra computations are needed in order to get the desired contraction type estimate. More precisely, our first result reads
\begin{proposition}\label{prop:diff:T1}
Let $\Lambda^1,\Lambda^2$ satisfy Assumption \ref{assumption:Lambda} and $\vartheta^1,\vartheta^2$ satisfy Assumption \ref{assumption:Theta}. Let $j_{0}\in C^{\alpha}(\SS^1)$ and define 
\begin{align}
	\mathsf{T}^{d}_{1}j_{0}(x)=\left(\mathsf{T}_{1}[\Lambda^1,\vartheta^1]-\mathsf{T}_{1}[\Lambda^2,\vartheta^2]\right)j_{0}(x), \label{difference:operator:T1} 
\end{align}
where $\mathsf{T}_{1}[\cdot,\cdot]$ is given in \eqref{def:T1:second}. Then we have that
\begin{align}
	\norm{\mathsf{T}_{1}^{d}}_{\mathcal{L}(C^{\alpha}(\mathbb{S}^1))}\leq C\left( \norm{\Lambda^1-\Lambda^2}_{C^{1,\alpha}(\Omega)}+ \norm{\vartheta^2-\vartheta^1}_{C^{\alpha}(\Omega)}\right). \label{estimate:difference:T1:Calpha} 
\end{align}
\end{proposition}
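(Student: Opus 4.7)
The plan is to mimic the decomposition strategy used in the proof of Proposition~\ref{Proposition:estimate:T1}, combined with the algebraic identity
\begin{equation*}
\frac{e^{-in\Lambda^1}-1}{1+\vartheta^1} - \frac{e^{-in\Lambda^2}-1}{1+\vartheta^2} = \frac{e^{-in\Lambda^1} - e^{-in\Lambda^2}}{1+\vartheta^1} \;+\; \frac{(e^{-in\Lambda^2}-1)(\vartheta^2-\vartheta^1)}{(1+\vartheta^1)(1+\vartheta^2)},
\end{equation*}
which splits $\mathsf{T}_1^{d} = \mathsf{T}_1^{d,\Lambda} + \mathsf{T}_1^{d,\vartheta}$. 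For each piece I would split into the $n>0$ and $n<0$ parts and peel off the tail correction $\left(\tfrac{n\sinh(nL)}{\cosh(nL)-1}-n\right)$, which decays exponentially in $n$ and produces a smoothing remainder that is bounded pointwise in the trivial way (using $|e^{-in\Lambda^j}-1|\le C|n|\|\Lambda^j\|_{L^\infty}$ and $\|\vartheta^2-\vartheta^1\|_{L^\infty}$, respectively, with no $n$-growth after summing the exponentially small weights).

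For the non-smoothing part of $\mathsf{T}_1^{d,\vartheta}$ I would carry out the geometric summation exactly as in \eqref{G11+:formula}, which yields an operator of the form of Proposition~\ref{Holder:singular:integral:alpha} acting on
\begin{equation*}
H(\eta,y) \;=\; \frac{e^{-i\Lambda^{2}(\eta,y)}-1}{y}\;\cdot\;\frac{\vartheta^{2}(\eta,y)-\vartheta^{1}(\eta,y)}{(1+\vartheta^{1}(\eta,y))(1+\vartheta^{2}(\eta,y))}\;j_{0}(\eta).
\end{equation*}
Since $\Lambda^2(\eta,0)=0$, Assumption~\ref{assumption:Lambda} gives $\|(e^{-i\Lambda^2}-1)/y\|_{C^\alpha(\Omega)}\le C\|\Lambda^2\|_{C^{1,\alpha}(\Omega)}\le C\delta_0$, and Assumption~\ref{assumption:Theta} then bounds $\|H\|_{C^\alpha(\Omega)} \le C\|\vartheta^{2}-\vartheta^{1}\|_{C^\alpha(\Omega)}\|j_0\|_{C^\alpha(\SS^1)}$, so estimate \eqref{cota:singular:1} yields the required contribution.

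The delicate piece, which is the main obstacle, is $\mathsf{T}_1^{d,\Lambda}$: after summing the geometric series, the two different exponentials $e^{-in\Lambda^1}$ and $e^{-in\Lambda^2}$ produce denominators $(1-e^{i(x-\eta)-y-i\Lambda^{j}})$ with both indices $j=1,2$, and the naive kernel is not of pure convolution type. The key algebraic manipulation is
\begin{equation*}
\frac{1}{1-e^{ix-y-i\Lambda^1}} - \frac{1}{1-e^{ix-y-i\Lambda^2}} = \frac{-i\,y\,e^{ix-y}}{(1-e^{ix-y-i\Lambda^1})(1-e^{ix-y-i\Lambda^2})}\cdot\frac{\Lambda^1-\Lambda^2}{y}\cdot\mathcal{E}(\eta,y),
\end{equation*}
with $\mathcal{E}(\eta,y)=\int_0^1 e^{-i(s\Lambda^1+(1-s)\Lambda^2)}\,ds$ a bounded $C^{1,\alpha}$ factor. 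Taking $\partial_x$ and integrating against $j_0(\eta)$ puts the operator precisely in the form addressed by Proposition~\ref{Holder:lemma:difference}, acting on
\begin{equation*}
\widetilde{H}(\eta,y) \;=\; \frac{\Lambda^{1}(\eta,y)-\Lambda^{2}(\eta,y)}{y}\;\cdot\;\frac{\mathcal{E}(\eta,y)\,j_{0}(\eta)}{1+\vartheta^{1}(\eta,y)}.
\end{equation*}
Using $(\Lambda^1-\Lambda^2)(\eta,0)=0$ and writing $(\Lambda^1-\Lambda^2)/y = \int_0^1 \partial_y(\Lambda^1-\Lambda^2)(\eta,ty)\,dt$ gives $\|(\Lambda^1-\Lambda^2)/y\|_{C^\alpha(\Omega)}\le C\|\Lambda^1-\Lambda^2\|_{C^{1,\alpha}(\Omega)}$, so $\|\widetilde{H}\|_{C^\alpha(\Omega)}\le C\|\Lambda^1-\Lambda^2\|_{C^{1,\alpha}(\Omega)}\|j_0\|_{C^\alpha(\SS^1)}$. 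Proposition~\ref{Holder:lemma:difference} then produces the bound $C\|\Lambda^{1}-\Lambda^{2}\|_{C^{1,\alpha}(\Omega)}\|j_0\|_{C^\alpha(\SS^1)}$ for this contribution. Combining with the estimate for $\mathsf{T}_1^{d,\vartheta}$ and the two exponentially small smoothing remainders yields \eqref{estimate:difference:T1:Calpha}.
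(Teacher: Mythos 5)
Your proposal is correct and follows essentially the same route as the paper's proof: the algebraic split of the difference kernel into a $\Lambda$-piece and a $\vartheta$-piece is exactly the paper's decomposition $\mathsf{L}^{n}=\mathsf{L}^{n}_{1}+\mathsf{L}^{n}_{2}$, the exponentially small tail is peeled off in the same way, and the two singular pieces are handled via Proposition~\ref{Holder:lemma:difference} and Proposition~\ref{Holder:singular:integral:alpha} acting on the same (up to an immaterial constant) $C^{\alpha}$ functions $H$ and $\widetilde{H}$, with the crucial observation that $(\Lambda^1-\Lambda^2)(\eta,0)=0$ letting you divide by $y$ while retaining $C^{\alpha}$ control in terms of $\|\Lambda^1-\Lambda^2\|_{C^{1,\alpha}}$.
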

\begin{proof}
Let us first introduce some notation. We define by $\mathfrak{G}^{d}_{1,\epsilon}$ the difference function
\begin{align}
	\mathfrak{G}^{d}_{1,\epsilon}(x,\eta)=\mathfrak{G}_{1,\epsilon}[\Lambda^1,\vartheta^1](x,\eta)-\mathfrak{G}_{1,\epsilon}[\Lambda^2,\vartheta^2](x,\eta)  \label{difference:fucntion:G1}    
\end{align}
where $\mathfrak{G}_{1,\epsilon}[\cdot,\cdot]$ is given in \eqref{def:G1:second}. Following the lines of Proposition \ref{Proposition:estimate:T1}, we split the function \eqref{def:G1:second} for $n>0$ and $n<0$. Similarly as in the previous propositions, we will use the notation $\mathfrak{G}^{d,+}_{1,\epsilon}$ for $n>0$ and $\mathfrak{G}^{d,-}_{1,\epsilon}$ otherwise.Then, for $n>0$ we find that
\begin{align}
	\mathfrak{G}^{d,+}_{1,\epsilon}(x,\eta)&=\displaystyle\sum_{n=1}^{n=\infty}n e^{inx}\int_{\epsilon}^{L} e^{-ny}  \mathsf{L}^{n}(\eta,y) dy \nonumber + \displaystyle\sum_{n=1}^{n=\infty}\left(\frac{n\sinh(nL)}{(\cosh(nL)-1)}-n\right)e^{inx}\int_{\epsilon}^{L} e^{-ny}   \mathsf{L}^{n}(\eta,y)  dy \nonumber \\
	&=\mathfrak{G}^{d,+}_{11,\epsilon}(x,\eta)+\mathfrak{G}^{d,+}_{12,\epsilon}(x,\eta) \label{splitting:difference:T1}
\end{align}
with
\begin{equation}
	\mathsf{L}^{n}(\eta,y) =\left[ \frac{e^{-in\Lambda^{1}(\eta,y)}-1}{1+\vartheta^1(\eta,y)} - \frac{e^{-in\Lambda^2(\eta,y)}-1}{1+\vartheta^2(\eta,y)} \right].
\end{equation}
It will be convenient to decompose $\mathsf{L}^{n}(\eta,y)$ as follows
\begin{align}
	\mathsf{L}^{n}(\eta,y) &=\frac{1}{1+\vartheta^1(\eta,y)}\left(e^{-in\Lambda^1(\eta,y)}-e^{-in\Lambda^2(\eta,y)}\right) + \left(e^{-in\Lambda^2(\eta,y)}-1\right) \left(\frac{1}{1+\vartheta^1(\eta,y)}-\frac{1}{1+\vartheta^2(\eta,y)}\right) \nonumber \\
	&= \mathsf{L}^{n}_{1}+\mathsf{L}^{n}_{2}. \label{definition:L1:L2}
\end{align}
Plugging the decomposition $\mathsf{L}^{n}=\mathsf{L}^{n}_1+\mathsf{L}^{n}_2$  into \eqref{splitting:difference:T1} and calculating the sum in $n$ for the first term $\mathfrak{G}^{d}_{11,\epsilon}(x,\eta)$ we obtain
\begin{align*}
	\mathfrak{G}^{d,+}_{11,\epsilon}(x,\eta)&= \frac{1}{i}\int_{\epsilon}^{L}\p_{x}\left(\frac{e^{ix-y-i\Lambda^1(\eta,y)}}{1-e^{ix-y-i\Lambda^1(\eta,y)}}-\frac{e^{ix-y-i\Lambda^2(\eta,y)}}{1-e^{ix-y-i\Lambda^2(\eta,y)}}\right)\frac{1}{1+\vartheta^1(\eta,y)}  \  dy \nonumber \\
	&\quad + \frac{1}{i}\int_{\epsilon}^{L}\p_{x}\left(\frac{e^{ix-y-i\Lambda^2(\eta,y)}}{1-e^{ix-y-i\Lambda^2(\eta,y)}}-\frac{e^{ix-y}}{1-e^{ix-y}}\right)\times \left(\frac{1}{1+\vartheta^1(\eta,y)}-\frac{1}{1+\vartheta^2(\eta,y)}\right)  dy \\
	&= \mathfrak{G}^{d,+}_{111,\epsilon}(x,\eta)+\mathfrak{G}^{d,+}_{112,\epsilon}(x,\eta).
\end{align*}
Denoting by
\begin{align*}
	\mathsf{F}_{1}(x,\eta,y)&= \p_{x}\left(\frac{ye^{ix-y}}{\left(1-e^{ix-y-i\Lambda^1(\eta,y)}\right)\left(1-e^{ix-y-i\Lambda^2(\eta,y)}\right)} \right), \\
	\mathsf{F}_{2}(x,\eta,y)&=\p_{x}\left(\frac{ye^{ix-y}}{\left(1-e^{ix-y-i\Lambda^2(\eta,y)}\right)\left(1-e^{ix-y}\right)} \right),
\end{align*}
we obtain that 
\begin{align*}
	\mathfrak{G}^{d,+}_{111,\epsilon}(x,\eta)= \frac{1}{i}\int_{\epsilon}^{L} \frac{\mathsf{L}^{1}_{1}(\eta,y)}{y}\mathsf{F}_{1}(x,\eta,y) dy \mbox{ and } \mathfrak{G}^{d}_{112,\epsilon}(x,\eta)= \frac{1}{i}\int_{\epsilon}^{L} \frac{\mathsf{L}^{1}_{2}(\eta,y)}{y}\mathsf{F}_{2}(x,\eta,y)dy.
\end{align*}
Therefore, recalling the definition of the operator $\mathsf{T}^{d}_{1}$ in \eqref{difference:operator:T1} and the previous computations we find, after changing the order of integration that
\begin{align*}
	\mathsf{T}^{d,+}_{1}j_{0}(x)&= \mathsf{T}^{d,+}_{11}j_{0}(x)+ \mathsf{T}^{d,+}_{12}j_{0}(x)+ \mathsf{T}^{d,+}_{13}j_{0}(x)
\end{align*} 
where
\begin{align}
	\mathsf{T}^{d,+}_{11}j_{0}(x)&=  -\frac{1}{2\pi i} \displaystyle \lim_{\epsilon\to 0} \int_{\epsilon}^{L}dy \int_{\mathbb{S}^{1}} d\eta \frac{\mathsf{L}^{1}_{1}(\eta,y)}{y} \mathsf{F}_{1}(x-\eta,\eta,y) j_{0}(\eta), \label{def:Td:11} \\
	\mathsf{T}^{d,+}_{12}j_{0}(x)&=  -\frac{1}{2\pi i} \displaystyle \lim_{\epsilon\to 0} \int_{\epsilon}^{L}dy \int_{\mathbb{S}^{1}} d\eta \frac{\mathsf{L}^{1}_{2}(\eta,y)}{y}\mathsf{F}_{2}(x-\eta,\eta,y) j_{0}(\eta), \label{def:Td:12} \\
	\mathsf{T}^{d,+}_{13}j_{0}(x)&= -\frac{1}{2\pi} \displaystyle \lim_{\epsilon\to 0}\int_{\mathbb{S}^{1}} \mathfrak{G}^{d,+}_{12,\epsilon}(x-\eta,\eta)j_{0}(\eta) \ d\eta. \label{def:Td:13}
\end{align} 
We start estimating the third term $\mathsf{T}^{d}_{13}j_{0}(x)$. Notice that the function $\mathfrak{G}^{d}_{12,\epsilon}$ in \eqref{splitting:difference:T1} is a smooth function in $x$ due to the exponential decay in $n$ of the terms that define the function \eqref{splitting:difference:T1}. Furthermore, we also have that the functions $\mathsf{L}^{n}_{1}(\eta,y),\mathsf{L}^{n}_{2}(\eta,y)$ defined in \eqref{definition:L1:L2} are bounded by
\begin{align}
	\abs{\mathsf{L}^{n}_{1}(\eta,y)}&\leq C \norm{\frac{1}{1+\vartheta^1}}_{L^{\infty}(\Omega)} \abs{n} \norm{\Lambda^1-\Lambda^2}_{L^\infty(\Omega)}, \mbox{ for } (\eta,y)\in\Omega, \label{estimate:L1} \\
	\abs{\mathsf{L}^{n}_{2}(\eta,y)}&\leq C \norm{\frac{\vartheta^2-\vartheta^1}{(1+\vartheta^2)(1+\vartheta^1)}}_{L^{\infty}(\Omega)} \abs{n}\norm{\Lambda^2}_{L^\infty(\Omega)}, \mbox{ for } (\eta,y)\in\Omega. \label{estimate:L2}
\end{align}
and thus
\begin{align}
	\abs{\mathfrak{G}^{d,+}_{12,\epsilon}(x,\eta)}&\leq C \left(\norm{\frac{1}{1+\vartheta^1}}_{L^{\infty}(\Omega)}\norm{\Lambda^1-\Lambda^2}_{L^\infty(\Omega)}+\norm{\frac{\vartheta^2-\vartheta^1}{(1+\vartheta^2)(1+\vartheta^1)}}_{L^{\infty}(\Omega)}\norm{\Lambda^2}_{L^\infty(\Omega)} \right)\nonumber \\
	&\leq C \norm{\Lambda^1-\Lambda^2}_{L^\infty(\Omega)}+ \norm{\vartheta^2-\vartheta^1}_{L^{\infty}(\Omega)}, \mbox{ for } x\in\SS^1,\eta\in\SS^1. \label{estimate:G12d:Linfty}
\end{align}
Since the estimate \eqref{estimate:G12d:Linfty} is independent of $\epsilon$, Lebesgue Dominated Convergence Theorem shows that the limit exists and that the associated operator $\mathsf{T}^{d}_{13}j_0$ in \eqref{def:Td:13} is well-defined. Moreover, we have the pointwise estimate
\begin{equation}
	\abs{\mathsf{T}^{d,+}_{13}j_{0}(x)}\leq C  \left(\norm{\Lambda^1-\Lambda^2}_{L^\infty(\Omega)}+ \norm{\vartheta^2-\vartheta^1}_{L^{\infty}(\Omega)}\right)\norm{j_0}_{L^{\infty}(\Omega)}, \quad x\in\SS^1. \label{estimate:Td:13:Linf}
\end{equation}
The $C^{\alpha}$ semi-norm estimate for $\mathsf{T}^{d,+}_{13}j_0$ follows directly by using estimate \eqref{formula:holder:exp} which combined with \eqref{estimate:Td:13:Linf} shows 
\begin{align}
	\norm{\mathsf{T}^{d,+}_{13}}_{\mathcal{L}(L^\infty(\SS^1),C^{\alpha}(\mathbb{S}^1))}\leq C\left(\norm{\Lambda^1-\Lambda^2}_{L^\infty(\Omega)}+\norm{\vartheta^2-\vartheta^1}_{L^{\infty}(\Omega)}\right).  \label{bound:T+13:diff}
\end{align}
To deal with $\mathsf{T}^{d,+}_{11}j_0$ in \eqref{def:Td:11} and $\mathsf{T}^{d,+}_{12}j_0$  in \eqref{def:Td:12}, we can invoke Proposition \ref{Holder:lemma:difference} and Proposition \ref{Holder:singular:integral:alpha}, respectively. Notice that a consequence of the before mentioned lemmas is that the limits of the integrals \eqref{def:Td:11} and \eqref{def:Td:12} are well-defined.

To that purpose, we first notice using the expression \eqref{definition:L1:L2} that
\begin{align}
	\frac{\mathsf{L}^{1}_{1}(\eta,y)}{y}=\frac{e^{-i\Lambda^{1}(\eta,y)}}{1+\vartheta^1(\eta,y)}\frac{\left(1-e^{-(i\Lambda^{2}(\eta,y)-i\Lambda^{1}(\eta,y))} \right)}{y}=\frac{e^{-i\Lambda^{1}(\eta,y)}}{1+\vartheta^1(\eta,y)}i\int_{0}^{1}B(\eta,y)e^{iysB(\eta,y)} ds \label{exp:L1:integral}
\end{align}
since by Assumption \ref{assumption:Lambda} we can write $\Lambda^{2}(\eta,y)-\Lambda^{1}(\eta,y)=yB(\eta,y)$ for $B(\eta,y)\in C^{1,\alpha}(\Omega).$
Thus, choosing
\begin{equation*}
	H(\eta,y)=\frac{\mathsf{L}^{1}_{1}(\eta,y)}{y} j_{0}(\eta)\in C^{\alpha}(\Omega)
\end{equation*}
we apply estimate \eqref{cota:singular:diff} in Proposition \ref{Holder:lemma:difference} combined with \eqref{exp:L1:integral} to obtain that
\begin{equation}\label{estimate:Td11}
	\norm{\mathsf{T}^{d,+}_{11}}_{\mathcal{L}(C^{\alpha}(\mathbb{S}^1))}\leq  C \norm{\Lambda^1-\Lambda^2}_{C^{1,\alpha}(\Omega)}.
\end{equation}
On the other hand, proceeding in a similar way and choosing
\begin{equation*}
	H(\eta,y)=\frac{\mathsf{L}^{1}_{2}(\eta,y)}{y} j_{0}(\eta)\in C^{\alpha}(\Omega)
\end{equation*}
we use Lemma \ref{Holder:singular:integral:alpha} to show that
\begin{equation}\label{estimate:Td12}
	\norm{\mathsf{T}^{d,+}_{12}}_{\mathcal{L}(C^{\alpha}(\mathbb{S}^1))}\leq C \norm{\vartheta^2-\vartheta^1}_{C^{\alpha}(\Omega)}.
\end{equation}

Collecting bounds \eqref{bound:T+13:diff}-\eqref{estimate:Td12} we deduce estimate \eqref{estimate:difference:T1:Calpha} for $n>0$. Repeating the same arguments for $n<0$ concludes the proof.
\end{proof}

\begin{proposition}\label{prop:diff:T2}
Let $\vartheta^1,\vartheta^2$ satisfy Assumption \ref{assumption:Theta}. Let $j_0\in C^{\alpha}(\SS^1)$ and define 
\begin{align}
	\mathsf{T}^{d}_{2}j_{0}(x)=\left(\mathsf{T}_{2}[\vartheta^1]-\mathsf{T}_{2}[\vartheta^2]\right)j_{0}(x), \label{difference:operator:T2} 
\end{align}
where $\mathsf{T}_{2}[\cdot]$ is given in \eqref{def:T2:second}. Then we have that
\begin{align}
	\norm{\mathsf{T}_{2}^{d}}_{\mathcal{L}(C^{\alpha}(\mathbb{S}^1))}\leq C \norm{\vartheta^2-\vartheta^1}_{C^{\alpha}(\Omega)}. \label{estimate:difference:T2:Calpha} 
\end{align}
\end{proposition}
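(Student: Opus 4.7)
The plan is to follow the blueprint of Proposition \ref{prop:diff:T1}, but the situation here is strictly simpler because no exponential perturbation $\Lambda$ appears in the denominator. First, I would define the difference kernel
\begin{equation*}
\mathfrak{G}^{d}_{2,\epsilon}(x,\eta) = \mathfrak{G}_{2,\epsilon}[\vartheta^1](x,\eta) - \mathfrak{G}_{2,\epsilon}[\vartheta^2](x,\eta),
\end{equation*}
and observe that the only $(\eta,y)$-dependent factor that changes between the two operators is
\begin{equation*}
\mathsf{M}(\eta,y) := \frac{\vartheta^1(\eta,y)}{1+\vartheta^1(\eta,y)} - \frac{\vartheta^2(\eta,y)}{1+\vartheta^2(\eta,y)} = \frac{\vartheta^1(\eta,y)-\vartheta^2(\eta,y)}{(1+\vartheta^1(\eta,y))(1+\vartheta^2(\eta,y))}.
\end{equation*}
By Assumption \ref{assumption:Theta} together with the algebra inequality for the product and quotient in $C^\alpha(\Omega)$, the function $\mathsf{M}$ lies in $C^\alpha(\Omega)$ with
\begin{equation*}
\norm{\mathsf{M}}_{C^\alpha(\Omega)} \leq C \norm{\vartheta^1-\vartheta^2}_{C^\alpha(\Omega)},
\end{equation*}
uniformly in $\vartheta^1,\vartheta^2$ satisfying Assumption \ref{assumption:Theta}.

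Next, as in Proposition \ref{Proposition:estimate:T2}, I would split the Fourier series defining $\mathfrak{G}^{d}_{2,\epsilon}$ into $n>0$ and $n<0$ contributions; by symmetry it suffices to handle $n>0$. Following the decomposition \eqref{operadorG2+:n:m}--\eqref{operadorG2+:n:s} applied to the common factor $\mathsf{M}(\eta,y)$, write $\mathfrak{G}^{d,+}_{2,\epsilon} = \mathfrak{G}^{d,+}_{21,\epsilon} + \mathfrak{G}^{d,+}_{22,\epsilon}$, where $\mathfrak{G}^{d,+}_{21,\epsilon}$ arises from summing the series $\sum_{n\ge 1} n e^{inx} \int_\epsilon^L e^{-ny}\mathsf{M}(\eta,y)\,dy$ to obtain, as in \eqref{finalG+2},
\begin{equation*}
\mathfrak{G}^{d,+}_{21,\epsilon}(x,\eta) = \frac{1}{i}\int_{\epsilon}^{L}\partial_{x}\!\left(\frac{1}{1-e^{ix-y}}\right)\mathsf{M}(\eta,y)\,dy,
\end{equation*}
and where $\mathfrak{G}^{d,+}_{22,\epsilon}$ is the smoothing remainder built from the exponentially decaying Fourier symbol $\frac{n\sinh(nL)}{\cosh(nL)-1}-n$.

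For the main singular piece I would apply Proposition \ref{Holder:singular:integral:2} with
\begin{equation*}
H(\eta,y) = \mathsf{M}(\eta,y)\, j_0(\eta) \in C^{\alpha}(\Omega),
\end{equation*}
since $\norm{H}_{C^\alpha(\Omega)} \leq C\norm{\mathsf{M}}_{C^\alpha(\Omega)}\norm{j_0}_{C^\alpha(\SS^1)} \leq C\norm{\vartheta^1-\vartheta^2}_{C^\alpha(\Omega)}\norm{j_0}_{C^\alpha(\SS^1)}$, giving the desired $C^\alpha(\SS^1)$ bound with factor $\norm{\vartheta^1-\vartheta^2}_{C^\alpha(\Omega)}$ on the resulting piece of $\mathsf{T}^d_2 j_0$. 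For the smoothing remainder, exponential decay of the symbol and the estimate \eqref{formula:holder:exp} imply pointwise bounds $\abs{\mathfrak{G}^{d,+}_{22,\epsilon}(x,\eta)}\leq C\norm{\mathsf{M}}_{L^\infty(\Omega)}$ and $\abs{\mathfrak{G}^{d,+}_{22,\epsilon}(x_1,\eta)-\mathfrak{G}^{d,+}_{22,\epsilon}(x_2,\eta)}\leq C\norm{\mathsf{M}}_{L^\infty(\Omega)}\abs{x_1-x_2}^\alpha$, hence the corresponding operator maps $L^\infty(\SS^1)$ into $C^\alpha(\SS^1)$ with norm $\leq C\norm{\vartheta^1-\vartheta^2}_{C^\alpha(\Omega)}$ (by dominated convergence the $\epsilon\to 0^+$ limit exists). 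Summing the two contributions and repeating the argument for $n<0$ yields \eqref{estimate:difference:T2:Calpha}.

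There is no real obstacle here: the only point requiring a bit of care is the algebra step showing $\mathsf{M}\in C^\alpha(\Omega)$ with the right norm, which is immediate from Assumption \ref{assumption:Theta} and the smallness $\delta_1<1/2$ that keeps $(1+\vartheta^i)^{-1}$ uniformly bounded in $C^\alpha(\Omega)$. Unlike Proposition \ref{prop:diff:T1}, here there is no need to split the difference into two pieces to separately produce the $\Lambda$- and $\vartheta$-differences, and no need to invoke Proposition \ref{Holder:lemma:difference}; Proposition \ref{Holder:singular:integral:2} suffices.
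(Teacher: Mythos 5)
Your proposal is correct and follows essentially the same route as the paper: split the kernel into the main piece (whose Fourier sum telescopes to $\partial_x(1-e^{ix-y})^{-1}$) plus an exponentially smoothing remainder, then apply Proposition \ref{Holder:singular:integral:2} to $H(\eta,y)=\mathsf{M}(\eta,y)j_0(\eta)$ for the main piece and elementary pointwise bounds for the remainder. The only cosmetic difference is that you simplify the common factor to the closed form $\mathsf{M}=\frac{\vartheta^1-\vartheta^2}{(1+\vartheta^1)(1+\vartheta^2)}$, whereas the paper keeps it in the equivalent split form $\mathsf{L}_3=\frac{\vartheta^1-\vartheta^2}{1+\vartheta^1}+\vartheta^2\bigl(\frac{1}{1+\vartheta^1}-\frac{1}{1+\vartheta^2}\bigr)$; both yield the same $C^\alpha$ bound $\norm{\mathsf{M}}_{C^\alpha(\Omega)}\leq C\norm{\vartheta^1-\vartheta^2}_{C^\alpha(\Omega)}$ by the same algebra-inequality argument. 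Your observation that, unlike Proposition \ref{prop:diff:T1}, no invocation of Proposition \ref{Holder:lemma:difference} is required matches the paper exactly.
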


\begin{proof}
Following the same notation as in the last proposition and the arguments of the proof of Proposition \ref{Proposition:estimate:T2}, we introduce the difference function $\mathfrak{G}^{d}_{2,\epsilon}$
\begin{align}
	\mathfrak{G}^{d}_{2,\epsilon}(x,\eta)=\mathfrak{G}_{2,\epsilon}[\vartheta^1](x,\eta)-\mathfrak{G}_{2,\epsilon}[\vartheta^2](x,\eta).  \label{difference:fucntion:G2}    
\end{align}
where $\mathfrak{G}_{2,\epsilon}[\cdot]$ is given in \eqref{operadorG2:frak}. The using the same notation by means of the superscripts $\pm$ as before, we find that for $n>0$ we have that
\begin{align}
	\mathfrak{G}^{d,+}_{2,\epsilon}(x,\eta)&=\displaystyle\sum_{n=1}^{n=\infty}n e^{inx}\int_{\epsilon}^{L} e^{-ny}  \mathsf{L}_{3}(\eta,y) dy \nonumber + \displaystyle\sum_{n=1}^{n=\infty}\left(\frac{n\sinh(nL)}{(\cosh(nL)-1)}-n\right)e^{inx}\int_{\epsilon}^{L} e^{-ny}  \mathsf{L}_{3}(\eta,y)  dy \nonumber \\
	&=\mathfrak{G}^{d,+}_{21,\epsilon}(x,\eta)+\mathfrak{G}^{d,+}_{22,\epsilon}(x,\eta) \label{splitting:difference:T2}
\end{align}
with
\begin{equation*}
	\mathsf{L}_3(\eta,y) =\frac{1}{1+\vartheta^1(\eta,y)} (\vartheta^1(\eta,y)-\vartheta^2(\eta,y))+\vartheta^{2}(\eta,y)\left( \frac{1}{1+\vartheta^1(\eta,y)}-\frac{1}{1+\vartheta^2(\eta,y)}\right).  
\end{equation*}
Therefore, calculating the sum in $n$ in $\mathfrak{G}^{d}_{21,\epsilon}(x,\eta)$ we find that 
\begin{align*}
	\mathfrak{G}^{d,+}_{21,\epsilon}(x,\eta)= \frac{1}{i}\int_{\epsilon}^{L} \p_{x}\left(\frac{e^{ix-y}}{1-e^{ix-y}}\right)\mathsf{L}_3(\eta,y) dy 
\end{align*}
and thus
\begin{align*}
	\mathsf{T}^{d,+}_{2}j_{0}(x)&= \mathsf{T}^{d,+}_{21}j_{0}(x)+ \mathsf{T}^{d,+}_{22}j_{0}(x)
\end{align*} 
where
\begin{align}
	\mathsf{T}^{d,+}_{21}j_{0}(x)&=  -\frac{1}{2\pi} \displaystyle \lim_{\epsilon\to 0^{+}} \int_{\epsilon}^{L}dy \int_{\mathbb{S}^{1}} d\eta \p_{x}\left(\frac{e^{ix-y}}{1-e^{i(x-\eta)-y}}\right)\mathsf{L}_3(\eta,y) j_{0}(\eta), \label{def:Td:21} \\
	\mathsf{T}^{d,+}_{22}j_{0}(x)&= -\frac{1}{2\pi} \displaystyle \lim_{\epsilon\to 0^{+}}\int_{\mathbb{S}^{1}} \mathfrak{G}^{d,+}_{22}(x-\eta,\eta)j_{0}(\eta) \ d\eta. \label{def:Td:22}
\end{align} 
Since the function $\mathfrak{G}^{d,+}_{22}$ is smooth in $x$ since the series that defines it decays exponentially in $n$, we infer using the expression on $\mathsf{L}_3(\eta,y)$  that
\begin{equation}
	\abs{\mathfrak{G}^{d,+}_{22,\epsilon}(x,\eta)}\leq C \norm{\mathsf{L}_{3}}_{L^\infty(\Omega)}\leq C \norm{\vartheta^1-\vartheta^2}_{L^{\infty}(\Omega)}, \mbox{ for } x\in\SS^1, \eta\in \SS^1.
\end{equation}
Since estimate is independent of $\epsilon$, Lebesgue Dominated Convergence Theorem shows that the limit exists and that the associated operator $\mathsf{T}^{d,+}_{22}j_0$ in \eqref{def:Td:22} is well-defined. Moreover, we have the pointwise estimate
\begin{equation}
	\abs{\mathsf{T}^{d,+}_{22}j_{0}(x)}\leq C   \norm{\vartheta^1-\vartheta^2}_{L^{\infty}(\Omega)}\norm{j_0}_{L^{\infty}(\Omega)}, \quad x\in\SS^1. \label{estimate:Td:22:Linf}
\end{equation}
The H\"older semi-norm estimate follows in a similar way as in \eqref{formula:holder:exp}. Therefore, 
\begin{equation}\label{bound:T21:diff}
	\norm{\mathsf{T}^{d,+}_{22}}_{\mathcal{L}(L^\infty(\SS^1),C^{\alpha}(\mathbb{S}^1))}\leq  C \norm{\vartheta^2-\vartheta^1}_{C^\alpha(\Omega)}.
\end{equation}
To estimate the remainder operator $\mathsf{T}^{d}_{21}j_0$ we apply Proposition \ref{Holder:singular:integral:2} with $H(\eta,y)=\mathsf{L}_{3}j_{0}(\eta)\in C^{\alpha}(\Omega)$ and hence
\begin{equation}\label{bound:T22:diff}
	\norm{\mathsf{T}^{d,+}_{21}}_{\mathcal{L}(C^{\alpha}(\mathbb{S}^1))}\leq  C \norm{\vartheta^2-\vartheta^1}_{C^\alpha(\Omega)}.
\end{equation}
By means of \eqref{bound:T21:diff}-\eqref{bound:T22:diff} we deduce that \eqref{estimate:difference:T2:Calpha} concluding the proof.
\end{proof}

\begin{proposition}\label{proposition:T2:diff}
Let $\Lambda^1,\Lambda^2$ satisfy Assumption \ref{assumption:Lambda} and let $\vartheta^1,\vartheta^2$ satisfy Assumption \ref{assumption:Theta}. Let $j_0\in C^{\alpha}(\SS^1)$ and define 
\begin{align}
	\mathsf{T}^{d}_{3}j_{0}(x)&=\left(\mathsf{T}_{3}[\Lambda^1,\vartheta^1]-\mathsf{T}_{2}[\Lambda^2,\vartheta^2]\right)j_{0}(x), \label{difference:operator:T3}  \\
	\mathsf{T}^{d}_{4}j_{0}(x)&=\left(\mathsf{T}_{4}[\vartheta^1]-\mathsf{T}_{4}[\vartheta^2]\right)j_{0}(x),
\end{align}
where $\mathsf{T}_{3}[\cdot,\cdot]$ and $\mathsf{T}_{4}[\cdot]$ are given in \eqref{def:T3:sec}-\eqref{def:T4:sec}. Then we have that
\begin{align}
	\norm{\mathsf{T}_{3}^{d}}_{\mathcal{L}(L^\infty(\Omega),C^{\alpha}(\mathbb{S}^1))}&\leq C\left(\norm{\Lambda^1-\Lambda^2}_{C^{1,\alpha}(\Omega)}+ \norm{\vartheta^2-\vartheta^1}_{C^{\alpha}(\Omega)}\right), \label{estimate:difference:T3:Calpha}  \\
	\norm{\mathsf{T}_{4}^{d}}_{\mathcal{L}(L^\infty(\Omega),C^{\alpha}(\mathbb{S}^1))}&\leq C  \norm{\vartheta^2-\vartheta^1}_{C^{\alpha}(\Omega)} \label{estimate:difference:T4:Calpha}  \end{align}
\end{proposition}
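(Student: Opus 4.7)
The plan is to exploit the smoothing character of $\mathsf{T}_3$ and $\mathsf{T}_4$, which was already the key feature in Proposition \ref{prop:T3T3}: the factor $M(n,y)$ decays exponentially as $|n|\to\infty$ (uniformly in $y\in[0,L]$), so no singular integral machinery from Section \ref{S4} is needed, and the estimates should follow from direct pointwise bounds on the kernels $\mathfrak{G}_3^d(x,\eta)$ and $\mathfrak{G}_4^d(x,\eta)$ combined with the elementary inequality $|e^{in x_1}-e^{in x_2}|\le C|n|^{\alpha}|x_1-x_2|^{\alpha}$ used already in \eqref{formula:holder:exp}.

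First I would define the difference kernels
\begin{equation*}
\mathfrak{G}^{d}_{k}(x,\eta)=\mathfrak{G}_{k}[\Lambda^{1},\vartheta^{1}](x,\eta)-\mathfrak{G}_{k}[\Lambda^{2},\vartheta^{2}](x,\eta), \qquad k=3,4,
\end{equation*}
and split the bracketed expressions exactly as in Propositions \ref{prop:diff:T1} and \ref{prop:diff:T2}. For $k=3$ this yields the decomposition
\begin{equation*}
\frac{e^{-in\Lambda^{1}(\eta,y)}-1}{1+\vartheta^{1}(\eta,y)}-\frac{e^{-in\Lambda^{2}(\eta,y)}-1}{1+\vartheta^{2}(\eta,y)}=\mathsf{L}^{n}_{1}(\eta,y)+\mathsf{L}^{n}_{2}(\eta,y),
\end{equation*}
where $\mathsf{L}^{n}_{1}$, $\mathsf{L}^{n}_{2}$ are as in \eqref{definition:L1:L2}, satisfying the bounds \eqref{estimate:L1}–\eqref{estimate:L2}. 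For $k=4$ one uses the analogous algebraic identity from the proof of Proposition \ref{prop:diff:T2}, which produces the single function $\mathsf{L}_{3}$ with $|\mathsf{L}_{3}|\le C\|\vartheta^{1}-\vartheta^{2}\|_{L^{\infty}}$.

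The second step is the pointwise kernel bound. Using $|M(n,y)|\le Ce^{-|n|L}$ uniformly in $y\in[0,L]$ together with \eqref{estimate:L1}–\eqref{estimate:L2}, the factor $|n|$ that appears in $|\mathsf{L}^{n}_{j}|$ is absorbed into the exponentially convergent series, so the series defining $\mathfrak{G}_{3}^{d}(x,\eta)$ converges uniformly and satisfies
\begin{equation*}
|\mathfrak{G}^{d}_{3}(x,\eta)|\le C\bigl(\|\Lambda^{1}-\Lambda^{2}\|_{L^{\infty}(\Omega)}+\|\vartheta^{1}-\vartheta^{2}\|_{L^{\infty}(\Omega)}\bigr),
\end{equation*}
and similarly $|\mathfrak{G}^{d}_{4}(x,\eta)|\le C\|\vartheta^{1}-\vartheta^{2}\|_{L^{\infty}(\Omega)}$, both uniformly for $x\in\SS^{1}$ and $\eta\in\SS^{1}$. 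These give $L^{\infty}$ bounds for $\mathsf{T}_{3}^{d}j_{0}$ and $\mathsf{T}_{4}^{d}j_{0}$ of the desired form, with $\|j_0\|_{L^\infty}$ on the right.

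Third, to promote these $L^{\infty}$ bounds to $C^{\alpha}$ bounds I would estimate the increment $|\mathfrak{G}_{k}^{d}(x_{1},\eta)-\mathfrak{G}_{k}^{d}(x_{2},\eta)|$ by freezing all factors that do not depend on $x$ and using \eqref{formula:holder:exp}: each term of the series carries an extra factor $|n|^{\alpha}|x_{1}-x_{2}|^{\alpha}$, but the exponential factor $e^{-|n|L}$ from $M(n,y)$ keeps the $n$-sum finite. This gives
\begin{equation*}
|\mathfrak{G}_{3}^{d}(x_{1},\eta)-\mathfrak{G}_{3}^{d}(x_{2},\eta)|\le C\bigl(\|\Lambda^{1}-\Lambda^{2}\|_{L^{\infty}}+\|\vartheta^{1}-\vartheta^{2}\|_{L^{\infty}}\bigr)|x_{1}-x_{2}|^{\alpha},
\end{equation*}
and an analogous bound for $k=4$. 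Combining the two estimates and integrating against $j_{0}\in L^{\infty}(\SS^{1})$ yields \eqref{estimate:difference:T3:Calpha} and \eqref{estimate:difference:T4:Calpha}.

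I do not expect any serious obstacle: the only subtlety is to make sure the $|n|$ factors arising from $|\mathsf{L}^{n}_{j}|\le C|n|(\cdots)$ and from $|n|^{\alpha}$ in \eqref{formula:holder:exp} are tamed by the exponential decay $e^{-|n|L}$ inherited from $M(n,y)$; this is quantitative but standard. The use of the $C^{1,\alpha}$ norm (rather than $C^{1}$) on $\Lambda^{1}-\Lambda^{2}$ on the right-hand side of \eqref{estimate:difference:T3:Calpha} is consistent with the convention followed in Propositions \ref{prop:diff:T1}–\ref{prop:diff:T2}, though in fact only $L^{\infty}$ of the difference is needed for $\mathsf{T}_{3}^{d}$ here, thanks to the smoothing.
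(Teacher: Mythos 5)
Your proposal is correct and is precisely the expansion the paper's terse one-line proof ("combining the bounds derived in Propositions \ref{prop:diff:T1}, \ref{prop:diff:T2} and \ref{prop:T3T3}") points to: you reuse the decompositions $\mathsf{L}^n_1+\mathsf{L}^n_2$ and $\mathsf{L}_3$ from the earlier difference propositions, absorb the resulting $|n|$ and $|n|^\alpha$ factors into the exponential decay $|M(n,y)|\le Ce^{-|n|L}$ exactly as in Proposition \ref{prop:T3T3}, and conclude via \eqref{formula:holder:exp}. Your remark that only $\|\Lambda^1-\Lambda^2\|_{L^\infty}$ is actually needed for $\mathsf{T}_3^d$ (the stated $C^{1,\alpha}$ norm being used only for uniformity with the singular operators) is also accurate.
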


\begin{proof}
The proof follows simply by combining the bounds derived in Proposition \ref{prop:diff:T1}, Proposition \ref{prop:diff:T2}  and Proposition \ref{prop:T3T3}.
\end{proof}

\subsection{Estimates for \texorpdfstring{$\mathsf{G}$}{Lg}}\label{Sec:44}
In this subsection, we provide the $C^{1,\alpha}$ H\"older estimate for the term $\mathsf{G}$ defined in \eqref{func:G:rig}. To that purpose, let us start with the following lemma:
\begin{lemma}\label{Lemma:G:1}
Let $f\in C^{2,\alpha}(\partial\Omega)$ and define the functions
\begin{align*}
	h^{+}(x)=-J+\int_{0}^{x} (f(\xi,L)-A) \ d\xi, \quad 
	h^{-}(x)=\int_{0}^{x} (f(\xi,0)-A) \ d\xi, \mbox{ for }  x\in \SS^1.
\end{align*}
Denote by $\widehat{h^{+}}(n),\widehat{h^{-}}(n)$ the Fourier coefficients of $h^{+},h^{-}$ respectively. We define
\begin{equation}\label{definition:widetilde:f}
	\mathcal{Z}(x)=\frac{1}{2\pi} \displaystyle\sum_{n=-\infty}^{n=\infty}\left( \widehat{h^{+}}(n) \frac{\abs{n}}{\sinh (\abs{n}L)}-
	\widehat{h^{-}}(n)\frac{\abs{n}}{\tanh (\abs{n}L)}\right)  e^{inx}.
\end{equation}
Then the function $\mathcal{Z}\in C^{2,\alpha}(\SS^1)$. Moreover, we have that
\begin{equation}
	\norm{\mathcal{Z}}_{C^{2,\alpha}(\SS^1)}\leq C \norm{f}_{C^{2,\alpha}(\SS^1)}.
\end{equation}
\end{lemma}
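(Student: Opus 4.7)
The plan is to decompose $\mathcal{Z}=\mathcal{Z}_+-\mathcal{Z}_-$ according to the two summands in \eqref{definition:widetilde:f} and treat each as a Fourier multiplier applied to $h^\pm$. A preliminary observation is that, because of the compatibility built into the definition of $A$ in \eqref{def:A}, the functions $f^+(\xi):=f(\xi,L)-A$ and $f^-(\xi):=f(\xi,0)-A$ have zero mean on $\SS^1$, so the integrals defining $h^\pm$ produce genuinely $2\pi$-periodic functions and one has the antiderivative identity $\widehat{h^\pm}(n)=\widehat{f^\pm}(n)/(in)$ for $n\neq 0$, together with $\|f^\pm\|_{C^{2,\alpha}(\SS^1)}\leq C\|f\|_{C^{2,\alpha}(\partial\Omega)}$. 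The $n=0$ coefficients $\widehat{h^\pm}(0)$ and the additive constant $-J$ contribute only a single bounded constant to $\mathcal{Z}$, to be absorbed at the end into the estimate.

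For $\mathcal{Z}_+$ the Fourier symbol $m^+(n)=|n|/\sinh(|n|L)$ decays exponentially in $|n|$, so the convolution kernel $K^+(x)=\tfrac{1}{2\pi}\sum_n m^+(n)e^{inx}$ is $C^\infty(\SS^1)$; differentiating term by term is justified by the exponential decay, and Young's inequality immediately yields
\[
\|\mathcal{Z}_+\|_{C^{2,\alpha}(\SS^1)} \leq C\,\|K^+\|_{C^{2,\alpha}(\SS^1)}\,\|h^+\|_{L^\infty(\SS^1)} \leq C\,\|f\|_{C^{2,\alpha}(\partial\Omega)}.
\]

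For the more delicate piece $\mathcal{Z}_-$ the symbol $m^-(n)=|n|/\tanh(|n|L)$ grows like $|n|$, but the antiderivative identity can be exploited to rewrite, for $n\neq 0$,
\[
\widehat{h^-}(n)\,m^-(n)\;=\;\frac{-i\,\mathrm{sgn}(n)}{\tanh(|n|L)}\,\widehat{f^-}(n)\;=\;-i\,\mathrm{sgn}(n)\,\widehat{f^-}(n)\;-\;i\,\mathrm{sgn}(n)\bigl(\coth(|n|L)-1\bigr)\widehat{f^-}(n).
\]
Since $\coth(|n|L)-1 = O(e^{-2|n|L})$ as $|n|\to\infty$, the second summand is again the Fourier side of a convolution against a $C^\infty(\SS^1)$ kernel, handled exactly as for $\mathcal{Z}_+$. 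The first summand is, by the paper's definition of $\mathcal{H}$, exactly $\widehat{\mathcal{H}f^-}(n)$. Invoking the classical Hölder boundedness $\|\mathcal{H}u\|_{C^{2,\alpha}(\SS^1)}\leq C\|u\|_{C^{2,\alpha}(\SS^1)}$ then gives $\|\mathcal{Z}_-\|_{C^{2,\alpha}(\SS^1)}\leq C\|f\|_{C^{2,\alpha}(\partial\Omega)}$, which combined with the bound for $\mathcal{Z}_+$ concludes the proof. No serious obstacle arises; the only point requiring care is the bookkeeping of the $n=0$ modes, which must be interpreted as limits in accordance with the convention fixed in Section~\ref{Sec:2}, and the verification that the constant $-J$ in $h^+$ affects only $\widehat{h^+}(0)$ and so is harmless.
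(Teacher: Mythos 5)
Your proposal is correct and follows essentially the same route as the paper: you isolate the Hilbert transform as the singular piece (the paper writes it as $\mathcal{H}\partial_x h^-$, you as $\mathcal{H}f^-$ with $f^-=\partial_x h^-$, which is the same object), absorb the remaining symbol pieces into exponentially decaying smooth convolution kernels, and invoke Hölder boundedness of $\mathcal{H}$. The explicit appeal to the antiderivative identity $\widehat{h^\pm}(n)=\widehat{f^\pm}(n)/(in)$ is just a cleaner packaging of the step the paper phrases by bumping $h^-$ from $C^{2,\alpha}$ to $C^{3,\alpha}$.
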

\begin{proof}
Adding and subtracting $\abs{n}\widehat{h^{-}}(n)e^{inx}$ in \eqref{definition:widetilde:f} we obtain that
\begin{equation}
	\mathcal{Z}(x)=\frac{1}{2\pi} \displaystyle\sum_{n=-\infty}^{n=\infty}\left( \abs{n}\widehat{h^{-}}(n)e^{inx}+W_{1}(n)\widehat{h^{+}}(n)e^{inx}+W_{2}(n)\widehat{h^{-}}(n)e^{inx}\right)
\end{equation}
where $\abs{W_{i}(n)}\leq C e^{-Ln}$ for $i=1,2$. Therefore, recalling that 
$$\partial_{x}\mathcal{H}h^{-}(x)=\mathcal{H}\partial_{x}h^{-}(x)=\frac{1}{2\pi} \displaystyle\sum_{n=-\infty}^{n=\infty} \abs{n}\widehat{h^{-}}(n)e^{inx}$$
where $\mathcal{H}$ is the periodic Hilbert transform we arrive at
\begin{align*}
	\norm{\mathcal{Z}}_{C^{2,\alpha}(\SS^1)} &\leq C \left(\norm{ \mathcal{H}\partial_{x}h^{-}}_{C^{2,\alpha}(\SS^1)} + \norm{h^-}_{L^{\infty}(\SS^1)}+\norm{h^+}_{L^{\infty}(\SS^1)} \right)\\
	&\leq C \left( \norm{ \mathcal{H}h^{-}}_{C^{3,\alpha}(\SS^1)}+ \norm{h^+}_{L^{\infty}(\SS^1)} \right).
\end{align*}
Moreover, using the fact that the Hilbert transform is a bounded operator in the class of H\"older spaces (cf. \cite{Mushk}) namely 
$$\norm{\mathcal{H}}_{\mathcal{L}(C^{3,\alpha}(\SS^1))}\leq C,$$
and that $\partial_{x}h^{-}(x)=(f(x,L)-A)\in C^{2,\alpha}(\SS^1)$ we obtain
\begin{align*}
	\norm{\mathcal{Z}}_{C^{2,\alpha}(\SS^1)} &\leq C \left( \norm{ \mathcal{H}h^{-}}_{C^{3,\alpha}(\SS^1)}+ \norm{h^+}_{L^{\infty}(\SS^1)} \right)\leq C  \norm{f}_{C^{2,\alpha}(\SS^1)}.
\end{align*}
\end{proof}

\begin{lemma}\label{Lemma:G:12}
Let $g\in C^{2,\alpha}(\partial\Omega_{-})$ and define $\widetilde{g}(x)=g(x)-\mathcal{Z}(x)$ where $\mathcal{Z}(x)$ is given in \eqref{definition:widetilde:f}. Then for $x\in\SS^1$ we define \begin{equation}\label{definition:G:boundary}
	\mathsf{G}(x)=-\frac{1}{2\pi}\int_{\mathbb{S}^{1}} \displaystyle\sum_{n=-\infty}^{n=\infty}\left[\frac{n \sinh(nL)}{\cosh(nL)-1}\right]e^{in(x-\eta)}\widetilde{g}(\eta) d\eta.
\end{equation} 
Moreover, we have that
\begin{equation}\label{bound:G:C1alpha}
	\norm{\mathsf{G}}_{C^{1,\alpha}(\SS^1)}\leq C \left( \norm{f}_{C^{2,\alpha}(\SS^1)}+ \norm{g}_{C^{2,\alpha}(\SS^1)}\right).
\end{equation}
\end{lemma}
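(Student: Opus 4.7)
The plan is to reduce $\mathsf{G}$ to a form already identified in Subsection~\ref{S:33} and then apply the preceding Lemma~\ref{Lemma:G:1} together with standard Hölder bounds for the periodic Hilbert transform. The key observation is that the Fourier multiplier in \eqref{definition:G:boundary} admits the decomposition
\[
\frac{n\sinh(nL)}{\cosh(nL)-1} = |n| + \widetilde{Q}_n, \qquad \widetilde{Q}_n = n\frac{\sinh(nL)}{\cosh(nL)-1}-|n| \ \ (n\neq 0), \quad \widetilde{Q}_0=\tfrac{2}{L},
\]
exactly as was used in \eqref{rig:inverse:TNLO}. A direct computation using $\sinh(nL)/(\cosh(nL)-1) = \mathrm{sgn}(n)\,(1+O(e^{-|n|L}))$ shows that $|\widetilde{Q}_n|\leq C e^{-|n|L}$ for $|n|\geq 1$, so $\widetilde{Q}(x)=\frac{1}{2\pi}\sum_n \widetilde{Q}_n e^{inx}$ defines a $C^\infty(\mathbb{S}^1)$ kernel.

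With this decomposition, the operator coincides with \eqref{rig:inverse:TNLO}, i.e.
\[
\mathsf{G}(x) = \mathcal{H}\partial_x \widetilde{g}(x) - \int_{\mathbb{S}^1}\widetilde{Q}(x-\eta)\,\widetilde{g}(\eta)\,d\eta,
\]
where $\mathcal{H}$ denotes the periodic Hilbert transform. Since $\widetilde{g} = -g - \mathcal{Z}$, combining the hypothesis $g\in C^{2,\alpha}(\partial\Omega_{-})$ with Lemma~\ref{Lemma:G:1} yields
\[
\|\widetilde{g}\|_{C^{2,\alpha}(\mathbb{S}^1)} \leq \|g\|_{C^{2,\alpha}(\mathbb{S}^1)} + \|\mathcal{Z}\|_{C^{2,\alpha}(\mathbb{S}^1)} \leq C\bigl(\|f\|_{C^{2,\alpha}(\partial\Omega)} + \|g\|_{C^{2,\alpha}(\partial\Omega_{-})}\bigr).
\]

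It then remains to estimate the two pieces of $\mathsf{G}$ separately in $C^{1,\alpha}(\mathbb{S}^1)$. For the Hilbert-transform term, $\partial_x\widetilde{g}\in C^{1,\alpha}(\mathbb{S}^1)$, and the classical boundedness of $\mathcal{H}$ on Hölder spaces of positive non-integer exponent (cf.\ \cite{Mushk}) gives $\|\mathcal{H}\partial_x\widetilde{g}\|_{C^{1,\alpha}(\mathbb{S}^1)} \leq C\|\widetilde{g}\|_{C^{2,\alpha}(\mathbb{S}^1)}$. For the remainder, the exponential decay of $\widetilde{Q}_n$ makes $\widetilde{Q}\in C^\infty(\mathbb{S}^1)$, so Young's inequality and differentiation under the integral sign show that convolution with $\widetilde{Q}$ maps $L^\infty(\mathbb{S}^1)$ continuously into $C^{k}(\mathbb{S}^1)$ for any $k$; in particular it sends $\widetilde{g}$ into $C^{1,\alpha}(\mathbb{S}^1)$ with norm controlled by $\|\widetilde{g}\|_{L^\infty(\mathbb{S}^1)}$. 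Summing these two contributions yields the desired estimate \eqref{bound:G:C1alpha}.

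There is no genuine obstacle here: the only points requiring care are verifying the exponential decay of the corrector multiplier $\widetilde{Q}_n$ (a short calculus check using the asymptotics of $\sinh$ and $\cosh$) and citing the mapping property of $\mathcal{H}$ on $C^{1,\alpha}(\mathbb{S}^1)$, both of which are standard and have already been used implicitly earlier in the paper.
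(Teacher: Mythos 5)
Your proposal is correct and follows essentially the same route as the paper: you reproduce the decomposition $\mathsf{G}=\mathcal{H}\partial_x\widetilde{g}-\widetilde{Q}\ast\widetilde{g}$ from \eqref{func:G:rig}, control $\widetilde{g}$ via Lemma~\ref{Lemma:G:1}, and then bound the Hilbert-transform piece via its boundedness on H\"older spaces and the corrector piece via the exponential decay of $\widetilde{Q}_n$. (One small inaccuracy shared with the paper: in fact $\widetilde{Q}_n=2|n|e^{-|n|L}/(1-e^{-|n|L})\sim 2|n|e^{-|n|L}$, so the bound is $|\widetilde{Q}_n|\le C|n|e^{-|n|L}$ rather than $Ce^{-|n|L}$, but this is still superpolynomial decay and the conclusion $\widetilde{Q}\in C^\infty(\SS^1)$ is unaffected.)
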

\begin{proof}
Recalling \eqref{func:G:rig}, we have that
\begin{align*}
	\mathsf{G}(x)&= \mathcal{H}\p_{x}\widetilde{g}(x)- \int_{\SS^1}\widetilde{Q}(x-\eta)\widetilde{g}(\eta) \ d\eta,
\end{align*}
where $$\widetilde{Q}(x)=\frac{1}{2\pi}\displaystyle\sum_{n=-\infty}^{n=\infty}\widetilde{Q}_{n}e^{inx}$$ and 
$$\widetilde{Q}_{n}=\frac{1}{n}\left(n\frac{\sinh(nL)}{\cosh(nL)-1}-\abs{n}\right), \mbox{ for } n\neq 0, \quad \widetilde{Q}_{0}=\frac{2}{L} \mbox{ for } n=0.$$
Therefore, using the fact
$$\norm{\mathcal{H}}_{\mathcal{L}(C^{1,\alpha}(\SS^1))}\leq C, \quad \abs{\widetilde{Q}_{n}}\leq e^{-nL}$$
and  \eqref{formula:holder:exp} we infer that
\begin{equation}
	\norm{\mathsf{G}}_{C^{1,\alpha}(\SS^1)}\leq C \left(\norm{\mathcal{H}\p_{x}\widetilde{g}}_{C^{1,\alpha}(\SS^1)}+ \norm{\widetilde{g}}_{L^{\infty}(\SS^1)}\right)
	\leq C \norm{\widetilde{g}}_{C^{2,\alpha}(\SS^1)}.
\end{equation}
To conclude, we recall that $\widetilde{g}(x)=g(x)-\mathcal{Z}(x)$ and thus by means of Lemma \ref{Lemma:G:1} we obtain that
\begin{equation}
	\norm{\mathsf{G}}_{C^{1,\alpha}(\SS^1)}\leq C \norm{\widetilde{g}}_{C^{2,\alpha}(\SS^1)}\leq C \left( \norm{f}_{C^{2,\alpha}(\SS^1)}+ \norm{g}_{C^{2,\alpha}(\SS^1)}\right).
\end{equation}
\end{proof}

\section{Existence of solutions to the integral equation for \texorpdfstring{$j_{0}$}{Lg}}\label{S5}
In this section we are interested in studying the existence of a solution $j_0\in C^{1,\alpha}(\partial\Omega_{-})$ to the integral equation \eqref{final:eq:j_0} given by
\begin{equation}\label{integral:eq:fixed:2}
j_0(x)=- \displaystyle\sum_{i=1}^{4}(\mathsf{T}_{i}j_{0}(x)-\langle \mathsf{T}_{i}j_{0} \rangle)-\langle j_0 f^{-}\rangle+\mathsf{G}(x)-\langle \mathsf{G} \rangle
\end{equation}
To that purpose, let us first introduce the following notation. Given $j_{0}\in C^{1,\alpha}(\partial\Omega_{-})$ we define the operator
\begin{equation*}
\Upsilon:B_{\delta_{0}}(C^{2,\alpha}(\Omega))\times B_{\delta_{1}}(C^{1,\alpha}(\Omega)) \to \mathcal{L}\left(C^{1,\alpha}(\partial\Omega_{-})\right)
\end{equation*}
such that for $(\Lambda,\vartheta)\in B_{\delta_{0}}(C^{2,\alpha}(\Omega))\times B_{\delta_{1}}(C^{1,\alpha}(\Omega))$ we have that 
\begin{equation}\label{eq:Gamma}
\Upsilon[\Lambda,\vartheta](j_{0})= -\displaystyle\sum_{i=1}^{4}\left(\mathsf{T}_{i}[\Lambda,\vartheta]j_{0}-\langle \mathsf{T}_{i}[\Lambda,\vartheta]j_{0}  \rangle\right)-\langle f^{-}j_{0} \rangle
\end{equation}
where $\mathsf{T}_{i}$ are defined in \eqref{def:T1},\eqref{def:T2},\eqref{def:T3} and \eqref{def:T4}. 

\begin{remark}
More precisely, the operators 
$\mathsf{T}_{i}[\Lambda,\vartheta]$ as stated in \eqref{def:T1},\eqref{def:T2},\eqref{def:T3} and \eqref{def:T4} are written for the particular case where $\vartheta(\eta,y)=\partial_{\xi} \Theta(X(\eta,y),y))$. However, we will show the existence of solutions for a more general class of integral equations, namely for general functions $\Lambda$ and $\vartheta$ which satisfied certain regularity and smallness assumptions. We will later check that for the particular case where $\vartheta(\eta,y)=\partial_{\xi} \Theta(X(\eta,y),y))$ the required assumptions are  satisfied (cf. Section \ref{sec:6}).
\end{remark}

\begin{remark}
In a similar manner as we did for the operators $\mathsf{T}_{1}, \ldots, \mathsf{T}_{4}$, we now define an operator $\Upsilon$ acting either in $C^\alpha$ or in   $C^{1,\alpha}$. We will not use different symbols for operators acting in different spaces for the sake of simplicity, (cf. Remark \ref{remark:operator:T}). 
\end{remark} 
In the first place, we have the following two lemmas
\begin{lemma}\label{lemma:1:fixed}
Let $M_{0}\leq \displaystyle\min\{\delta_0,\delta_1\}$ where $\delta_0, \delta_1$ are defined in Assumption \ref{assumption:Lambda} and  Assumption \ref{assumption:Theta}, respectively. Let $\widetilde{M}\leq M_{0}$ and 
\begin{equation}\label{smallness:lambda:vartheta}
	\norm{\Lambda}_{C^{2,\alpha}(\Omega)}+    \norm{\vartheta}_{C^{1,\alpha}(\Omega)}+\norm{f^{-}}_{C^{2,\alpha}(\partial\Omega_{-})} \leq \widetilde{M}.
\end{equation}
Then we have that
\begin{equation}\label{estimate:upsilon:1}
	\norm{\Upsilon[\Lambda,\vartheta]}_{\mathcal{L}\left(C^{1,\alpha}(\partial\Omega_{-})\right)}\leq C \widetilde{M}.
\end{equation}
Furthermore, the operator $\Upsilon[\Lambda,\vartheta]$ is Lipschitz in $C^{\alpha}(\partial\Omega_{-})$, i.e. for any  $\Lambda^{1},\Lambda^{2}$ and $\vartheta^{1},\vartheta^{2}$ satisfying \eqref{smallness:lambda:vartheta} we have that
\begin{equation}\label{estimate:upsilon:1:contraction}
	\norm{\Upsilon[\Lambda^1,\vartheta^1]- \Upsilon[\Lambda^2,\vartheta^2]}_{\mathcal{L}\left(C^{\alpha}(\partial\Omega_{-})\right)}\leq C\left( \norm{\Lambda^1-\Lambda^2}_{C^{1,\alpha}(\Omega)}+ \norm{\vartheta^2-\vartheta^1}_{C^{\alpha}(\Omega)}\right).
\end{equation}
\end{lemma}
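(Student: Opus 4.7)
The plan is to estimate $\Upsilon[\Lambda,\vartheta](j_0)$ term-by-term by invoking the propositions of Section \ref{sec:4:2} (for the bounds on each $\mathsf{T}_i$) and Subsection \ref{sec:4:3} (for the differences), together with the elementary observation that subtracting the mean $\langle \mathsf{T}_i[\Lambda,\vartheta]j_0\rangle$ and the constant $\langle f^- j_0\rangle$ are bounded linear operations on the relevant H\"older spaces.

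For the boundedness estimate \eqref{estimate:upsilon:1}, I would first note that Assumption \ref{assumption:Lambda} and Assumption \ref{assumption:Theta} are satisfied with $\delta_0=\delta_1=\widetilde M$, since $\widetilde M\le M_0\le\min\{\delta_0,\delta_1\}$. Then Propositions \ref{Proposition:estimate:T1}, \ref{Proposition:estimate:T2} and \ref{prop:T3T3} immediately give, for $j_0\in C^{1,\alpha}(\partial\Omega_-)$,
\begin{equation*}
\sum_{i=1}^4\norm{\mathsf{T}_i[\Lambda,\vartheta]j_0}_{C^{1,\alpha}(\SS^1)}\le C\widetilde M\,\norm{j_0}_{C^{1,\alpha}(\SS^1)}.
\end{equation*}
Since for any $h\in C^{1,\alpha}(\SS^1)$ the constant function $\langle h\rangle$ has $C^{1,\alpha}$ norm bounded by $\norm{h}_{L^\infty}\le \norm{h}_{C^{1,\alpha}}$, the centered terms $\mathsf{T}_i[\Lambda,\vartheta]j_0-\langle \mathsf{T}_i[\Lambda,\vartheta]j_0\rangle$ satisfy the same bound up to a factor $2$. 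Finally, the scalar $\langle f^- j_0\rangle$ is estimated by $\norm{f^-}_{L^\infty}\norm{j_0}_{L^\infty}\le \widetilde M\norm{j_0}_{C^{1,\alpha}}$. Summing yields \eqref{estimate:upsilon:1}.

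For the Lipschitz estimate \eqref{estimate:upsilon:1:contraction}, the key observation is that the contribution $\langle f^- j_0\rangle$ does not depend on $(\Lambda,\vartheta)$, so it cancels in the difference $\Upsilon[\Lambda^1,\vartheta^1](j_0)-\Upsilon[\Lambda^2,\vartheta^2](j_0)$. What remains is
\begin{equation*}
-\sum_{i=1}^4\Bigl((\mathsf{T}_i[\Lambda^1,\vartheta^1]-\mathsf{T}_i[\Lambda^2,\vartheta^2])j_0-\langle(\mathsf{T}_i[\Lambda^1,\vartheta^1]-\mathsf{T}_i[\Lambda^2,\vartheta^2])j_0\rangle\Bigr).
\end{equation*}
Applying Proposition \ref{prop:diff:T1} to $\mathsf{T}_1$, Proposition \ref{prop:diff:T2} to $\mathsf{T}_2$ and Proposition \ref{proposition:T2:diff} to $\mathsf{T}_3,\mathsf{T}_4$, each term is bounded in $C^\alpha(\SS^1)$ by a constant times $\norm{\Lambda^1-\Lambda^2}_{C^{1,\alpha}(\Omega)}+\norm{\vartheta^1-\vartheta^2}_{C^\alpha(\Omega)}$ times $\norm{j_0}_{C^\alpha(\SS^1)}$, and the centering does not spoil the estimate for the same reason as above. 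Summing over $i=1,\dots,4$ gives \eqref{estimate:upsilon:1:contraction}.

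The computation is entirely routine once the difference estimates in Subsection \ref{sec:4:3} are in hand; the only subtle point is to keep track of which norms each $\mathsf{T}_i$ is estimated in (some of $\mathsf{T}_3, \mathsf{T}_4$ are smoothing), but since here we only need the $C^\alpha\to C^\alpha$ and $C^{1,\alpha}\to C^{1,\alpha}$ bounds, no loss of regularity occurs. No serious obstacle is expected in this lemma.
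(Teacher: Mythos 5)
Your proposal is correct and follows essentially the same argument as the paper: both invoke the $C^{1,\alpha}$ bounds from Propositions \ref{Proposition:estimate:T1}, \ref{Proposition:estimate:T2}, \ref{prop:T3T3} for \eqref{estimate:upsilon:1}, note that the averaging operators preserve those bounds, and for \eqref{estimate:upsilon:1:contraction} observe that $\langle f^{-}j_0\rangle$ cancels in the difference and then apply the difference estimates of Propositions \ref{prop:diff:T1}, \ref{prop:diff:T2}, \ref{proposition:T2:diff}. (One small remark: for $\mathsf{T}_3,\mathsf{T}_4$ the small factor $\delta$ is supplied by the $\mathcal{L}(L^\infty,C^{1,\alpha})$ bounds actually established inside the proof of Proposition \ref{prop:T3T3}, which is indeed what your chain of inequalities implicitly uses.)
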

\begin{proof}
The proof of \eqref{estimate:upsilon:1} is a consequence of the estimates  \eqref{estimate:T1:C1alpha},   \eqref{estimate:T2:C1alpha}, \eqref{estimate:prop:T3:C1alpha} and \eqref{estimate:prop:T4:C1alpha}, as well as the fact that those estimates are preserved for the averaging operators $\langle \cdot \rangle$. Notice that the derivatives of the averaging operators are zero since they are just constant functions.

Indeed, applying those bounds we readily check that
\begin{equation}
	\norm{\Upsilon[\Lambda,\vartheta]}_{\mathcal{L}\left(C^{1,\alpha}(\partial\Omega_{-})\right)}\leq C  \widetilde{M}
\end{equation}
for $  \widetilde{M}\leq M_{0}$. To show estimate \eqref{estimate:upsilon:1:contraction}, we invoke bounds \eqref{estimate:difference:T1:Calpha}, \eqref{estimate:difference:T2:Calpha}, \eqref{estimate:difference:T3:Calpha} and \eqref{estimate:difference:T4:Calpha} to obtain 
\begin{align*}
	\norm{\Upsilon[\Lambda^1,\vartheta^1]- \Upsilon[\Lambda^2,\vartheta^2]}_{\mathcal{L}\left(C^{\alpha}(\partial\Omega_{-})\right)} &\leq  \displaystyle \sum_{i=1}^{4}\norm{\mathsf{T}_{i}[\Lambda^1,\vartheta^1]- \mathsf{T}_{i}[\Lambda^2,\vartheta^2]}_{\mathcal{L}\left(C^{\alpha}(\partial\Omega_{-})\right)}  \\
	&\quad \quad + \displaystyle \sum_{i=1}^{4} \norm{ \left(\langle \mathsf{T}_{i}[\Lambda_{1},\vartheta_{1}](\cdot) \rangle- \mathsf{T}_{i}[\Lambda_{2},\vartheta_{2}](\cdot) \right)}_{\mathcal{L}\left(C^{\alpha}(\partial\Omega_{-})\right)}\\
	&\leq C\left( \norm{\Lambda^1-\Lambda^2}_{C^{1,\alpha}(\Omega)}+ \norm{\vartheta^2-\vartheta^1}_{C^{\alpha}(\Omega)}\right)
\end{align*}
where the action of the operators $\langle \mathsf{T}_{i}[\Lambda_{\ell},\vartheta_{\ell}](\cdot) \rangle$ for $i=1,\ldots,4$ and $\ell=1,2$ acting on the function $j_0$ is given by $\langle \mathsf{T}_{i}[\Lambda_{\ell},\vartheta_{\ell}]j_{0}  \rangle$. We also use the fact that the terms $\langle f^{-}j_0 \rangle$ cancel out.
\end{proof}

\begin{lemma}\label{lemma:2:fixed}
There exists $M_{0}\leq \displaystyle\min\{\delta_0,\delta_1\}$ such that for any $  \widetilde{M}\leq M_{0}$ and $\Lambda,\vartheta, f^{-}$ satisfying \eqref{smallness:lambda:vartheta} the operator $\left(\mathsf{I}-\Upsilon\right)$ is invertible in $C^{1,\alpha}(\partial\Omega_{-})$. More precisely, there exists an operator $\Pi[\Lambda,\vartheta]= \left(\mathsf{I}-\Upsilon[\Lambda,\vartheta]\right)^{-1}$ such that
\begin{equation*}
	\Pi:B_{\widetilde{M}}(C^{2,\alpha}(\Omega))\times B_{\widetilde{M}}(C^{1,\alpha}(\Omega)) \to \mathcal{L}\left(C^{1,\alpha}(\partial\Omega_{-})\right).
\end{equation*}
Moreover, the operator $\Pi[\Lambda,\vartheta]$ is Lipschitz in $C^{\alpha}(\partial\Omega_{-})$, i.e. for any  $\Lambda^{1},\Lambda^{2}$ and $\vartheta^{1},\vartheta^{2}$ satisfying \eqref{smallness:lambda:vartheta} we have that
\begin{equation}\label{difference:PI:lower}
	\norm{\Pi[\Lambda^1,\vartheta^1]- \Pi[\Lambda^2,\vartheta^2]}_{\mathcal{L}\left(C^{\alpha}(\partial\Omega_{-})\right)}\leq C\left( \norm{\Lambda^1-\Lambda^2}_{C^{1,\alpha}(\Omega)}+ \norm{\vartheta^2-\vartheta^1}_{C^{\alpha}(\Omega)}\right).
\end{equation}
\end{lemma}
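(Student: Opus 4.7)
The plan is to deduce invertibility of $\mathsf{I}-\Upsilon[\Lambda,\vartheta]$ in $C^{1,\alpha}(\partial\Omega_{-})$ directly from the boundedness estimate \eqref{estimate:upsilon:1} in Lemma \ref{lemma:1:fixed} via a Neumann series. More precisely, choosing $M_{0}\leq \min\{\delta_0,\delta_1\}$ small enough so that $CM_{0}\leq \tfrac{1}{2}$, where $C$ is the constant in \eqref{estimate:upsilon:1}, the estimate gives $\|\Upsilon[\Lambda,\vartheta]\|_{\mathcal{L}(C^{1,\alpha}(\partial\Omega_{-}))}\leq \tfrac{1}{2}$. The series
\begin{equation*}
\Pi[\Lambda,\vartheta]:=\sum_{k=0}^{\infty}\Upsilon[\Lambda,\vartheta]^{k}
\end{equation*}
therefore converges absolutely in $\mathcal{L}(C^{1,\alpha}(\partial\Omega_{-}))$, defines the inverse of $\mathsf{I}-\Upsilon[\Lambda,\vartheta]$, and satisfies $\|\Pi[\Lambda,\vartheta]\|_{\mathcal{L}(C^{1,\alpha}(\partial\Omega_{-}))}\leq 2$.

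Next I would observe that precisely the same smallness estimate holds when $\Upsilon[\Lambda,\vartheta]$ is viewed as an operator in $\mathcal{L}(C^{\alpha}(\partial\Omega_{-}))$, using the $C^\alpha$ parts of Propositions \ref{Proposition:estimate:T1}, \ref{Proposition:estimate:T2} and \ref{prop:T3T3} (inequalities \eqref{estimate:T1:Calpha}, \eqref{estimate:T2:Calpha}, \eqref{estimate:prop:T3:Calpha}, \eqref{estimate:prop:T4:Calpha}), together with the obvious bound for the constant multiplier $\langle f^{-}\,\cdot\,\rangle$. Hence, possibly after shrinking $M_0$, the very same Neumann series also converges in $\mathcal{L}(C^{\alpha}(\partial\Omega_{-}))$ and gives an extension of $\Pi[\Lambda,\vartheta]$ to a bounded operator on $C^\alpha$, with norm bounded by $2$. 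The two definitions agree on $C^{1,\alpha}(\partial\Omega_{-})$ by uniqueness of the Neumann expansion.

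For the Lipschitz property \eqref{difference:PI:lower} I would use the resolvent identity
\begin{equation*}
\Pi[\Lambda^{1},\vartheta^{1}]-\Pi[\Lambda^{2},\vartheta^{2}] \;=\; \Pi[\Lambda^{1},\vartheta^{1}]\bigl(\Upsilon[\Lambda^{1},\vartheta^{1}]-\Upsilon[\Lambda^{2},\vartheta^{2}]\bigr)\Pi[\Lambda^{2},\vartheta^{2}],
\end{equation*}
which follows formally from $\Pi_i(\mathsf{I}-\Upsilon_i)=\mathsf{I}$. Taking $\mathcal{L}(C^{\alpha}(\partial\Omega_{-}))$-norms, using that $\|\Pi[\Lambda^{i},\vartheta^{i}]\|_{\mathcal{L}(C^{\alpha})}\leq 2$ from the previous step, and finally invoking the Lipschitz estimate \eqref{estimate:upsilon:1:contraction} for $\Upsilon$, we obtain
\begin{equation*}
\|\Pi[\Lambda^{1},\vartheta^{1}]-\Pi[\Lambda^{2},\vartheta^{2}]\|_{\mathcal{L}(C^{\alpha})}\;\leq\; 4\,\|\Upsilon[\Lambda^{1},\vartheta^{1}]-\Upsilon[\Lambda^{2},\vartheta^{2}]\|_{\mathcal{L}(C^{\alpha})}\;\leq\; C\bigl(\|\Lambda^{1}-\Lambda^{2}\|_{C^{1,\alpha}(\Omega)}+\|\vartheta^{1}-\vartheta^{2}\|_{C^{\alpha}(\Omega)}\bigr),
\end{equation*}
as required.

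The only delicate point is the asymmetry between the spaces in which invertibility and Lipschitz continuity are asserted: invertibility is proved in $C^{1,\alpha}$, whereas the Lipschitz bound is proved only in the weaker norm $C^{\alpha}$. This mismatch is unavoidable because \eqref{estimate:upsilon:1:contraction} in Lemma \ref{lemma:1:fixed} is available only in $C^{\alpha}$ (differentiating the operator $\Upsilon$ once would lose control on $\Lambda$, compare the regularity hypotheses in Propositions \ref{prop:diff:T1}--\ref{proposition:T2:diff}). The workaround is precisely to run the Neumann series argument \emph{separately} in both spaces, exploiting that the smallness estimate survives at both regularity levels, and then to close the Lipschitz bound in $C^{\alpha}$ using the resolvent identity together with the fact that $\Pi[\Lambda,\vartheta]$ also acts boundedly on $C^\alpha$.
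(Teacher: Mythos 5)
Your proof is correct and follows the same general Neumann-series strategy as the paper; the one genuine technical difference is in how the Lipschitz bound \eqref{difference:PI:lower} is closed. The paper expands $\Pi[\Lambda^{1},\vartheta^{1}]-\Pi[\Lambda^{2},\vartheta^{2}]$ term-by-term in the Neumann series, telescopes each power $\Upsilon^{n}[\Lambda^{1},\vartheta^{1}]-\Upsilon^{n}[\Lambda^{2},\vartheta^{2}]$ into a sum of $n$ products $\Upsilon^{k-1}(\Upsilon_1-\Upsilon_2)\Upsilon^{n-k}$, and then sums $\sum_{n}n(C\widetilde{M})^{n-1}$. You instead collapse this into the single resolvent identity $\Pi_{1}-\Pi_{2}=\Pi_{1}(\Upsilon_{1}-\Upsilon_{2})\Pi_{2}$, which is cleaner and is exactly the resummed form of the paper's telescoping sum. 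Both routes require the same ingredients — namely that $\Upsilon[\Lambda,\vartheta]$ has small operator norm on $C^{\alpha}(\partial\Omega_{-})$ as well as on $C^{1,\alpha}(\partial\Omega_{-})$, so that the Neumann series converges in $\mathcal{L}(C^{\alpha})$, together with the Lipschitz estimate \eqref{estimate:upsilon:1:contraction}. You make this point explicit, which is a mild improvement: Lemma \ref{lemma:1:fixed} only states the $C^{1,\alpha}$ boundedness \eqref{estimate:upsilon:1}, and the paper's proof of \eqref{difference:PI:lower} tacitly uses the $C^{\alpha}$ analogue (available from \eqref{estimate:T1:Calpha}, \eqref{estimate:T2:Calpha}, \eqref{estimate:prop:T3:Calpha}, \eqref{estimate:prop:T4:Calpha}) without highlighting it. Your remark on the unavoidable asymmetry between the $C^{1,\alpha}$ invertibility statement and the $C^{\alpha}$ Lipschitz estimate is also a correct reading of the structure, mirroring the fact that Propositions \ref{prop:diff:T1}--\ref{proposition:T2:diff} give difference bounds only in $C^{\alpha}$.
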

\begin{proof}
Under the hypothesis of Lemma \ref{lemma:1:fixed} we have shown that  $\norm{\Upsilon[\Lambda,\vartheta]}_{\mathcal{L}\left(C^{1,\alpha}(\partial\Omega_{-})\right)}\leq C \widetilde{M}$ for some $\widetilde{M}$ sufficiently small. Therefore, the existence of  $\left(\mathsf{I}-\Upsilon\right)^{-1}$
follows from the classical Neumann series (cf. \cite{ReedsSimon}). Indeed,  we have that
\begin{equation}\label{Neumann:series}
	\Pi[\Lambda,\vartheta]=\left(\mathsf{I}-\Upsilon[\Lambda,\vartheta]\right)^{-1}=\displaystyle \sum_{n=0}^{\infty}(-1)^{n}\Upsilon^{n}[\Lambda,\vartheta]
\end{equation}
where $n=0$, $\Upsilon^{0}[\Lambda,\vartheta]=\mathcal{I}$ is the identity operator. Moreover, for $\Upsilon[\Lambda,\vartheta]\in \mathcal{L}\left(C^{1,\alpha}(\partial\Omega_{-})\right)$ and using estimate \eqref{estimate:upsilon:1} we find that
\begin{equation}
	\norm{\Pi[\Lambda,\vartheta]}_{\mathcal{L}\left(C^{1,\alpha}(\partial\Omega_{-})\right)} = \norm{\displaystyle \sum_{n=0}^{\infty}(-1)^{n}\Upsilon^{n}[\Lambda,\vartheta]}_{\mathcal{L}\left(C^{1,\alpha}(\partial\Omega_{-})\right)}\leq \frac{1}{1-C\widetilde{M}}.
\end{equation}
Denoting by $A_{1}^{n}=\Upsilon^{n}[\Lambda^1,\vartheta^1]$, $A_{2}^{n}=\Upsilon^{n}[\Lambda^2,\vartheta^2]$ we can find that 
\begin{align}
	A_{1}^{n}-A_{2}^{n}=A_{1}^{n-1}\left(A_{1}-A_{2}\right)+A_{1}^{n-2}\left(A_{1}-A_{2}\right)A_{2}+\ldots+A\left(A_{1}-A_{2}\right)A_{2}^{n}.\label{combination:Neumann:series}
\end{align}
Thus, combining  \eqref{Neumann:series} and \eqref{combination:Neumann:series} we find that
\begin{align*}
	\Pi[\Lambda^1,\vartheta^1]-\Pi[\Lambda^2,\vartheta^2]&=\displaystyle \sum_{n=0}^{\infty}(-1)^{n}A^{n}_{1}-\displaystyle \sum_{n=0}^{\infty}(-1)^{n}A^{n}_{2} \\
	&= \displaystyle \sum_{n=0}^{\infty}(-1)^{n} \bigg[ A_{1}^{n-1}\left(A_{1}-A_{2}\right)+A_{1}^{n-2}\left(A_{1}-A_{2}\right)A_{2}+\ldots+A\left(A_{1}-A_{2}\right)A_{2}^{n} \bigg]
\end{align*}
and hence by means of bounds \eqref{estimate:upsilon:1}-\eqref{estimate:upsilon:1:contraction}we infer that if $M_0$ is sufficiently small that
\begin{align*}
	\norm{\Pi[\Lambda^1,\vartheta^1]- \Pi[\Lambda^2,\vartheta^2]}_{\mathcal{L}\left(C^{\alpha}(\partial\Omega_{-})\right)}&\leq \norm{A_{1}-A_{2}}_{\mathcal{L}\left(C^{\alpha}(\partial\Omega_{-})\right)}  \displaystyle \sum_{n=0}^{\infty} n (C\widetilde{M})^{n-1} \\
	&\leq  C \norm{\Upsilon^{n}[\Lambda^1,\vartheta^1]-\Upsilon^{n}[\Lambda^2,\vartheta^2]}_{\mathcal{L}\left(C^{\alpha}(\partial\Omega_{-})\right)} \\
	&\leq C\left( \norm{\Lambda^1-\Lambda^2}_{C^{1,\alpha}(\Omega)}+ \norm{\vartheta^2-\vartheta^1}_{C^{\alpha}(\Omega)}\right).
\end{align*}
\end{proof}
Combining both lemmas we can provide the existence of solutions to the integral equation \eqref{integral:eq:fixed:2} which reads
\begin{proposition}\label{Prop5.4}
Let the hypothesis of Lemma \ref{lemma:1:fixed} hold. Let also $f\in C^{2,\alpha}(\Omega)$, $g\in C^{2,\alpha}(\partial\Omega_{-})$ and $\mathsf{G}$ given as in \eqref{func:G:rig}. Then there exists a solution $j_0\in C^{1,\alpha}(\partial\Omega_{-})$ to \eqref{integral:eq:fixed:2} given by 
\begin{equation}\label{solution:explicit:omega_0}
	j_{0}=\Pi[\Lambda,\vartheta]\left( \mathsf{G}-\langle \mathsf{G} \rangle \right).
\end{equation}
Furthermore, 
\begin{equation}\label{bound:Prop5.4}
	\norm{j_{0}}_{C^{1,\alpha}(\partial\Omega_{-})}\leq C \left( \norm{f}_{C^{2,\alpha}(\Omega)}+ \norm{g}_{C^{2,\alpha}(\partial\Omega_{-})}\right).
\end{equation}
\end{proposition}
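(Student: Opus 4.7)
The plan is to recognise that the integral equation \eqref{integral:eq:fixed:2} is precisely the fixed point identity for the operator $\Upsilon[\Lambda,\vartheta]$ introduced in \eqref{eq:Gamma}, and then to invert $\mathsf{I}-\Upsilon[\Lambda,\vartheta]$ using the machinery already assembled in Lemma \ref{lemma:2:fixed}. Concretely, reading off the definition of $\Upsilon[\Lambda,\vartheta]$ in \eqref{eq:Gamma} and comparing with the right-hand side of \eqref{integral:eq:fixed:2}, we see that \eqref{integral:eq:fixed:2} is equivalent to the operator equation
\begin{equation*}
\bigl(\mathsf{I}-\Upsilon[\Lambda,\vartheta]\bigr)j_{0} \;=\; \mathsf{G}-\langle \mathsf{G}\rangle \qquad \text{in } C^{1,\alpha}(\partial\Omega_{-}).
\end{equation*}
Thus existence of $j_0$ reduces to invertibility of $\mathsf{I}-\Upsilon[\Lambda,\vartheta]$ on $C^{1,\alpha}(\partial\Omega_{-})$, applied to a source term in the same space.

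The first step is to verify that the source belongs to $C^{1,\alpha}$: by Lemma \ref{Lemma:G:12} the function $\mathsf{G}$ satisfies the $C^{1,\alpha}(\SS^1)$ bound
\begin{equation*}
\norm{\mathsf{G}}_{C^{1,\alpha}(\SS^1)}\leq C\bigl(\norm{f}_{C^{2,\alpha}(\Omega)}+\norm{g}_{C^{2,\alpha}(\partial\Omega_{-})}\bigr),
\end{equation*}
and the same estimate carries over to $\mathsf{G}-\langle \mathsf{G}\rangle$ since the averaging functional is bounded on $C^{1,\alpha}(\SS^1)$. The second step is to apply Lemma \ref{lemma:2:fixed}: under the smallness hypothesis \eqref{smallness:lambda:vartheta} with $\widetilde{M}\leq M_{0}$, the bounded inverse $\Pi[\Lambda,\vartheta]=(\mathsf{I}-\Upsilon[\Lambda,\vartheta])^{-1}$ exists on $C^{1,\alpha}(\partial\Omega_{-})$, constructed via the Neumann series \eqref{Neumann:series}, and satisfies $\norm{\Pi[\Lambda,\vartheta]}_{\mathcal{L}(C^{1,\alpha}(\partial\Omega_{-}))}\leq (1-C\widetilde{M})^{-1}$.

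Applying $\Pi[\Lambda,\vartheta]$ to both sides then yields the unique solution
\begin{equation*}
j_{0} \;=\; \Pi[\Lambda,\vartheta]\bigl(\mathsf{G}-\langle\mathsf{G}\rangle\bigr) \;\in\; C^{1,\alpha}(\partial\Omega_{-}),
\end{equation*}
which is \eqref{solution:explicit:omega_0}. Combining the operator norm bound for $\Pi[\Lambda,\vartheta]$ with the estimate for $\mathsf{G}-\langle\mathsf{G}\rangle$ immediately gives the claimed bound \eqref{bound:Prop5.4}. No genuinely new obstacle appears here, since the heavy lifting, namely the H\"older estimates for the non-convolution singular integral operators $\mathsf{T}_{1},\dots,\mathsf{T}_{4}$ (Propositions \ref{Proposition:estimate:T1}--\ref{prop:T3T3}), the bound on $\mathsf{G}$ (Lemma \ref{Lemma:G:12}), and the Neumann series inversion (Lemma \ref{lemma:2:fixed}), has already been carried out. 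The only thing to be careful about is to identify \eqref{integral:eq:fixed:2} precisely with $(\mathsf{I}-\Upsilon[\Lambda,\vartheta])j_{0}=\mathsf{G}-\langle\mathsf{G}\rangle$, in particular checking that the averaged contribution $\langle j_{0}f^{-}\rangle$ in \eqref{integral:eq:fixed:2} corresponds exactly to the $-\langle f^{-}j_{0}\rangle$ term included in the definition of $\Upsilon[\Lambda,\vartheta]$ in \eqref{eq:Gamma}, which is a direct inspection.
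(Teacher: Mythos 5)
Your proof is correct and follows essentially the same route as the paper: identify \eqref{integral:eq:fixed:2} with the operator equation $(\mathsf{I}-\Upsilon[\Lambda,\vartheta])j_{0}=\mathsf{G}-\langle\mathsf{G}\rangle$, invoke the Neumann-series inverse $\Pi[\Lambda,\vartheta]$ from Lemma \ref{lemma:2:fixed}, and combine its $C^{1,\alpha}$ operator bound with the $C^{1,\alpha}$ estimate on $\mathsf{G}-\langle\mathsf{G}\rangle$ from Lemma \ref{Lemma:G:12}. Your explicit check that the $-\langle f^{-}j_{0}\rangle$ term in $\Upsilon$ matches the $-\langle j_{0}f^{-}\rangle$ term in \eqref{integral:eq:fixed:2} is a sensible detail the paper glosses over, but otherwise the arguments coincide.
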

\begin{proof}
The fact that $j_0$ as given in \eqref{solution:explicit:omega_0} solves \eqref{integral:eq:fixed:2} is a consequence of the definition of the operator $\Pi[\Lambda,\vartheta]$ in Lemma \ref{lemma:2:fixed}. On the other hand, we notice by means of Lemma \ref{Lemma:G:12} and estimate \eqref{bound:G:C1alpha} yields
\begin{equation}
	\norm{\mathsf{G}-\langle\mathsf{G}\rangle}_{C^{1,\alpha}(\SS^1)}\leq C \left( \norm{f}_{C^{2,\alpha}(\SS^1)}+ \norm{g}_{C^{2,\alpha}(\SS^1)}\right)
\end{equation}
where $\mathsf{G}$ is as in \eqref{func:G:rig}. This estimate as well as the fact that $\Pi:B_{\widetilde{M}}(C^{2,\alpha}(\Omega))\times B_{\widetilde{M}}(C^{1,\alpha}(\Omega)) \to \mathcal{L}\left(C^{1,\alpha}(\partial\Omega_{-})\right)$ 
\begin{equation}
	\norm{j_{0}}_{C^{1,\alpha}(\partial\Omega_{-})}\leq C \left( \norm{f}_{C^{2,\alpha}(\Omega)}+ \norm{g}_{C^{2,\alpha}(\partial\Omega_{-})}\right).
\end{equation}
\end{proof}

\section{The fixed point argument}\label{sec:6}
In this section we will provide the fixed point argument, this is, we will define an adequate operator $\Gamma$ on a subspace of $C^{2,\alpha}(\Omega)$ which has a fixed point $b$ such that $B=(0,1)+b$ is a solution to \eqref{MHS2D:eq} and \eqref{bvc}. 

We define the operator $\Gamma: B_{M}(C^{2,\alpha}(\Omega)) \to C^{2,\alpha}(\Omega)$ using several intermediate building blocks. Given $b\in B_{M}(C^{2,\alpha}(\Omega))$ we define the flow map associated with the vector field $B=(0,1)+b$ as the mapping $X[b]: B_{M}(C^{2,\alpha}(\Omega))\to C^{2,\alpha}(\Omega)$ which satisfies the ordinary differential equation 
\begin{equation}\label{ode:flow:X}
	\left\lbrace
	\begin{array}{lll}
		\partial_{y} X[b](\xi,y)= \frac{b^{1}(X(\xi,y),y)}{1+b^{2}(X(\xi,y),y)} \\
		X[b](\xi,0)=\xi.
	\end{array}\right.
\end{equation}
Moreover, we denote by $X[b]^{-1}(\xi,y)$ the inverse function of $X[b]$ in the first variable, namely $X[b](X[b]^{-1}(\xi,y),y)=\xi$. Then we define $\Lambda:B_{M}(C^{2,\alpha}(\Omega))\to C^{2,\alpha}(\Omega)$ as
\begin{equation}\label{def:Lambda:sec6}
	\Lambda[b](\eta,y)= X[b](\eta,y)-\eta
\end{equation}
Actually, in Lemma \ref{flow:map:bound2} we will show the stronger result $Im(\Lambda)\subset B_{CM}(C^{2,\alpha}(\Omega))$ for $C>0$.
We define also the function $\Theta:B_{M}(C^{2,\alpha}(\Omega))\to C^{2,\alpha}(\Omega)$ as
\begin{equation}\label{def:Theta:V}
	\Theta[b](\xi,y)=X[b]^{-1}(\xi,y)-\xi.
\end{equation}
Finally $\vartheta[b]:B_{M}(C^{2,\alpha}(\Omega))\to C^{1,\alpha}(\Omega)$ is given by
\begin{equation}\label{vartheta:Theta:V}
	\vartheta[b](\eta,y)=\partial_{\xi} \Theta[b](X[b](\eta,y),y).
\end{equation}
Notice that combining \eqref{def:Theta:V}-\eqref{vartheta:Theta:V} we can write
\begin{equation}\label{def:Comb}
	\vartheta[b](\eta,y)= \partial_{\xi} X^{-1}[b]\left(X[b](\eta,y),y)\right)-1.
\end{equation}
Moreover, arguing as in Lemma \ref{flow:map:bound2} we prove that $Im(\vartheta)\subset B_{CM}(C^{1,\alpha}(\Omega))$.

Therefore, we now introduce the following operator $$\Psi[b]: B_{M}(C^{2,\alpha}(\Omega))\to B_{CM}(C^{2,\alpha}(\Omega))\times B_{CM}(C^{1,\alpha}(\Omega))$$ defined by $\Psi[b]=(\Lambda[b], \vartheta[b])$ and choose $\widetilde{M}=CM\leq M_{0}$.
Next, notice that the function $j_{0}\in C^{1,\alpha}(\partial\Omega_{-})$ given by \eqref{solution:explicit:omega_0} solving the integral equation \eqref{eq:Gamma} can be expressed as the following composition of operators
\begin{equation}\label{omega:cero:fixed}
	j_0= \Pi[\Psi[b]](\mathsf{G}-\langle \mathsf{G} \rangle).
\end{equation}
The condition $\widetilde{M}=CM\leq M_{0}$ must be satisfied so that Proposition \ref{Prop5.4} can be applied.
To conclude the construction, we use two additional building blocks. First, for $b\in B_{M}(C^{2,\alpha}(\Omega))$  and  $j_0= \Pi[\Psi[b]](\mathsf{G}-\langle \mathsf{G} \rangle)$, we define $$j=T[b,j_0]:B_{M}(C^{2,\alpha}(\Omega)) \times C^{1,\alpha}(\Omega_{-})\to C^{1,\alpha}(\Omega)$$ 
where $j$ is the unique solution to the transport type problem 
\begin{equation}\label{transport:problem:full:fixed}
	T[b,j_{0}]:
	\left\lbrace
	\begin{array}{lll}
		((0,1)+b)\cdot\nabla j =0, \ \mbox{in} \ \Omega \\
		j = j_{0}, \ \mbox{on} \ \partial \Omega_{-}.
	\end{array} \right.
\end{equation} 
To conclude, the new velocity field $W\in C^{2,\alpha}(\Omega)$ is recovered by means of the div-curl problem (also known as Biot-Savart operator) given by
$$W=B_{s}[j,f,J]:C^{1,\alpha}(\Omega)\times C^{2,\alpha}(\partial\Omega)\times\mathbb{R}\to C^{2,\alpha}(\Omega)$$
where $W$ is the unique solution to
\begin{equation}\label{div:curl:problem:full:fixed}
	B_{s}[j,f,J]:
	\left\lbrace
	\begin{array}{lll}
		\nabla\times W= j, \ \mbox{in} \ \Omega \\
		\mbox{div } W=0, \ \mbox{in} \ \Omega \\ 
		W\cdot n=f, \ \mbox{on} \ \partial \Omega \\
		\int_{0}^{L} W_{1}(0,y) \ dy=J.
	\end{array} \right.
\end{equation} 
where $J=J[f,g,b]$ is given by
\begin{equation}\label{combination:5}
\frac{2J}{L^2}=-\frac{1}{2\pi}\int_{0}^{2\pi}\mathsf{G}(x)  dx+\frac{1}{2\pi} \displaystyle\sum_{i=1}^{4}\int_{0}^{2\pi} \mathsf{T}_{i}j_{0}(x) \ dx \ dx -\frac{1}{2\pi}\int_{0}^{2\pi} j_0(x)f^{-}(x) \ dx
\end{equation}
(cf. \eqref{combination:3}) with $j_0= \Pi[\Psi[b]](\mathsf{G}-\langle \mathsf{G} \rangle)$.
Then,  we define $\Gamma(b)=W$. In particular, the full operator can be expressed as the following composition of operators
\begin{equation}\label{full:composiotion:operator}
	W=\Gamma(b)=B_{s}[T[b,\Pi[\Psi[b]](\mathsf{G}-\langle \mathsf{G}\rangle)], f,J[f,g,b]].
\end{equation}

The precise statement of the theorem reads as follows:
\begin{theorem} \label{theorem2}
	Let $f\in C^{2,\alpha}(\Omega)$ satisfying \eqref{compcond} and $g\in C^{2,\alpha}(\partial\Omega_{-})$. There exist $\epsilon_{0}>0, M_{0}=M_{0}(L,\alpha)$ sufficiently small such that if
	\begin{equation}\label{small:f:g}
		\norm{f}_{C^{2,\alpha}(\Omega)}+\norm{g}_{C^{2,\alpha}(\partial\Omega_{-})}\leq \epsilon_{0}M, \mbox{ for } M\leq M_{0},
	\end{equation}
	then $\Gamma(B_{M}(C^{2,\alpha}(\Omega)))\subset B_{M}(C^{2,\alpha}(\Omega))$. Furthermore, the operator $\Gamma$ has a unique fixed point in $B_{M}(C^{2,\alpha}(\Omega))$. 
\end{theorem}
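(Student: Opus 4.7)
The plan is to apply the Banach fixed point theorem to $\Gamma$, treated as a self-map of the closed ball $B_{M}(C^{2,\alpha}(\Omega))$. Since $\Gamma$ is a composition of several operators (the flow map $X[b]$ and its associated $\Lambda[b],\vartheta[b]$; the integral-equation solver $\Pi$; the transport operator $T$; the Biot-Savart operator $B_{s}$; and the scalar $J[f,g,b]$), the proof splits into two independent estimates: a stability/self-map bound in $C^{2,\alpha}(\Omega)$, and a contraction bound in a lower-regularity norm, e.g.\ $C^{1,\alpha}(\Omega)$. The use of a weaker norm for contraction is forced by the transport step, which does not smooth, together with the trace-loss mechanism discussed in Subsection~\ref{sec:13}.

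For the self-map property, I first establish ODE estimates for the characteristic flow \eqref{ode:flow:X}: if $\|b\|_{C^{2,\alpha}(\Omega)}\le M$, then standard Gronwall-type arguments give $\|\Lambda[b]\|_{C^{2,\alpha}(\Omega)}\le CM$ and, using the implicit function theorem for the inverse $X[b]^{-1}$, $\|\vartheta[b]\|_{C^{1,\alpha}(\Omega)}\le CM$, with $\Lambda[b](\eta,0)=0$. For $M$ small enough so that $\widetilde{M}:=CM\le M_{0}$, Assumptions~\ref{assumption:Lambda} and~\ref{assumption:Theta} hold for $(\Lambda[b],\vartheta[b])$, so Proposition~\ref{Prop5.4} yields
\[
\|j_{0}\|_{C^{1,\alpha}(\partial\Omega_{-})}\le C\bigl(\|f\|_{C^{2,\alpha}(\partial\Omega)}+\|g\|_{C^{2,\alpha}(\partial\Omega_{-})}\bigr)\le C\epsilon_{0}M.
\]
The transport step $j=T[b,j_{0}]=j_{0}\circ X[b]^{-1}$ preserves $C^{1,\alpha}$ with $\|j\|_{C^{1,\alpha}(\Omega)}\le C\|j_{0}\|_{C^{1,\alpha}(\partial\Omega_{-})}$, and the constant $J$ from \eqref{combination:5} satisfies $|J|\le C\epsilon_{0}M$. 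Elliptic regularity for the div-curl system \eqref{div:curl:problem:full:fixed} (which reduces to a Poisson problem for the stream function as in Section~\ref{Sec:2}) gives $\|\Gamma(b)\|_{C^{2,\alpha}(\Omega)}\le C\bigl(\|j\|_{C^{1,\alpha}(\Omega)}+\|f\|_{C^{2,\alpha}(\partial\Omega)}+|J|\bigr)\le C\epsilon_{0}M$. Choosing $\epsilon_{0}$ so that $C\epsilon_{0}\le 1$ closes the self-map property.

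For contraction, given $b_{1},b_{2}\in B_{M}(C^{2,\alpha}(\Omega))$ I track the difference through each building block in the norm $C^{1,\alpha}$. Continuous dependence of ODE solutions gives
\[
\|\Lambda[b_{1}]-\Lambda[b_{2}]\|_{C^{1,\alpha}(\Omega)}+\|\vartheta[b_{1}]-\vartheta[b_{2}]\|_{C^{\alpha}(\Omega)}\le C\|b_{1}-b_{2}\|_{C^{1,\alpha}(\Omega)}.
\]
The Lipschitz estimate \eqref{difference:PI:lower} for $\Pi$ then produces
\[
\|\Pi[\Psi[b_{1}]]\phi-\Pi[\Psi[b_{2}]]\phi\|_{C^{\alpha}(\partial\Omega_{-})}\le C\|\phi\|_{C^{\alpha}(\partial\Omega_{-})}\,\|b_{1}-b_{2}\|_{C^{1,\alpha}(\Omega)},
\]
applied to $\phi=\mathsf{G}-\langle\mathsf{G}\rangle$, whose $C^{\alpha}$-norm is controlled by $C\epsilon_{0}M$. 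For the transport step I use the pointwise identity $j_{1}-j_{2}=(j_{0,1}-j_{0,2})\circ X[b_{1}]^{-1}+j_{0,2}\circ X[b_{1}]^{-1}-j_{0,2}\circ X[b_{2}]^{-1}$ combined with the flow-difference estimate to obtain a $C^{\alpha}$-bound on $j_{1}-j_{2}$ proportional to $C\epsilon_{0}M\|b_{1}-b_{2}\|_{C^{1,\alpha}(\Omega)}$. Analogously, $|J[f,g,b_{1}]-J[f,g,b_{2}]|$ inherits the same smallness from \eqref{combination:5}. Finally, Schauder estimates for $B_{s}$ in $C^{2,\alpha}\to C^{1,\alpha}$ (equivalently, elliptic regularity for the stream function with Dirichlet data) give
\[
\|\Gamma(b_{1})-\Gamma(b_{2})\|_{C^{1,\alpha}(\Omega)}\le C\epsilon_{0}M\,\|b_{1}-b_{2}\|_{C^{1,\alpha}(\Omega)},
\]
which is a strict contraction for $\epsilon_{0}$ small. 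The Banach fixed point theorem applied in the closed subset
\[
\{b\in B_{M}(C^{2,\alpha}(\Omega)):\|b\|_{C^{2,\alpha}(\Omega)}\le M\},
\]
which is complete in the $C^{1,\alpha}$ metric since $C^{2,\alpha}$-balls are closed in $C^{1,\alpha}$, then produces a unique fixed point $b_{\ast}\in B_{M}(C^{2,\alpha}(\Omega))$.

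The main technical obstacle is the contraction step, specifically propagating the Lipschitz dependence on $b$ through the integral-equation inversion $\Pi$. Here one must crucially work with the loss-of-derivative Lipschitz estimate \eqref{difference:PI:lower} (which is proved only in $C^{\alpha}$, not in $C^{1,\alpha}$), and match this loss with the regularity of the transport operator and the Biot-Savart step. This is precisely the reason one is forced to set up the contraction in $C^{1,\alpha}$ while keeping the a priori bound in $C^{2,\alpha}$; working entirely in $C^{2,\alpha}$ would require Lipschitz continuity of $\Pi$ in $C^{1,\alpha}$, which is not available because of the regularity trade-off in the non-convolution singular integral estimates of Section~\ref{S4}. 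A secondary delicate point is the self-consistent choice of the constant $J$, which must be part of the fixed-point iteration and cannot be prescribed a priori; it is determined by \eqref{combination:5} in a way that depends continuously on $b$ through $j_{0}$, and this continuity has to be checked in parallel with that of the other building blocks.
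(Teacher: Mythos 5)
Your proposal is correct and follows essentially the same two-norm strategy as the paper's proof: a $C^{2,\alpha}$ self-map bound assembled from Lemma~\ref{flow:map:bound2}, Proposition~\ref{Prop5.4}, Proposition~\ref{TP:prop}, Proposition~\ref{DCP:prop}, and the bound on $J$ from \eqref{combination:5}, combined with a $C^{1,\alpha}$ contraction bound built from the Lipschitz estimates \eqref{estimate:Lambda:contraction}--\eqref{estimate:vartheta:contraction}, \eqref{difference:PI:lower}, and \eqref{estimate:difference:TP}, followed by Banach's theorem on the complete metric space $(B_{M}(C^{2,\alpha}(\Omega)),\|\cdot\|_{C^{1,\alpha}})$. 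Your discussion of why the contraction must be set up in the weaker norm, and of the role of the self-consistently chosen $J$, matches the paper's reasoning; the only slight slip is labeling the Schauder gain for $B_s$ as $C^{2,\alpha}\to C^{1,\alpha}$ when the relevant step in the contraction estimate is $j\in C^{\alpha}\Rightarrow W\in C^{1,\alpha}$.
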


\subsection{Preliminary estimates: ODE, transport problem and div-curl problem}
Before showing the proof of Theorem \ref{theorem2}, let us first show several Lemmas that will be needed to provide the proof of Theorem \ref{theorem2}. The first result summarizes general H\"older estimates for $\Lambda[b]$ and $\vartheta[b]$.

	\begin{lemma}\label{flow:map:bound2}
		Let $M_{0}$ be sufficiently small and let 
		$b\in B_{M}(C^{2,\alpha}(\Omega))$ with $M\leq M_0$. Then, for $\Lambda[b], \vartheta[b]$ given in \eqref{def:Lambda:sec6} and \eqref{def:Comb}, we have that $\Lambda[b]\in C^{2,\alpha}(\Omega)$ $\vartheta[b]\in C^{1,\alpha}(\Omega)$ and
		\begin{align}
			\norm{\Lambda[b]}_{C^{2,\alpha}(\Omega)}\ &\leq C M, \label{bound:Lambda:V} \\
			\norm{\vartheta[b]}_{C^{1,\alpha}(\Omega)} & \leq  CM. \label{bound:vartheta:V}
		\end{align}
		Moreover, the operator $\Lambda[b]$ and $\vartheta[b]$ are Lipschitz in $C^{1,\alpha}(\Omega)$ and $C^{\alpha}(\Omega)$ respectively. This is for any $b^{1},b^{2} \in B_{M}(C^{2,\alpha}(\Omega))$
		\begin{align}
			\norm{\Lambda[b^1]- \Lambda[b^2]}_{C^{1,\alpha}(\Omega)}\leq C \norm{b^1-b^2}_{C^{1,\alpha}(\Omega)}, \label{estimate:Lambda:contraction} \\
			\norm{\vartheta[b^1]- \vartheta[b^2]}_{C^{\alpha}(\Omega)}\leq C \norm{b^1-b^2}_{C^{\alpha}(\Omega)}.\label{estimate:vartheta:contraction}
		\end{align}
	\end{lemma}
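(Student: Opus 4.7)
The plan is to derive the estimates directly from the defining ODE \eqref{ode:flow:X} together with the identity $\Lambda[b](\eta,y)=\int_0^y F(X[b](\eta,s),s)\,ds$, where $F(x,y):=b^1(x,y)/(1+b^2(x,y))$. Since $\|b\|_{C^{2,\alpha}(\Omega)}\le M\le M_0$ with $M_0$ small, the denominator is bounded away from zero, so standard Banach algebra estimates for composition give $\|F\|_{C^{2,\alpha}(\Omega)}\le C\|b\|_{C^{2,\alpha}(\Omega)}\le CM$. In particular $|F|\le CM$ pointwise, which yields $\|\Lambda[b]\|_{L^\infty}\le CML$ immediately from the integral formula and also verifies that $\Lambda[b](\eta,0)=0$ as required in Assumption~\ref{assumption:Lambda}.

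First I would obtain \eqref{bound:Lambda:V}. Differentiating the integral formula once in $\eta$ gives $\partial_\eta \Lambda=\int_0^y \partial_x F(X,s)\,\partial_\eta X\,ds$, and $\partial_\eta X=1+\partial_\eta \Lambda$ solves the linear ODE $\partial_y(\partial_\eta X)=\partial_x F(X,y)\,\partial_\eta X$ with initial datum $1$. A Gronwall argument, combined with $\|\partial_x F\|_{L^\infty}\le CM$, gives $\|\partial_\eta X-1\|_{L^\infty}\le CM$ for $M$ small. Differentiating once more and using the same linear-ODE/Gronwall scheme (now with forcing of the form $\partial_x^2 F\,(\partial_\eta X)^2$) gives the $C^2$ bound, and the $C^{2,\alpha}$ bound follows by differencing the integrals in $\eta$ and using the $C^\alpha$ regularity of $\partial_x^2 F$ together with the estimates already obtained on $X$. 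The $y$-derivatives are read off directly from the ODE: $\partial_y\Lambda=F(X,y)$ lies in $C^{1,\alpha}$, and differentiating once more in $y$ expresses $\partial_y^2\Lambda$ in terms of quantities already controlled in $C^\alpha$. Mixed derivatives are handled analogously.

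Next I would derive \eqref{bound:vartheta:V}. The inverse function theorem applied to the diffeomorphism $\xi\mapsto X[b](\xi,y)$ (whose Jacobian $\partial_\xi X=1+\partial_\eta\Lambda$ differs from $1$ by a quantity of size $CM$) yields the pointwise identity
\begin{equation*}
\vartheta[b](\eta,y)=\partial_\xi X^{-1}(X(\eta,y),y)-1=\frac{1}{\partial_\xi X(\eta,y)}-1=-\frac{\partial_\eta\Lambda(\eta,y)}{1+\partial_\eta\Lambda(\eta,y)}.
\end{equation*}
Since $\partial_\eta\Lambda$ is bounded in $C^{1,\alpha}$ by $CM$ from the previous step and $|1+\partial_\eta\Lambda|\ge 1/2$, the Banach algebra property of $C^{1,\alpha}$ immediately yields \eqref{bound:vartheta:V}.

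For the Lipschitz estimates \eqref{estimate:Lambda:contraction}--\eqref{estimate:vartheta:contraction}, I would write the difference $X^1-X^2$ of the two flows as the solution of the integral equation
\begin{equation*}
X^1(\eta,y)-X^2(\eta,y)=\int_0^y\bigl[F^1(X^1,s)-F^2(X^2,s)\bigr]\,ds,
\end{equation*}
split $F^1(X^1,s)-F^2(X^2,s)=[F^1(X^1,s)-F^1(X^2,s)]+[(F^1-F^2)(X^2,s)]$, and use $\|F^1-F^2\|_{C^{1,\alpha}}\le C\|b^1-b^2\|_{C^{1,\alpha}}$ together with the mean value inequality for the first bracket. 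A Gronwall-type inequality in the $C^{1,\alpha}$ norm then gives \eqref{estimate:Lambda:contraction}; the crucial point here is that since we only claim $C^{1,\alpha}$ Lipschitz continuity we are allowed to lose one derivative, which is exactly what the chain rule costs in the composition $F^1(X^1,s)-F^1(X^2,s)$. The bound \eqref{estimate:vartheta:contraction} then follows from the explicit formula for $\vartheta[b]$ derived above, since $\vartheta[b^1]-\vartheta[b^2]$ is a $C^\alpha$-smooth function of $\partial_\eta\Lambda[b^1]$ and $\partial_\eta\Lambda[b^2]$ evaluated at the same point, and the denominator is uniformly bounded away from zero. The main obstacle will be the $C^{2,\alpha}$ bound \eqref{bound:Lambda:V}, where one must carefully organise the bookkeeping of second derivatives produced by applying the chain rule to the composition $F(X(\eta,y),y)$ and verify that every factor is controlled by the already-established lower-order estimates uniformly in $y\in[0,L]$.
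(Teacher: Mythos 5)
Your proposal is correct and follows essentially the same route the paper sketches (treating $\Lambda[b]$ as the solution of an ODE, controlling derivatives and difference quotients via Gr\"onwall, then passing to $\vartheta[b]$), filling in the details the paper delegates to ``standard arguments'' and to \cite[Lemma 3.7]{Alo-Velaz-2021}. The one genuinely useful addition is the explicit algebraic identity $\vartheta[b]=-\partial_\eta\Lambda[b]/(1+\partial_\eta\Lambda[b])$, obtained from the inverse function theorem applied to $\xi\mapsto X[b](\xi,y)$; this lets you read off \eqref{bound:vartheta:V} and \eqref{estimate:vartheta:contraction} directly from the $\Lambda$-estimates and the Banach-algebra property of H\"older spaces, avoiding any separate analysis of the inverse flow map, which is cleaner than working with $\partial_\xi\Theta[b]$ as written.
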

	\begin{proof}
		The proof of Lemma \ref{flow:map:bound2} is the standard argument used to compute the dependence of the solutions for an ODE in their parameters. More precisely, the main idea of the proof is to control incremental quotients of the form $\frac{f(x+h)-f(x)}{h}$ for $h>0$, as well as terms quotients of the form 
		$\frac{\abs{f(x)-f(y)}}{\abs{x-y}^{\alpha}}$ using Gr\"onwall type arguments. A bound similar to \eqref{bound:Lambda:V}, \eqref{bound:vartheta:V} in Lemma \ref{flow:map:bound2} but estimating only the $C^{1,\alpha}$ H\"older norm have been shown in \cite[Lemma 3.7]{Alo-Velaz-2021}. Moreover, the proof of \eqref{estimate:Lambda:contraction}, \eqref{estimate:vartheta:contraction} is obtained computing the differences of the solutions of the differential equations which define $\Lambda$, $\vartheta$ (cf. \eqref{def:Lambda:sec6}, \eqref{def:Comb} and \eqref{ode:flow:X}) with $b=b^1$ and $b=b^2$.
		
	\end{proof}
	
	The next results deals with H\"older estimates for solutions to the hyperbolic transport type problem \eqref{transport:problem:full:fixed}. For a proof of this result we refer the reader to \cite[Proposition 3.8]{Alo-Velaz-2021}, where a more general result is shown.
	\begin{proposition}\label{TP:prop}
		Let $M_{0}$ be sufficiently small. Then for every $M\leq M_{0}$, $b\in B_{M}(C^{2,\alpha}(\Omega))$ and $j_0\in  C^{1,\alpha}(\partial\Omega_{-})$,  there exists a unique $j\in C^{1,\alpha}(\Omega)$ solving
		\begin{equation}\label{transport:propo:6}
			T[b,j_0]:
			\left\lbrace
			\begin{array}{lll}
				(B_{0}+b)\cdot\nabla j =0 \ \mbox{in} \ \Omega, \\
				j = j_{0} \ \mbox{on} \ \partial \Omega_{-}.
			\end{array} \right.
		\end{equation}
		Moreover, there exists a constant $C=C(\alpha,L)>0$ such that the following estimate holds
		\begin{equation}\label{estimate:TP}
			\norm{j}_{C^{1,\alpha}(\Omega)} \leq C  \norm{j_0}_{C^{1,\alpha}(\partial\Omega_{-})}. 
		\end{equation}
		Furthermore, let $j_{1},j_{2}\in C^{\alpha}(\Omega)$ be  two different solutions to  \eqref{transport:propo:6} with  $b$ given by $b^{1},b^{2}\in B_{M}(C^{2,\alpha}(\Omega))$  respectively. Then
		\begin{equation}\label{estimate:difference:TP}
			\norm{j^{1}-j^{2}}_{C^{\alpha}(\Omega)} \leq  C \left(\norm{j^{1}_{0}-j^{2}_{0}}_{C^{\alpha}(\partial\Omega_{-})}+ \norm{j^{1}_{0}}_{C^{1,\alpha}(\partial\Omega_{-})}\norm{V^1-V^2}_{C^{\alpha}(\Omega)}\right)
		\end{equation}
		where $C=C(\alpha,L)>0$.
	\end{proposition}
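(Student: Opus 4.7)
The plan is to solve the transport equation by the method of characteristics, using the flow map $X[b]$ defined in \eqref{ode:flow:X} and its inverse $X[b]^{-1}$ in the first variable. The key observation is that for $b\in B_M(C^{2,\alpha}(\Omega))$ with $M\leq M_0$ sufficiently small, the second component $1+b_2$ stays uniformly bounded away from zero, so the characteristic ODE \eqref{ode:flow:X} generates a well-defined diffeomorphism $\xi\mapsto X[b](\xi,y)$ of $\SS^1$ for every $y\in[0,L]$. I would then define the candidate solution by
\[ j(x,y):= j_0(X[b]^{-1}(x,y)), \qquad (x,y)\in\Omega. \]
A direct computation using the chain rule together with \eqref{ode:flow:X} shows that $((0,1)+b)\cdot\nabla j=0$ in $\Omega$, and clearly $j|_{y=0}=j_0$ since $X[b](\xi,0)=\xi$. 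Uniqueness follows from the fact that any $C^1$ solution to \eqref{transport:propo:6} is necessarily constant along characteristics, so it must coincide with the formula above.

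For the regularity bound \eqref{estimate:TP}, I would recall that by Lemma \ref{flow:map:bound2} the displacement $\Lambda[b](\eta,y)=X[b](\eta,y)-\eta$ lies in $C^{2,\alpha}(\Omega)$ with $\norm{\Lambda[b]}_{C^{2,\alpha}(\Omega)}\leq CM$; a symmetric argument applied to the inverse characteristic ODE (which has the same structure as \eqref{ode:flow:X} with $b$ replaced by a function built from $b$) yields $X[b]^{-1}(\cdot,y)-(\cdot)\in C^{2,\alpha}(\Omega)$ with the analogous bound. The bound \eqref{estimate:TP} then follows from the standard composition estimate for $C^{1,\alpha}$ functions: $\norm{j_0\circ X[b]^{-1}}_{C^{1,\alpha}(\Omega)}$ is controlled by $\norm{j_0}_{C^{1,\alpha}(\partial\Omega_{-})}(1+\norm{X[b]^{-1}}_{C^{1,\alpha}(\Omega)})^{1+\alpha}$, which in view of the smallness of $M$ yields a constant depending only on $\alpha$ and $L$.

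For the stability estimate \eqref{estimate:difference:TP}, I would write
\[ j^1(x,y)-j^2(x,y)=\big(j_0^1-j_0^2\big)\!\big(X[b^1]^{-1}(x,y)\big)+\Big(j_0^2\big(X[b^1]^{-1}(x,y)\big)-j_0^2\big(X[b^2]^{-1}(x,y)\big)\Big). \]
The first term is bounded in $C^\alpha$ by $C\norm{j_0^1-j_0^2}_{C^\alpha(\partial\Omega_-)}$ via the same composition estimate as above. For the second term, I use the mean value theorem together with the Lipschitz dependence $\norm{X[b^1]^{-1}-X[b^2]^{-1}}_{C^\alpha(\Omega)}\leq C\norm{b^1-b^2}_{C^\alpha(\Omega)}$, which is the inverse-map counterpart of \eqref{estimate:Lambda:contraction}. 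This gives the bound $C\norm{j_0^1}_{C^{1,\alpha}(\partial\Omega_-)}\norm{b^1-b^2}_{C^\alpha(\Omega)}$ (the $C^{1,\alpha}$ norm of $j_0^2$ would also work and can be swapped using the triangle inequality).

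The main technical obstacle is the Lipschitz dependence of $X[b]^{-1}$ on $b$ in the $C^\alpha$ norm: one must differentiate between the regularity needed to control $\nabla j$ (which forces $X[b]^{-1}$ to live in $C^{2,\alpha}$, and hence $b$ in $C^{2,\alpha}$) and the lower regularity in which the contraction is expressed, since the difference equation for $X[b^1]^{-1}-X[b^2]^{-1}$ involves the derivative of $b$ only through composition with the flow, and a careful Gr\"onwall argument as in \cite[Lemma 3.7]{Alo-Velaz-2021} allows one to close the estimate in $C^\alpha$ with only a $C^\alpha$ norm of $b^1-b^2$ appearing on the right-hand side. Once this is established, the rest of the proof is a routine application of composition estimates in H\"older spaces, and the entire argument mirrors \cite[Proposition 3.8]{Alo-Velaz-2021}.
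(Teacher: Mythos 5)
The paper itself gives no self-contained proof of Proposition \ref{TP:prop}: it simply cites \cite[Proposition 3.8]{Alo-Velaz-2021}. Your sketch — solving by characteristics via $j=j_0\circ X[b]^{-1}$, using the $C^{2,\alpha}$ control of $\Lambda[b]=X[b]-\mathrm{id}$ from Lemma~\ref{flow:map:bound2} (and its analogue for the inverse flow), the standard Hölder composition estimate for \eqref{estimate:TP}, and a split of $j^1-j^2$ into a ``difference-of-data'' term plus a ``difference-of-flow'' term controlled by a Grönwall/mean-value argument for \eqref{estimate:difference:TP} — is precisely the argument carried out in that reference, so your proposal is correct and follows essentially the same route the paper relies on. (Note $V^1,V^2$ in \eqref{estimate:difference:TP} is a notational slip for $b^1,b^2$, which you correctly read through.)
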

	To conclude let us also recall the following result regarding H\"older estimate for the div-curl problem, cf.  \cite[Proposition 3.11]{Alo-Velaz-2021} for a detailed proof.
	\begin{proposition}\label{DCP:prop}
		For every $J\in \mathbb{R}$, $j\in C^{1,\alpha}(\Omega)$ and $f\in C^{2,\alpha}(\partial\Omega)$ satisfying \eqref{compcond},  there exists a unique solution $W\in C^{2,\alpha}(\Omega)$ solving
		\begin{equation}\label{DCP:problem}
			B_{s}[j,f,J]:
			\left\lbrace
			\begin{array}{lll}
				\nabla \times W = j, \quad \mbox{in } \Omega \\ 
				\mbox{div } W =0,  \quad \mbox{in } \Omega  \\
				W \cdot n = f ,  \quad \mbox{on } \partial \Omega \\
				\int_{\mathcal{C}}W\cdot n \ dS= J.
			\end{array}\right.
		\end{equation}
		where the curve $\mathcal{C}=\{ (0,y), y\in [0,L]\}.$ Moreover, the solution satisfies the inequality
		\begin{equation}\label{estimate:DCP}
			\norm{W}_{C^{2,\alpha}(\Omega)} \leq C \left(\norm{j}_{C^{1,\alpha}(\Omega)}+ \norm{f}_{C^{2,\alpha}(\partial\Omega)} + \abs{J}\right),
		\end{equation}
		where  $C=C(L,\alpha)>0$. 
	\end{proposition}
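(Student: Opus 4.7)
My plan is to reduce the div-curl system to a Dirichlet problem for the Poisson equation on $\Omega$ via a stream-function construction, and then invoke classical Schauder theory. The main topological subtlety is that $\Omega = \SS^1 \times [0,L]$ is not simply connected, so a divergence-free vector field on it is not automatically of the form $\nabla^\perp \psi$ with single-valued $\psi$; one must factor out a harmonic-form contribution first.

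First I would show that for any smooth divergence-free $W$ the quantity $\frac{1}{2\pi}\int_0^{2\pi} W_2(x,y)\, dx$ is independent of $y$, since its $y$-derivative equals $-\frac{1}{2\pi}\int_0^{2\pi} \partial_x W_1\, dx = 0$; call this constant $A$. Then $\widetilde W := W - (0, A)$ is divergence-free with vanishing $x$-average of its second component for each $y$, which guarantees the existence of a single-valued stream function $\psi$ on $\Omega$ with $\widetilde W = \nabla^\perp \psi$. Evaluating $W \cdot n = W_2 = f$ on $\partial\Omega_\pm$ (under the paper's convention $n = (0, 1)$ on both components) and averaging in $x$ forces $A$ to equal the common value $\frac{1}{2\pi}\int_{\partial\Omega_\pm} f\, dS$, which is well-defined \emph{precisely} by \eqref{compcond}. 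Integrating $\partial_x \psi = f - A$ along $y = 0, L$ then gives $\psi(x, 0) = h^-(x) + c_-$ and $\psi(x, L) = h^+(x) + c_+$ with $h^\pm$ as in \eqref{def:h}, again $2\pi$-periodic thanks to \eqref{compcond}. Since a constant shift of $\psi$ does not alter $W$, I may set $c_- = 0$; the circulation constraint $\int_0^L W_1(0, y)\, dy = J$ then reads $\psi(0, 0) - \psi(0, L) = J$, which fixes $c_+ = -J$.

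The div-curl problem is thus equivalent to the Dirichlet problem $\Delta\psi = j$ in $\Omega$ with $\psi(x, 0) = h^-(x)$ and $\psi(x, L) = h^+(x) - J$, together with the reconstruction $W = (0, A) + \nabla^\perp \psi$. Since $\Omega$ is a smooth compact manifold with smooth boundary, classical global Schauder theory (or the explicit Fourier solution of Section \ref{Sec:2}) produces a unique $\psi \in C^{3,\alpha}(\Omega)$ with
\begin{equation*}
\norm{\psi}_{C^{3,\alpha}(\Omega)} \leq C\bigl(\norm{j}_{C^{1,\alpha}(\Omega)} + \norm{h^+}_{C^{3,\alpha}(\SS^1)} + \norm{h^-}_{C^{3,\alpha}(\SS^1)} + \abs{J}\bigr).
\end{equation*}
The $C^{3,\alpha}$ level (rather than $C^{2,\alpha}$) is what ensures $\nabla^\perp \psi \in C^{2,\alpha}$; the required boundary regularity is available because $\partial_x h^\pm = f(\cdot, y_\pm) - A \in C^{2,\alpha}(\SS^1)$ gives $\norm{h^\pm}_{C^{3,\alpha}(\SS^1)} \leq C\norm{f}_{C^{2,\alpha}(\partial\Omega)}$, and clearly $\abs{A} \leq C\norm{f}_{L^\infty(\partial\Omega)}$. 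Assembling $W = (0, A) + \nabla^\perp \psi$ yields \eqref{estimate:DCP}. Uniqueness follows by linearity: for two solutions, the difference gives $A = 0$ and $\psi$ harmonic with zero Dirichlet data, hence $\psi \equiv 0$ by the maximum principle. The only genuine obstacle is the construction of the single-valued $\psi$ on the non-simply-connected cylinder; this is what forces the harmonic-form split $W = (0, A) + \nabla^\perp \psi$ and gives \eqref{compcond} its twofold role of determining $A$ consistently from both boundary components and guaranteeing that the indefinite integrals $h^\pm$ descend to functions on $\SS^1$.
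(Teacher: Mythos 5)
Your proposal is correct and follows essentially the same route as the paper: the decomposition $W=(0,A)+\nabla^{\perp}\psi$ with $A$ the common mean of $f$ on the two boundary components, the boundary data $h^{\pm}$ of \eqref{def:h}, and the reduction to a Dirichlet problem for $\Delta\psi=j$ is exactly the construction the paper sets up in Section \ref{Sec:2} and in the proof it cites from \cite[Proposition 3.11]{Alo-Velaz-2021}. The only cosmetic difference is that the paper solves the resulting Poisson problem explicitly by Fourier series on the cylinder, whereas you invoke global Schauder estimates (while noting the Fourier alternative); both yield the same $C^{3,\alpha}$ bound on $\psi$ and hence \eqref{estimate:DCP}.
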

	\subsection{Proof of Theorem \ref{theorem2}}
	First, we show that the operator  $\Gamma$ maps $ b\in B_{M}(C^{2,\alpha}(\Omega))$ into itself and second, that the operator  $\Gamma$ is a contraction mapping in the lower order norm $C^{1,\alpha}(\Omega)$. By combining both ingredients, we can invoke Banach fixed point theorem to infer that the operator $\Gamma$ has a unique fixed point in $B_{M}(C^{2,\alpha}(\Omega)).$ Let us start with the former assertion.  By means of \eqref{full:composiotion:operator} we find that for $b\in B_{M}(C^{2,\alpha}(\Omega))$
	\begin{align*}
		\norm{\Gamma(b)}_{C^{2,\alpha}(\Omega)}&=\norm{B_{s}[T[b,\Pi[\Psi[b]](\mathsf{G}-\langle \mathsf{G} \rangle)], f,J]}_{C^{2,\alpha}(\Omega)} \\
		&\leq C\left(\norm{T[b,\Pi[\Psi[b]](\mathsf{G}-\langle \mathsf{G} \rangle)]}_{C^{1,\alpha}(\Omega)}+\norm{f}_{C^{2,\alpha}(\partial\Omega)} + \abs{J}\right) \\
		&\leq C\left(\norm{\Pi[\Psi[b]](\mathsf{G}-\langle \mathsf{G} \rangle)]}_{C^{1,\alpha}(\Omega)}+\norm{f}_{C^{2,\alpha}(\partial\Omega)} + \abs{J}\right).
	\end{align*}
	where in the first inequality we have used \eqref{estimate:DCP} in Proposition \ref{DCP:prop} and in the latter we invoked \eqref{estimate:difference:TP} in Proposition \ref{TP:prop}.
	On the other hand, recall that by definition $\Psi[b]=(\Lambda[b],\vartheta[b])$. Hence, combining inequalities \eqref{bound:Lambda:V}-\eqref{bound:vartheta:V} in Lemma \ref{flow:map:bound2} and estimate \eqref{bound:Prop5.4} in Proposition \ref{Prop5.4} we have that 
	\begin{equation*}
		\norm{\Pi[\Psi[b]](\mathsf{G}-\langle \mathsf{G} \rangle)]}_{C^{1,\alpha}(\Omega)}\leq C \left( \norm{f}_{C^{2,\alpha}(\Omega)}+ \norm{g}_{C^{2,\alpha}(\partial\Omega_{-})}\right).
	\end{equation*}
	Moreover, we can show using the expression  of $J$ given in \eqref{combination:5} and the previous estimates \eqref{estimate:T1:C1alpha},   \eqref{estimate:T2:C1alpha}, \eqref{estimate:prop:T3:C1alpha}, \eqref{estimate:prop:T4:C1alpha} and  \eqref{bound:Prop5.4} that
	\begin{equation}
	    \abs{J}\leq C \left( \norm{f}_{C^{2,\alpha}(\Omega)}+ \norm{g}_{C^{2,\alpha}(\partial\Omega_{-})}\right).
	\end{equation}

	Thus, we readily check that 
	\begin{align}\label{Gamma:estimate:ball}
		\norm{\Gamma(b)}_{C^{2,\alpha}(\Omega)}\leq C \left( \norm{f}_{C^{2,\alpha}(\Omega)}+ \norm{g}_{C^{2,\alpha}(\partial\Omega_{-})} \right) \leq C\epsilon_{0}M
	\end{align}
	where in the second inequality we have used the smallness assumption \eqref{small:f:g}. Choosing $\epsilon_{0}=\frac{1}{4C}$, we obtain that $\Gamma(B_{M}(C^{2,\alpha}(\Omega)))\subset B_{M}(C^{2,\alpha}(\Omega))$. 
	
	We now claim that the $B_{M}(C^{2,\alpha}(\Omega))$ endowed with the topology $C^{1,\alpha}$ is a complete metric space which we will denote by $(B_{M}(C^{2,\alpha}(\Omega)), \norm{\cdot}_{C^{1,\alpha}})$.  In order to show this it is sufficient to check that $B_{M}(C^{2,\alpha}(\Omega))$ is a closed subset of $C^{1,\alpha}(\Omega)$, (cf. \cite[Proof of Lemma 3.12]{Alo-Velaz-2021}).

	Moreover, we also claim that  
	$$\Gamma: (B_{M}(C^{2,\alpha}(\Omega)), \norm{\cdot}_{C^{1,\alpha}}) \to (B_{M}(C^{2,\alpha}(\Omega)), \norm{\cdot}_{C^{1,\alpha}})$$
	is a contraction mapping. To this end, we have to show that for $b^{1},b^{2}\in  B_{M}(C^{2,\alpha}(\Omega))$, we need to estimate the difference $\norm{\Gamma(b^1)-\Gamma(b^2)}_{C^{1,\alpha}(\Omega)}$. To that purpose, using the expression of the full operator given in \eqref{full:composiotion:operator} and noticing that the Biot-Savart operator defined in \eqref{div:curl:problem:full:fixed} is a linear operator, we obtain by means of Proposition \ref{DCP:prop} that
	\begin{align}
		\norm{\Gamma(b^1)-\Gamma(b^2)}_{C^{1,\alpha}(\Omega)}&=\norm{B_{s}[T[b^1,\Pi[\Psi[b^1]](\mathsf{G}-\langle \mathsf{G} \rangle)], f,J^{1}]-B_{s}[T[b^2,\Pi[\Psi[b^2]](\mathsf{G}-\langle \mathsf{G} \rangle)], f,J^{2}]}_{C^{1,\alpha}(\Omega)} \nonumber \\
		&\leq C\left(\norm{T[b^1,\Pi[\Psi[b^1]](\mathsf{G}-\langle \mathsf{G} \rangle)-T[b^2,\Pi[\Psi[b^2]](\mathsf{G}-\langle \mathsf{G} \rangle)}_{C^{1,\alpha}(\Omega)}\right)+\abs{J^{1}-J^{2}} \label{contraction:gamma:est1}
	\end{align}
	where $J^{\ell}$ is given by \eqref{combination:5} with $j_0= \Pi[\Psi[b^{\ell}]](\mathsf{G}-\langle \mathsf{G} \rangle)$ for $\ell=1,2$.
	To deal with the transport type operator $T$  given in \eqref{transport:problem:full:fixed}, we invoke inequality \eqref{estimate:difference:TP} in Proposition \ref{TP:prop} to find that
	\begin{align*}
		&\norm{T[b^1,\Pi[\Psi[b^1]](\mathsf{G}-\langle \mathsf{G} \rangle)-T[b^2,\Pi[\Psi[b^2]](\mathsf{G}-\langle \mathsf{G} \rangle)}_{C^{1,\alpha}(\Omega)}  \\
		& \quad \quad \quad \leq C\bigg[ \norm{\Pi[\Psi[b^1]](\mathsf{G}-\langle \mathsf{G} \rangle)-\Pi[\Psi[b^2]](\mathsf{G}-\langle \mathsf{G} \rangle)}_{C^{\alpha}(\partial\Omega_{-})} \\
		&\quad \quad  \quad \quad \quad+ \norm{\Pi[\Psi[b^1]](\mathsf{G}-\langle \mathsf{G} \rangle}_{C^{1,\alpha}(\partial\Omega_{-})}\norm{b_{1}-b_{2}}_{C^{1,\alpha}(\Omega)}\bigg].
	\end{align*}
	On other hand by means of \eqref{difference:PI:lower} in Lemma \ref{lemma:2:fixed} and recalling that $\Psi[b^1]=(\Lambda[b^1],\vartheta[b^1])$ we arrive at
	\begin{align}
		&\norm{\Pi[\Psi[b^1]](\mathsf{G}-\langle \mathsf{G} \rangle)-\Pi[\Psi[b^2]](\mathsf{G}-\langle \mathsf{G} \rangle)}_{C^{\alpha}(\partial\Omega_{-})} \nonumber \\
		& \hspace{4cm}  \leq C\left(\norm{\Lambda^1-\Lambda^2}_{C^{1,\alpha}(\Omega)}+ \norm{\vartheta^2-\vartheta^1}_{C^{\alpha}(\Omega)}\right)\norm{\mathsf{G}-\langle \mathsf{G} \rangle)}_{C^{\alpha}(\partial\Omega_{-})} \label{cota:diferencia:psi:b1b2}
	\end{align}
	and
	\begin{align*}
		\norm{\Pi[\Psi[b^1]]}_{\mathcal{L}(C^{1,\alpha}(\partial\Omega_{-}))} \leq C.
	\end{align*}
	Hence, combining both estimates with the fact that 
	\begin{equation*}
		\norm{\mathsf{G}}_{C^{1,\alpha}(\SS^1)}+	\norm{\langle \mathsf{G}\rangle}_{C^{1,\alpha}(\SS^1)}\leq C \left( \norm{f}_{C^{2,\alpha}(\SS^1)}+ \norm{g}_{C^{2,\alpha}(\SS^1)}\right)\leq C\epsilon_{0}M
	\end{equation*}
	and using \eqref{small:f:g} we find that
	\begin{align}
		\norm{T[b^1,\Pi[\Psi[b^1]](\mathsf{G}-\langle \mathsf{G}\rangle)-T[b^2,\Pi[\Psi[b^2]](\mathsf{G}-\langle \mathsf{G}\rangle)}_{C^{1,\alpha}(\Omega)} &\leq C\epsilon_{0}M\bigg[\norm{\Lambda^1-\Lambda^2}_{C^{1,\alpha}(\Omega)}\nonumber \\
		&+ \norm{\vartheta^2-\vartheta^1}_{C^{\alpha}(\Omega)} + \norm{b_{1}-b_{2}}_{C^{1,\alpha}(\Omega)}\bigg]. \label{combination:7}
	\end{align}
	On the other hand using equation \eqref{combination:5} and noticing that the first term on the right hand side of \eqref{combination:5} cancels out we infer that
	\begin{align*}
	    \abs{J^{1}-J^{2}} &\leq \frac{1}{2\pi} \displaystyle\sum_{i=1}^{4}\int_{0}^{2\pi}  \abs{\mathsf{T}_{i}[\Psi[b^{1}]]j^{1}_{0}(x) -  \mathsf{T}_{i}[\Psi[b^{2}]]j^{2}_{0}(x)} \ dx \\
	  &  \hspace{3cm}+ \frac{1}{2\pi}\int_{0}^{2\pi} \abs{\left(j^{1}_0(x)-j^{2}_{0}(x)\right)f^{-}(x)} \ dx  \\
	  & \leq C\epsilon_{0}M\bigg[\norm{\Lambda^1-\Lambda^2}_{C^{1,\alpha}(\Omega)}+ \norm{\vartheta^2-\vartheta^1}_{C^{\alpha}(\Omega)}+ \norm{b_{1}-b_{2}}_{C^{1,\alpha}(\Omega)}\bigg]
	\end{align*}
	where we have argued as in the derivation of \eqref{combination:7} and using that $j^{\ell}_0= \Pi[\Psi[b^{\ell}]](\mathsf{G}-\langle \mathsf{G}\rangle)$ with $\ell=1,2.$

	Combining the later estimate with \eqref{contraction:gamma:est1}, \eqref{combination:7} and making use of the estimates \eqref{estimate:Lambda:contraction}-\eqref{estimate:vartheta:contraction} in Lemma \ref{flow:map:bound2} we conclude 
	\begin{equation}
		\norm{\Gamma(b^1)-\Gamma(b^2)}_{C^{1,\alpha}(\Omega)}\leq C\epsilon_{0}M\norm{b_{1}-b_{2}}_{C^{1,\alpha}(\Omega)} \leq \beta \norm{b_{1}-b_{2}}_{C^{1,\alpha}(\Omega)}
	\end{equation}
	where $\beta$ is strictly less than one for  $\epsilon_{0}= \frac{1}{2CM}$. Therefore, 
	$$\Gamma: (B_{M}(C^{2,\alpha}(\Omega)), \norm{\cdot}_{C^{1,\alpha}}) \to (B_{M}(C^{2,\alpha}(\Omega)), \norm{\cdot}_{C^{1,\alpha}})$$
	is a contraction mapping for $M\leq M_{0}$. 
	Invoking Banach fixed point theorem we find that $\Gamma$ admits a unique fixed point	$b\in B_{M}(C^{2,\alpha}(\Omega))$ and thus $\Gamma(b)=b$, which concludes the proof.

	\section{Proof of Theorem \ref{theorem2D}}\label{Sec:7}
	
	Take $\epsilon_{0}>0$ and $M_{0}=M_{0}(L,\alpha)$ be the constants defined in Theorem \ref{theorem2}. Let also $M\leq M_{0}$. Then, Theorem \ref{theorem2} implies that $\Gamma$ has a unique fixed point $b\in B_{M}(C^{2,\alpha}(\Omega))$. 
	We claim that if $b\in B_{M}(C^{2,\alpha}(\Omega))$ is a fixed point operator of $\Gamma$ then $B=(0,1)+b$ is the velocity field which is a solution 
	$(B,p)\in C^{{2,\alpha}}(\Omega)\times C^{{2,\alpha}}(\Omega)$ to \eqref{MHS2D:eq} satisfying the boundary conditions
	$$
	B\cdot n =1+f \ \mbox{on} \ \partial \Omega, \ B\cdot \tau=g  \ \mbox{on} \ \partial \Omega_{-}.
	$$
	On the one hand, assuming that $b\in B_{M}(C^{2,\alpha}(\Omega))$ is a fixed point operator of $\Gamma$ it is straightforward to check by construction (see that $b$ solves \eqref{div:curl:problem:full:fixed}) that
	\begin{equation*}
		\nabla \cdot B= \nabla \cdot b=0, \mbox{ in } \Omega, \quad B\cdot n= 1+b\cdot n=1+f,   \mbox{ on } \partial\Omega.
	\end{equation*}
	On the other hand, since $b$ is a fixed point of of $\Gamma$ we find that
	\begin{equation*}
		\nabla \times B= \nabla\times b=\nabla\times \Gamma(b)=j  
	\end{equation*}
	where in the last equality we have use the first equation in \eqref{div:curl:problem:full:fixed}  where $j$ solves the transport system \eqref{transport:problem:full:fixed}. Thus,
	\begin{equation*}
		0=(B\cdot\nabla)j= \nabla\times \big[j\times B], \mbox{ in } \Omega
	\end{equation*}
	and $j_0$ as in \eqref{omega:cero:fixed}. Then we can define a uni-valued function $p$ in $\Omega$ given by means of
	\begin{equation}\label{construcion:p:non:final}
		p(\bm{x})=\int_{\bm{0}}^{\bm{x}} \big[j\times B\big](\bm{y})
		\ d\bm{y}
	\end{equation}
	where the integral on the right hand side is the line integration computed along any curve connecting $\bm{0}=(0,0)$ and $\bm{x}\in\Omega$. In order to check that $p$ is a uni-valued function on $\Omega$, we only need to show that \eqref{condition:pressure:1:nonlineal} holds or equivalently that \eqref{combination:1} is satisfied. However, this follows because $J$ has been chosen as in \eqref{combination:5} (cf. \eqref{combination:3}). 
	
	Finally, since $B\in C^{2,\alpha}(\Omega)$ and $j\in C^{1,\alpha}(\Omega)$ it follows from \eqref{construcion:p:non:final} that $p\in C^{2,\alpha}(\Omega)$ and
	$$j\times B=\nabla p, \mbox{ in } \Omega$$ holds.
	Therefore, $(B,p)\in C^{{2,\alpha}}(\Omega)\times C^{{2,\alpha}}(\Omega)$ solves \eqref{MHS2D:eq}. 
	\subsection{Checking the tangential boundary value condition}\label{S:71}
	To conclude the proof of Theorem \ref{theorem2D}, it is only left to show that $B\cdot \tau=g, \mbox{ on } \partial\Omega_{-}.$  To that purpose, let us first show some consequences of the a priori estimates in Section \ref{S4}.
	
	\begin{corollary}\label{Cor:71}
		Let $j_{0}\in C^{1,\alpha}(\partial\Omega_{-})$. Then, for $i=1,\ldots,4$ the operators
		\begin{equation}
			\mathcal{T}_{i}^{NL}j_{0}(x)=\mathcal{T}_{0}^{NL}\mathsf{T}_{i}j_{0}(x): C^{1,\alpha}(\partial\Omega_{-})\to C^{2,\alpha}(\partial\Omega_{-})
		\end{equation}
		are well defined operators.  Furthermore, they can be expressed as the perturbation of convolution operators given in \eqref{TNL:j}.
	\end{corollary}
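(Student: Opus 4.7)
The plan is to establish the corollary by first showing that each $\mathsf{T}_i$ maps $C^{1,\alpha}(\partial\Omega_-)$ boundedly into itself, then showing that $\mathcal{T}_0^{NL}$ maps $C^{1,\alpha}(\partial\Omega_-)$ into $C^{2,\alpha}(\partial\Omega_-)$, and finally identifying the composition with the integral formula \eqref{TNL:j}. Well-definedness of $\mathcal{T}_i^{NL} = \mathcal{T}_0^{NL} \mathsf{T}_i$ as an operator $C^{1,\alpha} \to C^{2,\alpha}$ then follows by composition.

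For the first step, Propositions \ref{Proposition:estimate:T1}, \ref{Proposition:estimate:T2} and \ref{prop:T3T3} already give $\mathsf{T}_i \in \mathcal{L}(C^{1,\alpha}(\SS^1))$ for $i=1,2$ under Assumptions \ref{assumption:Lambda}-\ref{assumption:Theta}, and for $i=3,4$ they give even better: the smoothing nature of the kernels $\mathfrak{G}_3,\mathfrak{G}_4$ (thanks to the exponential decay of $M(n,y)$ in $n$) means these operators actually map $L^\infty$ into $C^{1,\alpha}$, so in particular into $C^{1,\alpha}$ from $C^{1,\alpha}$. The key input for the present corollary is just that $\mathsf{T}_i j_0 \in C^{1,\alpha}(\SS^1)$ when $j_0 \in C^{1,\alpha}(\SS^1)$.

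For the second step, I decompose $\mathcal{T}_0^{NL}$ as in \eqref{rig:TNL0}:
\[
\mathcal{T}_0^{NL} \psi(x) = -\mathcal{H}\partial_x^{-1}\psi(x) + \int_{\SS^1} Q(x-\eta)\psi(\eta)\,d\eta.
\]
The operator $\partial_x^{-1}$ gains one derivative (its definition was chosen precisely so that it is a bounded right inverse of $\partial_x$ modulo constants), hence $\partial_x^{-1}: C^{1,\alpha}(\SS^1) \to C^{2,\alpha}(\SS^1)$ is bounded. The periodic Hilbert transform $\mathcal{H}$ is bounded on $C^{k,\alpha}(\SS^1)$ for every $k \in \mathbb{N}_0$, so $\mathcal{H}\partial_x^{-1}$ indeed produces a $C^{2,\alpha}$ output. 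The convolution kernel $Q$ has Fourier coefficients $Q_n$ with exponential decay in $|n|$, hence $Q \in C^\infty(\SS^1)$ and convolution with $Q$ is an arbitrarily smoothing operator; in particular it maps $C^{1,\alpha}$ into $C^{2,\alpha}$ continuously. Summing the two contributions proves $\mathcal{T}_0^{NL} \in \mathcal{L}(C^{1,\alpha}(\SS^1), C^{2,\alpha}(\SS^1))$.

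For the third step, the task is to justify that $\mathcal{T}_0^{NL} \mathsf{T}_i j_0$, defined as this composition, coincides with the integral operator
\[
-\frac{1}{2\pi}\int_{\SS^1} \mathcal{G}_i^{NL}(x-\eta,\eta)\,j_0(\eta)\,d\eta
\]
displayed in \eqref{TNL:j}. The idea is to work on the dense subspace of trigonometric polynomials $j_0(x) = \sum_{|n|\le N} \hat{j}_0(n) e^{inx}$, where all Fourier series and integrals involved (including those defining $\mathsf{T}_i$ via an $\epsilon$-regularization in \eqref{def:T1}--\eqref{def:T4} and those defining $\mathcal{G}_i^{NL}$ in Subsection \ref{S:32}) are absolutely convergent. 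For such $j_0$ the formal Fourier manipulations of Subsection \ref{S:31}--\ref{S:32} hold rigorously: Fubini allows exchanging the $y$-integral with the $n$-sum, and the identity $\mathcal{T}_0^{NL} \circ (\mathcal{T}_0^{NL})^{-1} = \mathcal{I}$ from \eqref{rig:inverse:TNLO} gives
\[
\mathcal{T}_0^{NL}\bigl[(\mathcal{T}_0^{NL})^{-1}\mathcal{T}_i^{NL} j_0\bigr] = \mathcal{T}_i^{NL} j_0
\]
precisely in the integral form \eqref{TNL:j}. Once the identity is established on trigonometric polynomials, both sides extend by continuity to $C^{1,\alpha}(\SS^1)$: the left-hand side by the composition bounds above, and the right-hand side by the a priori estimates of Section \ref{S4} (Propositions \ref{Holder:singular:integral:alpha}, \ref{Holder:singular:integral:1alpha}, \ref{Holder:singular:integral:2}) applied to the kernels $\mathcal{G}_i^{NL}$.

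The main obstacle is the last step, especially for $i=1,2$: the kernels $\mathcal{G}_1^{NL}, \mathcal{G}_2^{NL}$ are not absolutely summable in $n$, so the identification requires carefully keeping the $\epsilon$-regularization in \eqref{def:T1}--\eqref{def:T2}, interchanging sums and integrals while $\epsilon>0$, and passing to the limit $\epsilon\to 0^+$ using the dominated convergence arguments already developed in the proofs of Propositions \ref{Holder:singular:integral:alpha}--\ref{Holder:singular:integral:1alpha}. For $i=3,4$ this step is routine since $M(n,y)$ provides uniform geometric decay in $n$ that makes the series for $\mathcal{G}_3^{NL}, \mathcal{G}_4^{NL}$ absolutely convergent, and no regularization is needed.
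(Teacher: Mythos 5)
Your first two steps (showing $\mathsf{T}_i \in \mathcal{L}(C^{1,\alpha}(\SS^1))$ from Propositions \ref{Proposition:estimate:T1}, \ref{Proposition:estimate:T2}, \ref{prop:T3T3}, and showing $\mathcal{T}_0^{NL}: C^{1,\alpha} \to C^{2,\alpha}$ by splitting off $\mathcal{H}\partial_x^{-1}$ from the smoothing $Q$-convolution) are essentially the paper's argument and are fine. The problem is Step 3, where your density-of-trigonometric-polynomials route diverges from the paper and contains two genuine gaps.

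First, trigonometric polynomials are not dense in $C^{1,\alpha}(\SS^1)$ with its norm topology: their closure is the \emph{little} H\"older space $c^{1,\alpha}$, a proper closed subspace. To transfer an identity from polynomials to a general $j_0 \in C^{1,\alpha}$ by continuity, you would have to pass through some larger ambient space (e.g.\ $c^{1,\beta}$, $\beta<\alpha$) in which both sides are bounded and into which $C^{1,\alpha}$ embeds, and then carry out the approximation there; this extra machinery is not in your sketch. Second, and more seriously, restricting $j_0$ to a trigonometric polynomial does not make the Fourier series defining the kernels $\mathcal{G}^{NL}_1,\mathcal{G}^{NL}_2$ (or the intermediate $\mathfrak{G}_{1,\epsilon},\mathfrak{G}_{2,\epsilon}$) absolutely convergent: those series live on the $x$-variable and their $n$-decay is controlled by the factor $\int_\epsilon^L e^{-|n|y}\,dy$ coming from the $\epsilon$-regularization and by the H\"older regularity of $\Lambda,\vartheta$, not by the spectral content of $j_0$. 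The obstacle you flag at the end --- that $\mathcal{G}^{NL}_1,\mathcal{G}^{NL}_2$ are not absolutely summable in $n$ --- is therefore not resolved by passing to polynomial data. Finally, the line ``$\mathcal{T}_0^{NL}[(\mathcal{T}_0^{NL})^{-1}\mathcal{T}_i^{NL}j_0]=\mathcal{T}_i^{NL}j_0$'' is circular as written, since $\mathsf{T}_i$ is defined directly by \eqref{def:T1}--\eqref{def:T4}, not as $(\mathcal{T}_0^{NL})^{-1}\mathcal{T}_i^{NL}$, and establishing the representation \eqref{TNL:j} for $\mathcal{T}_i^{NL}:=\mathcal{T}_0^{NL}\mathsf{T}_i$ is precisely the point to be proved.

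The paper's route bypasses all of this by \emph{keeping} the $\epsilon$-regularization throughout. It uses the uniform bound $\|\mathsf{T}_{1,\epsilon}j_0\|_{C^{1,\alpha}} \le C$ from Proposition \ref{Proposition:estimate:T1} together with the fact that the kernel of $\mathcal{T}_0^{NL}$ (logarithmic singularity, since its Fourier multiplier is $O(1/|n|)$) is integrable, to justify $\mathcal{T}_0^{NL}\lim_{\epsilon\to 0^+}\mathsf{T}_{1,\epsilon}j_0 = \lim_{\epsilon\to 0^+}\mathcal{T}_0^{NL}\mathsf{T}_{1,\epsilon}j_0$ by dominated convergence. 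For fixed $\epsilon>0$ the Fourier manipulations of Subsection \ref{S:32} are all absolutely convergent and give $\mathcal{T}_0^{NL}\mathsf{T}_{1,\epsilon}j_0 = \int_{\SS^1}\mathcal{G}^{NL}_{1,\epsilon}(x-\eta,\eta)j_0(\eta)\,d\eta$, and the uniform (in $\epsilon$) pointwise bound $|\mathcal{G}^{NL}_{1,\epsilon}(x,\eta)|\le C\log|x|$ allows one last dominated-convergence pass to obtain \eqref{TNL:j}. This is a cleaner and more robust argument; I would encourage you to replace your Step 3 with it.
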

	\begin{proof}
		By means of Proposition \ref{Proposition:estimate:T1}, Proposition \ref{Proposition:estimate:T2} and Proposition \ref{prop:T3T3} we have that for $j_0\in C^{1,\alpha}(\partial\Omega_{-})$ the operators $\mathsf{T}_{i}j_{0}$ are well defined and $\mathsf{T}_{i}j_{0}\in C^{1,\alpha}(\partial\Omega_{-})$, $i=1,\ldots,4.$  On the other hand, for $j_{0}\in C^{1,\alpha}(\partial\Omega_{-})$, the operator $\mathcal{T}_{0}^{NL}j_{0}$ given as in \eqref{rig:TNL0} is well defined and $\mathcal{T}_{0}^{NL}j_{0}\in C^{2,\alpha}(\partial\Omega_{-})$. Thus combining both facts yields that 
		$$\mathcal{T}_{i}^{NL}=\mathcal{T}_{0}^{NL}\mathsf{T}_{i}: C^{1,\alpha}(\partial\Omega_{-})\to C^{2,\alpha}(\partial\Omega_{-}), \quad  i=1,\ldots,4.$$ 
		
		We now show that we can express $\mathcal{T}_{i}^{NL}j_{0}(x)$ as the convolution operators given in \eqref{TNL:j}, for $i=1,\ldots,4.$ We will just provide the proof for $i=1$, since the cases $i=2,3,4$ are very similar. Recalling that the $\mathsf{T}_{1}$ is understood as the limit operator \eqref{def:T1} 
		\begin{equation}
			\mathsf{T}_{1}j_{0}(x)=- \frac{1}{2\pi}\displaystyle \lim_{\epsilon\to 0^{+}}\int_{\mathbb{S}^{1}} \mathfrak{G}_{1,\epsilon}(x-\eta,\eta)j_{0}(\eta) \ d\eta:= \displaystyle \lim_{\epsilon\to 0^{+}}\mathsf{T}_{1,\epsilon}j_{0}(x)
		\end{equation}
		we have that
		$$\mathcal{T}_{0}^{NL}\mathsf{T}_{1}j_{0}=\mathcal{T}_{0}^{NL}\lim_{\epsilon\to 0^{+}}\mathsf{T}_{1,\epsilon}j_{0}(x).$$
		By means of Proposition \ref{Proposition:estimate:T1}, we have shown the uniform estimate
		\begin{equation}
			\norm{\mathsf{T}_{1,\epsilon}j_{0}}_{C^{1,\alpha}(\partial\Omega_{-})}\leq C, \mbox{ and }    \lim_{\epsilon\to 0^{+}}\mathsf{T}_{1,\epsilon}j_{0}(x)=\mathsf{T}_{1}j_{0}(x)
		\end{equation}
		Therefore, by the Lebesgue Dominated Convergence Theorem and the fact that 
		$\mathcal{T}_{0}^{NL}$ has an integrable kernel, we conclude that
		\begin{equation}
			\mathcal{T}_{0}^{NL}\lim_{\epsilon\to 0^{+}}\mathsf{T}_{1,\epsilon}j_{0}(x)=  \lim_{\epsilon\to 0^{+}}(\mathcal{T}_{0}^{NL}\mathsf{T}_{1,\epsilon}j_{0})(x)
		\end{equation}
		Using the formal Fourier computations in Section \ref{S:32}, we have that for $\epsilon>0$ 
		\begin{align*}
			\mathcal{T}_{0}^{NL}\mathsf{T}_{1,\epsilon}j_{0}(x)=\int_{\SS^1}\mathcal{G}_{1,\epsilon}^{NL}(x-\eta,\eta) \omega_0(\eta) d\eta
		\end{align*}
		where 
		\begin{equation}
			\mathcal{G}_{1,\epsilon}^{NL}(x,\eta)= \displaystyle\sum_{n=-\infty}^{n=\infty}e^{inx}\int_{\epsilon}^{L}e^{-\abs{n}y} \left[ \frac{e^{-in\Lambda(\eta,y)}-1}{1+\partial_{\xi} \Theta(X(\eta,y),y)} \right]  dy.
		\end{equation}
		Computing the summation in $n$, in a similar fashion in \eqref{G11+:formula} we find that
		$$\abs{\mathcal{G}_{1,\epsilon}^{NL}(x,\eta)}\leq C \displaystyle\log(\abs{x}+\epsilon) \leq C \displaystyle\log(\abs{x})$$
		and hence by Lebesgue Dominated Convergence Theorem we conclude that
		\begin{equation}
			\lim_{\epsilon\to 0^{+}}\int_{\SS^1}\mathcal{G}_{1,\epsilon}^{NL}(x-\eta,\eta) \omega_0(\eta) d\eta= \int_{\SS^1} \lim_{\epsilon\to 0^{+}}\mathcal{G}_{1,\epsilon}^{NL}(x-\eta,\eta) \omega_0(\eta) d\eta.
		\end{equation}
		Moreover $ \lim_{\epsilon\to 0^{+}}  \mathcal{G}_{1,\epsilon}^{NL}(x,\eta)\to \mathcal{G}_{1}^{NL}(x,\eta),  \forall x\neq 0$. Therefore, combining the previous computations we obtain that
		\begin{equation}
			\mathcal{T}_{1}^{NL}\omega_0(x)= \mathcal{T}_{0}^{NL}\mathsf{T}_{1}\omega_0(x)= \int_{\SS^1}\mathcal{G}_{1}^{NL}(x-\eta,\eta)j_{0}(\eta
			) \ d\eta
		\end{equation}
		which shows the desired asserted expression as in \eqref{TNL:j}.
	\end{proof}
	The following lemma gives the tangential velocity in terms of the Biot-Savart system \eqref{DCP:problem}. 
	\begin{lemma}\label{lema:final:7}
		Let  $b\in C^{2,\alpha}(\Omega), X^{-1}\in C^{2,\alpha}(\Omega)$,  $f\in C^{2,\alpha}(\partial\Omega)$ and $j_{0}\in C^{1,\alpha}(\partial\Omega_{-})$ satisfy the following system
		\begin{equation}\label{div:curl:problem:full:fixed:2}
			\left\lbrace
			\begin{array}{lll}
				\nabla\times b= j_{0}(X^{-1}(x,y)), \ \mbox{in} \ \Omega \\
				\mbox{div } b=0, \ \mbox{in} \ \Omega \\ 
				b\cdot n=f, \ \mbox{on} \ \partial \Omega \\
				\int_{0}^{L} b_{1}(0,y) \ dy=J.
			\end{array} \right.
		\end{equation} 
		with $J$ as in \eqref{combination:5} (cf. \eqref{combination:3}). 
		
		Then, we have that 
		$$(b\cdot \tau)(x,0)=-\frac{J}{L}+\mathcal{Z}(x) +\frac{1}{2\pi}\mathcal{T}_{0}^{NL}j_{0}(x)+\frac{1}{2\pi}\displaystyle\sum_{i=1}^{4}\mathcal{T}_{i}^{NL}j_{0}(x)$$
		with
		$$\mathcal{Z}(x)=\frac{1}{2\pi} \displaystyle\sum_{n=-\infty}^{n=\infty}\left(\widehat{h^{+}}(n) \frac{\abs{n}}{\sinh (\abs{n}L)}-
	\widehat{h^{-}}(n)\frac{\abs{n}}{\tanh (\abs{n}L)}\right) e^{inx}. $$
	\end{lemma}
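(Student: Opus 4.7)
My plan is to reduce the identity to the explicit Fourier-series representation of $\partial_y\psi$ already derived formally in Section \ref{Sec:3}, and then invoke Corollary \ref{Cor:71} to make the resulting kernel decomposition rigorous.

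First I would introduce a stream function. Since $b$ is divergence-free on $\Omega = \SS^1\times[0,L]$ and $b\cdot n = f$ on $\partial\Omega$, the compatibility condition \eqref{compcond} allows one to write $b = (0,A) + \nabla^{\perp}\psi$ for some $\psi\in C^{2,\alpha}(\Omega)$ with $A = \int_{\partial\Omega_{+}}f\,dS$. Combining the curl equation $\nabla\times b = j_0\circ X^{-1}$ with the integrated form of $b\cdot n = f$ on $\partial\Omega_{\pm}$, together with the flux normalization $\int_0^L b_1(0,y)\,dy = J$ and the convention $\psi(0,0) = 0$, one sees that $\psi$ solves precisely the mixed boundary-value problem \eqref{stream:function:general}. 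Since $b_1 = -\partial_y\psi$ and $\tau$ is tangential along $\partial\Omega_-$, we obtain $(b\cdot\tau)(x,0) = -\partial_y\psi(x,0)$ (with signs matching the convention used in \eqref{stream:formulation:linear}).

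Next I would solve \eqref{stream:function:general} explicitly by superposition: use the Dirichlet Green's function $\Phi(x-\xi,y,y_0)$ on the strip, computed in Section \ref{Sec:3}, for the inhomogeneous part, and add a harmonic lift carrying the boundary data $h^{\pm}$ and the shift $-J$. Differentiating at $y=0$ and using the explicit Fourier form of $\partial_y\Phi(x,0,y_0)$ reproduces equation \eqref{psi:invers:full}, i.e.
\begin{equation*}
\partial_y\psi(x,0) = -\tfrac{J}{L} + \mathcal{Z}(x) - \tfrac{1}{2\pi}\int_0^L dy_0 \int_{\SS^1}\!\sum_{n\in\ZZ}\tfrac{\sinh(n(L-y_0))}{\sinh(nL)}e^{in(x-\xi)} j_0(X^{-1}(\xi,y_0))\,d\xi.
\end{equation*}
Performing the change of variables $\xi = X(\eta,y_0)$ with Jacobian $(1+\partial_\xi\Theta(X(\eta,y_0),y_0))^{-1}$ and introducing $\Lambda(\eta,y) = X(\eta,y)-\eta$ rewrites the double integral as $\mathcal{T}^{NL}j_0(x)$ in the form \eqref{Fourier:series:operator}--\eqref{an:Fourier}. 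Splitting the Fourier coefficients $a_n(\eta)$ exactly as in Section \ref{S:32} produces the decomposition $\mathcal{T}^{NL} = \mathcal{T}_0^{NL} + \sum_{i=1}^4 \mathcal{T}_i^{NL}$, which by Corollary \ref{Cor:71} matches the integral operators in \eqref{TNL:0}--\eqref{TNL:j}. Combining with $(b\cdot\tau)(x,0) = -\partial_y\psi(x,0)$ yields the claimed formula.

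The main obstacle is that the series $\sum_n \sinh(n(L-y_0))/\sinh(nL)\,e^{in(x-\xi)}$ is only conditionally convergent and behaves as a periodic principal-value kernel as $y_0\to 0^+$, so neither the interchange of $\sum_n$ with the $(\xi,y_0)$-integration nor the change of variables is automatic. This is exactly the difficulty already absorbed by the $\epsilon$-regularizations in the definitions \eqref{def:T1}--\eqref{def:T2} and by the a priori bounds of Section \ref{S4}. Concretely, I would first carry out the whole computation on the truncated problem $\int_\epsilon^L dy_0\cdots$, where every sum and integral converges absolutely and all Fubini/change-of-variable steps are legitimate; this gives the identity with $\mathcal{T}^{NL}$ replaced by its $\epsilon$-version. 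I would then pass to $\epsilon\to 0^+$ on both sides using the uniform H\"older estimates of Propositions \ref{Holder:singular:integral:alpha}--\ref{Holder:singular:integral:1alpha} and the dominated-convergence arguments underlying Corollary \ref{Cor:71}, which guarantee that the limit of the regularized kernel agrees with the decomposition $\mathcal{T}_0^{NL} + \sum_{i=1}^4 \mathcal{T}_i^{NL}$. This completes the proof.
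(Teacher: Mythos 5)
Your proposal follows essentially the same route as the paper: reduce to the stream function $\psi$ satisfying \eqref{stream:function:general}, derive the explicit representation of $\partial_y\psi(x,0)$ via the strip Green's function, perform the change of variables $\xi = X(\eta,y_0)$ to identify the kernel with $\mathcal{T}^{NL}$, split the Fourier coefficients as in Section \ref{S:32}, and appeal to Corollary \ref{Cor:71} to legitimize the decomposition $\mathcal{T}^{NL} = \mathcal{T}_0^{NL}+\sum_{i=1}^4 \mathcal{T}_i^{NL}$. The only difference is cosmetic: you spell out the $\epsilon$-truncation and dominated-convergence step explicitly in the proof of the lemma, whereas the paper has already packaged that regularization into Corollary \ref{Cor:71} and simply cites it here; the underlying justification is identical.
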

	
	\begin{proof}
		Arguing as in Subsection \ref{S:31}, we have that since $b$ solves \eqref{div:curl:problem:full:fixed:2} there exists a stream function $\psi$ such that
		\begin{equation}
			\left\lbrace
			\begin{array}{lll}
				\Delta \psi= j_{0}(X^{-1}(x,y)),\ \mbox{in } \Omega \\ 
				\psi(x,L)= -J+\displaystyle\int_{0}^{x} (f(\xi,L)-A) \ d\xi  ,  \ x\in \mathbb{R} \\
				\psi(x,0)= \displaystyle\int_{0}^{x} (f(\xi,0)-A) \ d\xi  ,  \ x\in \mathbb{R}
			\end{array}\right.
		\end{equation}
		for $A=\int_{\partial\Omega_{+}} f \ dS=\int_{\partial\Omega_{-}} f\ dS$. Moreover, using the fundamental solution $\Phi(x,y)$ solving the problem
		\begin{equation}
			\left\lbrace
			\begin{array}{lll}
				\Delta \Phi (x,y)= \delta(x)\delta(y-y_{0}), \ \mbox{in } \Omega, \\ 
				\Phi=0, \ \mbox{on } \partial \Omega. \\
			\end{array}\right.
		\end{equation}
		we can readily check (cf. Subsection \ref{S:31}) that the normal derivative at $y=0$ is given by
		\[\partial_{y} \Phi(x,0,y_{0})= -\frac{1}{2\pi} \displaystyle\sum_{n=-\infty}^{n=\infty} \frac{\sinh(n(L-y_{0}))}{\sinh(nL)} e^{inx}. \]
		Computing an homogeneous solution and imposing the boundary value conditions using Fourier techniques as in \eqref{psi:fourier:exp} in Section \ref{Sec:2}
		we conclude  that 
		\begin{equation}
			\partial_{y} \psi (x,0)= -\frac{J}{L}+
			\mathcal{Z}(x)-\frac{1}{2\pi}\int_{0}^{L} dy_{0} \int_{\mathbb{S}^{1}}\displaystyle\sum_{n=-\infty}^{n=\infty} \frac{\sinh(n(L-y_{0}))}{\sinh(nL)} e^{in(x-\xi)}j_{0}(X^{-1}(\xi,y_{0})) \ d \xi \label{psi:invers:full:2}
		\end{equation}
		where $\mathcal{Z}(x)$ as in \eqref{B}. Since $f\in C^{2,\alpha}(\partial\Omega)$, it is straightforward to check that boundary condition term $\mathcal{Z}(x)$ is well defined.  
		On the other hand, invoking Corollary \ref{Cor:71} we have that for $j_0\in C^{1,\alpha}(\partial\Omega_{-})$ the operators 
		\begin{equation*}
			\mathcal{T}_{i}^{NL}j_{0}(x)=\mathcal{T}_{0}^{NL}\mathsf{T}_{i}j_{0}(x): C^{1,\alpha}(\partial\Omega_{-})\to C^{2,\alpha}(\partial\Omega_{-})
		\end{equation*}
		are well defined operators and can be expressed as the convolution operators given in \eqref{TNL:j}. Hence, by recalling the definition  \eqref{decomposition:operator:T} we infer that
		\begin{equation*}
			\mathcal{T}^{NL}j_{0}(x)=\mathcal{T}^{NL}_{0}j_{0}(x)+ \displaystyle\sum_{i=1}^{4}\mathcal{T}^{NL}_{i}j_{0}(x)
		\end{equation*}
		it admit the representation formula
		\begin{align*}
			\mathcal{T}^{NL}j_{0}(x)=-\frac{1}{2\pi}\int_{\mathbb{S}^1}\mathcal{G}^{NL}(x-\eta,\eta) j_{0}(\eta)  \ d\eta,
		\end{align*}
		where
		\begin{align*}
			\mathcal{G}^{NL}(x,\eta) = \ \displaystyle\sum_{n=-\infty}^{n=\infty}  a_{n}(\eta)e^{inx}, a_{n}(\eta)=  \int_{0}^{L} \frac{\sinh(n(L-y))}{\sinh(nL)} \frac{e^{-in \Lambda (\eta,y)}}{(1+\partial_{\xi} \Theta(X(\eta,y),y))} dy.
		\end{align*}
		where $\Lambda(\eta,y)=X(\eta,y)-\eta \in C^{2,\alpha}(\Omega)$ and $X^{-1}(\xi,y)=\xi+\Theta(\xi,y)=\eta$. Unraveling notation (cf. computations \eqref{psi:invers:full}-\eqref{full:integral:eq}), it is easy to check that 
		\begin{equation}\label{def:TNL:sec7}
			\mathcal{T}^{NL}j_{0}(x)=-\frac{1}{2\pi}\int_{0}^{L} dy_{0} \int_{\mathbb{S}^{1}}\displaystyle\sum_{n=-\infty}^{n=\infty} \frac{\sinh(n(L-y_{0}))}{\sinh(nL)} e^{in(x-\xi)}j_{0}(X^{-1}(\xi,y_{0})) \ d \xi
		\end{equation}
		is a well defined operator. Therefore, combining \eqref{psi:invers:full:2}-\eqref{def:TNL:sec7} and noticing that $-\dfrac{\partial \psi }{\partial y}=b\cdot \tau$ on $\partial\Omega_{-}$ provides our claim.
	\end{proof}
	
	\begin{corollary}\label{Cor:72}
		We have that $B\cdot\tau =g \ \mbox{on} \ \partial \Omega_{-}.$
	\end{corollary}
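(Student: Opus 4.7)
The plan is to apply Lemma \ref{lema:final:7} directly to the fixed point $b$ of $\Gamma$ and simplify using the integral equation satisfied by $j_0$. First, I would check that the hypotheses of Lemma \ref{lema:final:7} are in force. Since $b$ is a fixed point of $\Gamma$, the definition \eqref{full:composiotion:operator} shows that $b$ solves the div-curl system \eqref{div:curl:problem:full:fixed} with source $j = T[b, j_0]$, where $j_0 = \Pi[\Psi[b]](\mathsf{G} - \langle\mathsf{G}\rangle)$ and $J$ is given by \eqref{combination:5}. Because $j$ is transported along the integral curves of $(0,1)+b$ starting from $j_0$ on $\partial\Omega_-$, the method of characteristics yields the explicit representation $j(x,y) = j_0(X^{-1}(x,y))$, so $b$ fits exactly in the setup \eqref{div:curl:problem:full:fixed:2} of Lemma \ref{lema:final:7}. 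The regularity of $b$, $X^{-1}$, $j_0$ and $f$ is provided by Lemma \ref{flow:map:bound2}, Proposition \ref{Prop5.4} and the hypotheses of Theorem \ref{theorem2D}.

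Lemma \ref{lema:final:7} then delivers the exact identity
\begin{equation*}
(b\cdot\tau)(x,0) = -\frac{J}{L} + \mathcal{Z}(x) + \mathcal{T}_0^{NL}j_0(x) + \sum_{i=1}^{4}\mathcal{T}_i^{NL}j_0(x).
\end{equation*}
Since $(0,1)\cdot\tau = 0$ along $\partial\Omega_-$, the left-hand side coincides with $(B\cdot\tau)(x,0)$, and the non-convolution operators on the right-hand side act as bounded operators from $C^{1,\alpha}(\partial\Omega_-)$ into $C^{2,\alpha}(\partial\Omega_-)$ thanks to Corollary \ref{Cor:71}.

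The final step is to invoke the integral equation for $j_0$. By Proposition \ref{Prop5.4}, the function $j_0 = \Pi[\Psi[b]](\mathsf{G}-\langle\mathsf{G}\rangle)$ solves the reduced Fredholm equation \eqref{final:eq:j_0}. As was laid out in Subsection \ref{S:32}, this equation together with the choice \eqref{combination:5} of $J$ is equivalent to the full operator identity
\begin{equation*}
\mathcal{T}_0^{NL}j_0(x) + \sum_{i=1}^{4}\mathcal{T}_i^{NL}j_0(x) = \widetilde{g}(x) + \frac{J}{L},
\end{equation*}
with $\widetilde{g} = -g - \mathcal{Z}$. Substituting this identity into the formula above and cancelling the $J/L$ and $\mathcal{Z}(x)$ contributions yields $(B\cdot\tau)(x,0) = g(x)$ on $\partial\Omega_-$, which is precisely the missing boundary condition.

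No substantive obstacle is expected at this stage: all the analytic machinery has already been built. The only point worth isolating is the equivalence between the Fredholm formulation \eqref{final:eq:j_0} and the full operator equation $\mathcal{T}^{NL}j_0 = \widetilde{g} + J/L$ under the choice \eqref{combination:5} of $J$; this equivalence is exactly what was established in Subsection \ref{S:32} via the single-valuedness condition for the pressure. Combined with the tangential-trace characterization of Lemma \ref{lema:final:7}, the existence of $j_0$ in $C^{1,\alpha}$ from Proposition \ref{Prop5.4}, and the rigorous identification of the operators $\mathcal{T}_i^{NL}$ in Corollary \ref{Cor:71}, the proof of Corollary \ref{Cor:72} amounts to assembling these ingredients.
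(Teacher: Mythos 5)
Your proposal is correct and follows essentially the same route as the paper's own proof: apply Lemma \ref{lema:final:7} to the fixed point $b$, substitute the integral equation $\mathcal{T}^{NL}j_0 = \widetilde{g} + J/L$ satisfied by $j_0 = \Pi[\Psi[b]](\mathsf{G}-\langle\mathsf{G}\rangle)$, and cancel the $J/L$ and $\mathcal{Z}$ terms. Your write-up is somewhat more explicit than the paper's terse two-line argument in spelling out why the fixed point places $b$ in the setting of Lemma \ref{lema:final:7} and why the reduced Fredholm equation \eqref{final:eq:j_0} together with the choice \eqref{combination:5} of $J$ reproduces the operator identity, but these are the same ingredients the paper cites.
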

	\begin{proof}
		Applying Lemma \ref{lema:final:7} and noticing that by construction $j_0(x)$ solves 
		\begin{equation}\label{full:integral:eq:2}
			\mathcal{T}^{NL}j_{0}(x)=-\frac{1}{2\pi}\int_{\mathbb{S}^1}\mathcal{G}^{NL}(x-\eta,\eta) j_{0}(\eta)  \ d\eta=\widetilde{g}+\frac{J}{L}
		\end{equation}
		where $\widetilde{g}(x)=-g(x)-\mathcal{Z}(x)$ we conclude that  $B\cdot\tau=b\cdot \tau=g  \ \mbox{on} \ \partial \Omega_{-}$.
	\end{proof}


\begin{thebibliography}{99}
		\bibitem{Alber-1992}
		H.D.~Alber. 
		\newblock{Existence of three dimensional, steady, inviscid, incompressible flows with nonvanishing vorticity.}
		\newblock {\em Mathematische Annalen}, 292,  pp. 493-528, (1992).
		
		
		\bibitem{Alo-Velaz-2021}
		D.~Alonso-Or\'an and J.~ J. L.~ Vel\'azquez.
		\newblock  Boundary value problems for two dimensional steady incompressible fluids.
		\newblock {\em Journal of Differential Equations}, 307, 211-249, (2022).
		
		\bibitem{ABM-1999} 
		T.~Amari and T.~Boulmezaoud and Z.~Milki\'c.
		\newblock An iterative method for the reconstruction of the solar coronal magnetic field. Method for regular solutions. 
		\newblock {\em Astronomy and Astrophysics}, 350, pp. 1051-1059, (1999).
		
		\bibitem{Arnold-Khesin-1999}
		V.I~Arnold and B.A~Khesin.
		\newblock  Topological methods in hydrodynamics.
		\newblock {\em Vol. 125. Springer Science $\&$ Business
			Media,} (1999).
		
		
		
		
		\bibitem{Bard-Titi}
		C.~Bardos and E.~Titi. 
		\newblock Euler equations for incompressible ideal fluids.
		\newblock {\em Russian Mathematical Surveys},  62:3 409-451 (2007).
		
	
		\bibitem{Bineau-1972}
		M.~Bineau.
		\newblock On the  existence of force-free magnetic fields.
		\newblock {\em Communications in Pure and Applied Mathematics}, 27, pp. 77-84, (1972).
		
		
		\bibitem{Buffoni-Wahlen-2019}
		B.~Buffoni and E.~Wahl\'en.
		\newblock  Steady three-dimensional rotational flows: an approach via two stream functions and Nash-Moser iteration.
		\newblock {\em Analysis and PDE}, 12, pp. 1225-1258, (2019).
		\bibitem{Coddington-Levison-1955}
		E.A.~Coddington and N.~Levinson.
		\newblock Theory of Ordinary Differential Equations.
		\newblock {\em McGraw Hill Publishing,} (1955).
		
		\bibitem{CDG-2020}
		P.~Constantin and T.~Drivas and D.~Ginsberg.\
		\newblock Flexibility and rigidity in steady fluid motion.
		\newblock {\em Communications in Mathematical Physics},  385, 521–563 (2021).
		
		\bibitem{CDG-2020-2}
		P.~Constantin and T.~Drivas and D.~Ginsberg.
		\newblock On quasisymmetric plasma equilibria sustained by small force.
		\newblock {\em Journal of Plasma Physics}, 87 Issue 1, (2021).
		
		\bibitem{CLV}
		P.~Constantin and J.~La and V.~Vicol.
		\newblock  Remarks on a paper by Gavrilov: Grad-Shafranov equations, steady solutions of the three dimensional incompressible Euler equations with compactly supported velocities, and applications.
		\newblock {\em Geometric and Functional Analysis}, 29, 1773-1793, (2019)
		
		
		
		
		\bibitem{EPS-2018} 
		A.~Enciso and D.~Poyato and J.~Soler.
		\newblock Stability Results, Almost Global Generalized Beltrami Fields and Applications to Vortex Structures in the Euler Equations.
		\newblock {\em Communications in Mathematical Physics}, 360, pp. 197-269 (2018).
		
	
		\bibitem{Gilbarg-Trudinger-2001}
		D.~Gilbarg and N.S.~Trudinger.
		\newblock Elliptic partial differential equations of second order.
		\newblock {\em Classics in Mathematics, Springer-Verlag, Berlin,} (2001).
		
		
		\bibitem{Goedbloed-Poedts-2010}
		J.P.~Goedbloed and S.~Poedts.
		\newblock  Principles of Magnetohydrodynamics: With Applications to Laboratory and Astrophysical Plasmas.
		\newblock {\em Cambridge University Press}, (2010).
		
		\bibitem{Goedbloed-Poedts-2010-2}
		J.P.~Goedbloed and S.~Poedts.
		\newblock  Advanced Magnetohydrodynamics: With Applications to Laboratory and Astrophysical Plasmas.
		\newblock {\em Cambridge University Press}, (2010).
		
		\bibitem{Grad-Rubin-1958}
		H.~Grad and H.~Rubin.
		\newblock  Hydromagnetic Equilibria and Force-Free Fields.
		\newblock {\em Proceedings of the 2nd UN Conf. on
			the Peaceful Uses of Atomic Energy}, Vol. 31, Geneva: IAEA p. 190, (1958).
		
		\bibitem{Grad-1967}
		H.~Grad. 
		\newblock  Toroidal containment of a plasma.
		\newblock {\em The Physics of Fluids}, 10(1), 137-154, (1967).
		
		
		
		
		\bibitem{Hamel-Nadirashvili-2017}
		F.~Hamel and N.~Nadirashvili.
		\newblock  Shear flows of an ideal fluid and elliptic equations in unbounded domains.
		\newblock {\em Communications on Pure and Applied Mathematics }, 70, 3, pp. 590-608, (2017).
		
		\bibitem{Hamel-Nadirashvili-2019}
		F.~Hamel and N.~Nadirashvili.
		\newblock  Circular flows for the Euler equations in two-dimensional annular domains.
		\newblock {\em Journal of the European Mathematical Society}, (2021).
		
		
		
		
	
		\bibitem{Molinet-1999}
		L.~Molinet.
		\newblock  On the existence of inviscid compressible steady flows through a three- dimensional bounded domain.
		\newblock {\em Advances in Differential Equations}, 4, pp. 493-528, (1999).
		
		
		\bibitem{Mushk}
		N. I.~ Muskhelishvili.
		\newblock  Singular Integral Equations. Boundary problems of functions theory and their applications to mathematical physics.
		\newblock {\em Springer Science, Business Media B.V.}, (1958).
		
		\bibitem{Priest-2014}
		E.~Priest.
		\newblock  Magnetohydrodynamics of the Sun.
		\newblock {\em Cambridge University Press}, (2014).
		
		\bibitem{ReedsSimon}
		M.~Reeds and B.~Simon.
		\newblock  Methods of modern mathematical physics.
		\newblock {\em Academic Press, INC}, (1980).
		
		
		
		\bibitem{Safranov-1966}
		V.D.~Safranov.
		\newblock Plasma equilibrium in a magnetic field.
		\newblock {\em Reviews of Plasma Physics, Vol. 2, New York: Consultants Bureau}, p. 103, (1966).
		
		\bibitem{Seth-2020}
		D.S.~Seth.
		\newblock Steady three-dimensional ideal flows with nonvanishing vorticity in domains with edges.
		\newblock {\em Journal of Differential Equations}, 274, pp. 345-381, (2021).
		
		\bibitem{Tang-Xin-2009}
		C.~Tang and Z.~Xin.
		\newblock Existence of solutions for three dimensional stationary incompressible Euler equations with nonvanishing vorticity.
		\newblock {\em Chinese Annals of Mathematics Series B}, 30, pp. 803-830, (2009).
		
	\end{thebibliography}
\end{document}